\numberwithin{equation}{section}
\newcommand{\nb}[1]{\textcolor{blue}{\texttt{[#1]}}}
\newcommand{\sff}{\mathsf{f}}
\newcommand{\sfg}{\mathsf{g}}
\newcommand{\tx}{\bm{x}^*}
\newtheorem{defn}{Definition}[section]
\newtheorem{thm}{Theorem}[section]
\newtheorem{lem}{Lemma}[section]
\newtheorem{cor}{Corollary}[section]
\theoremstyle{remark}
\newtheorem{rem}{Remark}[section]
\newcommand{\Overlap}{\mathrm{Overlap}}
\newcommand{\MMSE}{\mathsf{MMSE}}
\newcommand{\MSEA}{\mathsf{MSE}^{\mathsf{AMP}}}
\newcommand{\GMMSE}{\mathsf{GMMSE}}
\newcommand{\sMI}{\mathrm{I}}
\newcommand{\smmse}{\mathrm{mmse}}
\newcommand{\cbrac}[1]{\{#1\}}
\newcommand{\revsag}[1]{\textcolor{black}{#1}}
\newcommand{\norm}[1]{\left\|#1\right\|}
\newcommand{\sech}{\mbox{sech}}
\title{Community Detection with Contextual Multilayer Networks}
\author{Zongming Ma\footnote{Email: zongming@wharton.upenn.edu.} 
\hspace{0.1in}and\hspace{0.1in}Sagnik Nandy\footnote{Email: sagnik@wharton.upenn.edu.}\\
\emph{University of Pennsylvania}}
\begin{document}
	\maketitle
 % \begin{frontmatter}
%\runtitle{Optimal Bounds}
% \begin{aug}
%  \author{\fnms{Zongming} \snm{Ma}}\and
%   \author{\fnms{Sagnik} \snm{Nandy}}
%   %\author{\fnms{Arun K.}   \snm{Kuchibhotla}\ead[label=e1]{arunku@wharton.upenn.edu}},
%   % \author{\fnms{Lawrence D.} \snm{Brown}},
%   % \and
%   % \author{\fnms{Andreas} \snm{Buja}}
%   % ,
%   % \author{\fnms{Edward I.} \snm{George}},
%   % \and
%   % \author{\fnms{Linda} \snm{Zhao}}
%     % \ead[label=e3]{third@somewhere.com}%
%     % \ead[label=u1,url]{http://www.foo.com}}
%
%   \runauthor{Ma \& Nandy}
%   \runtitle{Community Detection with Multilayer Networks and Covariates}
%
%   \affiliation{Department of Statistics}
%
%   \address{Department of Statistics, University of Pennsylvania} %\printead{e1}}
%\thankstext{t1}{Corresponding author.}

% \end{aug}  
%\maketitle 

\begin{abstract}
In this paper, we study community detection when we observe $m$ sparse networks and a high dimensional covariate matrix, all encoding the same community structure among $n$ subjects. 
In the asymptotic regime where the number of features $p$ and the number of subjects $n$ grow proportionally,
we derive an exact formula of asymptotic minimum mean square error (MMSE) for estimating the common community structure in the balanced two block case using an orchestrated approximate message passing algorithm. The formula implies the necessity of integrating information from multiple data sources. Consequently, it induces a sharp threshold of phase transition between the regime where detection (i.e., weak recovery) is possible and the regime where no procedure performs better than random guess. 
The asymptotic MMSE depends on the covariate signal-to-noise ratio in a more subtle way than the phase transition threshold.
In the special case of $m=1$, our asymptotic MMSE formula complements the pioneering work \cite{ContBlockMod} which found the sharp threshold when $m=1$.
A practical variant of the theoretically justified algorithm with spectral initialization leads to an estimator whose empirical MSEs closely approximate theoretical predictions over simulated examples. 
~
\\
\textbf{Keywords}: clustering; contextual SBM; integrative data analysis; multilayer network; phase transition; stochastic block model; approximate message passing. 
\end{abstract}

\section{Introduction}
\label{sec:intro}

\revsag{Network data is a prevalent form of relational data.}
It appears in many different fields such as social science,
 % \cite{social_1},  
economics,
% \cite{econ_1},
epidemiology,
% \cite{epidem_1},
biological science,
% \cite{bio_1,bio_2},
among others.
Many networks come with inherent community structures. 
Nodes within the same community connect in different ways than nodes between different communities.
% in a manner different from the way they connect with nodes from other communities.
The community labels of the nodes are usually unknown. 
It is of interest to uncover such latent community structures based on observed networks.
 % \cite{Fortunato_2010,1339264,1167344}.
This inference problem is usually called \emph{community detection} which in essence is \emph{clustering} of network nodes.
 % This problem of discovering latent community structure in random graphs is called graph clustering problem or community detection problem and is a very important problem in statistics and machine learning.
% Applications of community detection problem can be found in study of sociological interactions (\cite{Fortunato_2010}), gene expressions (\cite{1339264}), and recommendation systems (\cite{1167344}) among others.
\revsag{ The stochastic block model (SBM)} \cite{holland1983stochastic} is a popular model for studying community detection. 
% \nb{the references here need careful adjustment...}
There has been a large literature on theoretical approaches, 
% (e.g., \cite{KC,7298436,abbe2020entrywise,DBLP:conf/colt/BanksMNN16,bickel2013asymptotic,bickel2009nonparametric,Decelle_2011,AbbeMonYash,hajek2016achieving,hajek2016achieving2,massoulie2013community,mns14,MNS14a,mnsf,zhang2016minimax}),
algorithmic, 
% (e.g., \cite{Amini2014OnSR,amini2013pseudo,cai2015robust, gao2017achieving,MontSen,rohe2011spectral,Vu,Yun2014CommunityDV,YP,zhang2020theoretical})
and application 
% (e.g., \cite{1339264,Fortunato_2010,1167344})
aspects of SBM's.
% The foregoing references are limited, and
We refer interested readers to several recent survey papers \cite{abbe2017community,gao2021minimax,li2021convex} for more detailed accounts of this large and growing literature.

A more traditional clustering problem in statistics is clustering based on covariates, which is also a leading example of unsupervised learning.
% The other direction of community detection or clustering considered in statistics and machine learning is the data clustering problem in presence of co-variates.
Standard techniques include, but are not limited to $k$-means clustering, hierarchical clustering, and the EM algorithm.
The multivariate Gaussian mixture model has been a popular model for theoretical study on this front which has received renewed interest in recent years.
% See, e.g., \cite{balakrishnan2017statistical,daskalakis2017ten,jin2016local,lu2016statistical,wu2019randomly,xu2016global}
% \nb{refs! HZ's Llyod's algorithm, Wainwright, Wu \& Zhou, etc.} 
% for some recent advancements.
The model is closely related to the spiked covariance model \cite{johnstone2001distribution} which is widely adopted in Random Matrix Theory.
% Such problems are generally solved using clustering algorithms like $k$-means or hierarchical clustering. To study the computational thresholds in such problems people have considered co-variate models like equation~\eqref{eq:def_cov} defined in Section~\ref{cov_sec}. Such model was introduced by \cite{johnstone2009sparse}. The weak recovery in this set-up is well studied in \cite{baik2005} and \cite{10.2307/24307692} among others.

Ever-growing techniques for data acquisition have led us to a new paradigm where one could have multiple data sets as multiple sources of information about the community structure.
For instance, for a set of $n$ people, one could potentially have several social networks observed on them (Facebook, LinkedIn, etc.) together with a large collection of socioeconomic (and/or genomic, neuroimaging, etc.) covariates for each individual.
This poses a new challenge: 
\emph{How can we best integrate information from these multiple sources to uncover the common underlying community structure?}

When the network is at least part of the observation, there are two different scenarios.
The first is where one observes multiple networks without any covariate.
A practical example of this scenario was described in \cite{mm} where the nodes represent proteins, the edges in one network represent physical interactions between nodes and those in another network represent co-memberships in protein complexes.
This scenario has been studied in the \emph{multilayer network} literature.
The arguably more interesting scenario is where one observes one or more networks together with a collection of covariates.
In the pioneering work by Deshpande, et al.~\cite{ContBlockMod},
the authors considered the case where the available data is an $n\times n$ adjacency matrix of an SBM and a high dimensional $p\times n$ Gaussian covariate matrix, 
both containing the same balanced two block community structure. 
Under this stylized yet informative model, they rigorously established a sharp information-theoretic threshold for detecting the community structure (i.e., to uncover the community structure better than random guessing) when $p$ and $n$ tend to infinity proportionally and the average degree of the network diverges with $n$.
In addition, they proposed a heuristic algorithm which supports the information-theoretic threshold empirically.
Subsequently, the sharp threshold was extended to the case where the average degree is bounded \cite{lu2020contextual}. 
See also \cite{binkiewicz2017covariate,Abbe2020AnT} for investigations of spectral clustering and \cite{psarkar} for an SDP approach in similar models.
% \nb{talk about \citet{ContBlockMod} and Sen's more recent work.}

% Practitioners have proposed various heuristic algorithms to meet the challenge.
% \nb{separate review on multiple networks, and network plus covariate settings. add related references if possible}

The present paper is motivated by two important questions that remain unanswered by \cite{ContBlockMod}.
\begin{itemize}
\item \emph{Multiple networks with or without covariates}. 
How does the phase transition phenomenon found in \cite{ContBlockMod} exhibit itself when one observes multiple networks with or without high-dimensional covariates? 
How does the threshold depend on individual signal-to-noise ratios in these multiple data sources?

\item \emph{Precise characterization of the information-theoretic limit achieved by Bayes optimal \newline estimator}. 
Even in the special case considered by \cite{ContBlockMod} where only one network is observed together with covariates, 
it is not clear what the information-theoretic limit of the performance by the best estimator is when the signal-to-noise ratio is above the phase transition threshold.
The authors provided a spectral estimator that achieves nontrivial performance above the threshold. 
However, it is not Bayes optimal, and
the exact form of the information-theoretic limit is unknown.
\end{itemize}

In this paper, we provide affirmative answers to both of the above questions.
Without loss of generality, we propose to consider an observation model where one observes $m$ independent adjacency matrices from $m$ SBM's and a high-dimensional Gaussian data-set with $p$ covariates, all carrying the same latent community structure (balanced two block).
% Considering only one dataset with covariates is reasonable, since from an information-theoretic perspective, one could reduce multiple datasets with Gaussian covariates to one with appropriate identification of signal-to-noise ratios.
Focusing on this model and assuming that the average degrees diverge, our contributions are the following.
\begin{itemize}
\item \emph{Sharp phase transition threshold}.
We establish sharp thresholds for phase transition between the regime where detecting the community structure is feasible and the regime where no procedure performs better than random guess.

\item \emph{Exact formula for asymptotic minimum mean square error (MMSE)}. 
We give an exact formula for the asymptotic MMSE achieved by the Bayes optimal estimator of the community structure.

\end{itemize}
To facilitate the derivation of asymptotic MMSE, we also provide convergence analysis of 
an orchestrated approximate message passing (AMP) algorithm with multiple parallel and information-sharing orbits.
% approximate message passing (AMP) with Gaussian side information \textcolor{red}{that are not independent across network nodes}. 
This could be of independent interest.

Last but not least, our results continue to hold for the special case where one only observes $m$ networks and there is no covariate.
% Both the thresholds and asymptotic MMSE formula hold for networks with growing degrees and for a Gaussian version of the model.
In this case, our model is reduced to a multilayer SBM. 
Results in \cite{jog2015information,paul2016consistent} provide sharp information-theoretic thresholds between the regimes of exact recovery 
(where the best procedure uncovers the community structure perfectly) and almost exact recovery (where the best procedure only makes mistakes on a vanishing proportion of nodes). 
In addition, \cite{paul2016consistent,xu2020optimal} derived the minimax rates of convergence that are sharp in exponents in the regime of almost exact recovery under the Hamming loss.
In contrast, the present paper provides sharp thresholds for detection (a.k.a.~weak recovery) and the exact asymptotic minimax risk under the squared error loss.

On the technical front, the main novelty of the present manuscript lies in designing an 
orchestrated
AMP algorithm 
with multiple orbits
that synchronizes the extraction of information about community structure from multiple data sources.
In addition, we provide a rigorous proof of \revsag{the almost sure convergence of the AMP average sequence}.
The idea underpinning the algorithm design is potentially applicable to other settings where the integration of information from multiple sources is needed.

The setting of the present paper can be reformulated as a multi-view spiked matrix model which has been studied in \cite{9173970,9272982}. 
The results of the aforementioned papers give the asymptotic MMSE of \emph{joint} estimation of the covariates.
% co-variates. 
More specifically, these papers have used the adaptive interpolation technique described in \cite{Adaptive_Interpolation} to obtain the limit of per-vertex mutual information between data and both the community labels and the covariate means. 
Then they have used the I-MMSE identity to get the asymptotic MMSE for the joint estimation of the community labels and the covariate means. 
A different proof technique related to a similar model was described in \cite{chen2021statistical}, where the authors identified the limiting free energy as the viscosity solution to a certain Hamilton-Jacobi equation. 
In contrast to the foregoing papers, the present manuscript focuses on the optimal estimation of the community label vector \emph{only}. 
The asymptotic joint estimation MMSE results in the foregoing papers do not lead to the asymptotic community label estimation MMSE we shall derive in this paper, since different priors (Rademacher vs.~Gaussian) have been put on the community labels and the covariate means, respectively, while the connection between the joint and individual estimation MMSEs depend crucially on the choices of the priors.
Due to the generality of the models considered, the asymptotic MMSE's in \cite{9173970,9272982} were expressed in complicated variational forms with matrix arguments. 
In contrast, under the specific setting considered in the present manuscript, we
shall obtain an explicit `single-letter' characterization of the asymptotic MMSE. 
While it remains possible to derive asymptotic per-vertex mutual information between data and community labels alone by using the adaptive interpolation technique and further obtain the asymptotic MMSE result in the current paper by differentiation, 
our proof technique is constructive and hence is entirely different from the approaches in \cite{9173970,9272982,chen2021statistical}.
While those investigations found the limit of the free energy and used it to find the limit of the per-vertex mutual information,
we shall use an AMP algorithm to explicitly construct a Bayes optimal sequence of estimators and directly obtain the asymptotic MMSE as the limiting mean squared error of that sequence. The limit of the per-vertex mutual information will be obtained as a side result of our calculations.
% avoid this technicality by explicitly constructing the Bayes optimal estimator of the community label vector using a novel AMP algorithm. Thus, we provide a sharper and more explicit limit of the asymptotic MMSE using methods which are radically different from those used in \cite{9173970,9272982}.

% \nb{we need to make it clear what is new here compared with \cite{ContBlockMod} and \cite{AbbeMonYash}}

% See, e.g., \cite{bhattacharyya2018spectral,chen2020global,HXA,NIPS,
% lei2020tail,lei2020consistent,paul2016consistent,paul2018random,paul2020spectral,inproceedings_1,xu2020optimal}.
% A lot of theory and methods developed for multilayer networks have roots in the SBM literature.

\paragraph*{Paper organization}
The rest of the paper is organized as follows.
Section \ref{cov_sec} introduces our observation models and presents key results on detection threshold and asymptotic MMSE {along with introducing the new \emph{orchestrated AMP setup}.}
It also lays out three major steps in the proof of main results, which are executed in Sections \ref{inf_sec}--\ref{mmse} in order, and formally summarized in Section \ref{Main_MMSE}.
{In Section \ref{AMP} we collect the results on the asymptotics of the orchestrated AMP algorithm. 
In Section \ref{sec:numerical} we describe a practical algorithm for estimating the community labels using orchestrated AMP with appropriate spectral initialization and demonstrate its performance through simulations.
Finally, we discuss the wider applicability of our techniques and discuss some potential future research directions in Section \ref{sec:future}.
The technical proofs are deferred to the appendices.}

\section{Detection Threshold and Asymptotic MMSE}
\label{cov_sec}

\subsection{Model} 
\label{sec:prelim}

Suppose that $n$ subjects are partitioned into two disjoint groups (labeled by $\pm 1$) according to an $n$-dimensional vector $\tx\in \{\pm 1\}^n$.
Throughout the paper we assume that the elements $x^*_i$'s are i.i.d.~Rademacher random variables which take values $\pm 1$ with equal probability $\frac{1}{2}$.

The observed data consists of two parts. 
The first part is a collection of $m$ undirected networks on these $n$ subjects denoted by their adjacency matrices, $\bm{G} = \{\bm{G}^{(i)}:i\in [m]\}$.
Throughout the paper, for any positive integer $k$, we let $[k] = \{1,\dots,k\}$.
The second part is a $p\times n$ data matrix $\bm{B}$ where the $i$-th column records the observed values of $p$ covariates on the $i$-th subject.
Conditional on an instance of $\tx$, the adjacency matrix $\bm{G}^{(i)}$ has zero diagonal entries, and for all $k\neq l$, we assume
\begin{equation}
\label{eq:def_SBM_cov}
{G}^{(i)}_{kl} = {G}^{(i)}_{lk} \stackrel{ind}{\sim}
\begin{cases}
\mathrm{Bern}\Big(\frac{\,a_n^{(i)}}{n}\Big), & \mbox{if $x^*_k = x^*_l$},\\	
\mathrm{Bern}\Big(\frac{\,b_n^{(i)}}{n}\Big), & \mbox{if $x^*_k \neq x^*_l$}.
\end{cases}
% :=\begin{cases}
% 1 & \mbox{with probability $a^{(i)}/n$ if $\bm{x^{*}}_k\bm{x^{*}}_l=1$.}\\
% 0 & \mbox{with probability $1-a^{(i)}/n$ if $\bm{x^{*}}_k\bm{x^{*}}_l=1$.}\\
% 1 & \mbox{with probability $b^{(i)}/n$ if $\bm{x^{*}}_k\bm{x^{*}}_l=-1$.}\\
% 0 & \mbox{with probability $1-b^{(i)}/n$ if $\bm{x^{*}}_k\bm{x^{*}}_l=-1$.}\\
% \end{cases}
\end{equation}
For any $\rho\in [0,1]$, $\mathrm{Bern}(\rho)$ denotes a Bernoulli distribution with success probability $\rho$. Further, we assume that $a_n^{(i)}>b_n^{(i)}$ for all $n,i\ge0$.
In addition, the data matrix $\bm{B}$ is assumed to admit the representation
\begin{equation}
\label{eq:def_cov}
\bm{B} = \sqrt{\frac{\mu}{n}}\,\bm{v^{*}}(\bm{x^{*}})^\top+\bm{R},
\end{equation}
where $\bm{R}$ is an $p \times n$ matrix consisting of i.i.d.~standard Gaussian variates and $\bm{v^{*}} \sim {N}_p(\bm{0}, \bm{I}_p)$.
Finally, we assume that conditional on $\tx$,  $\bm{G}^{(1)}, \dots, \bm{G}^{(m)}$ and $\bm{B}$ are mutually independent.
In other words, conditional on $\tx$, the first part of our data consists of $m$ stochastic block models (a.k.a.~a multi-layer stochastic block model with $m$ layers~\cite{HXA}%\nb{refs}
) with a common community structure $\tx$.
Given $\bm{v}^*$, the columns of the covariate matrix $\bm{B}$ is also partitioned into two groups by $\tx$, where those corresponding to $\tx_i=+1$ are i.i.d.~realizations of a $N_p(\sqrt{{\mu}/{n}}\,\bm{v}^*, \bm{I}_p)$ distribution, and those with $\tx_i = -1$ are i.i.d.~random vectors following $N_p(-\sqrt{{\mu}/{n}}\,\bm{v}^*, \bm{I}_p)$.

Our goal is to estimate $\tx$ upon observing $\{\bm{G}^{(1)}, \dots, \bm{G}^{(m)}\}$
%\nb{should these $G$'s be in boldface?}
and $\bm{B}$.
We focus on an asymptotic regime where as $n\to\infty$, $m$ is fixed and
\begin{equation}
	\label{eq:pnlimit}
	\lim_{n\to\infty} \frac{p}{n} = \frac{1}{c} \in (0,\infty).
\end{equation}
For each $1\leq i\leq m$, define
\begin{equation}
	\label{eq:def_lambda}
\widebar{p}^{(i)}_n  =\frac{a_n^{(i)}+b_n^{(i)}}{2n}, \quad \Delta^{(i)}_n =\frac{a_n^{(i)}-b_n^{(i)}}{2n}, \quad
\lambda_n^{(i)}  = \frac{n (a^{(i)}_n-b^{(i)}_n )^2}{(a^{(i)}_n+b^{(i)}_n )(2n-a^{(i)}_n-b^{(i)}_n)}.
\end{equation}
We further assume that $\mu \geq 0$ is a fixed constant while
\begin{align}
	& \lim_{n\to\infty} n \widebar{p}^{(i)}_n (1 - \widebar{p}^{(i)}_n ) =\infty, \quad \mbox{and}
	\label{eq:degreelimit}
	\\
	\label{eq:lambdalimit}
	& \hspace{0.35in} \lambda_n^{(i)}
	= \lambda^{(i)} \in (0,\infty),\quad \mbox{for all $n, i$.}
	% \frac{n (a^{(i)}_n-b^{(i)}_n )^2}{(a^{(i)}_n+b^{(i)}_n )(2n-a^{(i)}_n-b^{(i)}_n)}
	 %= \lambda^{(i)} \in (0,\infty),\quad \mbox{for all $i$.}
	%& \lim_{n \to \infty} \lambda_n^{(i)}
	% \frac{n (a^{(i)}_n-b^{(i)}_n )^2}{(a^{(i)}_n+b^{(i)}_n )(2n-a^{(i)}_n-b^{(i)}_n)}
	 %= \lambda^{(i)} \in (0,\infty),\quad \mbox{for all $i$.}
\end{align}
For brevity, we let
\begin{equation}
	\label{eq:lambda}
\bm{\lambda}=(\lambda^{(1)},\ldots,\lambda^{(m)})
\quad \mbox{and}\quad
\lambda = \sum_{i=1}^m \lambda^{(i)}
\end{equation}
%For brevity, we let
%$\bm\lambda=(\lambda^{(1)}_n,\ldots,\lambda^{(m)}_n)$, 
%\begin{equation}
%	\label{eq:lambda}
%\lambda_n = \sum_{i=1}^m \lambda_n^{(i)}
%\quad  \mbox{and}  \quad
%\lambda = \sum_{i=1}^m \lambda^{(i)}
%\end{equation}
in the rest of this paper.
For convenience, we shall further assume that there are
some constants $r^{(1)},\ldots,r^{(m)} \in (0,1)$ such that 
\begin{equation}
\label{eq:lambdaratio}	
\sum_{i=1}^m r^{(i)}=1\quad \mbox{and} \quad \lambda^{(i)}=r^{(i)}\lambda, \quad \mbox{for all $i$.}
\end{equation}

\subsection{Detection Threshold}

For every fixed $n$, \bm{$\lambda$} and $\mu$,
we define 
\begin{equation}
\label{eq:def_overlap}
\Overlap_n(\bm{\lambda},\mu)
=\sup_{\widehat{s}_n:\mathcal{G}_n \times \mathcal{B}_n\rightarrow \{\pm 1\}^n}
\frac{1}{n}\,\mathbb{E}\left|\left\langle\bm{x^{*}},\hat{s}_n(\bm{G},\bm{B})\right\rangle\right|.
\end{equation}
Here $\mathcal{G}_n$ is the set of all collections of $m$ undirected networks on $n$ vertices, $\mathcal{B}_n$ is the set of all $p\times n$ real-valued matrices, and $\widehat{s}_n(\bm{G},\bm{B})$ is a generic estimator of the community vector $\bm{x^{*}}$ based on observing $\bm{G}$ and $\bm{B}$. 
% \nb{should estimator take value in $[-1,1]^n$ or $\{\pm 1\}^n$?}

We say \emph{detection} (a.k.a.~\emph{weak recovery}) of $\tx$ is possible if
\begin{equation}
\label{eq:weak_recov_defn}
\liminf_{n \rightarrow \infty}
\Overlap_n(\bm{\lambda},\mu) > 0. 
% \frac{1}{n}
% \mathbb{E}\left|\langle~\bm{x^{*}},\,\hat{\bm{x}}~\rangle\right|~>~0.
\end{equation}
Otherwise, we perform no better than random guessing.
Indeed,
if we simply estimate by random guessing, then our estimator is essentially a vector $\bm{x}$ with i.i.d.~Rademacher entries that is independent of $\tx$, and we have $\frac{1}{n}\mathbb{E}|\langle \tx, \bm{x} \rangle | \to 0$.

% By weak recovery of the community structure we mean getting a vector $\hat{\bm{x}} \in \{-1,1\}^n$ such that
%
% Let us observe that if
% \[
% \lim\limits_{n \rightarrow \infty}\Overlap_n(\bm{\lambda_n}) ~=~ 0,
% \]
% then weak recovery is not possible. It is reasonable to guess that if the difference between the probabilities of inter-community and intra-community edges is negligible, then community detection in any sense will be impossible. Our main result shows that this is indeed the case with weak recovery as defined in~\eqref{eq:weak_recov_defn}.
The following theorem characterizes the phase transition of detection under our model.
\begin{thm}
\label{thm:main_thm_cov}
% Consider $\bm{x^{*}}$, a collection of i.i.d Rademacher random variables. We observe a collection of $m$ random networks $\bm{G}$ defined as in~\eqref{eq:def_SBM_cov} and a $p \times n$ covariate matrix $\bm{B}$ defined as in~\eqref{eq:def_cov}. Consider the definition of $\lambda^{(i)}_n$ defined in~\eqref{eq:def_lambda}. We define $\lambda_n=\sum\limits_{i=1}^{m}\lambda^{(i)}_n$ and its limit as $n \rightarrow \infty$ by $\lambda=\sum\limits_{i=1}^{m}\lambda^{(i)}$.
%  Let us further assume $n\widebar{p}^{(i)}_n\left(1-\widebar{p}^{(i)}_n\right) \rightarrow \infty$ and $p/n \rightarrow 1/c \in (0,\infty)
Let the data be generated by \eqref{eq:def_SBM_cov} and \eqref{eq:def_cov}.
Suppose that as $n\to\infty$, \eqref{eq:pnlimit}, \eqref{eq:degreelimit}, \eqref{eq:lambdalimit} and \eqref{eq:lambdaratio} hold.
Then 
% for all $r^{(1)},\ldots,r^{(m)} \in (0,1)$ such that \eqref{eq:lambdaratio} holds,
we have
\begin{equation}
	\label{eq:detectionthreshold}
\begin{aligned}
\limsup_{n \rightarrow \infty}\Overlap_n(\bm{\lambda},\mu) = 0, &\quad \mbox{if $\lambda +\frac{\mu^2}{c} \le 1$},\\
\liminf_{n \rightarrow \infty}\Overlap_n(\bm{\lambda},\mu) > 0, &\quad \mbox{if $\lambda +\frac{\mu^2}{c} > 1$}.
\end{aligned}
\end{equation}
where $\lambda$ 
% and $\Overlap(\lambda,\mu)$ are
is defined in \eqref{eq:lambda}.
 % and \eqref{eq:def_overlap} respectively.
% \begin{itemize}
% \item For $\lambda +\frac{\mu^2}{c} \le 1$,
% \[
% \limsup\limits_{n \rightarrow \infty}\Overlap_n(\bm\lambda,\mu) ~=~ 0.
% \]
% \item For $\lambda +\frac{\mu^2}{c} > 1$,
% \[
% \liminf\limits_{n \rightarrow \infty}\Overlap_n(\bm\lambda,\mu) ~>~ 0.
% \]
% \end{itemize}
\end{thm}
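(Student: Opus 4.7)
The plan is to deduce Theorem~\ref{thm:main_thm_cov} from the asymptotic MMSE formula that the paper constructs via its orchestrated AMP analysis. Noting that, after breaking the global sign ambiguity, $\Overlap_n(\bm{\lambda},\mu)$ is controlled on both sides by $1-\MMSE_n(\bm{\lambda},\mu)$, the theorem reduces to showing that the asymptotic MMSE equals $1$ exactly on the subcritical region $\lambda + \mu^2/c \leq 1$, and is strictly less than $1$ when $\lambda + \mu^2/c > 1$. This phase transition will emerge from the fixed-point structure of the scalar state-evolution recursion driving the AMP.

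I would first pass from the sparse SBM observations to equivalent Gaussian observations. Under \eqref{eq:degreelimit}, the centered rescaled adjacency matrices $(\bm{G}^{(i)}-\widebar{p}^{(i)}_n\bm{1}\bm{1}^\top)/\sqrt{n\widebar{p}^{(i)}_n(1-\widebar{p}^{(i)}_n)}$ are asymptotically equal to $\sqrt{\lambda^{(i)}/n}\,\tx(\tx)^\top + \bm{W}^{(i)}$ plus a remainder that is negligible for the quantities of interest, where each $\bm{W}^{(i)}$ is a GOE matrix independent across $i$; this follows from classical CLT-based couplings as in \cite{ContBlockMod}. Combined with \eqref{eq:def_cov}, the problem becomes a multi-view spiked Wigner-plus-Wishart model to which the orchestrated AMP of Section \ref{AMP} applies directly, with SNRs $\lambda^{(i)}$ for the networks and $\mu^2/c$ for the covariate block.

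Next, I would run the orchestrated AMP initialized with a slightly informative guess (for instance, a spectral estimator correlated with $\tx$) and track the induced overlaps through state evolution. The key point is that the $m$ network orbits and the covariate orbit share information each iteration via posterior-mean denoisers of $\tx$ and $\bm{v}^*$, and the resulting joint state evolution collapses to a scalar recursion $q_{t+1}=\E\tanh\bigl((\lambda+\mu^2/c)q_t+\sqrt{(\lambda+\mu^2/c)q_t}\,Z\bigr)$ whose derivative at $q=0$ equals exactly $\lambda+\mu^2/c$. Hence $q=0$ is the only fixed point when $\lambda+\mu^2/c\le 1$, forcing the AMP overlap to zero; whereas for $\lambda+\mu^2/c>1$ a strictly positive fixed point $q^*$ exists, and the AMP estimator achieves $\liminf_{n\to\infty}\Overlap_n(\bm{\lambda},\mu)\ge \E|\tanh(\sqrt{(\lambda+\mu^2/c)q^*}\,Z+(\lambda+\mu^2/c)q^*)|>0$, giving the positive direction of \eqref{eq:detectionthreshold}.

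The subcritical half requires more than the AMP upper bound, because AMP need not match the Bayes optimum a priori. I would therefore combine the AMP analysis with a matching information-theoretic lower bound, the subject of Section \ref{inf_sec}, obtained via a second-moment/contiguity computation on the joint likelihood ratio between the planted and null models: independence of the $m+1$ data sources makes the log second moment additive, producing the combined SNR $\lambda+\mu^2/c$ and yielding boundedness of the second moment below the threshold, which rules out any estimator with positive asymptotic overlap by a standard Neyman--Pearson argument. The main obstacle is the rigorous state-evolution analysis for the orchestrated AMP with $m+1$ synchronized orbits: one must extend classical single-orbit AMP theory to track the joint evolution of all orbits, verify that the Onsager correction for each orbit involves only that orbit's own denoiser Jacobian, and show that the joint recursion collapses to the scalar form above. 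Upgrading the convergence of AMP averages from in-probability to almost sure, as emphasized in the paper's introduction, is an additional technical hurdle needed to translate the per-iteration state-evolution predictions into statements about the deterministic sequence $\Overlap_n(\bm{\lambda},\mu)$.
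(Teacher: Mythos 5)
Your top-level reduction is the same as the paper's: Theorem \ref{thm:main_thm_cov} is obtained from the asymptotic MMSE formula together with the overlap--MMSE sandwich \eqref{eq:MMSE-overlap}, so the real content is the MMSE limit. However, the way you propose to get that limit diverges from the paper and has concrete gaps. First, your collapsed state evolution $q_{t+1}=\E\tanh\bigl((\lambda+\mu^2/c)q_t+\sqrt{(\lambda+\mu^2/c)q_t}\,Z\bigr)$ is wrong: because $\bm v^*$ must itself be estimated from $\bm B$, the covariate contribution saturates, and the correct fixed-point equation is \eqref{eq:zstar}, i.e.\ effective SNR $\lambda z+\frac{\mu^2}{c}\frac{z}{1+\mu z}$, not $(\lambda+\mu^2/c)z$. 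The derivative at $z=0$ is still $\lambda+\mu^2/c$, so the threshold survives, but your recursion would make the limiting MMSE a function of $\lambda+\mu^2/c$ alone, which the paper explicitly points out is false. Second, your supercritical direction rests on running AMP from a spectral (or otherwise ``slightly informative'') initialization; the state-evolution machinery you invoke does not cover an initializer that is correlated with the sensing matrices, and the paper deliberately avoids this by using the $\varepsilon$-revelation initialization \eqref{eq:AMP_shift_main_1}--\eqref{eq:AMP_shift_main_1_1} and then letting $\varepsilon\to 0$ (spectral initialization is only treated numerically and deferred to other work). Without either the revelation device or the paper's I-MMSE/area-theorem sandwich (Theorem \ref{thm:MMSE_main}, proof in Section \ref{proof_thm_5_2}) showing the AMP MSE matches $\GMMSE_n$, your positive direction is not complete.

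Third, your subcritical direction is a genuinely different route from the paper's, and as stated it both mischaracterizes Section \ref{inf_sec} and has a boundary problem. Section \ref{inf_sec} is not a second-moment/contiguity computation; it is a Lindeberg interpolation showing $\frac1n I(\tx;\bm G,\bm B)-\frac1n I(\tx;\bm Y,\bm B)\to 0$, which feeds into the I-MMSE relation of Section \ref{sec_5}. The paper then gets impossibility below threshold for free: when $\lambda+\mu^2/c\le 1$ the only non-negative solution of \eqref{eq:zstar} is $z_*=0$, so $\MMSE_n\to 1$ and \eqref{eq:MMSE-overlap} forces the overlap to vanish, including exactly at $\lambda+\mu^2/c=1$. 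A likelihood-ratio second-moment argument, by contrast, typically requires conditioning/truncation for the sparse SBM factor and in any case only yields non-detection strictly below the threshold, so it would not by itself cover the equality case asserted in \eqref{eq:detectionthreshold}. If you want to keep a contiguity-style lower bound you would need an extra argument at criticality; following the paper's MMSE route avoids this entirely. Similarly, replacing the Lindeberg swap by a hard coupling of the centered adjacency matrices with GOE matrices is delicate when $n\widebar p_n^{(i)}$ diverges slowly; the mutual-information-level comparison of Lemmas \ref{lem:I_SBM_cov}--\ref{lem:cov_connect_phi} is what the paper actually uses.
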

\begin{proof}
	%\nb{revise!}
The theorem follows from our later Theorem~\ref{thm:main_thm_mmse} and \eqref{eq:MMSE-overlap}.
%Lemmas 4.2 and 4.6 of~\cite{AbbeMonYash}. 
\end{proof}
\begin{rem}
Note that $r^{(1)},\ldots,r^{(m)} \in (0,1)$ are allowed to take any values as long as \eqref{eq:lambdaratio} holds. 
% {\red In other words, the asymptotic limit of overlap depends on the networks only through their overall signal, regardless of how it is divided across them.}
% % Hence for all SBM ensembles $\bm{G}$ and covariate matrix $\bm{B}$, the phase transition of detection described in Theorem~\ref{thm:main_thm_cov} holds.
\end{rem}

The foregoing theorem determines a sharp detection threshold in terms of the joint signal-to-noise ratio (SNR) contained in the two different data sources, namely the $m$ networks and the data matrix.
Here $\lambda$ can be understood as the joint SNR of the $m$ networks.
The phase transition described in \eqref{eq:detectionthreshold} asserts that the joint SNR of the two parts has an additive form $\lambda + \mu^2/c$. 
In the special case of $m = 1$, Theorem \ref{thm:main_thm_cov} reconstructs the threshold found in \cite{ContBlockMod}.
When $m = 0$, it coincides with the famous Baik--Ben Arous--Peche phase  transition for PCA \cite{baik2005,baik2006eigenvalues,10.2307/24307692}.
When $\mu = 0$, we could simply discard the data matrix $\bm{B}$ as it contains no information about $\tx$ and Theorem \ref{thm:main_thm_cov} leads to a new detection threshold for multi-layer stochastic block models.

% \begin{rem}
% From Theorem 1 and 2 of~\cite{ContBlockMod} Theorem~\eqref{thm:main_thm_cov}, we note that if we consider only one random network $G^{(i)}$ then we can detect the community vector $\bm{x^{*}}$ if the signal to noise ratio ({\it snr}) $\lambda^{(i)}$ is bigger than $1$ and if we consider only $\bm{B}$ then we can detect $\bm{x^{*}}$ only if $\mu^2/c >1$. However if we have information from $m$ random networks $\bm{G}$ as well as $\bm{B}$, we can detect $\bm{x^{*}}$ more easily.
% \end{rem}

\subsection{Asymptotic MMSE}
We now seek a more precise characterization of the optimal estimator of $\tx$ based on observing $\bm{G}$ and $\bm{B}$, where the optimality is measured through the mean square error.

\paragraph{Minimum mean square error}
To this end, define the (matrix) 
minimum mean square error for estimating the community labels from $(\bm{G},\bm{B})$ as 
% where $\bm{G}$ is defined by~\eqref{eq:def_SBM_cov} and $\bm{B}$ is defined by~\eqref{eq:def_cov}.
\begin{equation}
\label{eq:MMSE_cov_graph}
\MMSE_{n}(\bm{\lambda},\mu) 
= \frac{1}{\,n^2}\mathbb{E}\left[\|\bm{x^{*}}(\bm{x^{*}})^\top-\mathbb{E}\left[\bm{x^{*}}(\bm{x^{*}})^\top\middle|\,\bm{G},\bm{B}\right]\|^2_F\right].
\end{equation}
%\begin{equation}
%\label{eq:MMSE_cov_graph}
%\MMSE_{n}(\bm{\lambda},\mu) 
%= \frac{1}{n(n-1)}\mathbb{E}\left[\|\bm{x^{*}}(\bm{x^{*}})^\top-\mathbb{E}\left[\bm{x^{*}}(\bm{x^{*}})^\top\middle|\,\bm{G},\bm{B}\right]\|^2_F\right].
%\end{equation}
Following the lines of the proof of Lemma 4.6 in \cite{AbbeMonYash}, one has
\begin{equation}
	\label{eq:MMSE-overlap}
	1 - \MMSE_n(\bm{\lambda},\mu) + O(n^{-1})
	\leq \Overlap_{n}(\bm{\lambda}, \mu) \leq
	\sqrt{1 - \MMSE_n(\bm{\lambda},\mu)} + O(n^{-1/2}).
\end{equation}
% appropriate functions of $\MMSE_n(\bm\lambda,\mu)$ provide controls of $\Overlap_n(\bm\lambda,\mu)$ from both sides.
In particular, $\Overlap_n(\bm \lambda,\mu)\to 0$ if and only if $\MMSE_n(\bm{\lambda},\mu) \to 1$.
Therefore, a more precise characterization of the overlap than that in Theorem \ref{thm:main_thm_cov} can be made if we could describe the exact asymptotic behavior of $\MMSE_n(\bm \lambda,\mu)$.

\paragraph{A scalar Gaussian model} 
As a useful device for describing the asymptotic behavior of $\MMSE_n(\bm \lambda,\mu)$, we follow~\cite{1633802,6875223,AbbeMonYash} 
%\nb{need more references which use this one letter description} 
to introduce the following scalar Gaussian model:
\begin{equation}
	\label{eq:scalar-Gaussian}
Y = Y(\eta) = \sqrt{\eta}X_0+Z_0,	
\end{equation}
where $X_0 \sim \mbox{Rademacher}$ and $Z_0 \sim N(0,1)$. 
In \eqref{eq:scalar-Gaussian}, every term is a scalar. 
We assume knowledge of $\eta$ and the goal is to estimate $X_0$ based on the observed $Y$.
For this model, we can compute the mutual information between $X_0$ and $Y$ and the minimum mean square error for estimating $X_0$ respectively as
\begin{align}
\label{eq:inf_scalar}
\sMI(\eta) & = 
\mathbb{E}\left[\frac{dp_{Y\mid X_0}(Y(\eta)\mid X_0)}{dp_{Y}(Y(\eta))}\right] = \eta-\mathbb{E}\left[ \log\cosh(\eta+\sqrt{\eta}Z_0) \right],
\\
% \end{equation}
% \begin{equation}
\label{eq:mmse}
\smmse(\eta) & = 
\mathbb{E}\left[X_0-\mathbb{E}(X_0\mid Y(\eta))\right]^2 = 1-\mathbb{E}\left[\tanh^2(\eta+\sqrt{\eta}Z_0)\right].
\end{align}

\paragraph{Representation of the asymptotic MMSE}
With the foregoing definitions in \eqref{eq:scalar-Gaussian}, \eqref{eq:inf_scalar} and \eqref{eq:mmse}, we define $z_* = z_{*}(\lambda,\mu)$ as the largest non-negative solution to
\begin{equation}
\label{eq:zstar}
z = 1-\smmse\left(\lambda z + \frac{\mu^2}{c}\;\frac{z}{1+\mu z}\right).
\end{equation}
%\begin{equation}
%\label{eq:zstar}
%\frac{\gamma}{\lambda} = 1-\smmse\left(\gamma+\frac{\mu}{c}\cdot\frac{\mu \gamma}{\lambda+\mu \gamma}\right),
%\end{equation}
%and $\theta_*=\theta_*(\lambda,\mu)$ as the largest non-negative solution to
%\begin{equation}
%\label{eq:tstar}
%\frac{\theta}{\mu} = 1-\smmse\left(\frac{\mu}{c}\cdot\frac{\theta}{\theta+1}+\frac{\lambda}{\mu}\theta\right).
%\end{equation}
The following theorem gives a precise characterization of the limiting behavior of $\MMSE_n(\bm\lambda, \mu)$.
\begin{thm}
\label{thm:main_thm_mmse}
Suppose that the conditions in Theorem \ref{thm:main_thm_cov} hold.
%Let $z_* = z_{*}(\lambda,\mu)$ denote the largest non-negative solution to \eqref{eq:zstar}. 
Then we have
\begin{equation}
	\label{eq:MMSE-limit}
\lim\limits_{n \rightarrow \infty}
\MMSE_n(\bm\lambda,\mu) = 1-z^2_{*}(\lambda,\mu),
\end{equation}
where $\lambda$ is defined by \eqref{eq:lambda}.
This implies
\begin{enumerate}
\item If $\lambda + \mu^2/c \le 1$,
\begin{equation}
\lim\limits_{n \rightarrow \infty}\MMSE_n(\bm\lambda,\mu) =1,
\end{equation}
\item If $\lambda + \mu^2/c > 1$,
\begin{equation}
\lim\limits_{n \rightarrow \infty}\MMSE_n(\bm\lambda,\mu) <1.
\end{equation}
\end{enumerate}
\end{thm}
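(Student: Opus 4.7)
The plan is to establish the MMSE limit by explicitly constructing, via an orchestrated AMP algorithm, a sequence of estimators whose asymptotic mean squared error equals $1-z_*^2(\lambda,\mu)$, and then matching this with a corresponding lower bound on $\MMSE_n(\bm{\lambda},\mu)$. The two phase-transition corollaries will then follow from a short analysis of the fixed-point equation \eqref{eq:zstar}.

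First, I would design an orchestrated AMP with $m+1$ parallel orbits that share a common denoiser: one orbit per Gaussianized adjacency matrix $\bm{G}^{(i)}$ (obtained by subtracting the entrywise mean $\widebar{p}^{(i)}_n$ and rescaling, and absorbing the $\pm 1$ structure into a $\Delta^{(i)}_n$-weighted spike), and one rectangular-AMP orbit for the covariate matrix $\bm{B}$. At each iteration, the orbits pool their current estimates of $\tx$, feed the pooled statistic through a single scalar posterior-mean denoiser $x\mapsto\tanh(x)$, and redistribute the updated estimate back to every orbit. The point of orchestration is that the effective SNR entering the common denoiser is additive in the individual SNRs: if $q_t$ denotes the normalized overlap of the iterate with $\tx$, then the scalar channel seen at step $t$ has precision $\lambda q_t+\tfrac{\mu^2}{c}\cdot\tfrac{q_t}{1+\mu q_t}$, the covariate term arising from the standard rectangular-AMP contraction for a spiked Wishart matrix with $p/n\to 1/c$.

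Second, I would run the state-evolution analysis for this orchestrated AMP (the rigorous version is collected in Section~\ref{AMP}) to conclude that the iterates satisfy
\begin{equation*}
q_{t+1}=1-\smmse\Bigl(\lambda q_t+\frac{\mu^2}{c}\cdot\frac{q_t}{1+\mu q_t}\Bigr),
\end{equation*}
together with almost-sure convergence of the AMP average sequence (the technically new ingredient needed to handle sparse non-Gaussian $\bm{G}^{(i)}$'s through a truncation/Gaussianization universality step). Initialized at any point with strictly positive asymptotic correlation with $\tx$ (e.g.\ via a spectral warm-start when $\lambda+\mu^2/c>1$), the monotone recursion converges to the largest non-negative fixed point $z_*$, and the resulting estimator has asymptotic matrix-MSE exactly $1-z_*^2$, giving $\limsup_n\MMSE_n(\bm{\lambda},\mu)\le 1-z_*^2$.

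Third, I would establish the matching lower bound $\liminf_n\MMSE_n(\bm{\lambda},\mu)\ge 1-z_*^2$. The clean path is to use the Gaussian I-MMSE identity along a one-parameter enhancement that appends a scalar Gaussian side-channel with tunable SNR to the observation model, together with monotonicity of MMSE in SNR and concavity of the mutual information. This forces any subsequential limit of $1-\MMSE_n$ to be a fixed point of \eqref{eq:zstar}, and a standard argument (the AMP already achieves $z_*$, and a sub-$z_*$ fixed point would contradict the monotonicity of the mutual information in $\lambda$ and $\mu$) pins the limit at $z_*$. Combining the two bounds yields \eqref{eq:MMSE-limit}. Finally, the two items in the theorem follow by inspecting \eqref{eq:zstar} at $z=0$: the right-hand side has derivative $\lambda+\mu^2/c$ at $0$ (using $\smmse(0)=1$, $\smmse'(0)=-1$), so $z_*=0$ iff $\lambda+\mu^2/c\le 1$, which gives the two cases.

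The main obstacle I anticipate is two-fold: (i) pushing the AMP state-evolution through despite the sparsity and non-Gaussianity of the adjacency matrices $\bm{G}^{(i)}$, which requires a careful Lindeberg/truncation reduction to a Gaussian-equivalent model in the multi-orbit setting; and (ii) certifying that AMP tracks the \emph{largest} fixed point $z_*$ rather than getting stuck at a spurious smaller one, which is exactly what the almost-sure convergence of the AMP average sequence, advertised in the introduction as an independent contribution, is designed to secure.
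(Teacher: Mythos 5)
Your overall philosophy (AMP achievability plus an information-theoretic matching bound) is the same as the paper's, but two of your key steps have genuine gaps. First, the matching lower bound. You assert that appending a tunable scalar Gaussian side-channel, together with monotonicity of MMSE in SNR and concavity of mutual information, ``forces any subsequential limit of $1-\MMSE_n$ to be a fixed point of \eqref{eq:zstar}.'' That claim is precisely the hard direction of the theorem and does not follow from those ingredients alone; establishing it would require overlap-concentration/cavity or interpolation machinery that you have not supplied. The paper instead closes the gap with an area-theorem sandwich: it proves the exact I-MMSE identity in the Gaussian model and an \emph{asymptotic} I-MMSE relation for the SBM observations (Lemma \ref{lem:diff_cov_inf} --- note the exact Gaussian identity you invoke does not hold for the Bernoulli model, so this asymptotic version is needed), integrates over $\lambda\in(0,\infty)$, and pins both endpoints explicitly: the $\lambda=0$ mutual information via Miolane's formula for the non-symmetric spiked model (Lemma \ref{lem:mse_2_1}) and the $\lambda\to\infty$ limit $\log 2$. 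Since the integrated AMP upper bound equals the integrated MMSE exactly, all inequalities collapse to equalities in \eqref{eq:equiv_2}, which is what forces $\lim_n\GMMSE_n=1-z_*^2$ for every $\lambda$. Without an argument of this strength (or a full adaptive-interpolation proof), your lower bound is a claim, not a proof.

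Second, your achievability route differs from the paper's in two ways that are not innocuous. You run $m+1$ AMP orbits directly on the centered, rescaled adjacency matrices and appeal to a ``truncation/Gaussianization universality step,'' and you initialize with a spectral warm start. Under the paper's assumption \eqref{eq:degreelimit} the average degree may diverge arbitrarily slowly, and a state-evolution universality theorem for such sparse Bernoulli matrices is not something you can cite off the shelf; the paper deliberately avoids this by transferring only mutual information to the Gaussian model via Lindeberg (Sections \ref{inf_sec}--\ref{sec_5}), collapsing the $m$ layers into a single $\bm{T}(\lambda)$ (Lemma \ref{lem:join_all_matrices}), and running AMP purely on $(\bm{T},\bm{B})$. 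Likewise, rigorous state evolution with a data-dependent spectral initialization is exactly what the paper declines to prove (it is deferred to \cite{fan_yang_calculation}); the theoretical analysis instead uses $\varepsilon$-revelation, which keeps the initialization independent of the sensing matrices and still lets one take $\varepsilon\to 0$ at the end. As written, your proof would need to import both a sparse-matrix AMP universality result and a spectral-initialization state-evolution result, neither of which you establish. Your final step (reading off the phase transition from the derivative of the right-hand side of \eqref{eq:zstar} at $z=0$) matches the paper and is fine.
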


\begin{figure}[t]
\includegraphics[width=0.49\textwidth]{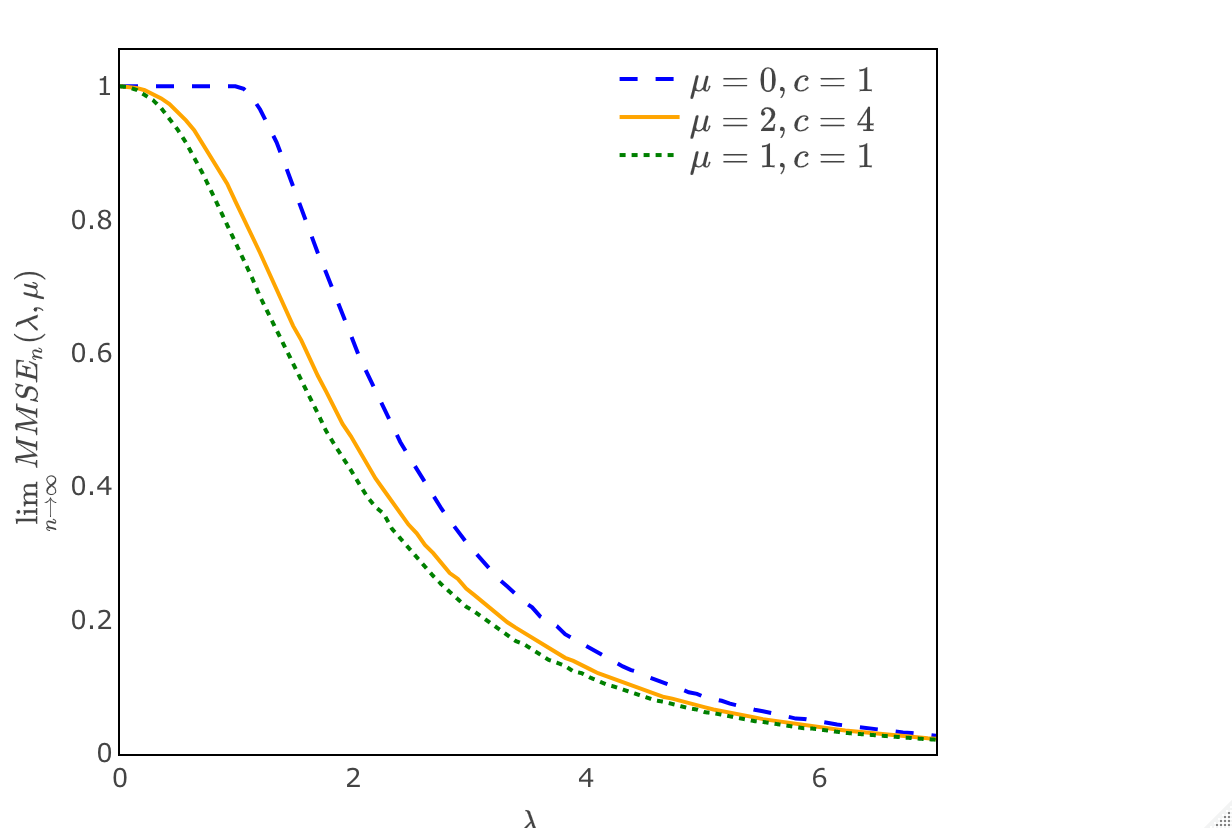}
\includegraphics[width=0.49\textwidth]{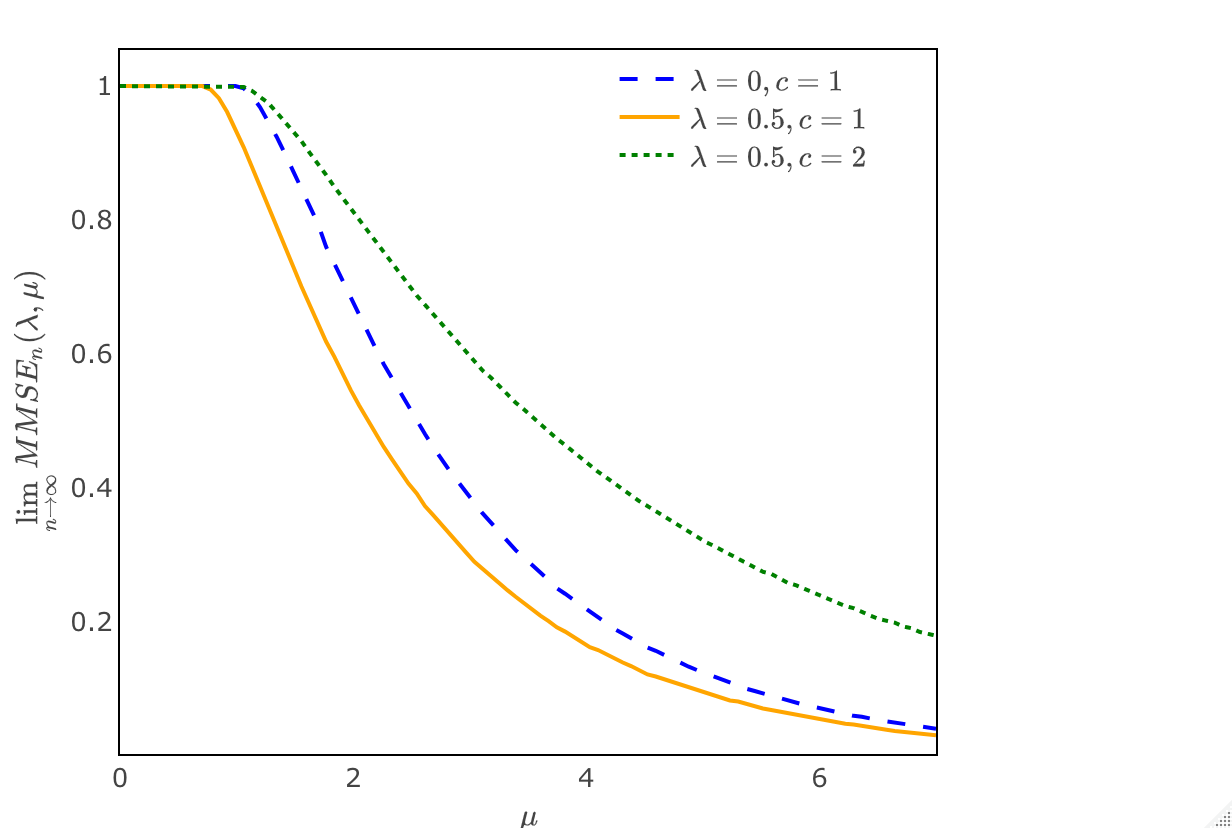}
\centering
\caption{Plots of asymptotic MMSE. 
Left panel: Asymptotic MMSE as a function of $\lambda$ for fixed $\mu$ and $c$ combinations. 
Right panel: Asymptotic MMSE as a function of $\mu$ for fixed $\lambda$ and $c$ combinations. 
% \nb{Increase font size. 20210405zm}
% as a function of $\lambda$ for various values of $\mu$ and $c$.
}
\label{fig:mmse}
\end{figure}

\begin{rem}
If $\lambda + \mu^2/c > 1$, then $z_{*}(\lambda,\mu)$, the largest non-negative solution to \eqref{eq:zstar} is strictly greater than zero. Consequently, the limit of MMSE is strictly less than $1$, or equivalently, we strictly perform better than random guessing.
\end{rem}
\begin{rem}
Together with \eqref{eq:MMSE-overlap}, phase transition of matrix MMSE for estimating $\tx$ in Theorem \ref{thm:main_thm_mmse} implies the phase transition of the overlap in Theorem \ref{thm:main_thm_cov}.
\end{rem}
\begin{rem}
By Theorems \ref{thm:main_thm_cov} and \ref{thm:main_thm_mmse}, \revsag{the parameter $\bm{\lambda}$} {affects the asymptotic behavior of} both $\Overlap_n$ and $\MMSE_n$ only through $\lambda$.
So from here on, we shall slightly abuse notation to also write $\Overlap_n(\lambda,\mu)$ and $\MMSE_n(\lambda,\mu)$,
which can be viewed as fixing a set of $r^{(i)}$'s in \eqref{eq:lambdaratio} and hence treating $\Overlap_n$ and $\MMSE_n$ as functions of $\lambda$ and $\mu$ only for this fixed set of $r^{(i)}$'s.
\end{rem}

% \begin{rem}
In Figure \ref{fig:mmse}, we show how $\lim_{n \rightarrow \infty}\MMSE_n(\lambda,\mu)$ behaves as a function of $\lambda$ for different values of $\mu$ and $c$, and as a function of $\mu$ for different values of $\lambda$ and $c$. 
It is worth noting that even for the same values of $\lambda$ and $\mu^2/c$, asymptotic MMSE's could differ, as its dependence on the three parameters is more subtle than the phase transition threshold.
% Since the curves for $\mu =1, c=1$ and $\mu=2, c=4$ are different, we can conclude that the said limit is not dependent on $\mu$ only through $\mu^2/c$ but the dependence is more subtle. }
% \end{rem}
%\nb{shall we insert the plots of MMSEs here? in addition, we need to discuss that the MMSE formula depends on $\mu$ and $c$ in a more subtle way than the threshold does.}

% \begin{rem}
% Note that, setting $\lambda =0$ in \eqref{eq:zstar}, we get the BBAP phase transition, described in~\cite{baik2005} and~\cite{10.2307/24307692}, as a special case of Theorem \ref{thm:main_thm_mmse}. Similarly, setting $\mu = 0$, we recover the phase transition described in \cite{AbbeMonYash}.
% \end{rem}
%

\subsection{Outline of Proof}
\label{sec:outline}
% \nb{ZM started here. 20210318}
We now outline the three major steps in the proof of Theorem \ref{thm:main_thm_mmse}. We follow the same overall proof structure as in \cite{AbbeMonYash}. Compared with \cite{ContBlockMod} and \cite{AbbeMonYash},
our major novelty lies in the proposal of an orchestrated AMP algorithm which synchronizes updates about community structure from multiple data sources.
% \nb{we need to briefly comment on what is new here compared with \cite{ContBlockMod} and \cite{AbbeMonYash}. 20210318}

\paragraph{Gaussian approximation}
In the first step, we define a Gaussian observation model whose asymptotic per-vertex mutual information about $\tx$ is the same as that in the 
original observation
% ``multi-layer network + covariate'' 
model in \eqref{eq:def_SBM_cov}--\eqref{eq:def_cov}.
To this end, 
let $\left\{\bm{Y}^{(i)}: i \in [m]\right\}$ be a collection of symmetric Gaussian matrices defined as
\begin{equation}
\label{eq:def_Gauss_Chann}
\bm{Y}^{(i)}=\sqrt{\frac{\,\lambda^{(i)}}{n}}\,\bm{x^{*}}(\bm{x^{*}})^\top
+\bm{Z}^{(i)},\quad i\in [m],
\end{equation}
where $\bm{Z}^{(i)}$'s are i.i.d.~Gaussian Wigner matrices.
In other words, each $\bm{Z}^{(i)}$ is symmetric and
\begin{equation}
\label{eq:def_Z}
Z^{(i)}_{kl} = Z^{(i)}_{lk} \sim 
\begin{cases}
N\left(0,1\right) & \mbox{if $k\neq l$.}\\
N\left(0,2\right) & \mbox{if $k = l$.}
\end{cases}
\end{equation}
We shall denote the collection of $\bm{Y}^{(i)}$'s by $\bm{Y}$, i.e.
\begin{equation*}
	\bm{Y} = \{\bm{Y}^{(i)}: i\in [m] \}.
\end{equation*}
Using Lindeberg's interpolation argument, we show that the per-vertex mutual information between $\tx$ and the Gaussian observation model $\{\bm{Y},\bm{B}\}$ is asymptotically the same as that between $\tx$ and the original observation $\{\bm{G}, \bm{B} \}$, in the sense that
\begin{equation}
	\label{eq:info-asymp-equiv}
\frac{1}{n}I(\tx;\bm{G},\bm{B}) - \frac{1}{n}I(\tx;\bm{Y},\bm{B}) \to 0
\quad \mbox{as} \quad
n\to\infty.
\end{equation}
As a final reduction in our first step, we shall show that 
\begin{equation}
\label{eq:i_bet_matrices}
	I(\tx;\bm{T},\bm{B}) = I(\tx;\bm{Y},\bm{B})
\end{equation}
where for $\lambda$ defined in \eqref{eq:lambda} and $\bm{Z}$, a Gaussian Wigner matrix, as in \eqref{eq:def_Z}
\begin{equation}
\label{eq:T}
\bm{T} = \bm{T}(\lambda) = \sqrt{\frac{\lambda}{n}}\,\bm{x^{*}}(\bm{x^{*}})^\top + \bm{Z}.
\end{equation}

\paragraph{Asymptotic I-MMSE relation}
For Gaussian observation models, one has the famous I-MMSE relation \cite{guo2005mutual}. 
For instance, for the Gaussian observation $(\bm{T},\bm{B})$, define 
%\nb{use a slightly different notation here for Gaussian models, instead of using subscripts}
\begin{equation}
\label{eq:Gmmse}
	\GMMSE_n(\lambda,\mu) = \frac{1}{\,n^2}\, \mathbb{E}\left\| \tx(\tx)^\top - \mathbb{E}[ \tx(\tx)^\top \,|\, \bm{T},\bm{B}]\right\|_F^2.
\end{equation}
Then the I-MMSE relation refers to the identity 
%\nb{not exactly sure. double check later.}
% \nb{this identity seems problematic... one needs to account for the fact that the diagonal entries are known, and the leftside should ideally be $I(\tx(\tx)^\top; \bm{Y},\bm{B})$}
\begin{equation}
\label{eq:I_MMSE}
	\frac{1}{n}\frac{d}{d\lambda}I\left(\tx(\tx)^\top;\bm{T},\bm{B}\right) = \frac{1}{4}\GMMSE_n(\lambda,\mu).
\end{equation}
See Section~\ref{I_MMSE} for a proof of \eqref{eq:I_MMSE}.
On the other hand, we shall derive the following asymptotic counterpart of \eqref{eq:I_MMSE} for the original observation model $\cbrac{\bm{G},\bm{B}}$:
\begin{equation}
	\label{eq:I-MMSE-lim-G} 
 \frac{1}{n}\frac{d}{d\lambda}I(\tx;\bm{G},\bm{B}) - \frac{1}{4}\MMSE_n(\lambda,\mu) \to 0.
\end{equation}
% identity similar to~\eqref{eq:I_MMSE} characterizing the derivative of $I(\tx;\bm{G},\bm{B})$ with respect to $\lambda$.
Furthermore, (263) of~\cite{AbbeMonYash} implies for $\bm{K}= \bm{Y}$ and $\bm{T}$
\begin{equation}
\label{eq:I-MMSE-lim-Y} 
 \frac{1}{n}I(\tx;\bm{K},\bm{B}) - \frac{1}{n}I\left(\tx(\tx)^\top;\bm{K},\bm{B}\right)  \to 0.
\end{equation}
%Since both terms on the leftside have finite limits, the last display, together with \eqref{eq:i_bet_matrices} and \eqref{eq:I_MMSE} further implies that 
%\begin{equation}
%	\label{eq:I-MMSE-lim-Y} 
%	\frac{1}{n}\frac{d}{d\lambda}I(\tx;\bm{T},\bm{B}) - \frac{1}{4}\GMMSE_n(\lambda,\mu) \to 0.
%\end{equation}
Together with \eqref{eq:info-asymp-equiv} and the fundamental theorem of calculus, 
\eqref{eq:I-MMSE-lim-G} and \eqref{eq:I-MMSE-lim-Y} establish the asymptotic equivalence of the MMSE's in the two models.
Hence, the proof of Theorem~\ref{thm:main_thm_mmse} reduces to
finding the exact asymptotic limit of $\GMMSE_n(\lambda,\mu)$.
To this end, we turn to Approximate Message Passing (AMP).
%\nb{further argue the reduction to the $\cbrac{\bm{T},\bm{B}}$ model or simply work on the $\cbrac{\bm{T},\bm{B}}$ model directly?}
 % suffices to prove .

%\nb{motivate the establishment of an analogous identity that holds asymptotically for the network model}

%\nb{state clearly that the purpose is then to establish the asymptotic equivalence between the MMSEs in the two models}

%\nb{== start here ==}
%
%In Section 5, we derive a differentiation result akin to I-MMSE identity, connecting the matrix mean squared error of estimating $\bm{x^{*}}(\bm{x^{*}})^\top$ using the networks and the co-variates with the per-vertex mutual information of $\bm{x^{*}}$ and the random networks plus co-variates. Then we finally combine the results from Section 3 and 4 with the differentiation result to derive the limit of $\MMSE_n(\bm\lambda,\mu)$ (defined in~\eqref{eq:MMSE_cov_graph}). Then we can conclude the proof of Theorem~\ref{thm:main_thm_cov} using 
%Lemma 4.2 and Lemma 4.6 of~\cite{AbbeMonYash}. 

\paragraph{Approximate message passing and MMSE in the Gaussian observation model}
To obtain the ``large $n$'' limit of $\GMMSE_n(\lambda,\mu)$, 
we design the following orchestrated
% approximate message passing (AMP)
AMP algorithm where we 
extract
% integrate 
information about $\tx$ from both the data sources.
% treat the covariate part $\bm{B}$ as side information.

Let $\bm{u}^0=\bm{x}^0=\bm{0}$, $\bm{T}$ be as in~\eqref{eq:T} and $\bm{B}$ be as in~\eqref{eq:def_cov}. 
Fix any $\varepsilon \in (0,1)$.
We define 
% \sout{a sequence of AMP iterates} 
two companion AMP orbits $\{\bm{v}^t,\bm{u}^{t+1}\}$ and $\{\bm{x}^{t+1}\}$, for $t=0,1,2,\ldots,$ characterized by the sensing matrices $\bm T$ 
% $\bm A$
and $\bm B$ respectively, as follows
\begin{equation}
\begin{aligned}
\label{eq:AMP_shift_main_1}
\bm{v}^{t} &= \frac{\bm{B}}{\sqrt{p}} f_{t}(\bm{u}^t,\bm{x}^t,\bm{x}_0(\varepsilon))-p_{t}g_{t-1}(\bm{v}^{t-1},\bm{v}_0(\varepsilon)),\\
\bm{u}^{t+1} &= \frac{\,\bm{B}^\top}{\sqrt{p}} g_t(\bm{v}^t,\bm{v}_0(\varepsilon))-c_tf_{t}(\bm{u}^t,\bm{x}^t,\bm{x}_0(\varepsilon)),\\
\end{aligned}
\end{equation}
and
\begin{equation}
\label{eq:AMP_shift_main_1_1}
\bm{x}^{t+1} = \frac{\bm{T}}{\sqrt{n}} f_{t}(\bm{u}^t,\bm{x}^t,\bm{x}_0(\varepsilon))-d_{t}f_{t-1}(\bm{u}^{t-1},\bm{x}^{t-1},\bm{x}_0(\varepsilon)).
\end{equation}

%\begin{equation}
%\label{eq:AMP_updates}
%\bm{x}^{t+1}=\frac{\bm{T}}{\sqrt{n}}\bm{f}_t(\bm{x}^t,\bm{B})-b^t\bm{f}_{t-1}(\bm{x}^{t-1},\bm{B}),
%\end{equation}
Here $\bm{x}_0(\varepsilon)=(x_{0,1}(\varepsilon),\ldots,x_{0,n}(\varepsilon))^\top
=(B_1x^*_1,\ldots,B_n x^*_n)^\top$ 
% \nb{shall we change these to column vectors?}
where $B_i$'s are i.i.d.~$\mathrm{Bern}(\varepsilon)$ and $\bm{v}_0(\varepsilon)=(v_{0,1}(\varepsilon),\ldots,v_{0,p}(\varepsilon))^\top
=(\widetilde{B}_1 v^*_1,\ldots,\widetilde{B}_p v^*_p)^\top$ 
where $\widetilde{B}_j$'s are  i.i.d.~$\mathrm{Bern}(\varepsilon)$. 
Next, we define \begin{equation}
\label{eq:ft-gt}
\begin{aligned}
g_t(\bm{v}^t,\bm{v}_0)&=(g_t(v^t_1,v_{0,1}(\varepsilon)),\ldots,g_t(v^t_p,v_{0,p}(\varepsilon)))^\top,\\
f_{t}(\bm{u}^t,\bm{x}^t,\bm{x}_0)&=(f_{t}(u^t_1,x^t_1,x_{0,1}(\varepsilon)),\ldots,f_{t}(u^t_n,x^t_n,x_{0,n}(\varepsilon)))^\top,
\end{aligned}
\end{equation}
and
\begin{equation}
\label{eq:def_correc}	
% \begin{aligned}
% c_t &= \frac{1}{p}\sum\limits_{i=1}^{p} \frac{\partial g_t}{\partial v}(v^t_i,{\red v}_{0,i}(\varepsilon)),\\
% p_t &= \frac{c}{n}\sum\limits_{i=1}^{n} \frac{\partial f_{t}}{\partial u}(u^t_i,{\red x}^t_i,x_{0,i}(\varepsilon)),\\
% d_t &= \frac{1}{n}\sum\limits_{i=1}^{n} \frac{\partial f_t}{\partial y}(u^t_i,{\red x}^t_i,x_{0,i}(\varepsilon)).
% \end{aligned}
c_t = \frac{1}{p}\sum\limits_{i=1}^{p} \frac{\partial g_t}{\partial v}(v^t_i,v_{0,i}(\varepsilon)),\quad
p_t = \frac{c}{n}\sum\limits_{i=1}^{n} \frac{\partial f_{t}}{\partial u}(u^t_i,x^t_i,x_{0,i}(\varepsilon)),\quad
d_t = \frac{1}{n}\sum\limits_{i=1}^{n} \frac{\partial f_t}{\partial x}(u^t_i,x^t_i,x_{0,i}(\varepsilon)),
\end{equation}
where $g_{-1}$ is the zero function, and $f_t:\mathbb{R}^3 \rightarrow \mathbb{R}$ and $g_t:\mathbb{R}^2\rightarrow \mathbb{R}$ for $t \in \mathbb{N}\cup\{0\}$ are defined as follows:
%\begin{equation}
%\label{eq:divergence}
%b^t =\mbox{div}\bm\,\bm{f}(\bm{x},\bm{B})=\sum\limits_{i=1}^{n}\frac{\partial}{\partial x_j}\bm{f}_j(\bm{x}_j,\bm{B}).
%\end{equation} 
% Now we choose the sequence of update functions
%\nb{Note that the $\alpha$'s, $\tau$'s, etc.~are not defined!}
%{\red At each coordinate, we define}
\begin{align}
\label{eq:def_f}
f_t(x,y,z) & =
\mathbb{E}\left[X_0|\alpha_{t-1}X_0+\tau_{t-1}Z_1=x,\mu_tX_0+\sigma_tZ_2=y,X_0(\varepsilon)=z\right],\\
% \end{equation}
% \begin{equation}
\label{eq:def_g}
g_t(x,z) & = \mathbb{E}\left[V_0|\beta_tV_0+\vartheta_tZ_3=x,V_0(\varepsilon)=z\right].
% \quad t = 0,1,2,\dots,
\end{align}
In the above definitions \eqref{eq:def_f} and \eqref{eq:def_g}
\begin{equation}
	\label{eq:limit-quant}
\begin{aligned}
& \mbox{$X_0 \sim \mbox{Rademacher}$, $X_0(\varepsilon)=B(\varepsilon)X_0$ with $B(\varepsilon) \sim \mathrm{Bern}(\varepsilon)$}, 	\\
& \mbox{$V_0(\varepsilon)=\widetilde{B}(\varepsilon)V_0$ with $\widetilde{B}(\varepsilon) \sim \mathrm{Bern}(\varepsilon)$,
$V_0, Z_1, Z_2, Z_3 \sim N(0,1)$, and} \\
& \mbox{$X_0, V_0, B(\varepsilon), \widetilde{B}(\varepsilon), Z_1,Z_2$ and $Z_3$ are mutually independent.}
\end{aligned}
\end{equation}
In addition, the quantities $\alpha_t,\tau_t,\mu_t,\sigma_t,\beta_t$ and $\vartheta_t$'s are recursively defined as follows.
Let $\mu_0=\sigma_0=\alpha_{-1}=\tau_{-1}=0$ and $\smmse(\cdot)$ be defined as in \eqref{eq:mmse}, then for all $t \ge 0$
\begin{equation}
	\label{eq:state_ev_rec}
\begin{aligned}	
\mu_{t+1} = \sqrt{\lambda}\;\left(1-(1-\varepsilon)\smmse\left(\frac{\alpha^2_{t-1}}{\tau^2_{t-1}}+\frac{\mu^2_{t}}{\sigma^2_{t}}\right)\right),
& \quad
\sigma^2_{t+1} = 1-(1-\varepsilon)\smmse\left(\frac{\alpha^2_{t-1}}{\tau^2_{t-1}}+\frac{\mu^2_{t}}{\sigma^2_{t}}\right);\\             
\alpha_t = \sqrt{\frac{\mu}{c}}\;\left((1-\varepsilon)\frac{\beta^2_t}{\beta^2_t+\vartheta^2_t}\right),
% \\
& \quad
\tau^2_{t} =(1-\varepsilon)\frac{\beta^2_t}{\beta^2_t+\vartheta^2_t}; \\     
\beta_t = \sqrt{\frac{\mu}{c}}\;c \left(1-(1-\varepsilon)\smmse\left(\frac{\alpha^2_{t-1}}{\tau^2_{t-1}}+\frac{\mu^2_{t}}{\sigma^2_{t}}\right)\right),
& \quad
\vartheta^2_{t} = c \left(1-(1-\varepsilon)\smmse\left(\frac{\alpha^2_{t-1}}{\tau^2_{t-1}}+\frac{\mu^2_{t}}{\sigma^2_{t}}\right)\right).
\end{aligned}
\end{equation}

\begin{rem}
These AMP iterations can be viewed as a corrected version of the power iteration to simultaneously estimate the leading eigenvector of 
$\bm T$
% $\bm A$ 
and the leading singular vectors of $\bm B$. 
However, studying the asymptotics of the iterates of the power iteration 
% iterates 
is difficult because of the dependence introduced in each step. \revsag{This difficulty is overcome by subtracting a so-called ``\emph{Onsager term}" in every
iteration, which ensures that $x^t_i$ for $i=1,\ldots,n$ are ``almost independent". Further, we deviate from using the linear version of the power iteration and specific non-linear functions $f_t$, $g_t$ (tailored to the priors in the model) are applied componentwise/row-wise to previous iterates before post multiplying the iterates to the matrices $\bm A$ and $\bm B$, so as to obtain asymptotically Bayes
optimal estimates of $\bm x^*$.} One can refer to \cite{BM11journal,JM12} for further understanding of AMP in general.
\end{rem}

\begin{rem}
The AMP iterates \eqref{eq:AMP_shift_main_1} and \eqref{eq:AMP_shift_main_1_1} are based on the $\varepsilon$-revelation of the truth $\bm{x^{*}}$ and $\bm{v^{*}}$, which is adopted here to eliminate the degenerate case where all updates $\bm{u}^t=\bm{x}^t=\bm{v}^t =0$ for $t \ge 0$.
Alternatively, such degeneracy could potentially be avoided by considering spectral initialization (e.g., \cite{montanari2021}). 
Since our primary goal here is to use AMP for bounding $\GMMSE_n(\lambda,\mu)$, we choose to work with the $\varepsilon$-revelation approach as its theoretical analysis is cleaner.
% \nb{start here!!}
% Note that the partial revealing of the community labels $\tx$ and $\bm w$ in the construction of the update functions $f_t$ and $g_t$ is essential to remove the degenerate case of the updates being $\bm{u}^t={\red \bm{x}}^t=\bm{v}^t =0$ for all $t \ge 0$. This can occur otherwise, if we initialize the AMP by setting $\bm{u}^0={\red \bm{x}}^0=\bm{0}$. Such degeneracies can also be removed by considering spectral initialization of $\bm u^t$ and $\bm {\red x}^t$ as described in \cite{montanari2021}, but we choose this method as it leads to an easier analysis.
\end{rem}

\begin{rem}
% While the AMP used in~\cite{AbbeMonYash}, had one Gaussian matrix $\bm{T}$
% our AMP has two Gaussian matrices $\bm{T}$ and $\bm{B}$.
The major difficulty in designing the AMP iterates lies in the effective integration of information from 
multiple %two 
data sources. 
One possibility
% possible way to handle it 
is to treat $\bm{T}$ as the main information and $\bm{B}$ as the side information, or vice versa.
Although AMP with side information has been considered in~\cite{Rush} in the context of signal recovery from noisy observations, the generic approach in~\cite{Rush} does not work in the present context. 
In \cite{Rush}, the side information, contained in a set of random variables
% \sout{is separable 
% in the sense that if} 
$\{S_1,\ldots,S_n\}$ where $S_i$ contains the side information for node $i$, has the special property that they are mutually independent. 
In our case, the side information is in the form of $\{\bm b_1, \ldots, \bm b_n\}$ where $\bm b_i$ is the $i$-th column of the matrix $\bm B$. 
% It is easy to check that $\bm b_i$'s 
They
are not independent whenever $\mu > 0$, and hence the side information 
% \sout{is non-separable} 
are not independent across nodes,
% This substantially complicates the situation, 
and a direct application of the results in \cite{BM11} as in \cite{Rush} is impossible. 
An alternative approach
% Another approach 
is to construct
% consider 
a \revsag{sequence of AMP recursions with matrix valued iterates} as in (28)-(29) of \cite{JM12}. 
However, we can verify that this leads to a version of the AMP that is not Bayes optimal. 
This is because $\bm x^*$ is essentially estimated in two separate iterations using $\bm T$ 
% $\bm A$
and $\bm B$. 
This nonsynchronized iteration is
the root of the
% to update the estimates of $\bm x$ leads to 
sub-optimal performance. 
Our proposed iterates \revsag{\eqref{eq:AMP_shift_main_1}-\eqref{eq:AMP_shift_main_1_1} are designed to resolve} this issue by running two parallel AMP orbits with sensing matrices $\bm T$ and $\bm B$, respectively, while sharing information between each other at each iteration.
This is achieved by the \revsag{use of a synchronized update} function $f_t$ which takes both $\bm u^t$ and $\bm x^t$ in its arguments.
% To resolve this issue, we introduce the iterations \eqref{eq:AMP_shift_main_1} and \eqref{eq:AMP_shift_main_1_1}, where we run two parallel AMP's with the sensing matrices $\bm A$ and $\bm B$ sharing information between each other in each iteration. This is achieved by the use of synchronized update functions to update $\bm h^t$ and $\bm x^t$.} 
%\nb{need some more detail on what ``separable'' means here.} 
 % in presence of side information related to the true signal.
% We regard the matrix $\bm{T}$ as the main information and $\bm{B}$ as the side information.
% %The non-separability arises from the use of $\bm{B}$ as side information in the update function.
% An AMP with side information has been considered in~\cite{Rush} in the context of signal recovery from noisy observations in presence of side information related to the true signal. But in their case the side information was separable, whereas, in our case, $\bm{B}$ is a non-separable side information matrix.
\end{rem}

Finally, we define a sequence of estimates of $\tx$ based on the AMP iterates as
\begin{equation}
\label{eq:amp_final_estim}
\widehat{\bm{x}}^{t} = f_{t-1}(\bm{u}^{t-1},\bm{x}^{t-1},\bm{x}_0(\varepsilon)),
\end{equation} 
As these estimates are functions of $\bm{T}$, $\bm{B}$, $\bm{x}_0(\varepsilon)$ and $\bm{v}_0(\varepsilon)$, 
the mean square errors of $\widehat{\bm{x}}^{t}(\widehat{\bm{x}}^{t})^\top$ to estimate $\bm{x^{*}}(\bm{x^{*}})^\top$ 
provide a sequence of upper bounds for 
 % for $\GMMSE_n(\lambda,\mu,\varepsilon)$, where,
\begin{equation}
\label{eq:Gmmse_eps}
	\GMMSE_n(\lambda,\mu,\varepsilon) = \frac{1}{\,n^2}\, \mathbb{E}\left\| \tx(\tx)^\top - \mathbb{E}[ \tx(\tx)^\top \,|\, \bm{T},\bm{B},\bm{x}_0(\varepsilon),\bm{v}_0(\varepsilon)]\right\|_F^2.
\end{equation}
%\nb{should be $\GMMSE(\lambda,\mu)$ if $\bm{T}$ is used in observation}. 
% Using the results from the first part and the I-MMSE identity described in the second part,
We shall show that
% it will be shown that
the mean square errors of $\widehat{\bm{x}}^{t}(\widehat{\bm{x}}^{t})^\top$ converge to the same limit as $\GMMSE_n$ in the ``large $n$, large $t$' asymptotics. 
We analyze the asymptotics of the mean square errors of $\widehat{\bm{x}}^{t}(\widehat{\bm{x}}^{t})^\top$ by analyzing the AMP defined in \eqref{eq:AMP_shift_main_1} and \eqref{eq:AMP_shift_main_1_1}.
To this end, we augment the techniques in \cite{BM11} to 
handle multiple communicating orbits.
% incorporate side information.
Last but not least,
we argue that as $\varepsilon$ goes to $0$, the mean square errors of $\widehat{\bm{x}}^{t}(\widehat{\bm{x}}^{t})^\top$ approximate
the limit of $\GMMSE_n(\lambda,\mu)$.
%adapting the results in~\cite{Berthier} to our setting. 
Therefore,
by showing that the ``large $n$, large $t$, small $\varepsilon$'' limit of mean square errors of the AMP iterates is exactly the same as that in Theorem \ref{thm:main_thm_mmse}, we complete the proof.
\section{Gaussian Approximation and Asymptotic Per-Vertex Mutual Information}
\label{inf_sec}
The results spelt out in this section closely follow the results of Section 5 in \cite{AbbeMonYash}. We list them here for the paper to be self contained.

\subsection{Mutual Information in the Gaussian Model}

Let us recall the Gaussian model given by $\bm{Y}$, the collection of Gaussian random matrices defined in~\eqref{eq:def_Gauss_Chann}; the SBM ensemble $\bm{G}$ defined by~\eqref{eq:def_SBM_cov}; and the covariate matrix $\bm{B}$ defined in~\eqref{eq:def_cov}. We shall show that as $n \to \infty$, the per-vertex mutual information between $\tx$ and the model $\{\bm{Y},\bm{B}\}$ is 
asymptotically the same as
% approximately equal to 
the the per-vertex mutual information between $\tx$ and the model $\{\bm{G},\bm{B}\}$. 

To this end, we begin by defining the Hamiltonian function $\mathcal{H}$ for $m$ arbitrary $n \times n$ symmetric matrices $\bm{V}^{(1)},\bm{V}^{(2)},\ldots,\bm{V}^{(m)}$: 
\begin{equation}
\label{eq:cov_H}
\mathcal{H}(\bm{x},\bm{x^{*}},\bm{v},\bm{V},\bm{B},\bm{\lambda},\mu,n,p):= \mathcal{H}'\left(\bm{x},\bm{x^{*}},\bm{V},\bm{\lambda},n\right)-
\frac{1}{2}\norm{\bm{B}-\sqrt{\frac{\mu}{n}}\bm{v}\bm{x}^\top}^2_F
\end{equation}
where
\begin{equation}
\label{eq:def_H}
% \begin{split}
\mathcal{H}'\left(\bm{x},\bm{x^{*}},\bm{V},\bm{\lambda},n\right)
:=\sum_{i=1}^{m}\sum_{k<l}V^{(i)}_{kl}\left(x_kx_l-x^*_kx^*_l\right)
+\sum_{i=1}^{m}\sum_{k<l}\frac{\lambda^{(i)}}{n}x_kx_lx^*_kx^*_l
% \end{split}
\end{equation}
with $\bm{V}:=\left(\bm{V}^{(1)},\ldots,\bm{V}^{(m)}\right)$.
 % and $\bm{\lambda}:=\left(\lambda^{(1)},\ldots,\lambda^{(m)}\right)$.
Further, define
\begin{equation}
\label{eq:cov_phi}
\begin{aligned}
& \phi(\bm{x^{*}},\bm{B},\bm{V},\bm{\lambda},\mu,n,p)\\ 
& ~~~~~ = \log\Bigg\{\sum_{\bm{x} \in \{\pm 1\}^{n}}\int_{\mathbb{R}^p}\exp(\mathcal{H}(\bm{x},\bm{x^{*}},\bm{v},\bm{V},\bm{B},\bm{\lambda},\mu,n,p))
\exp\left(-\frac{\|\bm{v}\|^2}{2} \right)d\bm{v}\Bigg\}.	
\end{aligned}
\end{equation}
Then we have the following lemma.
\begin{lem}
\label{lem:cov_gauss_sbm}
Let us consider $\bm{Y}=\{\bm{Y}^{(i)}: i \in [m]\}$ defined in~\eqref{eq:def_Gauss_Chann} and $\bm{B}$ defined in~\eqref{eq:def_cov}. Then we have 
\begin{align*}
I(\bm{x^{*}};\bm{Y},\bm{B})  & =  n\log 2 +\frac{n-1}{2}\sum_{i=1}^{m}\lambda^{(i)}\\
&\quad\quad +\mathbb{E}\log\Bigg(\int_{\mathbb{R}^p}\exp\Bigg(-\frac{1}{2}
\norm{\bm{B}-\sqrt{\frac{\mu}{n}}\bm{v}(\bm{x^{*}})^\top}^2_F\Bigg)
\exp\Bigg(-\frac{\|\bm{v}\|^2}{2} \Bigg)d\bm{v}\Bigg)\\
&\quad\quad-\mathbb{E}[\phi(\bm{x^{*}},\bm{B},\bm{W},\bm{\lambda},\mu,n,p)]
\end{align*}
where $\phi(\bm{x^{*}},\bm{B},\bm{W},\bm{\lambda},\mu,n,p)$ is defined in~\eqref{eq:cov_phi} and $\bm{W}=(\sqrt{\lambda^{(1)}/n}\bm{Z}^{(1)},\ldots,\sqrt{\lambda^{(m)}/n}\bm{Z}^{(m)})$.
\end{lem}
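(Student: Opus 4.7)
The plan is a direct calculation: expand the mutual information via $I(\bm{x^*};\bm{Y},\bm{B})=\mathbb{E}\log p(\bm{Y},\bm{B}\mid\bm{x^*})-\mathbb{E}\log p(\bm{Y},\bm{B})$ and identify the pieces that arise with the Hamiltonian $\mathcal{H}$ and the log-partition function $\phi$ in~\eqref{eq:cov_H}--\eqref{eq:cov_phi}. Conditional independence of $\bm{Y}$ and $\bm{B}$ given $\bm{x^*}$ lets me split $p(\bm{Y},\bm{B}\mid\bm{x^*})=p(\bm{Y}\mid\bm{x^*})\,p(\bm{B}\mid\bm{x^*})$, where the second factor is the Gaussian integral
$p(\bm{B}\mid\bm{x})=(2\pi)^{-p(n+1)/2}\int \exp\!\bigl(-\tfrac{1}{2}\|\bm{B}-\sqrt{\mu/n}\,\bm{v}\bm{x}^{\top}\|_F^{2}-\|\bm{v}\|^{2}/2\bigr)\,d\bm{v}$, obtained by marginalizing over the Gaussian prior on $\bm{v^*}$ in~\eqref{eq:def_cov}.

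Next I would work out the log-likelihood ratio for the Gaussian Wigner part. Using $\bm{Y}^{(i)}-\sqrt{\lambda^{(i)}/n}\,\bm{x^*}(\bm{x^*})^{\top}=\bm{Z}^{(i)}$ and the fact that on the off-diagonal $\bm{Z}^{(i)}$ has i.i.d.\ $N(0,1)$ entries while the diagonal is $N(0,2)$ (so the joint density is proportional to $\exp(-\tfrac14\|\cdot\|_F^{2})$), a straightforward expansion of $\|\bm{Y}^{(i)}-\sqrt{\lambda^{(i)}/n}\,\bm{x}\bm{x}^{\top}\|_F^{2}-\|\bm{Z}^{(i)}\|_F^{2}$ shows
\[
\log\frac{p(\bm{Y}\mid\bm{x})}{p(\bm{Y}\mid\bm{x^*})}
=\mathcal{H}'(\bm{x},\bm{x^*},\bm{W},\bm{\lambda},n)-\frac{n-1}{2}\sum_{i=1}^{m}\lambda^{(i)},
\]
where the constant shift comes from the terms $\tfrac{\lambda^{(i)}}{4n}\|\bm{x^*}(\bm{x^*})^{\top}-\bm{x}\bm{x}^{\top}\|_F^{2}=\tfrac{\lambda^{(i)}(n^{2}-(\bm{x}^\top\bm{x^*})^{2})}{2n}$ combined with the identity $\sum_{k<l}x_kx_lx_k^*x_l^*=\tfrac12((\bm{x}^\top\bm{x^*})^{2}-n)$ that matches the quadratic part of $\mathcal{H}'$. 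The key observation is that $\|\bm{x}\bm{x}^{\top}\|_F^{2}=n^{2}$ is independent of $\bm{x}\in\{\pm 1\}^{n}$, so all the constants not matching $\mathcal{H}'$ aggregate into the clean factor $\tfrac{n-1}{2}\sum_i\lambda^{(i)}$.

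I then assemble. Write $p(\bm{Y},\bm{B})=2^{-n}\sum_{\bm{x}}p(\bm{Y}\mid\bm{x})\,p(\bm{B}\mid\bm{x})$ and factor out $p(\bm{Y}\mid\bm{x^*})$ using the previous step. Inside the sum, the factor $\exp\bigl(\mathcal{H}'(\bm{x},\bm{x^*},\bm{W},\bm{\lambda},n)\bigr)$ multiplies the Gaussian integral for $p(\bm{B}\mid\bm{x})$, and by definition of $\mathcal{H}$ in~\eqref{eq:cov_H} this product collapses inside the $\bm v$-integral to $\exp(\mathcal{H}(\bm{x},\bm{x^*},\bm{v},\bm{W},\bm{B},\bm{\lambda},\mu,n,p)-\|\bm{v}\|^{2}/2)$, whose sum over $\bm{x}$ and integral over $\bm{v}$ is precisely $\exp(\phi(\bm{x^*},\bm{B},\bm{W},\bm{\lambda},\mu,n,p))$. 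Putting these together, the Gaussian normalizing constants of $p(\bm{B}\mid\bm{x^*})$ and $p(\bm{Y}\mid\bm{x^*})$ cancel between $\log p(\bm{Y},\bm{B}\mid\bm{x^*})$ and $\log p(\bm{Y},\bm{B})$, leaving exactly the $n\log 2$, the $\tfrac{n-1}{2}\sum_i\lambda^{(i)}$, the integral term involving $\bm{x^*}$, and $-\mathbb{E}\phi$.

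There is no real obstacle here beyond careful bookkeeping; the only point where I would be cautious is the combinatorial identity matching the quadratic term of $\mathcal{H}'$ to $-\tfrac{\lambda^{(i)}}{4n}\|\bm{x}\bm{x}^{\top}-\bm{x^*}(\bm{x^*})^{\top}\|_F^{2}$ up to the additive $\tfrac{n-1}{2}\sum_i\lambda^{(i)}$ shift, since that is what determines the normalization in the statement. Everything else is a standard manipulation of Gaussian densities and Fubini to swap the sum over $\bm{x}$ with the integral over $\bm{v}$.
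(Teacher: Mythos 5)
Your proposal is correct and takes essentially the same route as the paper's proof: both expand $I(\bm{x^{*}};\bm{Y},\bm{B})$ as the expected log-likelihood ratio, marginalize the Gaussian prior on $\bm{v}$ to produce the $\bm{v}$-integral, match the Wigner-part expansion to $\mathcal{H}'$ and hence to $\phi$, and extract the constant $\frac{n-1}{2}\sum_{i}\lambda^{(i)}$ from the identity $\sum_{k<l}(x_kx_l-x^*_kx^*_l)^2=n(n-1)-2\sum_{k<l}x_kx_lx^*_kx^*_l$. Your bookkeeping of the Gaussian normalizing constants and of the diagonal entries (which drop out since $x_k^2=1$) is accurate, so there is no gap.
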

\begin{proof}
See Section \ref{proof_lem_1}.
\end{proof}
Furthermore, if we consider the random matrix $\bm{T}(\lambda)$ defined by~\eqref{eq:T}, the following lemma shows that the mutual information between $\bm{x^{*}}$ and $\{\bm{Y}, \bm{B}\}$ is the same as the mutual information between $\bm{x^{*}}$ and $\{\bm{T},\bm{B}\}$.
\begin{lem}
\label{lem:join_all_matrices}
If we consider $\bm{T}(\lambda)$ defined in~\eqref{eq:T}, $\bm{Y}$ defined in~\eqref{eq:def_Gauss_Chann} and $\bm{B}$ defined in~\eqref{eq:def_cov} \revsag{then we have}
\[
I(\bm{x^{*}};\bm{Y},\bm{B}) = I(\bm{x^{*}};\bm{T}(\lambda),\bm{B}).
\]
\end{lem}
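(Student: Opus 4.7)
The idea is to exhibit a sufficient statistic of $\bm{Y}$ for $\bm{x^*}$ whose joint distribution with $(\bm{x^*},\bm{B})$ coincides with that of $(\bm{T}(\lambda),\bm{B})$, with the orthogonal complement being pure noise independent of everything else. Concretely, set
\begin{equation*}
\bm{S} \;=\; \sum_{i=1}^m \sqrt{r^{(i)}}\,\bm{Y}^{(i)},
\end{equation*}
which is well-defined since $\lambda^{(i)}=r^{(i)}\lambda$ with $\sum_i r^{(i)}=1$. First I would substitute \eqref{eq:def_Gauss_Chann} and check that
\begin{equation*}
\bm{S} \;=\; \sqrt{\frac{\lambda}{n}}\,\bm{x^*}(\bm{x^*})^\top + \sum_{i=1}^m \sqrt{r^{(i)}}\,\bm{Z}^{(i)},
\end{equation*}
and observe that $\bm{Z}' := \sum_{i=1}^m \sqrt{r^{(i)}}\,\bm{Z}^{(i)}$ is again a Gaussian Wigner matrix of the form \eqref{eq:def_Z} because $\sum_i r^{(i)}=1$ and the $\bm{Z}^{(i)}$ are independent. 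Consequently $(\bm{S},\bm{B},\bm{x^*}) \stackrel{d}{=} (\bm{T}(\lambda),\bm{B},\bm{x^*})$.

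Next I would perform an orthogonal change of coordinates entrywise. Let $\bm{a}=(\sqrt{r^{(1)}},\ldots,\sqrt{r^{(m)}})^\top\in\mathbb{R}^m$, which is a unit vector, and extend it to an orthonormal basis $\{\bm{a},\bm{e}_2,\ldots,\bm{e}_m\}$ of $\mathbb{R}^m$. For each pair $(k,l)$ with $k\le l$, the vector $(Y^{(1)}_{kl},\ldots,Y^{(m)}_{kl})^\top$ equals $\sqrt{\lambda/n}\,x^*_k x^*_l\,\bm{a} + (Z^{(1)}_{kl},\ldots,Z^{(m)}_{kl})^\top$ (using $\lambda^{(i)}=r^{(i)}\lambda$). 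Define the residual matrices
\begin{equation*}
R^{(j)}_{kl} \;=\; \bm{e}_j^\top \bigl(Y^{(1)}_{kl},\ldots,Y^{(m)}_{kl}\bigr)^\top \;=\; \bm{e}_j^\top \bigl(Z^{(1)}_{kl},\ldots,Z^{(m)}_{kl}\bigr)^\top, \qquad j=2,\ldots,m,
\end{equation*}
where the signal part is annihilated because $\bm{e}_j\perp\bm{a}$. Set $\bm{R}=(\bm{R}^{(2)},\ldots,\bm{R}^{(m)})$. Then the map $\bm{Y}\mapsto(\bm{S},\bm{R})$ is a bijective orthogonal rotation of the entries, so no information is lost.

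The key step is now to argue that $\bm{R}$ is independent of $(\bm{x^*},\bm{B},\bm{S})$. By construction $\bm{R}$ is a linear function only of the noise entries $Z^{(i)}_{kl}$, hence independent of $(\bm{x^*},\bm{B})$ since $\bm{B}$ and $\bm{x^*}$ are independent of $\{\bm{Z}^{(i)}\}$. For each fixed $(k,l)$, the noise parts of $\bm{S}$ and of $\bm{R}$ are the projections of the Gaussian vector $(Z^{(1)}_{kl},\ldots,Z^{(m)}_{kl})$ onto $\bm{a}$ and onto $\bm{a}^\perp$ respectively; being orthogonal projections of a spherically symmetric Gaussian, they are independent. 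Since entries are independent across $(k,l)$, we conclude that $\bm{R}$ is independent of the noise part $\bm{S}-\sqrt{\lambda/n}\bm{x^*}(\bm{x^*})^\top$, and therefore independent of $(\bm{S},\bm{x^*},\bm{B})$.

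Putting the pieces together via the chain rule,
\begin{equation*}
I(\bm{x^*};\bm{Y},\bm{B}) \;=\; I(\bm{x^*};\bm{S},\bm{R},\bm{B}) \;=\; I(\bm{x^*};\bm{S},\bm{B}) + I(\bm{x^*};\bm{R}\mid \bm{S},\bm{B}) \;=\; I(\bm{x^*};\bm{S},\bm{B}),
\end{equation*}
and by the distributional identity $(\bm{S},\bm{B},\bm{x^*}) \stackrel{d}{=} (\bm{T}(\lambda),\bm{B},\bm{x^*})$ established above, the right-hand side equals $I(\bm{x^*};\bm{T}(\lambda),\bm{B})$. The only part that requires care is verifying the independence of $\bm{R}$ from $\bm{S}$ at the joint level, which is precisely the content of the orthogonal decomposition of the Gaussian noise; once that is clean, the rest is bookkeeping.
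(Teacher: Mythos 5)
Your proposal is correct, but it takes a different route from the paper. The paper's proof leans on the explicit representation of $I(\bm{x^{*}};\bm{Y},\bm{B})$ already derived in Lemma~\ref{lem:cov_gauss_sbm}: inspecting that formula, the dependence on the noise and on $\bm{\lambda}$ enters only through $\lambda=\sum_i\lambda^{(i)}$ and the combinations $\sum_i\sqrt{\lambda^{(i)}/n}\,Z^{(i)}_{kl}$, and the claim follows from $\sum_i\sqrt{\lambda^{(i)}/n}\,Z^{(i)}_{kl}\stackrel{d}{=}\sqrt{\lambda/n}\,Z_{kl}$ — a two-line algebraic observation once Lemma~\ref{lem:cov_gauss_sbm} is in hand. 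You instead give a self-contained sufficiency argument: the weighted sum $\bm{S}=\sum_i\sqrt{r^{(i)}}\bm{Y}^{(i)}$ has the same joint law with $(\bm{x^{*}},\bm{B})$ as $\bm{T}(\lambda)$, the entrywise rotation onto $\{\bm{a},\bm{e}_2,\dots,\bm{e}_m\}$ is a bijection so no information is discarded, the residual coordinates are orthogonal projections of spherically symmetric Gaussian noise (variance $2$ on the diagonal, $1$ off it, but isotropic across layers in either case) and hence independent of $(\bm{x^{*}},\bm{S},\bm{B})$, and the chain rule kills the conditional term. Both proofs hinge on the same fact — the layer-aggregated matrix is a sufficient statistic — but yours makes the sufficiency explicit at the level of distributions and mutual information, so it does not require the Hamiltonian computation of Lemma~\ref{lem:cov_gauss_sbm} at all and would transfer to other priors on $\bm{x^{*}}$ unchanged; the paper's version is shorter given that Lemma~\ref{lem:cov_gauss_sbm} is needed elsewhere anyway. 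One cosmetic point: you reuse $\bm{R}$ for the residual matrices, which clashes with the covariate noise matrix $\bm{R}$ in \eqref{eq:def_cov}, so rename it if this is to be written up.
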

\begin{proof}
See Section \ref{proof_lem_2}.
\end{proof}
This shows that it is equivalent to study the model $\{\bm{T},\bm{B}\}$ or $\{\bm{Y},\bm{B}\}$. It is easier to study the model $\{\bm{T},\bm{B}\}$ as instead of dealing with an $n$-vector of parameters $\bm{\lambda}$ in $\{\bm{Y},\bm{B}\}$, in $\{\bm{T},\bm{B}\}$ we can study the model with respect to a single parameter $\lambda$.

\subsection{Mutual Information in the Original Model}
Next, we observe that the entries of the adjacency matrix $G^{(i)}_{kl}$ of the adjacency matrices $\bm{G}^{(i)}$ are given by 
\[
G^{(i)}_{kl}:=\begin{cases}
1 & \mbox{with probability $\widebar{p}^{(i)}_n+\Delta^{(i)}_nx^*_kx^*_l$,}\\
0 & \mbox{with probability $1-\widebar{p}^{(i)}_n-\Delta^{(i)}_nx^*_kx^*_l$.}
\end{cases}
\]
We define the function $\mathcal{H}_{SBM}$, the Hamiltonian with respect to the multilayer SBM as follows.
\begin{equation}
\label{eq:H_SBM_cov}
\mathcal{H}_{SBM}(\bm{x},\bm{x^{*}},\bm{u},\bm{G},\bm{B},\bm{\lambda},\mu,n,p)) ~=~ \mathcal{H}_{SBM}'\left(\bm{x},\bm{x^{*}},\bm{G},\bm{\lambda},n\right)
-\frac{1}{2}\norm{\bm{B}-\sqrt{\mu\over n}\bm{v}\bm{x}^\top}^2_F,
\end{equation}
where
\begin{equation}
\label{eq:H_cov}
\begin{aligned}
\mathcal{H}_{SBM}'\left(\bm{x},\bm{x^{*}},\bm{G},\bm{\lambda},n\right)
:=\sum_{i=1}^{m}\sum_{k<l}\Bigg[& G^{(i)}_{kl}\log\left(\frac{\widebar{p}^{(i)}_n+\Delta^{(i)}_nx_kx_l}{\widebar{p}^{(i)}_n+\Delta^{(i)}_nx^*_kx^*_l}\right)\\
& +\left(1-G^{(i)}_{kl}\right)\log\left(\frac{1-\widebar{p}^{(i)}_n-\Delta^{(i)}_nx_kx_l}{1-\widebar{p}^{(i)}_n-\Delta^{(i)}_nx^*_kx^*_l}\right)
\Bigg].
\end{aligned}
\end{equation}
Let us define
\begin{equation}
\label{eq:cov_phi_g}
\begin{aligned}
& \psi(\bm{x^{*}},\bm{B},\bm{G},\bm{\lambda},\mu,n,p)\\
& \quad = \log\Bigg\{\sum_{\bm{x} \in \{\pm 1\}^{n}}\int_{\mathbb{R}^p}\exp(\mathcal{H}_{SBM}(\bm{x},\bm{x^{*}},\bm{v},\bm{G},\bm{B},\bm{\lambda},\mu,n,p))
\exp\left(-\frac{\,\|\bm{v}\|^2}{2}\right)d\bm{v}\Bigg\}.
\end{aligned}
\end{equation}
Then we have the following lemma characterizing the mutual information between $\tx$ and $\{\bm{G},\bm{B}\}$.
\begin{lem}
\label{lem:inf_cov_sbm}
Let us consider $\bm{B}$ defined in~\eqref{eq:def_cov} and $\bm{G}=\{\bm{G}^{(i)}: i \in [m]\}$ defined in~\eqref{eq:def_SBM_cov}. Then we have
\begin{align*}
I(\bm{x^{*}};\bm{G},\bm{B})&= n\log2+
\mathbb{E}\log\Bigg(\int_{\mathbb{R}^p}\exp\left(-\frac{1}{2}\norm{\bm{B}-
\sqrt{\mu\over n}\bm{v}(\bm{x^{*}})^\top}^2_F\right)
\exp\left(-\frac{\|\bm{v}\|^2}{2}\right)d\bm{v}\Bigg)\\
% & \mathbb{E}\log\Bigg(\int_{\mathbb{R}^p}\exp\left(-\frac{1}{2}\|\bm{B}-\sqrt{(\mu/n)}\bm{u}(\bm{x^{*}})^\top\|^2_F\right)\exp(-\|\bm{u}\|^2/2)d\bm{u}\Bigg)\\
&\quad\quad-\mathbb{E}[\psi(\bm{x}^{*},\bm{B},\bm{G},\bm{\lambda},\mu,n,p)]
\end{align*}
where $\psi(\bm{x}^{*},\bm{B},\bm{G},\bm{\lambda},\mu,n,p)$ is defined in~\eqref{eq:cov_phi_g}.
\end{lem}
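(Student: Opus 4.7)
The plan is to expand the mutual information as an expected log-density ratio and then rearrange the pieces until the stated decomposition emerges; this parallels the derivation of Lemma \ref{lem:cov_gauss_sbm} but with the Bernoulli likelihoods of the SBM replacing the Gaussian ones. Starting from
\begin{equation*}
I(\bm{x^{*}};\bm{G},\bm{B}) \;=\; \mathbb{E}\log\frac{p(\bm{G},\bm{B}\mid \bm{x^{*}})}{p(\bm{G},\bm{B})},
\end{equation*}
I would first use conditional independence of $\bm{G}$ and $\bm{B}$ given $\bm{x^{*}}$ to factor the numerator as $p(\bm{G}\mid \bm{x^{*}})\,p(\bm{B}\mid \bm{x^{*}})$ and expand the marginal as $p(\bm{G},\bm{B})=2^{-n}\sum_{\bm{x}\in\{\pm1\}^n} p(\bm{G}\mid \bm{x})\,p(\bm{B}\mid \bm{x})$ via the uniform Rademacher prior on $\bm{x^{*}}$. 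The prefactor $2^{-n}$ immediately contributes the $n\log 2$ term, and dividing numerator and denominator of the remaining log-density ratio by $p(\bm{G}\mid \bm{x^{*}})$ replaces the numerator by $p(\bm{B}\mid \bm{x^{*}})$ and the denominator by $\sum_{\bm{x}}\bigl(p(\bm{G}\mid \bm{x})/p(\bm{G}\mid \bm{x^{*}})\bigr)\,p(\bm{B}\mid \bm{x})$.

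Next I would marginalize over the Gaussian prior on $\bm{v^{*}}$ in the $\bm{B}$-likelihood: since $\bm{B}\mid \bm{x},\bm{v}^{*}$ is Gaussian with mean $\sqrt{\mu/n}\,\bm{v}^{*}\bm{x}^\top$ and identity covariance entrywise, integrating out $\bm{v}^{*}\sim N_p(\bm 0,\bm I_p)$ yields
\begin{equation*}
p(\bm{B}\mid \bm{x}) \;=\; C\int_{\mathbb{R}^p}\exp\Bigl(-\tfrac{1}{2}\bigl\|\bm{B}-\sqrt{\mu/n}\,\bm{v}\bm{x}^\top\bigr\|_F^2\Bigr)\exp\bigl(-\|\bm{v}\|^2/2\bigr)\,d\bm{v},
\end{equation*}
with $C=(2\pi)^{-(np+p)/2}$ independent of $\bm{x}$. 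Because the same $C$ appears in every summand, it factors out of the inner logarithm and cancels against the constant arising from $\log p(\bm{B}\mid \bm{x^{*}})$, leaving the claimed integral expression as the middle term. For the $\bm{G}$-likelihood, the Bernoulli form under \eqref{eq:def_SBM_cov} gives
\begin{equation*}
\log\frac{p(\bm{G}\mid \bm{x})}{p(\bm{G}\mid \bm{x^{*}})} \;=\; \mathcal{H}_{SBM}'(\bm{x},\bm{x^{*}},\bm{G},\bm{\lambda},n),
\end{equation*}
by direct comparison with \eqref{eq:H_cov}.

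Combining the previous two displays, the sum $\sum_{\bm{x}}\bigl(p(\bm{G}\mid \bm{x})/p(\bm{G}\mid \bm{x^{*}})\bigr)\,p(\bm{B}\mid \bm{x})$ collapses (after dropping the common $C$) to exactly $\exp(\psi(\bm{x^{*}},\bm{B},\bm{G},\bm{\lambda},\mu,n,p))$ by the definitions \eqref{eq:H_SBM_cov} and \eqref{eq:cov_phi_g}, and taking expectations produces the final $-\mathbb{E}[\psi]$ term. The proof is essentially algebraic bookkeeping with no substantive obstacle. The two points requiring care are (i) verifying that the normalizing constant $C$ truly cancels between the middle and last terms (using that $C$ is the same for every $\bm{x}$ in the sum and may therefore be pulled out of $\log\sum$), and (ii) making sure that the auxiliary division by $p(\bm{G}\mid \bm{x^{*}})$ is accounted for consistently in both the numerator and the denominator of the inner ratio.
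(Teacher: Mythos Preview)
Your proposal is correct and follows essentially the same route as the paper's proof: write $I(\bm{x^{*}};\bm{G},\bm{B})$ as an expected log density ratio, extract the $2^{-n}$ prior to produce $n\log 2$, integrate out the Gaussian $\bm v^*$ to produce the middle integral, and recognize the SBM log-likelihood ratio as $\mathcal{H}_{SBM}'$ so that the denominator collapses to $\exp\psi$. The paper packages the two likelihoods into a single function $\mathcal{F}(\bm{x},\bm{v},\bm{G},\bm{B})$ before expanding, but the manipulations are identical to yours.
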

\begin{proof}
See Section \ref{proof_lem_3}.
\end{proof}
To connect $I(\bm{x^{*}};\bm{G},\bm{B})$ to $I(\bm{x^{*}};\bm{Y},\bm{B})$, we use Lindeberg's Interpolation Argument. For that purpose, let us define the auxiliary random matrices $\widetilde{\bm{G}}^{(i)}$ where
\begin{equation}
\label{eq:G_aux_def}
\widetilde{G}^{(i)}_{kl}:=
\frac{\Delta^{(i)}_n}{\widebar{p}^{(i)}_n(1-\widebar{p}^{(i)}_n)}
\left(G^{(i)}_{kl}-\widebar{p}^{(i)}_n-\Delta^{(i)}_nx^*_kx^*_l\right).
\end{equation}
By $\widetilde{\bm{G}}$ we refer to the collection of random matrices $\{\widetilde{\bm{G}}^{(1)},\ldots,\widetilde{\bm{G}}^{(m)}\}$. The mutual information between $\tx$ and $\{\bm{G},\bm{B}\}$ is related to $\tx$ and $\{\widetilde{\bm{G}},\bm{B}\}$ in the following way.
\begin{lem}
\label{lem:I_SBM_cov}
Let us consider $\tilde{\bm{G}}$ defined in~\eqref{eq:G_aux_def}. Then with \revsag{$n\widebar{p}^{(i)}_n (1-\widebar{p}^{(i)}_n )\rightarrow \infty$ for $i=1,\ldots,m$}, we have the following identity
\begin{align*}
I(\bm{x^{*}};\bm{G},\bm{B})  &= n\log2+\frac{n-1}{2}\sum_{i=1}^{n}\lambda^{(i)}
\\
& \qquad +
\mathbb{E}\log\Bigg(\int_{\mathbb{R}^p}\exp\left(-\frac{1}{2}\norm{\bm{B}-
\sqrt{\mu\over n}\bm{v}(\bm{x^{*}})^\top}^2_F\right)
\exp\left(-\frac{\|\bm{v}\|^2}{2}\right)d\bm{v}\Bigg)\\
& \qquad - \mathbb{E}[\phi(\bm{x}^{*},\bm{B},\bm{\widetilde{G}},\bm{\lambda},\mu,n,p)] + O\left(\sum_{i=1}^{m}\frac{n(\lambda^{(i)})^{3/2}}{\sqrt{n\widebar{p}^{(i)}_n
(1-\widebar{p}^{(i)}_n)}}\right).
\end{align*}
\end{lem}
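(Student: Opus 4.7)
The plan is to Taylor-expand $\mathcal{H}_{SBM}'$ from~\eqref{eq:H_cov} and show that, up to a deterministic shift of $-\frac{n-1}{2}\sum_i \lambda^{(i)}$ and a controlled remainder, it coincides with the Gaussian Hamiltonian $\mathcal{H}'$ from~\eqref{eq:def_H} with $\widetilde{\bm{G}}$ substituted for $\bm{V}$. Feeding this comparison into the formula of Lemma~\ref{lem:inf_cov_sbm} then produces the identity in the statement.

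To carry this out, for each triple $(i, k, l)$ with $k<l$ I would set $u = \Delta^{(i)}_n(x_k x_l - x^*_k x^*_l)$ and $q^* = \widebar p^{(i)}_n + \Delta^{(i)}_n x^*_k x^*_l$ and expand the summand $G^{(i)}_{kl}\log(1 + u/q^*) + (1 - G^{(i)}_{kl})\log(1 - u/(1 - q^*))$ through third order. The linear term is $u(G^{(i)}_{kl} - q^*)/(q^*(1-q^*))$, which equals $\widetilde{G}^{(i)}_{kl}(x_k x_l - x^*_k x^*_l)$ up to the multiplicative factor $\widebar p^{(i)}_n(1-\widebar p^{(i)}_n)/(q^*(1 - q^*)) = 1 + O(\Delta^{(i)}_n/\widebar p^{(i)}_n)$. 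Conditioning on $\tx$, the quadratic term has mean $-u^2/(2 q^*(1-q^*))$; invoking $(x_k x_l - x^*_k x^*_l)^2 = 2(1 - x_k x_l x^*_k x^*_l)$ together with $(\Delta^{(i)}_n)^2/(\widebar p^{(i)}_n(1-\widebar p^{(i)}_n)) = \lambda^{(i)}_n/n$ rewrites this mean as $-\lambda^{(i)}/n + (\lambda^{(i)}/n)\,x_k x_l x^*_k x^*_l$ to leading order. Summing over $i$ and $k<l$, these two pieces produce exactly the constant shift $-\frac{n-1}{2}\sum_i \lambda^{(i)}$ and the deterministic coupling $\sum_{i,k<l}(\lambda^{(i)}/n)\,x_k x_l x^*_k x^*_l$ that appears inside $\mathcal{H}'(\bm{x},\tx,\widetilde{\bm{G}},\bm{\lambda},n)$.

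Everything else---the fluctuation of the quadratic term around its conditional mean together with the cubic-and-higher remainder in the Taylor series---is controlled using the absolute third moments of $(G^{(i)}_{kl} - q^*)/\sqrt{q^*(1-q^*)}$, which are bounded by $1/\sqrt{q^*(1-q^*)}$. Each leftover term is of order $|u|^3/(q^*(1-q^*))^{3/2}$; using $\Delta^{(i)}_n \asymp \sqrt{\lambda^{(i)}_n\,\widebar p^{(i)}_n(1-\widebar p^{(i)}_n)/n}$ from~\eqref{eq:def_lambda} and summing over the $\binom{n}{2}$ pairs in each of the $m$ layers yields exactly the quoted error $O\bigl(\sum_i n(\lambda^{(i)})^{3/2}/\sqrt{n\,\widebar p^{(i)}_n(1 - \widebar p^{(i)}_n)}\bigr)$.

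The main obstacle is promoting this pointwise-in-$\bm{x}$ Hamiltonian approximation into a bound on $|\mathbb{E}[\psi(\bm{x^{*}}, \bm{B}, \bm{G}, \bm{\lambda}, \mu, n, p)] - \mathbb{E}[\phi(\bm{x^{*}}, \bm{B}, \widetilde{\bm{G}}, \bm{\lambda}, \mu, n, p)] + \frac{n-1}{2}\sum_i\lambda^{(i)}|$, since the two quantities of interest are expectations of logarithms of integrals over both $\bm{x}$ and $\bm{v}$. I would handle this via a Lindeberg-style interpolation: build a sequence of hybrid observations by swapping the $(i,k,l)$-th SBM entry, one at a time, for a Gaussian entry with matched first two moments, differentiate the resulting log-partition function, and express its derivatives as Gibbs expectations of the Hamiltonian variations. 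Because these Gibbs expectations are uniformly bounded and average to zero at the first two orders once one integrates over the swapped entry, each swap contributes at the third-moment level computed above, and summation over the $O(mn^2)$ swaps reproduces the stated remainder, completing the proof.
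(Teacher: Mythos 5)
Your proposal contains a genuine gap at its core expansion step. Expanding the summand of \eqref{eq:H_cov} in powers of $u=\Delta^{(i)}_n(x_kx_l-x^*_kx^*_l)$ around $q^*=\widebar{p}^{(i)}_n+\Delta^{(i)}_nx^*_kx^*_l$ leaves behind \emph{second-order} residuals that your error accounting ignores: the multiplicative correction $1+O(\Delta^{(i)}_n/\widebar{p}^{(i)}_n)$ you attach to the linear term contributes $\widetilde{G}^{(i)}_{kl}(x_kx_l-x^*_kx^*_l)\,O(\Delta^{(i)}_n/\widebar{p}^{(i)}_n)$ per edge, and the fluctuation of your quadratic term about its conditional mean contributes a term of size $u^2\,|G^{(i)}_{kl}-q^*|/(\widebar{p}^{(i)}_n)^2$. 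Each of these has absolute expectation of order $(\Delta^{(i)}_n)^2/\widebar{p}^{(i)}_n\asymp\lambda^{(i)}/n$ per edge, hence order $n\lambda^{(i)}$ after summing over the $\binom{n}{2}$ pairs; this is not $o(n)$ and is far larger than the claimed remainder $n(\lambda^{(i)})^{3/2}/\sqrt{n\widebar{p}^{(i)}_n(1-\widebar{p}^{(i)}_n)}$ (the ratio diverges like $\sqrt{n\widebar{p}^{(i)}_n(1-\widebar{p}^{(i)}_n)/\lambda^{(i)}}$), so your claim that ``each leftover term is of order $|u|^3/(q^*(1-q^*))^{3/2}$'' is false as stated. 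The paper sidesteps this entirely: since $x_kx_l,x^*_kx^*_l\in\{\pm1\}$, the identity $\log(c+dx)=\tfrac12\log\bigl((c+d)(c-d)\bigr)+\tfrac{x}{2}\log\tfrac{c+d}{c-d}$ rewrites each summand \emph{exactly} as $(x_kx_l-x^*_kx^*_l)$ times a half-log-odds whose deterministic arguments are $\Delta^{(i)}_n/\widebar{p}^{(i)}_n$ and $\Delta^{(i)}_n/(1-\widebar{p}^{(i)}_n)$; because $z\mapsto\tfrac12\log\tfrac{1+z}{1-z}$ is odd, its expansion has no quadratic term and the per-edge remainder is genuinely third order (multiplied by edge indicators), while the resulting linear term decomposes with \emph{no further approximation} into $\widetilde{G}^{(i)}_{kl}(x_kx_l-x^*_kx^*_l)+\tfrac{\lambda^{(i)}}{n}x_kx_lx^*_kx^*_l-\tfrac{\lambda^{(i)}}{n}$, which is exactly where the shift $-\tfrac{n-1}{2}\sum_i\lambda^{(i)}$ and the Hamiltonian $\mathcal{H}'$ of \eqref{eq:def_H} come from.

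Your closing Lindeberg step also targets the wrong statement. In this lemma there is no Gaussian surrogate: $\widetilde{\bm{G}}$ in \eqref{eq:G_aux_def} is a deterministic recentring and rescaling of the \emph{same} $\bm{G}$ (given $\tx$), so both $\mathbb{E}[\psi(\tx,\bm{B},\bm{G},\bm{\lambda},\mu,n,p)]$ and $\mathbb{E}[\phi(\tx,\bm{B},\widetilde{\bm{G}},\bm{\lambda},\mu,n,p)]$ are functionals of one and the same realization; swapping SBM entries for moment-matched Gaussians proves (a version of) Lemma~\ref{lem:cov_connect_phi}, not this identity. What is actually needed, and what the paper does (invoking Remark 5.4 of \cite{AbbeMonYash}), is much simpler: the Hamiltonian discrepancy is bounded uniformly over $\bm{x}$ by a quantity of the form $\sum_i C\,(\Delta^{(i)}_n/\widebar{p}^{(i)}_n)^3\bigl(|\bm{x}^\top\bm{G}^{(i)}\bm{x}|+|(\tx)^\top\bm{G}^{(i)}\tx|\bigr)$ plus the analogous complementary-graph term, so it passes through $\log\sum_{\bm{x}}\int_{\mathbb{R}^p}\exp(\cdot)$ as an additive error, and taking expectations of the edge counts yields exactly $O\bigl(\sum_i n(\lambda^{(i)})^{3/2}/\sqrt{n\widebar{p}^{(i)}_n(1-\widebar{p}^{(i)}_n)}\bigr)$; Lemma~\ref{lem:inf_cov_sbm} then finishes the proof. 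If you reorganize your expansion as above, no interpolation is needed at all; as written, your argument neither controls the second-order residuals nor connects the two sides of the stated identity.
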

\begin{proof}
See Section \ref{proof_lem_4}.
\end{proof}
\subsection{Gaussian Approximation}

Next, we use Lindeberg's interpolation to approximate $\mathbb{E}[\phi(\bm{x}^{*},\bm{B},\bm{\widetilde{G}},\bm{\lambda},\mu,n,p)]$ by $\mathbb{E}[\phi(\bm{x}^{*},\bm{B},$\newline$\bm{W},\bm{\lambda},\mu,n,p)]$. 
\begin{lem}
\label{lem:cov_connect_phi}
Suppose \revsag{$n\bar{p}^{(i)}_n(1-\bar{p}^{(i)}_n) \rightarrow \infty$ for $i=1,\ldots,m$}. Then we have 
\begin{align*}
\mathbb{E}[\phi(\bm{x}^{*},\bm{B},\widetilde{\bm{G}},\bm{\lambda},\mu,n,p)] &=\mathbb{E}[\phi(\bm{x}^{*},\bm{B},\bm{W},\bm{\lambda},\mu,n,p)]+O\left(\sum_{i=1}^{m}\frac{n (\lambda^{(i)})^{3/2}}{\sqrt{n\widebar{p}^{(i)}_n(1-\widebar{p}^{(i)}_n )}}\right).
\end{align*}
\end{lem}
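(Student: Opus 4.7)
The plan is to apply the Lindeberg interpolation argument at the level of entries of the sensing matrices $\widetilde{\bm G}^{(i)}$ and $\bm W^{(i)}=\sqrt{\lambda^{(i)}/n}\,\bm Z^{(i)}$. Observe that from the definition in \eqref{eq:cov_phi}, the function $\phi$ depends on the symmetric matrices $\bm{V}=(\bm V^{(1)},\dots,\bm V^{(m)})$ only through the Hamiltonian $\mathcal{H}'$ in \eqref{eq:def_H}, which is linear in each entry $V^{(i)}_{kl}$ (for $k<l$). Since $\phi$ is the logarithm of a partition function, differentiation yields
\[
\frac{\partial \phi}{\partial V^{(i)}_{kl}} = \bigl\langle x_k x_l - x^*_k x^*_l \bigr\rangle_{\mathrm{Gibbs}},
\]
with higher derivatives expressible as (mixed) Gibbs cumulants of $x_kx_l - x_k^*x_l^*$. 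Since $|x_k x_l|\le 1$, these cumulants are uniformly bounded by absolute constants. In particular, $|\partial^3_{V^{(i)}_{kl}}\phi|\le C$ for some universal $C$, uniformly in $\bm V$, $\bm x^*$ and $\bm B$.

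Next, enumerate the triples $\{(i,k,l): i\in[m],\, k<l\}$ in some order, and for $s$ in this enumeration, form a hybrid matrix obtained by keeping the first $s$ entries equal to the corresponding $W^{(i)}_{kl}$ and the remaining entries equal to $\widetilde G^{(i)}_{kl}$. Writing $\phi$ as a function of a single entry (with all others frozen), Taylor expansion to third order gives, for each swap,
\[
\E[\phi(\cdot,W^{(i)}_{kl},\cdot)] - \E[\phi(\cdot,\widetilde G^{(i)}_{kl},\cdot)]
= \sum_{r=1}^{2}\frac{1}{r!}\E\!\left[\partial^{r}_{V^{(i)}_{kl}}\phi\,\bigl(W^{(i)}_{kl}\bigr)^{r} - \partial^{r}_{V^{(i)}_{kl}}\phi\,\bigl(\widetilde G^{(i)}_{kl}\bigr)^{r}\right] + R_{ikl},
\]
where the remainder $R_{ikl}$ is bounded by $\tfrac{C}{6}\,\E[|W^{(i)}_{kl}|^3 + |\widetilde G^{(i)}_{kl}|^3]$. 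The key point is that $W^{(i)}_{kl}$ and $\widetilde G^{(i)}_{kl}$ have matching first moments (both zero) and matching second moments up to a negligible correction: a short computation (using $\mathrm{Var}(G^{(i)}_{kl}|\tx) = \bar p_n^{(i)}(1-\bar p_n^{(i)}) + O(\Delta_n^{(i)})$ and the definition of $\widetilde G^{(i)}_{kl}$) gives
\[
\mathrm{Var}(\widetilde G^{(i)}_{kl}\mid\tx) = \frac{\lambda_n^{(i)}}{n} + O\!\left(\frac{(\lambda_n^{(i)})^{3/2}}{n^{3/2}\sqrt{\bar p_n^{(i)}(1-\bar p_n^{(i)})}}\right) = \mathrm{Var}(W^{(i)}_{kl}) + O\!\left(\frac{(\lambda_n^{(i)})^{3/2}}{n^{3/2}\sqrt{n\bar p_n^{(i)}(1-\bar p_n^{(i)})}}\cdot n^{1/2}\right).
\]
Hence the first and second order terms contribute a total error of the same order as the third-moment remainder, namely the per-swap bound
\[
|R_{ikl}| \;\lesssim\; \frac{(\lambda_n^{(i)})^{3/2}}{n^{3/2}\sqrt{\bar p_n^{(i)}(1-\bar p_n^{(i)})}},
\]
since the third central moment of $\mathrm{Bern}(\bar p_n^{(i)}+\Delta_n^{(i)}x_k^*x_l^*)$ equals $\bar p_n^{(i)}(1-\bar p_n^{(i)})(1-2\bar p_n^{(i)}) + O(\Delta_n^{(i)})$, and the rescaling by $\Delta_n^{(i)}/(\bar p_n^{(i)}(1-\bar p_n^{(i)}))$ produces the stated $n^{-3/2}$ factor after substituting $\Delta_n^{(i)} = \sqrt{\lambda_n^{(i)}\bar p_n^{(i)}(1-\bar p_n^{(i)})/n}$. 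For $W^{(i)}_{kl}$, the Gaussian third absolute moment gives the same order bound.

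Summing the telescoping differences over all $\binom{n}{2}$ off-diagonal pairs and the $m$ layers yields
\[
\E[\phi(\cdot,\widetilde{\bm G},\cdot)] - \E[\phi(\cdot,\bm W,\cdot)] \;=\; O\!\left(\sum_{i=1}^{m}\binom{n}{2}\cdot \frac{(\lambda_n^{(i)})^{3/2}}{n^{3/2}\sqrt{\bar p_n^{(i)}(1-\bar p_n^{(i)})}}\right) \;=\; O\!\left(\sum_{i=1}^{m}\frac{n(\lambda^{(i)})^{3/2}}{\sqrt{n\widebar p_n^{(i)}(1-\widebar p_n^{(i)})}}\right),
\]
which, under the diverging average degree assumption $n\bar p_n^{(i)}(1-\bar p_n^{(i)})\to\infty$, is $o(n)$ and hence subleading in the per-vertex mutual information. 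I expect the main obstacle to be verifying carefully the uniform bound on the third derivatives of $\phi$, since the partition function couples all entries of $\bm V$ through the Gibbs sum; establishing this requires showing that $\phi$ remains a log-moment-generating function in each scalar entry $V^{(i)}_{kl}$ with bounded conjugate variable $x_kx_l - x_k^*x_l^*$, which is the crucial structural input that makes the interpolation go through.
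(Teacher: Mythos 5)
Your proposal is correct and follows essentially the same route as the paper: the paper also proves this by Lindeberg interpolation, using the identical moment comparisons (matching first moments, second moments matching up to an $O(\lambda^{3/2}n^{-3/2}(\bar p(1-\bar p))^{-1/2})$ error, and third/fourth moment bounds) together with the observation that the derivatives of $\phi$ in each entry $V^{(i)}_{kl}$ are Gibbs cumulants of $x_kx_l-x^*_kx^*_l$ and hence uniformly bounded. The only cosmetic difference is that the paper invokes the packaged Lindeberg generalization theorem (Theorem 5.6 of \cite{AbbeMonYash}, with moments up to fourth order), whereas you carry out the entrywise swapping and third-order Taylor expansion by hand, which works equally well once the second-moment mismatch is charged to the error as you do.
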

\begin{proof}
See Section \ref{proof_lem_5}.
\end{proof}
Finally we get the following theorem showing the asymptotic equivalence of the per-vertex mutual information in the two models.
\begin{thm}
\label{thm:connect_cov_inf}
Let us consider $\bm{Y}=\{\bm{Y}^{(i)}: i \in [m]\}$ defined in~\eqref{eq:def_Gauss_Chann}, $\bm{B}$ defined in~\eqref{eq:def_cov}, and $\bm{G}=\{\bm{G}^{(i)}: i \in [m]\}$ defined in~\eqref{eq:def_SBM_cov}. If for all $i \in [m]$ we have \revsag{$n\bar{p}^{(i)}_n(1-\bar{p}^{(i)}_n) \rightarrow \infty$  for $i=1,\ldots,m$}, then we have
\[
\left|\frac{1}{n}I(\bm{x}^{*};\bm{Y},\bm{B})-\frac{1}{n}I(\bm{x}^{*};\bm{G},\bm{B})\right| ~\le~ O\left(\sum_{i=1}^{m}\frac{(\lambda^{(i)})^{3/2}}{\sqrt{n\bar{p}^{(i)}_n(1-\bar{p}^{(i)}_n)}}\right).
\]
\end{thm}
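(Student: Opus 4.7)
The plan is simply to combine Lemmas 3.1, 3.4, and 3.5 in the obvious way: Lemma 3.1 expresses $I(\bm{x}^*;\bm{Y},\bm{B})$ in terms of $\mathbb{E}\phi(\bm{x}^*,\bm{B},\bm{W},\bm{\lambda},\mu,n,p)$, Lemma 3.4 expresses $I(\bm{x}^*;\bm{G},\bm{B})$ in terms of $\mathbb{E}\phi(\bm{x}^*,\bm{B},\widetilde{\bm{G}},\bm{\lambda},\mu,n,p)$ plus the error term of order $\sum_i n(\lambda^{(i)})^{3/2}/\sqrt{n\bar p_n^{(i)}(1-\bar p_n^{(i)})}$, and Lemma 3.5 tells us that these two $\phi$ expectations agree up to exactly the same order of error. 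Subtracting the two expressions for the mutual informations causes the leading terms to cancel, and what remains is controlled by Lemma 3.5.

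More concretely, I would first write out the two identities side by side. Both Lemma 3.1 and Lemma 3.4 contain the identical deterministic term $n\log 2 + \tfrac{n-1}{2}\sum_{i=1}^m \lambda^{(i)}$ and the identical random term
\begin{equation*}
\mathbb{E}\log\Bigl(\int_{\mathbb{R}^p}\exp\bigl(-\tfrac{1}{2}\|\bm{B}-\sqrt{\mu/n}\,\bm{v}(\bm{x}^*)^\top\|_F^2\bigr)\exp(-\tfrac{1}{2}\|\bm{v}\|^2)\,d\bm{v}\Bigr),
\end{equation*}
so upon taking the difference $I(\bm{x}^*;\bm{Y},\bm{B}) - I(\bm{x}^*;\bm{G},\bm{B})$ these terms cancel exactly and I am left with
\begin{equation*}
I(\bm{x}^*;\bm{Y},\bm{B}) - I(\bm{x}^*;\bm{G},\bm{B}) \;=\; \mathbb{E}\phi(\bm{x}^*,\bm{B},\widetilde{\bm{G}},\bm{\lambda},\mu,n,p) - \mathbb{E}\phi(\bm{x}^*,\bm{B},\bm{W},\bm{\lambda},\mu,n,p) + O\Bigl(\sum_{i=1}^m \tfrac{n(\lambda^{(i)})^{3/2}}{\sqrt{n\bar p_n^{(i)}(1-\bar p_n^{(i)})}}\Bigr).
\end{equation*}

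Then, applying Lemma 3.5 directly, the difference of the two $\phi$ expectations is itself $O\bigl(\sum_{i=1}^m n(\lambda^{(i)})^{3/2}/\sqrt{n\bar p_n^{(i)}(1-\bar p_n^{(i)})}\bigr)$, which absorbs into the error term already present. Taking absolute values, dividing both sides by $n$ and noting that the $n$ in the numerator of the error cancels one power of $n$ yields the claimed bound
\begin{equation*}
\Bigl|\tfrac{1}{n}I(\bm{x}^*;\bm{Y},\bm{B}) - \tfrac{1}{n}I(\bm{x}^*;\bm{G},\bm{B})\Bigr| = O\Bigl(\sum_{i=1}^m \tfrac{(\lambda^{(i)})^{3/2}}{\sqrt{n\bar p_n^{(i)}(1-\bar p_n^{(i)})}}\Bigr).
\end{equation*}

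There is essentially no obstacle here; all the genuine work has been done in Lemmas 3.4 and 3.5 (which is where Lindeberg's interpolation actually gets deployed and where one must carefully control the third-moment contributions that produce the $(\lambda^{(i)})^{3/2}$ factors). The proof of Theorem 3.1 itself is a bookkeeping step: line up the two identities, cancel the common pieces, invoke Lemma 3.5, and divide by $n$. The only minor thing to verify is that the error orders from Lemmas 3.4 and 3.5 are indeed the same sum, so that one asymptotic $O(\cdot)$ symbol suffices to encompass both contributions — but this is immediate from inspection of the two statements.
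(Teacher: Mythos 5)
Your proposal is correct and follows exactly the paper's own proof, which likewise obtains the theorem by combining Lemma~\ref{lem:cov_gauss_sbm}, Lemma~\ref{lem:I_SBM_cov} and Lemma~\ref{lem:cov_connect_phi}: the common terms cancel, the residual difference of $\phi$-expectations is bounded by Lemma~\ref{lem:cov_connect_phi}, and dividing by $n$ gives the stated rate. Nothing further is needed.
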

\begin{proof}
The proof easily follows using Lemma~\ref{lem:cov_gauss_sbm}, Lemma~\ref{lem:I_SBM_cov} and Lemma~\ref{lem:cov_connect_phi}.
\end{proof}
\begin{rem}
The above theorem shows that as $n \rightarrow \infty$ the asymptotic per-vertex mutual information about $\bm{x^{*}}$ obtained from $(\bm{Y},\bm{B})$ is same as that obtained from $(\bm{G},\bm{B})$. 
\end{rem}
An immediate corollary to the above theorem is as follows.
\begin{cor}
\label{cor:inf_combined}
Consider $\bm{T}(\lambda)$ defined by~\eqref{eq:T} and $\lambda=\sum_{i=1}^{m}\lambda^{(i)}$. If for all $i \in [m]$ we have \revsag{$n\bar{p}^{(i)}_n(1-\bar{p}^{(i)}_n) \rightarrow \infty$  for $i=1,\ldots,m$}, then we have the following inequality.
\[\left|\frac{1}{n}I(\bm{x}^{*};\bm{T}(\lambda),\bm{B})-\frac{1}{n}I(\bm{x}^{*};\bm{G},\bm{B})\right| ~\le~ O\left(\sum_{i=1}^{m}\frac{(\lambda^{(i)})^{3/2}}{\sqrt{n\bar{p}^{(i)}_n(1-\bar{p}^{(i)}_n)}}\right).\]
\end{cor}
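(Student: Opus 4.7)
The plan is very short: the corollary is essentially a repackaging of Theorem~\ref{thm:connect_cov_inf} once we identify the mutual information in the collection-of-matrices Gaussian model with that in the single-matrix Gaussian model.

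First, I would invoke Lemma~\ref{lem:join_all_matrices} to replace $\frac{1}{n}I(\bm{x}^{*};\bm{Y},\bm{B})$ by $\frac{1}{n}I(\bm{x}^{*};\bm{T}(\lambda),\bm{B})$. Recall that $\bm{T}(\lambda)$ is defined in~\eqref{eq:T} with $\lambda=\sum_{i=1}^{m}\lambda^{(i)}$, and Lemma~\ref{lem:join_all_matrices} already gives the exact equality
\[
I(\bm{x}^{*};\bm{Y},\bm{B}) = I(\bm{x}^{*};\bm{T}(\lambda),\bm{B}).
\]
So this substitution introduces no additional error.

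Next, I would apply Theorem~\ref{thm:connect_cov_inf}, which—under the assumption $n\bar{p}^{(i)}_n(1-\bar{p}^{(i)}_n) \rightarrow \infty$ for all $i \in [m]$—provides the bound
\[
\left|\frac{1}{n}I(\bm{x}^{*};\bm{Y},\bm{B})-\frac{1}{n}I(\bm{x}^{*};\bm{G},\bm{B})\right| \le O\!\left(\sum_{i=1}^{m}\frac{(\lambda^{(i)})^{3/2}}{\sqrt{n\bar{p}^{(i)}_n(1-\bar{p}^{(i)}_n)}}\right).
\]
Substituting the identity from Lemma~\ref{lem:join_all_matrices} into the left-hand side immediately yields the claimed inequality. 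No triangle inequality with a nonzero second term is needed, since the Gaussian-to-Gaussian comparison is an exact equality.

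Because both ingredients are already established earlier in the excerpt, I do not anticipate any obstacle: the only work is the one-line substitution. The conceptual content that made the bound nontrivial—the Lindeberg interpolation used for Lemma~\ref{lem:cov_connect_phi} and the combinatorial bookkeeping in Lemma~\ref{lem:I_SBM_cov}—has already been absorbed into Theorem~\ref{thm:connect_cov_inf}, and the reduction from $\bm{Y}$ to $\bm{T}(\lambda)$ is a sufficiency-style argument that $(\bm{Y},\bm{B})$ and $(\bm{T}(\lambda),\bm{B})$ carry identical information about $\bm{x}^*$ (which is why Lemma~\ref{lem:join_all_matrices} holds with equality rather than up to an error term).
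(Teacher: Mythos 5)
Your proposal is correct and matches the paper's own argument exactly: the paper proves Corollary~\ref{cor:inf_combined} by combining Theorem~\ref{thm:connect_cov_inf} with the exact identity of Lemma~\ref{lem:join_all_matrices}, precisely the one-line substitution you describe.
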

\begin{proof}
This corollary immediately follows from Theorem~\ref{thm:connect_cov_inf} and Lemma~\ref{lem:join_all_matrices}.
\end{proof}

\section{An Asymptotic I-MMSE Relation}
\label{sec_5}
Let us begin by observing that the collection of SBM's $\bm G^{(1)},\ldots,\bm G^{(m)}$ can be represented as the collection of random variables $\{G^{(i)}_{kl}: 1 \le i \le m, 1 \le k < l \le n \}$. 
% We shall refer to this collection as $\mathcal{G}$, i.e.,
% \begin{equation}
% \label{eq:cal_g}
% \mathbf{\mathcal{G}}=\left\{A^{(i)}_{kl}: 1 \le i \le m, 1 \le k < l \le n\right\}.
% \end{equation}
Instead of considering $\{0,1\}$ valued random variables $G^{(i)}_{kl}$, we shall consider $\{-1,1\}$ valued random variables $L^{(i)}_{kl}=2G^{(i)}_{kl}-1$.
This collection will be called $\mathcal{L}$, that is, 
\begin{equation}
\label{eq:linear_transformed variables}
\mathcal{L}=\left\{2G^{(i)}_{kl}-1: 1 \le i \le m, 1 \le k < l \le n\right\}.
\end{equation}
Since the elements of $\mathcal{L}$ are linear transformations of the elements of $\{G^{(i)}_{kl}:i\in [m], 1\leq k<l \leq n \}$, 
% it is easy to observe that
we have
\begin{equation}
\label{eq:cond_entrop_l}
H(\bm{x^{*}}|\bm{G},\bm{B})=H(\bm{x^{*}}|\mathcal{L},\bm{B}),
\end{equation}
where $H(\bm{x^{*}}|\mathcal{L},\bm{B})$ is the conditional entropy of $\bm{x^{*}}$ given $(\mathcal{L},\bm{B})$. This implies that
\begin{equation}
	\label{eq:MI-equiv}
\frac{1}{n}I(\bm{x^{*}};\bm{G},\bm{B}) = \frac{1}{n}I(\bm{x^{*}};\mathcal{L},\bm{B}).	
\end{equation}
Then an asymptotic I-MMSE identity for the differentiation of $I(\tx;\mathcal{L},\bm{B})$ is given by the following lemma.
\begin{lem}
\label{lem:diff_cov_inf}
Let $\bm \lambda$, $\lambda$ be \revsag{as} defined in \eqref{eq:lambda} and $r^{(i)}$ \revsag{for $i=1,\ldots,m$} be \revsag{as} defined in \eqref{eq:lambdaratio}. 
If $n\widebar{p}^{(i)}_n (1-\widebar{p}^{(i)}_n ) \rightarrow \infty$, 
then there is a positive constant $C$ such that 
\[
\left|\frac{1}{n}\,\frac{dI(\bm{x^{*}};\mathcal{L},\bm{B})}{d\lambda}
-\frac{1}{4}\,\MMSE_n(\lambda,\mu)\right| 
\le C\left(\sum\limits_{i=1}^{m}\sqrt{\frac{ r^{(i)} \lambda }{n\widebar{p}^{(i)}_n (1-\widebar{p}^{(i)}_n)}}\right).
\]
\end{lem}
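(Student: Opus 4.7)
The plan is to compute $\frac{d}{d\lambda} I(\bm{x^{*}}; \mathcal{L}, \bm{B})$ directly by Taylor-expanding the Bernoulli log-likelihoods, following the single-layer approach of \cite{AbbeMonYash} adapted to our multi-network, contextual setting. By \eqref{eq:MI-equiv} and Lemma \ref{lem:inf_cov_sbm},
\[
I(\bm{x^{*}}; \mathcal{L}, \bm{B}) = n\log 2 + F(\bm{x^{*}},\bm{B},\mu,n,p) - \mathbb{E}[\psi(\bm{x^{*}},\bm{B},\bm{G},\bm{\lambda},\mu,n,p)],
\]
where $F$ does not depend on $\bm{\lambda}$. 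Since $\lambda^{(i)} = r^{(i)}\lambda$, the chain rule gives $\frac{d}{d\lambda} = \sum_{i=1}^m r^{(i)} \frac{\partial}{\partial\lambda^{(i)}}$, reducing the task to per-layer computations of $\partial_{\lambda^{(i)}} \mathbb{E}[\psi]$. For each $i$, the only $\lambda^{(i)}$-dependence enters through $\Delta^{(i)}_n = \sqrt{\lambda^{(i)}\widebar{p}^{(i)}_n(1-\widebar{p}^{(i)}_n)/n}$, which appears both explicitly in the log-likelihood ratios inside $\mathcal{H}_{SBM}'$ and implicitly in the Bernoulli law of $G^{(i)}_{kl}$ under the outer expectation.

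Differentiating the log-partition function inside $\psi$ yields a Gibbs average of $\partial_{\Delta^{(i)}_n}\mathcal{H}_{SBM}'$, while differentiating the outer Bernoulli law produces an additional score-function contribution; I combine the two via the Nishimori identity (under which the Gibbs posterior on $\bm{x}$ is distributed as $\bm{x^{*}}$) to rewrite the result as a posterior covariance of $x_k x_l$ under the true observation. I then Taylor-expand the log-likelihood to first order,
\[
\log\frac{\widebar{p}^{(i)}_n + \Delta^{(i)}_n x_k x_l}{\widebar{p}^{(i)}_n + \Delta^{(i)}_n x^*_k x^*_l} = \frac{\Delta^{(i)}_n(x_k x_l - x^*_k x^*_l)}{\widebar{p}^{(i)}_n} + O\!\left(\frac{(\Delta^{(i)}_n)^2}{(\widebar{p}^{(i)}_n)^2}\right),
\]
and similarly for the $(1-G^{(i)}_{kl})$ piece. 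Using $\frac{d\Delta^{(i)}_n}{d\lambda^{(i)}} = \widebar{p}^{(i)}_n(1-\widebar{p}^{(i)}_n)/(2n\Delta^{(i)}_n)$ together with the identity $\sum_{k<l}\mathbb{E}[(x^*_k x^*_l - \mathbb{E}[x^*_k x^*_l\mid\bm{G},\bm{B}])^2] = \frac{n^2}{2}\MMSE_n(\lambda,\mu)$, the leading-order sum collapses to exactly $\frac{n}{4}\MMSE_n(\lambda,\mu)$. The Taylor remainders, controlled via $\mathrm{Var}(\widetilde{G}^{(i)}_{kl}) = \lambda^{(i)}/n$ (see \eqref{eq:G_aux_def}) and standard third-moment bounds for centered Bernoulli variables, give an error of order $\sqrt{r^{(i)}\lambda/(n\widebar{p}^{(i)}_n(1-\widebar{p}^{(i)}_n))}$ per layer; summing with weights $r^{(i)}$ and using $\sum_i r^{(i)} = 1$ then produces the stated bound.

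The main obstacle will be ensuring that the non-Gaussian corrections from the outer Bernoulli measure, after pairing via Nishimori with the Gibbs piece, combine to identify the leading term as the matrix MMSE of $\bm{x^{*}}(\bm{x^{*}})^\top$ under the \emph{original} observation $(\bm{G},\bm{B})$, rather than under a Gaussianized surrogate. This demands careful cancellation of the first-order terms between the two sources of $\Delta^{(i)}_n$-dependence, together with uniform control of the higher-order Taylor remainders on the scale $1/\sqrt{n\widebar{p}^{(i)}_n(1-\widebar{p}^{(i)}_n)}$ set by the standardized Bernoulli variables of \eqref{eq:G_aux_def}; the multi-layer structure poses no new conceptual difficulty beyond the linearity provided by the chain rule.
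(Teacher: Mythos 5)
Your route is genuinely different in organization from the paper's: you differentiate the log-partition representation of Lemma \ref{lem:inf_cov_sbm} directly in $\lambda^{(i)}$ (Gibbs term plus a score term from the $\lambda$-dependent Bernoulli law of $\bm{G}^{(i)}$) and invoke Nishimori, whereas the paper works edge-by-edge through an entropy chain rule (Lemma \ref{lem:lem_diff}), which expresses $\frac{d}{d\lambda}H(\bm{x^{*}}\mid\mathcal{L},\bm{B})$ in terms of the single-edge conditional entropies $H(L^{(i)}_e\mid x_e)$ and $H(L^{(i)}_e\mid \mathcal{L}^{(i)}_{-e},\bm{B})$ and hence in terms of the \emph{leave-one-out} posterior means $\widehat{x}_e(\mathcal{L}^{(i)}_{-e},\bm{B})$.

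There is, however, a concrete gap in your sketch at exactly the point you flag as "the main obstacle." After you differentiate, the Gibbs bracket $\langle x_k x_l\rangle$ is the full-data posterior mean, and it is \emph{correlated} with the very edge variable $L^{(i)}_e$ whose law you are differentiating in the score term. The Nishimori identity by itself only exchanges $\bm{x}$ and $\bm{x^{*}}$ under the joint posterior-times-data law; it does not let you factor $\mathbb{E}\bigl[\ell^{(i)}_e\, x^*_e\,\langle x_e\rangle\bigr]$-type terms into $\mathbb{E}\bigl[\langle x_e\rangle^2\bigr]$, nor does it control the score term $\mathbb{E}\bigl[\partial_\lambda \log \pi^{(i)}(\lambda;L^{(i)}_e,x^*_e)\cdot \log Z\bigr]$, in which $\log Z$ depends on $L^{(i)}_e$. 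What makes both issues tractable is a decoupling (cavity / leave-one-out) step: isolate the dependence of the posterior on the single edge, show it enters only through a factor $b^{(i)}(L^{(i)}_e)$ of size $\sqrt{\lambda r^{(i)}/(n\widebar{p}^{(i)}_n(1-\widebar{p}^{(i)}_n))}$, and conclude that $\widehat{x}_e(\mathcal{L}^{(i)}_{-e},\bm{B})$ and $\widehat{x}_e(\mathcal{L},\bm{B})$ differ by that amount; this is precisely the Bayes-perturbation argument in \eqref{eq:cond_expec_delete_connect}--\eqref{eq:connect_cond_expec}, and it is what produces the $\sqrt{r^{(i)}\lambda/(n\widebar{p}^{(i)}_n(1-\widebar{p}^{(i)}_n))}$ rate in the statement; it is not absorbed into the Taylor remainder of the log-likelihood, which is a separate contribution of the same order. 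Your sketch attributes the entire error to the Taylor remainders and asserts that the leading term "collapses" to $\frac{n}{4}\MMSE_n(\lambda,\mu)$, but $\MMSE_n$ is defined through the \emph{full-data} conditional expectation $\mathbb{E}[x^*_kx^*_l\mid\bm{G},\bm{B}]$, so without the decoupling step (or an equivalent explicit treatment of the score term and of the edge--posterior correlation) the identification of the leading term, and hence the stated bound, is not established. Supplying that step would essentially reproduce the paper's mechanism inside your differentiation scheme.
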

\begin{proof}
See Section \ref{proof_i_mmse}.
\end{proof}
Together with \eqref{eq:MI-equiv}, the above lemma implies
\begin{equation}
\label{eq:I-MMSE-Graph}
\left|\frac{1}{n}\,\frac{dI(\bm{x^{*}};\bm{G},\bm{B})}{d\lambda}
-\frac{1}{4}\,\MMSE_n(\lambda,\mu)\right| \to 0 \quad \mbox{as $n \to \infty$.}
\end{equation}

\section{Asymptotic MMSE in the Gaussian Model}
\label{mmse}
% last modified: 02/18/2021 ZM

In this section, we derive the asymptotic limit of the quantity $\GMMSE_n(\lambda,\mu)$ defined in \eqref{eq:Gmmse}. 
To this end, we 
% construct a sequence of estimates $\widehat{\bm{x}}^{t}$ using approximate message passing such
shall show that, for the AMP iterate $\widehat{\bm{x}}^t$ defined in \eqref{eq:amp_final_estim}, the mean square error in estimating $\bm{x^{*}}(\bm{x^{*}})^\top$ by $\widehat{\bm{x}}^{t}(\widehat{\bm{x}}^{t})^\top$ in the Gaussian observation model \eqref{eq:T} is asymptotically the same as the limit of $\GMMSE_n(\lambda,\mu)$ as $\varepsilon$ goes to zero and $n$, $t$ goes to infinity.
 % where $\GMMSE_n(\lambda,\mu)$ defined in \eqref{eq:Gmmse} with $\lambda$ as in~\eqref{eq:lambda}.
The matrix mean square error in estimating $\bm{x^{*}}(\bm{x^{*}})^\top$ by $\widehat{\bm{x}}^{t}(\widehat{\bm{x}}^{t})^\top$,referred to as $\MSEA_n(t;\lambda,\mu,\varepsilon)$,
is defined by
\begin{equation}
\label{eq:MMSE_AMP_ep}
% \MSEA_n
\MSEA_n(t;\lambda,\mu,\varepsilon) ~=~ \frac{1}{\,n^2}\,\mathbb{E}\left[\|\bm{x^{*}}(\bm{x^{*}})^\top-\widehat{\bm{x}}^{t}(\widehat{\bm{x}}^{t})^\top\|^2_F\right].
\end{equation}
We show that in \revsag{the} ``large $n$, large $t$, small $\varepsilon$" limit 
%\nb{Shall we change this globally? We always take $n\to\infty$ first.},
this sequence of estimators is asymptotically Bayes optimal in the sense that
 % as $t,n \rightarrow \infty$,
 $\MSEA_n(t;\lambda,\mu,\varepsilon)$ converges to 
the same limit as 
$\GMMSE_n(\lambda,\mu)$. 
Hence, from the properties of the AMP iterates that we shall derive in this section, we can characterize the precise limit of $\GMMSE_n(\lambda,\mu)$ 
(and hence of $\MMSE_n(\lambda,\mu)$) as $n \rightarrow \infty$. 

%\nb{add some words on $\varepsilon$?}

As a byproduct, we obtain an explicit formula of the asymptotic limit of the per-vertex mutual information in the Gaussian observation model.
% ``Gaussian+Side Information" model.
By Corollary \ref{cor:inf_combined}, it also gives the asymptotic limit of the per-vertex mutual information in the original model \eqref{eq:def_SBM_cov}--\eqref{eq:def_cov}.
 % where we observe $m$ networks and one data matrix.
% multi-layer Contextual Stochastic Block Model described by $(\bm G, \bm B)$.

\paragraph{State evolution of the AMP iterates}
% \nb{== start here ==}
Recall AMP iterates $\bm u^{t}, \bm x^{t}$ and $\bm v^t$ defined in \eqref{eq:AMP_shift_main_1} and \eqref{eq:AMP_shift_main_1_1}, 
and state evolution \eqref{eq:state_ev_rec}.
% For the update functions we consider $f_t,g_t$ as defined by \eqref{eq:def_f} and \eqref{eq:def_g}, and $\alpha_t,\tau_t,\mu_t,\sigma_t,\beta_t$ and $\vartheta_t$ defined by the state evolution recursions \eqref{eq:state_ev_rec}.
From \eqref{eq:state_ev_rec}, we obtain the following
\begin{align*}
% \[
\frac{\mu^2_{t+1}}{\sigma^2_{t+1}} & = \lambda\left(1-(1-\varepsilon)\smmse\left(\frac{\alpha^2_{t-1}}{\tau^2_{t-1}}+\frac{\mu^2_{t}}{\sigma^2_{t}}\right)\right),\\
% \]
% \[
\frac{\beta^2_{t}}{\vartheta^2_{t}} & = \mu\left(1-(1-\varepsilon)\smmse\left(\frac{\alpha^2_{t-1}}{\tau^2_{t-1}}+\frac{\mu^2_{t}}{\sigma^2_{t}}\right)\right),\\
% \]
% and
% \[
\frac{\alpha^2_t}{\tau^2_t} & = (1-\varepsilon)\frac{\mu}{c} \frac{\beta^2_t}{\beta^2_t+\vartheta^2_t}.
% \]
\end{align*}
Define $\theta_t:=\beta^2_t/\vartheta^2_t$ and $\gamma_t:=\mu^2_t/\sigma^2_t$. Then we have the following
\begin{align}
% \begin{equation}
\label{eq:gamma_t}
\gamma_{t+1} & = \lambda\left(1-(1-\varepsilon)\smmse\left(\gamma_t+(1-\varepsilon)\frac{\mu}{c}\frac{\theta_{t-1}}{1+\theta_{t-1}}\right)\right), \\
% \end{equation}
% and
% \begin{equation}
\label{eq:theta_t}
\theta_{t} & = \mu\left(1-(1-\varepsilon)\smmse\left(\gamma_t+(1-\varepsilon)\frac{\mu}{c}\frac{\theta_{t-1}}{1+\theta_{t-1}}\right)\right).
% \end{equation}
\end{align}
Further, define
\begin{equation}
\label{eq:init_z_t}
z_t = \frac{\gamma_{t+1}}{\lambda} = \frac{\theta_t}{\mu}.
\end{equation}
Then the state evolution recursion reduces to
\begin{equation}
\label{eq:z_t}
z_{t+1} = 1-(1-\varepsilon)\,\smmse\left(\lambda z_t+(1-\varepsilon)\frac{\mu^2}{c}\frac{z_t}{1+\mu z_t}\right).
\end{equation}
%If $\mu=0$, substituting $\theta_t$ defined by \eqref{eq:theta_t} in \eqref{eq:gamma_t} we get,
%\begin{equation}
%\label{eq:z_t}
%\gamma_{t+1} = \lambda\left(1-(1-\varepsilon)\smmse\left(\gamma_t\right)\right).
%\end{equation}
Since the function on the right side of \eqref{eq:z_t} 
%\nb{As a function of which argument? This needs clarification.} 
is concave, increasing monotonically and bounded as a function of $z_t$ (as we shall show later in the proof of Theorem \ref{thm:MSE_AMP_main} in Section \ref{sec:proof-mse-amp}), we have 
\[
z_t \rightarrow z_*(\lambda,\mu,\varepsilon), 
\quad \mbox{as} \quad
t \rightarrow \infty.
\] 
This implies that $z_*(\lambda,\mu,\varepsilon)$ satisfies
\begin{equation}
\label{eq:rec_final}
z_{*}(\lambda,\mu,\varepsilon) = 1-(1-\varepsilon)\smmse\left(\lambda z_*(\lambda,\mu,\varepsilon)+(1-\varepsilon)\frac{\mu^2}{c}\frac{z_*(\lambda,\mu,\varepsilon)}{1+\mu z_*(\lambda,\mu,\varepsilon)}\right).
\end{equation}
% \nb{need to mention that $z_*$ is the largest satisfying the foregoing equation.}

\paragraph{Limit of MMSE}
% With the AMP iterates defined and the asymptotics of its state evolutions investigated, we now the AMP based estimates \eqref{eq:amp_final_estim}.
As a first step, we have the following theorem that characterizes the asymptotics of $\MSEA_n(t;\lambda,\mu,\varepsilon)$. 
%\nb{do we still need $\lambda \neq 0$?}
%where
%\begin{equation}
%\label{eq:MMSE_AMP_ep}
%\MSEA_n(t;\lambda,\mu,\varepsilon) ~=~ \frac{1}{\,n^2}\,\mathbb{E}\left[\|\bm{x^{*}}(\bm{x^{*}})^\top-\widehat{\bm{x}}^{t}(\widehat{\bm{x}}^{t})^\top\|^2_F\right]
%\end{equation}
\begin{thm}
\label{thm:MSE_AMP_main}
Let $\mathsf{MSE^{AMP}_n}(t;\lambda,\mu,\varepsilon)$ be defined as in~\eqref{eq:MMSE_AMP_ep}.  
Then we have
\begin{equation}
\label{eq:MSE_AMP_final_1}
\lim\limits_{n \rightarrow \infty}\mathsf{MSE^{AMP}_n}(t;\lambda,\mu,\varepsilon) = 1-z^2_{t},
\end{equation}
where $z_t$ is defined by \eqref{eq:init_z_t}. 
Taking $t\to\infty$, we have
\begin{equation}
\label{eq:MSE_AMP_final_2}
\lim\limits_{t \rightarrow \infty}\lim\limits_{n \rightarrow \infty} \mathsf{MSE^{AMP}_n}(t;\lambda,\mu,\varepsilon) = 1-z^2_{*}(\lambda,\mu,\varepsilon),
\end{equation}
where $z_{*}(\lambda,\mu,\varepsilon)$ is the largest non-negative solution to \eqref{eq:rec_final}. 
As $\varepsilon \rightarrow 0$, we get
\begin{equation}
\lim\limits_{\varepsilon \rightarrow 0}\lim\limits_{t \rightarrow \infty}\lim\limits_{n \rightarrow \infty} \mathsf{MSE^{AMP}_n}(t;\lambda,\mu,\varepsilon) = 1-z^2_{*}(\lambda,\mu),
\end{equation}
where $z_{*}(\lambda,\mu)$ is the largest non-negative solution to \eqref{eq:zstar}. 
\end{thm}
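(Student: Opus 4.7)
The plan is to proceed in three tiers corresponding to the three displayed limits, each obtained by pushing one parameter at a time.

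\textbf{Finite $t$, $n\to\infty$.} I would first invoke the state evolution for the orchestrated AMP (the content of Section \ref{AMP}), which says that coordinate-wise the pair $(\bm u^{t-1},\bm x^{t-1})$ is asymptotically distributed as $(\alpha_{t-2}X_0+\tau_{t-2}Z_1,\,\mu_{t-1}X_0+\sigma_{t-1}Z_2)$ jointly with $\bm x^*$ and $\bm x_0(\varepsilon)$, in the pseudo-Lipschitz sense. Expanding the Frobenius norm gives
\begin{equation*}
\MSEA_n(t;\lambda,\mu,\varepsilon)
= 1 - \tfrac{2}{n^2}\,\mathbb{E}\bigl[\langle\bm x^*,\widehat{\bm x}^t\rangle^2\bigr] + \tfrac{1}{n^2}\,\mathbb{E}\bigl[\|\widehat{\bm x}^t\|^4\bigr].
\end{equation*}
State evolution together with uniform integrability lets me replace both inner products by expectations against the Gaussian surrogates. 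Because $f_{t-1}$ is by construction the Bayes (conditional mean) denoiser for $X_0$ given the two synthetic Gaussian observations and the $\varepsilon$-revelation, the orthogonality principle gives $\mathbb{E}[X_0 f_{t-1}]=\mathbb{E}[f_{t-1}^2]$. A direct MMSE computation (the $\varepsilon$-revelation trivially contributes mass $\varepsilon$ with zero residual variance) then yields
\begin{equation*}
\mathbb{E}[f_{t-1}^2]=1-(1-\varepsilon)\,\smmse\!\left(\tfrac{\alpha_{t-2}^2}{\tau_{t-2}^2}+\tfrac{\mu_{t-1}^2}{\sigma_{t-1}^2}\right),
\end{equation*}
which by the recursions \eqref{eq:state_ev_rec}--\eqref{eq:init_z_t} is exactly $z_t$ (up to the indexing convention baked into \eqref{eq:amp_final_estim}). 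Combining the two terms and noting their common limit yields \eqref{eq:MSE_AMP_final_1}.

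\textbf{Large $t$ limit.} Here I would analyze the scalar recursion \eqref{eq:z_t} directly. Define
\begin{equation*}
\Phi(z)=1-(1-\varepsilon)\smmse\!\left(\lambda z+(1-\varepsilon)\tfrac{\mu^2}{c}\tfrac{z}{1+\mu z}\right).
\end{equation*}
Since $\eta\mapsto \smmse(\eta)$ is decreasing and convex on $[0,\infty)$ (a standard fact for the scalar Gaussian channel) and $z\mapsto \lambda z+(1-\varepsilon)\frac{\mu^2}{c}\frac{z}{1+\mu z}$ is increasing and concave, $\Phi$ is increasing, concave, and maps $[0,1]$ into $[\varepsilon,1)$. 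Monotonicity of the iterated map on a bounded interval forces $z_t$ to converge monotonically to a fixed point; concavity of $\Phi$ together with $\Phi(0)=\varepsilon>0$ implies the orbit initialised at $z_0=\varepsilon$ is increasing in $t$, so its limit is the largest non-negative fixed point $z_*(\lambda,\mu,\varepsilon)$ of \eqref{eq:rec_final}. Continuity of $(1-z^2)$ then yields \eqref{eq:MSE_AMP_final_2}.

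\textbf{$\varepsilon\downarrow 0$ limit.} Let $\Phi_\varepsilon$ be the map above and $\Phi_0$ its $\varepsilon=0$ counterpart (the RHS of \eqref{eq:zstar}). The family $\{\Phi_\varepsilon\}$ is jointly continuous in $(z,\varepsilon)$ on $[0,1]\times[0,1]$ and monotone in $\varepsilon$; the largest fixed point is an upper semicontinuous function of $\varepsilon$ in this setting, and a standard sandwich argument (the finite-$\varepsilon$ fixed point produces a super-solution for $\Phi_0$ in the limit, while perturbing $z_*(\lambda,\mu)$ produces a sub-solution for $\Phi_\varepsilon$ for small $\varepsilon$) gives $z_*(\lambda,\mu,\varepsilon)\to z_*(\lambda,\mu)$. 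Squaring and subtracting from $1$ delivers the third assertion.

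\textbf{Main obstacle.} The delicate point is step two, namely a clean proof that the iteration lands at the \emph{largest} non-negative root rather than at some smaller fixed point — this is what forces us to initialise with the $\varepsilon$-revelation and to exploit concavity and monotonicity of $\Phi_\varepsilon$ together. Once that is in place, the $\varepsilon\downarrow 0$ continuity is a soft argument and the $n\to\infty$ step is a direct application of the orchestrated-AMP state evolution machinery developed in Section \ref{AMP}; the design of that machinery (to handle the two communicating orbits simultaneously) is the main technical innovation but is invoked here as a black box.
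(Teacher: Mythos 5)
Your proposal is correct and follows essentially the same route as the paper's proof: expand the Frobenius norm, apply the orchestrated AMP state evolution (Theorem \ref{thm:slln_shifted}) to the partially pseudo-Lipschitz functions $x f_{t-1}$ and $f_{t-1}^2$ together with dominated convergence and the conditional-expectation identity to get $1-z_t^2$, then analyze the scalar map $G_\varepsilon$ via monotonicity, concavity and boundedness for the $t\to\infty$ and $\varepsilon\to 0$ limits. The only cosmetic difference is that you invoke convexity of $\smmse$ as a standard fact where the paper cites Lemma 6.1 of \cite{AbbeMonYash} for the concavity of $G_\varepsilon$, which is the same content.
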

\begin{proof}
See Section \ref{sec:proof-mse-amp}.
\end{proof}
Next, we have the following theorem characterizing asymptotic $\GMMSE_n(\lambda,\mu)$ 
and per-vertex mutual information.
\begin{thm}
\label{thm:MMSE_main}
Consider $\mathsf{GMMSE}_n(\lambda,\mu)$ defined in~\eqref{eq:Gmmse}. Then \revsag{for all $\lambda,\mu \ge 0$,}
\begin{equation}
\label{eq:final_mmse_amp}
\lim\limits_{n \rightarrow \infty} \mathsf{GMMSE}_n(\lambda,\mu) = 1-z^2_{*}(\lambda,\mu),
\end{equation}
where $z_{*}(\lambda,\mu)$ is the largest non-negative solution to \eqref{eq:zstar}.
Further,
\begin{equation}
\label{eq:lim_inf}
\lim\limits_{n \rightarrow \infty} \frac{1}{n}I(\tx;\bm{T}(\lambda),\bm{B}) = \xi(z_{*}(\lambda,\mu),\lambda,\mu),
\end{equation}
where $\sMI(\cdot)$ is defined in \eqref{eq:inf_scalar} and
\begin{equation}
\label{eq:def_xi}
\begin{aligned}
\xi(z,\lambda,\mu) & = \frac{\lambda z^2}{4} - \frac{\lambda z}{2} + \frac{\lambda}{4} + \frac{1}{2c}\log(1+\mu z)+\frac{1}{2c}\,\frac{(1+\mu)}{(1+\mu z)}\\
& ~~~~~~~~~+\sMI\left(\lambda z + \frac{\mu^2}{c}\frac{z}{1+\mu z}\right)-\frac{1}{2c}\log(1+\mu)-\frac{1}{2c}.	
\end{aligned}
\end{equation}
\end{thm}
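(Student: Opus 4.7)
The strategy is to combine the AMP state-evolution convergence from Theorem~\ref{thm:MSE_AMP_main} with the I-MMSE identity \eqref{eq:I_MMSE} to first pin down the limiting per-vertex mutual information, and then differentiate it to read off the limit of $\GMMSE_n$. Since $\widehat{\bm{x}}^t(\widehat{\bm{x}}^t)^\top$ is measurable with respect to $(\bm T(\lambda),\bm B,\bm{x}_0(\varepsilon),\bm{v}_0(\varepsilon))$, the definition of the Bayes estimator gives $\GMMSE_n(\lambda,\mu,\varepsilon)\le \MSEA_n(t;\lambda,\mu,\varepsilon)$ for all $t$. Sending $n\to\infty$, $t\to\infty$, then $\varepsilon\to 0$ inside Theorem~\ref{thm:MSE_AMP_main} therefore yields $\limsup_n \GMMSE_n(\lambda,\mu,\varepsilon)\le 1-z_*^2(\lambda,\mu,\varepsilon)$, which tends to $1-z_*^2(\lambda,\mu)$ as $\varepsilon\to 0$.

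To promote this pointwise upper bound to an exact formula for the mutual information, I would integrate it via I-MMSE. The identity \eqref{eq:I_MMSE} gives $\frac{1}{n}I(\tx(\tx)^\top;\bm T(\lambda),\bm B)= \frac{1}{n}I(\tx(\tx)^\top;\bm B) + \frac{1}{4}\int_0^\lambda \GMMSE_n(s,\mu)\,ds$. Combining the AMP bound on the integrand with a direct evaluation of $\frac{1}{4}\int_0^\lambda (1-z_*^2(s,\mu))\,ds$ using the scalar I-MMSE identity $\tfrac{d}{d\eta}\sMI(\eta)=\tfrac12\smmse(\eta)$ and the fixed-point equation \eqref{eq:zstar}, the right-hand side equals exactly $\xi(z_*(\lambda,\mu),\lambda,\mu)-\xi(z_*(0,\mu),0,\mu)$. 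The $\lambda=0$ baseline $\frac{1}{n}I(\tx(\tx)^\top;\bm B)$ converges to $\xi(z_*(0,\mu),0,\mu)$: this is the $\lambda=0$ instance of the same claim, which reduces to a rank-one Gaussian spiked-matrix model handled by the $\bm B$-branch of \eqref{eq:AMP_shift_main_1} alone. Using \eqref{eq:I-MMSE-lim-Y} to move between $I(\tx;\cdot)$ and $I(\tx(\tx)^\top;\cdot)$, this produces $\limsup_n \frac{1}{n}I(\tx;\bm T(\lambda),\bm B)\le \xi(z_*(\lambda,\mu),\lambda,\mu)$. For the reverse inequality, I would exploit concavity of $\lambda\mapsto\frac{1}{n}I(\tx(\tx)^\top;\bm T(\lambda),\bm B)$ (an I-MMSE consequence, since $\GMMSE_n$ is nonincreasing in $\lambda$) together with the matched endpoint at $\lambda=0$, forcing equality on $(0,\lambda)$ off a null set. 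This establishes \eqref{eq:lim_inf}.

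With \eqref{eq:lim_inf} in hand, \eqref{eq:final_mmse_amp} follows by differentiating $\xi(z_*(\lambda,\mu),\lambda,\mu)$ in $\lambda$ and reading off the GMMSE limit via I-MMSE. The key calculation is an envelope identity: one verifies $\partial_z \xi(z,\lambda,\mu)\big|_{z=z_*}=0$ by substituting $\smmse(\eta_*)=1-z_*$ from \eqref{eq:zstar}, so that the chain-rule contribution through $z_*(\lambda,\mu)$ drops and only the explicit $\partial_\lambda$ piece survives. A short calculation then gives $4\,\partial_\lambda \xi(z_*,\lambda,\mu)=1-z_*^2(\lambda,\mu)$, as required. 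The main technical obstacle I anticipate lies in the matching step for the mutual information: the AMP only certifies that $z_*$ is \emph{some} fixed point of state evolution reachable from zero, while the theorem designates it as the \emph{largest} nonnegative fixed point. Closing this gap relies on the monotone, concave structure of the scalar recursion \eqref{eq:rec_final} inherited from the reduction of the coupled multi-orbit recursion \eqref{eq:state_ev_rec}, together with careful control of the three limits $n\to\infty$, $t\to\infty$, $\varepsilon\to 0$ via uniform Lipschitz estimates for the mutual information in $\varepsilon$ (the side-information entropy contributes at most $O(\varepsilon)$ per vertex).
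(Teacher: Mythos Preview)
Your overall architecture---AMP upper bound on $\GMMSE_n(\lambda,\mu,\varepsilon)$, integration via the I-MMSE identity \eqref{eq:I_MMSE}, evaluation of the $\lambda=0$ baseline, and the envelope calculation $\partial_z\xi(z,\lambda,\mu)\big|_{z=z_*}=0$---matches the paper's and is sound. The genuine gap is in your lower-bound step for the mutual information. Concavity of $\lambda\mapsto \tfrac1n I(\tx(\tx)^\top;\bm T(\lambda),\bm B)$ together with a single matched endpoint at $\lambda=0$ does \emph{not} force equality with $\xi(z_*(\lambda,\mu),\lambda,\mu)$: two concave functions agreeing at one point with one lying below the other can remain strictly separated for all larger $\lambda$ (take $f(\lambda)=\lambda/2$ versus $g(\lambda)=\lambda$). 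So from your argument you only obtain $\limsup_n \tfrac1n I\le \xi$, never the reverse inequality, and hence never \eqref{eq:final_mmse_amp}.

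The paper closes the sandwich differently: it matches at a \emph{second} endpoint, $\lambda\to\infty$. One checks directly that $\lim_{\lambda\to\infty}\lim_n \tfrac1n I(\tx(\tx)^\top;\bm T(\lambda),\bm B)=\log 2$ and that $\lim_{\lambda\to\infty}\xi(z_*(\lambda,\mu),\lambda,\mu)=\log 2$ as well. Writing $\log 2-\upkappa(\mu,\gamma_*)$ for both the left and right ends of the chain
\[
\liminf_n\int_0^\infty \tfrac14\,\GMMSE_n(u,\mu)\,du \;\le\; \int_0^\infty \tfrac14\bigl(1-z_*^2(u,\mu)\bigr)\,du
\]
forces equality of the integrals over $(0,\infty)$; since the integrands are ordered pointwise (from the AMP bound after $\varepsilon\to 0$) and monotone in $\lambda$, this yields the pointwise limit $\lim_n\GMMSE_n(\lambda,\mu)=1-z_*^2(\lambda,\mu)$. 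The mutual-information limit \eqref{eq:lim_inf} then follows by integrating back from the now-known $\lambda=0$ value $\xi(\gamma_*,0,\mu)$, exactly via your envelope identity. In short: replace your concavity step with the $\lambda\to\infty$ matching, and the rest of your outline goes through.
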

\begin{proof}
See Section \ref{proof_thm_5_2}.
\end{proof}
Finally, as an immediate corollary, we obtain the following limit for per-vertex mutual information in the original observation model.
\begin{cor}
\label{cor:asym_mutual_inf}
Consider $\bm{G}$ defined by~\eqref{eq:def_SBM_cov}. 
Then we have the following
% the following identities
\[
\lim\limits_{n \rightarrow \infty}\frac{1}{n}I(\bm{x^{*}};\bm{G},\bm{B}) = 
\xi(z_*(\lambda,\mu),\lambda,\mu),
\]
where $z_*(\lambda,\mu)$ is the largest non-negative solution to \eqref{eq:zstar}, and $\xi(z,\lambda,\mu)$ is defined by \eqref{eq:def_xi}.
\end{cor}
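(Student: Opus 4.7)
The proof is essentially a one-line reduction that combines two already-proved results, so my plan is simply to assemble them carefully.

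First, I would invoke Corollary \ref{cor:inf_combined}, which gives the quantitative bound
\[
\left|\frac{1}{n}I(\bm{x}^{*};\bm{T}(\lambda),\bm{B})-\frac{1}{n}I(\bm{x}^{*};\bm{G},\bm{B})\right| \le O\left(\sum_{i=1}^{m}\frac{(\lambda^{(i)})^{3/2}}{\sqrt{n\bar{p}^{(i)}_n(1-\bar{p}^{(i)}_n)}}\right),
\]
valid under the diverging average degree hypothesis $n\bar{p}^{(i)}_n(1-\bar{p}^{(i)}_n) \to \infty$ for each $i \in [m]$ (condition \eqref{eq:degreelimit}). Since $m$ is fixed, $\lambda^{(i)} \to \lambda^{(i)} \in (0,\infty)$ by \eqref{eq:lambdalimit}, and each denominator diverges, the right-hand side vanishes as $n \to \infty$. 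Hence
\[
\lim_{n \to \infty} \frac{1}{n} I(\bm{x}^{*};\bm{G},\bm{B}) = \lim_{n \to \infty} \frac{1}{n} I(\bm{x}^{*};\bm{T}(\lambda),\bm{B}),
\]
provided the latter limit exists.

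Second, the latter limit is supplied exactly by equation \eqref{eq:lim_inf} of Theorem \ref{thm:MMSE_main}, which asserts
\[
\lim_{n \to \infty} \frac{1}{n} I(\bm{x}^{*};\bm{T}(\lambda),\bm{B}) = \xi\bigl(z_{*}(\lambda,\mu),\lambda,\mu\bigr),
\]
where $\xi$ is defined in \eqref{eq:def_xi} and $z_*(\lambda,\mu)$ is the largest non-negative solution to \eqref{eq:zstar}. Chaining the two displays yields the claimed equality for the original model $\{\bm{G},\bm{B}\}$.

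There is no real obstacle here; the entire substance of the proof has been absorbed into the preceding sections. The nontrivial work was done earlier: Lindeberg interpolation in Section \ref{inf_sec} to replace the SBM channel by a Gaussian channel while controlling the mutual information gap, and the orchestrated AMP analysis in Section \ref{mmse} to identify the Gaussian-channel limit with $\xi(z_*(\lambda,\mu),\lambda,\mu)$. The corollary merely records the composition of these two reductions, so the proof reduces to citing Corollary \ref{cor:inf_combined} and Theorem \ref{thm:MMSE_main} and observing that the error bound vanishes in the asymptotic regime of interest.
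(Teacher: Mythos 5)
Your proposal is correct and is exactly the paper's argument: the paper's proof of this corollary is precisely the combination of Corollary \ref{cor:inf_combined} (whose error bound vanishes under \eqref{eq:degreelimit}) with the mutual-information limit \eqref{eq:lim_inf} from Theorem \ref{thm:MMSE_main}. Nothing further is needed.
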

\begin{proof}
Follows from Corollary \ref{cor:inf_combined} and Theorem \ref{thm:MMSE_main}.
\end{proof}
\section{Proof of Theorem~\ref{thm:main_thm_mmse}}
\label{Main_MMSE}
% modified 02/18/2021 by zm

With all the ingredients collected in Sections \ref{inf_sec}--\ref{mmse}, we now give a formal proof of Theorem \ref{thm:main_thm_mmse} according to the outline laid out in Section \ref{sec:outline}. 

Using (263) of \cite{AbbeMonYash}, we get
\[
\lim_{n \rightarrow \infty}\left[\frac{1}{n}I\left(\tx; \bm{T}(\lambda), \bm{B}\right) - \frac{1}{n}I\left(\tx(\tx)^\top; \bm{T}(\lambda), \bm{B}\right)\right] = 0.
\]
Now using the same arguments as those in Section \ref{I_MMSE},
we have
\begin{equation}
\begin{aligned}
\label{eq:immse}
& \lim_{n \rightarrow \infty}
\frac{1}{n}\left(I(\bm{x^{*}};\bm{T}(\lambda_1),\bm{B})-I(\bm{x^{*}};\bm{T}(\lambda_2),\bm{B})\right)\\
& =\lim_{n \rightarrow \infty}\frac{1}{n}\left(I(\tx(\tx)^\top; \bm{T}(\lambda_1),\bm{B})
-I(\tx(\tx)^\top;\bm{T}(\lambda_2),\bm{B})\right)\\
& =\lim_{n \rightarrow \infty}\int_{\lambda_1}^{\lambda_2}\frac{1}{4}\GMMSE_{n}(\theta,\mu)d\theta	
\end{aligned}
\end{equation}
where 
$\GMMSE_n(\theta,\mu)=\frac{1}{n^2}\mathbb{E}\|\bm{x^{*}}(\bm{x^{*}})^\top-\mathbb{E}\left[\bm{x^{*}}(\bm{x^{*}})^\top|\bm{T}(\theta),\bm{B}\right]\|^2_F$ 
and $\bm{T}(\theta)$ are defined in~\eqref{eq:T}.

% Next consider $\MMSE_n(\bm{\lambda},\mu)$ defined in~\eqref{eq:MMSE_cov_graph}.
Fix a set of $r^{(i)}$'s defined in \eqref{eq:lambdaratio}.
For any $\theta > 0$, we can write $\MMSE_n(\theta,\mu)$ for $\MMSE_n(\bm{\theta},\mu)$ where the $i$th element of $\bm{\theta}$ is $\theta r^{(i)}$.
Using Lemma~\ref{lem:diff_cov_inf} and \eqref{eq:degreelimit}, we get for all finite $\lambda_1$ and $\lambda_2$
%\nb{need more explanation here!}
\begin{equation}
\lim_{n \rightarrow \infty}\int_{\lambda_1}^{\lambda_2}\frac{1}{4}\MMSE_n(\theta,\mu)d\theta
=\lim_{n \rightarrow \infty}\frac{1}{n}\left(I(\bm{x^{*}}; \bm{G}(\lambda_2),\bm{B})
-I(\bm{x^{*}}; \bm{G}(\lambda_1),\bm{B})\right)
\end{equation}
where $I(\bm{x^{*}}; \bm{G}(\lambda),\bm{B})$ refers to the mutual information between $\bm{x^{*}}$ and $(\bm{G},\bm{B})$.
Then using Corollary~\ref{cor:inf_combined} and~\eqref{eq:immse} we get for all $\lambda \ge 0$ 
and $\mu\geq 0$
%\begin{equation}
%\lim_{n \rightarrow \infty}\frac{1}{n}\left(I(\mathbf{X},\mathbf{G}(\lambda_2))-I(\mathbf{X},\mathbf{G}(\lambda_1))\right)=\lim_{n \rightarrow \infty}\frac{1}{n}\left(I(\mathbf{X},Y^{*}(\lambda_1))-I(\mathbf{X},Y^{*}(\lambda_2))\right)
%\end{equation}
%This implies
\[
\lim_{n \rightarrow \infty}\GMMSE_n(\lambda,\mu) = \lim_{n \rightarrow \infty}\MMSE_n(\lambda,\mu).
\]
Now using Theorem~\ref{thm:MMSE_main} we get
\begin{equation}
\label{eq:MMSE_connect}
\lim_{n \rightarrow \infty}\MMSE_n(\lambda,\mu)=1-z^2_*(\lambda,\mu),
\end{equation}
where $z_*(\lambda,\mu)$ is the largest non-negative solution to~\eqref{eq:zstar}. Define
\[
G(z) = 1-\smmse\left(\lambda z + \frac{\mu^2}{c} \;\frac{z}{1+\mu z}\right),
\]
and
\[
S(z) = 1-\smmse(z).
\]
Then using Lemma 6.1 
%\nb{I do not find Remark 6.1 there. Only Lemma 6.1.}
of \cite{AbbeMonYash} we get that $G$ is increasing, concave, $G(0)=0$ and there is an unique positive solution of \eqref{eq:zstar} if and only if
\[
G^\prime(0)= \left(\lambda + \frac{\mu^2}{c}\right)S^\prime(0) = \lambda + \frac{\mu^2}{c} > 1.
\]
In this case, $z_*(\lambda,\mu) < 1$ by its definition in \eqref{eq:zstar}. Otherwise, if $\lambda + \mu^2/c \le 1$, then the only non-negative solution to \eqref{eq:zstar} is $0$. 

Since the foregoing arguments hold for any fixed $r^{(1)},\dots, r^{(m)}$, this implies if $\lambda + \mu^2/c \le 1$
\begin{equation}
\lim\limits_{n \rightarrow \infty}\MMSE_n(\bm\lambda,\mu) =1,
\end{equation}
and if $\lambda + \mu^2/c > 1$
\begin{equation}
\lim\limits_{n \rightarrow \infty}\MMSE_n(\bm\lambda,\mu) <1.
\end{equation}
This completes the proof.

%\nb{Shall we move the proof of Theorem \ref{thm:main_thm_mmse} here?}
\section{Orchestrated Approximate Message Passing}
\label{AMP}
% last modified 02/19/2021 zm

This section collects the key results on the SLLN type behavior of the orchestrated AMP iterates with multiple parallel orbits.
These results are used to derive the properties of the sequence of estimators $\widehat{\bm x}^t$ in Section \ref{mmse}, and they are potentially of independent interest.
Although we focus on the case of two orbits in this section, the arguments could be extended to more than two orbits.

To fully accommodate the $\varepsilon$-revelation approach we have taken in \eqref{eq:AMP_shift_main_1}--\eqref{eq:AMP_shift_main_1_1}, we need to introduce some additional technicalities for the function classes that we establish convergence results on.
The details are spelled out in Section \ref{sec:func-class}.
The SLLN-type behavior of the iterates in AMP without and with signal is established in Sections \ref{sec:amp-side} and \ref{sec:amp-side-signal}, respectively.

\subsection{Partially Pseudo-Lipschitz and Partially Lipschitz Functions}
\label{sec:func-class}

Traditionally, while analyzing the convergence of the AMP iterates, one considers pseudo-Lipschitz functions \cite{BM11journal,JM12,AbbeMonYash}. 
However, 
% this is a quite strong assumption and
many update functions which are intuitive may not belong to this function class.
% satisfy this requirement. 
For example, in our case, \revsag{while the sequence of update functions $f_t$ (in \eqref{eq:AMP_shift_main_1} and \eqref{eq:AMP_shift_main_1_1}) are pseudo-Lipschitz, the functions $g_t$ are not. 
Fortunately, the asymptotics of the AMP iterates that we have designed can be analyzed with a weaker 
requirement}
% assumption 
in the update functions. 

To motivate our definition, observe that $g_t:\mathbb{R}^2 \rightarrow \mathbb{R}$ in \eqref{eq:def_g} is given by
\begin{equation}
	\label{eq:def_g_explicit}
g_t(x,z) = \begin{cases}\frac{\beta_t}{\beta^2_t+\vartheta^2_t}x & \mbox{if $z=0$,}\\ z & \mbox{if $z \neq 0$.} \end{cases}
\end{equation} 
This function is discontinuous at $(x,0)$ for all $x \in \mathbb{R}\setminus\{0\}$. 
 Hence, it is not pseudo-Lipschitz. 
However, if we fix the last argument and view the function as only a function of the remaining arguments, then it becomes pseudo-Lipschitz.
%However, the pseudo-Lipschitz requirement is violated only at a lower dimensional subspace $\{(x,z):z=0\}$. 
%Moreover, if we consider the mapping $z \mapsto g_t(0,z)$, the resulting function is Lipschitz continuous in $z$. 
Such functions are sufficiently smooth for the AMP iterates to behave properly in the asymptotic regime. 
\revsag{In view of this} we define the following {\it partially pseudo-Lipschitz} functions.
%\nb{Shall we switch to ``partially pseudo-Lipschitz''? 20210318}
{ \begin{defn}
Let $\bm{a}=(a_1,\ldots,a_{k})^\top$, $\bm{b}=(b_1,\ldots,b_{k})^\top$, and 
$z \in \mathbb{R}$. 
A function $\varphi:\mathbb{R}^{k+1} \rightarrow \mathbb{R}$ is called 
\emph{partially pseudo-Lipschitz} 
if there is an absolute constant $C>0$ such that for all $\bm a, \bm b \in \mathbb{R}^k$ and $z \in \mathbb{R}$,
% it satisfies,
\begin{equation}
\label{eq:varphi}
|\varphi(\bm a, z)-\varphi(\bm b, z)| \le C\left(1+\sum\limits_{i=1}^{k}|a_i|+\sum\limits_{i=1}^{k}|b_i|+|z|\right)\|\bm a -\bm b\|.
\end{equation}
Further there exists $C_1>0$, {such that for all $z \in \mathbb{R}$,}
\begin{equation}
\label{eq:varphi_1}
|\varphi(\bm 0, z))| \le C_1\left(1+|z|^2\right).
\end{equation}
% for some absolute constant $C>0$ free of $x$
%and for all $z_1,z_2 \in \mathbb{R}$,
%\begin{equation}
%\label{eq:varphi_1}
%|\varphi(\bm 0, z_1)-\varphi(\bm 0,z_2)| \le C\left(1+|z_1|+|z_2|\right)|z_1-z_2|.
%\end{equation}
%\nb{this definition is problematic!}
\end{defn}}
In a partially pseudo-Lipschitz function,
the first $k$ variables are the main {\it variables}, and the last is called the {\it offset variable}. 

Similar to pseudo-Lipschitz functions, 
partially pseudo-Lipschitz functions also 
form a function class on which one has the desired SLLN type behavior.
In the same spirit, we define \emph{partially Lipschitz} functions as follows.
%\nb{Switch to ``partially Lipschitz''? 20210318} 
{ 
\begin{defn}
Consider $\bm{a}=(a_1,\ldots,a_{k})^\top$, $\bm{b}=(b_1,\ldots,b_{k})^\top$ and $z \in \mathbb{R}$. 
A function $f:\mathbb{R}^{k+1} \rightarrow \mathbb{R}$ is called 
\emph{partially Lipschitz} if there is an absolute constant $C > 0$ such that for all $\bm a, \bm b \in \mathbb{R}^k$ and $z \in \mathbb{R}$,
\begin{equation}
% \label{eq:varphi}
|f(\bm a, z)-f(\bm b, z)| \le C\|\bm a -\bm b\|.
\end{equation}
Further there exists $C_1>0$, {such that for all $z \in \mathbb{R}$,}
\begin{equation}
\label{eq:varphi_2}
% for some absolute constant $C>0$ free of $x$
%and for all $z_1,z_2 \in \mathbb{R}$, 
%% we have,
%\begin{equation}
%% \label{eq:varphi_1}
|f(\bm 0, z)| \le C_1(1+|z|).
\end{equation}
%\nb{this definition is problematic!}
\end{defn}}
\begin{rem} 
All pseudo-Lipschitz functions are partially pseudo-Lipschitz. 
This implies that all Lipschitz functions are partially pseudo-Lipschitz. 
Furthermore, if ${ f(x_1,\ldots,x_{k},z)}:\mathbb{R}^{k+1}\rightarrow \mathbb{R}$, is Lipschitz, then the functions $f^2$ and $x_if$ for $i \in [k]$ are partially pseudo-Lipschitz. 
For two Lipschitz functions $f,g:\mathbb{R}^{k+1}\rightarrow \mathbb{R}$, the function $fg$ is partially pseudo-Lipschitz. 
%\nb{This is a bit too vague. Do $f$ and $g$ have the same set of arguments? 20210401zm}
Finally, by Lemma \ref{lem:Lipschitz},
% \nb{the lemma does not really address the last argument}, 
the sequence of functions $f_t(x,y,z)$ defined by \eqref{eq:def_f} is Lipschitz. 
Hence, $f^2_t(x,y,z)$, $xf_t(x,y,z)$, $yf_t(x,y,z)$ and $f_t(x,y,z)f_s(x,y,z)$ are all partially pseudo-Lipschitz. 
The same is true for $\partial f_t / \partial x$ and $\partial f_t/\partial y$ for all $t$, as they are Lipschitz continuous.
\end{rem}
{ Next we observe the following properties of partially Lipschitz and partially pseudo-Lipschitz functions.
\begin{lem}
\label{lem:prop_pl}
Consider two partially Lipschitz functions $f,g:\mathbb{R}^{k+1} \rightarrow \mathbb{R}$. Then they satisfy the following properties.
\begin{enumerate}
\item The function $h(x_1,\ldots,x_k,z)=f(x_1,\ldots,x_k,z)g(x_1,\ldots,x_k,z)$ is partially pseudo-\newline Lipschitz.
\item Consider a random variable $X$ with finite expectation.
For any fixed $x_1,\ldots,x_{r-1},\newline x_{r+1},\ldots,x_k$, let \[H(x_1,\ldots,x_{r-1},x_{r+1},\ldots,x_k,z)=\mathbb{E}_{X}\left\{\phi(x_1,\ldots,x_{r-1},X,x_{r+1},\ldots,x_k,z)\right\},\] where $\phi$ is partially pseudo-Lipschitz. Then the function $H:\mathbb{R}^k\to \mathbb{R}$ is partially pseudo-Lipschitz.
\end{enumerate}
\end{lem}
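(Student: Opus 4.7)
The plan is to verify the two defining inequalities of partially pseudo-Lipschitz functions for each construction in turn, leveraging only the partially Lipschitz/partially pseudo-Lipschitz hypotheses stated for the inputs.

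\textbf{Part 1 (product of partially Lipschitz functions).} First I would extract a pointwise growth bound from the partially Lipschitz property: since $|f(\bm a, z) - f(\bm 0, z)| \le C \|\bm a\|$ and $|f(\bm 0, z)| \le C_1(1+|z|)$, the triangle inequality gives $|f(\bm a, z)| \le C_1(1+|z|) + C\|\bm a\|$, and likewise for $g$. Next I would apply the standard "add and subtract" identity
\[
h(\bm a, z) - h(\bm b, z) = f(\bm a, z)\bigl[g(\bm a, z) - g(\bm b, z)\bigr] + g(\bm b, z)\bigl[f(\bm a, z) - f(\bm b, z)\bigr],
\]
bound each difference by the Lipschitz constant times $\|\bm a - \bm b\|$, and bound the prefactors $|f(\bm a,z)|, |g(\bm b,z)|$ by the growth bound above. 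Absorbing absolute constants, this yields $|h(\bm a,z) - h(\bm b,z)| \le C'(1 + \sum|a_i| + \sum|b_i| + |z|)\|\bm a - \bm b\|$. For the offset inequality \eqref{eq:varphi_1}, I would use $|h(\bm 0, z)| = |f(\bm 0, z)||g(\bm 0, z)| \le C_1 C_1' (1+|z|)^2 \le 2 C_1 C_1' (1 + |z|^2)$.

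\textbf{Part 2 (expectation over one argument).} Write $\bm a' = (a_1,\dots,a_{r-1},a_{r+1},\dots,a_k)$ and let $\widetilde{\bm a}(X)$ denote the $k$-vector obtained by inserting $X$ in the $r$-th coordinate; define $\widetilde{\bm b}(X)$ analogously. I would then pass the difference into the expectation and apply the partially pseudo-Lipschitz bound for $\phi$ pointwise in $X$:
\[
|H(\bm a', z) - H(\bm b', z)| \le \mathbb{E}\bigl[C\bigl(1 + \textstyle\sum_{i \neq r}|a_i| + \sum_{i \neq r}|b_i| + 2|X| + |z|\bigr)\bigr]\|\bm a' - \bm b'\|,
\]
since $\|\widetilde{\bm a}(X) - \widetilde{\bm b}(X)\| = \|\bm a' - \bm b'\|$ is independent of $X$. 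Since $\mathbb{E}|X| < \infty$, the term $2\mathbb{E}|X|$ is absorbed into the constant, delivering \eqref{eq:varphi} for $H$. For the offset inequality, I would write $|H(\bm 0', z)| \le |\phi(\bm 0, z)| + \mathbb{E}|\phi(\widetilde{\bm 0}(X), z) - \phi(\bm 0, z)|$, bound the first term by $C_1(1+|z|^2)$ and the second, using the partially pseudo-Lipschitz property with $\bm a = \widetilde{\bm 0}(X)$, $\bm b = \bm 0$, by $C\,\mathbb{E}[(1 + |X| + |z|)|X|]$; after $|X||z| \le \tfrac12(X^2 + z^2)$, this is at most $C''(1 + |z|^2)$.

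\textbf{Expected obstacle.} The computation itself is essentially bookkeeping; the only genuinely delicate point is the moment requirement on $X$ in Part 2. To control the $(1+|X|+|z|)|X|$ term needed for \eqref{eq:varphi_1} one really needs $\mathbb{E}X^2 < \infty$, whereas the statement only asserts finite expectation. I would either strengthen the hypothesis to finite second moment (which is satisfied in every application of the lemma in the paper, since the random variables $X$ arising from the state evolution \eqref{eq:state_ev_rec} are Gaussian or Rademacher) or note explicitly that the second moment is used. Aside from this clarification, both parts are straightforward consequences of unwinding the definitions.
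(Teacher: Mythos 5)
Your proof follows essentially the same route as the paper's: the same growth bound and add-and-subtract product decomposition in Part~1, and in Part~2 the same device of passing the difference inside the expectation and splitting $|H(\bm 0,z)|\le |\phi(\bm 0,z)|+\mathbb{E}\left|\phi(0,\ldots,0,X,0,\ldots,0,z)-\phi(\bm 0,z)\right|$ for the offset bound. Your observation about moments is well taken: the paper's own estimate $\mathbb{E}_X\big[(1+|X|+|z|)|X|\big]$ likewise requires $\mathbb{E}X^2<\infty$ even though the lemma only assumes finite expectation, but this is harmless since every application in the paper involves $X$ with all moments finite.
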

\begin{rem}
Recall that $g_t(x,z)$ defined in \eqref{eq:def_g} satisfies \eqref{eq:def_g_explicit}.
% , then,
% \[
% g_t(x,z)=\begin{cases}\frac{\beta_t}{\beta^2_t+\vartheta^2_t}x & \mbox{if $z = 0$,}\\
% z & \mbox{if $z \neq 0$.}
% \end{cases}
% \]
It is straightforward to check that $g_t$'s are partially Lipschitz. 
As all partially Lipschitz functions are partially pseudo-Lipschitz,  $g_t$'s are partially pseudo-Lipschitz. Further, $\partial g_t/\partial x$'s are also partially Lipschitz for all $t$ and hence partially pseudo-Lipschitz. 
Furthermore, $g^2_t(x,z)$, $xg_t(x,z)$, and $g_t(x,z)g_s(x,z)$ are all partially pseudo-Lipschitz.
\end{rem}}
%Henceforth, we shall slightly abuse notation to refer a function {\it pseudo-Lipschitz with respect to the first $k$ arguments} as a {\it pseudo-Lipschitz} function where $k$ should be understood from the context. 
%Let us define, $\phi:(\mathbb{R}^{k+1})^n \rightarrow \mathbb{R}$ as
%\begin{equation}
%\label{eq:phi}
%\phi(\bm x_1, \ldots, \bm x_n)=\frac{1}{n}\sum\limits_{i=1}^{n}\varphi(x_{i,1},\ldots,x_{i,k+1}).
%\end{equation}
%Then we have the following lemma characterizing the {\it pseudo-Lipschitz} behavior of $\phi$.
%\begin{lem}\bm{\xi}
%\label{lem:lem_5_1}
%Consider $\varphi$ defined in ~\eqref{eq:varphi} and $\phi$ defined in ~\eqref{eq:phi}. Then we have $C_1 > 0$ such that for all 
%\begin{equation}
%	\label{eq:pseudo-Lip-vec}
%|\phi(\bm{x})-\phi(\bm{y})| \le C_1\,\left(1+\frac{\|\bm{x}\|}{\sqrt{n}}+\frac{\|\bm{y}\|}{\sqrt{n}}\right)\frac{\|\bm{x}-\bm{y}\|}{\sqrt{n}}.	
%\end{equation}
%\end{lem}

\subsection{Orchestrated AMP with Mean Zero Gaussian Sensing Matrices}
% \sout{Gaussian Side Information}}
\label{sec:amp-side}
Let $\bm{L}$ be a $p \times n$ random matrix where 
\begin{equation}
\label{eq:L-def}
L_{ij} \overset{iid}{\sim} N(0,1/p),
\end{equation}
and let $\bm{N}$ be a scaled GOE($n$) matrix where 
\begin{equation}
\label{eq:N-def}
\mbox{$N_{ii}\stackrel{iid}{\sim} N(0,2/n)$ and $N_{ij} = N_{ji}\stackrel{iid}{\sim} N(0,1/n)$ when $i\neq j$. }
\end{equation}
In addition, assume that $\bm{L}$ and $\bm{N}$ are mutually independent. We want to construct two orchestrated AMP orbits based on the matrices 
$\bm L$ and $\bm N$
% $\bm A$ 
% and $\bm B$ 
with information sharing between them in each iteration.

\paragraph{Construction of orchestrated AMP orbits}
% \sout{AMP iterates}} 
Consider a sequence of update functions $\sff_t:\mathbb{R}^4 \rightarrow \mathbb{R}$, where for all integers $t \ge 0$, 
\begin{equation}
	\label{eq:ft-req}
\begin{aligned}
&\mbox{${ \sff_t}$'s are partially Lipschitz and their partial derivatives
with respect to}\\
&\mbox{the first two variables are also partially Lipschitz.}	
\end{aligned}
\end{equation}
Let us consider another sequence of update functions ${\sfg_t}:\mathbb{R}^3 \rightarrow \mathbb{R}$, where for any integer $t \geq 0$, 
\begin{equation}
\label{eq:gt-req}
\begin{aligned}
&\mbox{${\sfg_t}$'s are partially Lipschitz and their partial derivatives with respect to}\\
&\mbox{the first argument are also partially Lipschitz.}
\end{aligned}
\end{equation}
In addition, let ${ \sff_{-1}}$ and ${ \sfg_{-1}}$ be zero functions. 
% \nb{I am afraid that we are switching the meanings of $f_t$ and $g_t$ too frequently in this section, and it could be confusing to the readers.
% How about keeping $f_t,g_t$ here, and using $\sff_t, \sfg_t$ [mathsf fonts] in AMPs with rank one perturbation?
% Depending on the amount of work, we can go the other way round.
% 202010405zm}

Starting with $\bm{h}^0=\bm{y}^0=0$, 
we consider the following two AMP orbits:
%\nb{== start here == 20210318}

\begin{equation}
\label{eq:orig_AMP}	
\begin{aligned}
\bm{b}^{t} &= \bm{L} {\sff_{t}}(\bm{h}^t,\bm{y}^t,\bm{\xi}_0,\bm{x}_0)
-p_{t}{\sfg_{t-1}}(\bm{b}^{t-1},\bm{\omega}_0,\bm{v}_0),\\
\bm{h}^{t+1} &= \bm{L}^\top {\sfg_t}(\bm{b}^t,\bm{\omega}_0,\bm{v}_0)-c_t{\sff_{t}}(\bm{h}^t,\bm{y}^t,\bm{\xi}_0,\bm{x}_0),\\
\end{aligned}
\end{equation}
and
\begin{equation}
\label{eq:orig_AMP_1}
\bm{y}^{t+1} = \bm{N} {\sff_{t}}(\bm{h}^t,\bm{y}^t,\bm{\xi}_0,\bm{x}_0)-d_{t}{\sff_{t-1}}(\bm{h}^{t-1},\bm{y}^{t-1},\bm{\xi}_0,\bm{x}_0),
\end{equation}

where $\bm{\xi}_0 = (\xi_{0,1},\ldots,\xi_{0,n})^\top$ and $\bm{x}_0 = (x_{0,1},\ldots,x_{0,n})^\top$ with 
$(\xi_{0,i},x_{0,i}) \stackrel{iid}{\sim} P_{\xi,x}$
which has a finite second moment.
% where both the marginals have finite { second} moments .
% for all $i \in [n]$. Further for all $i \in [n]$, we have,
%and
%\begin{align}
%\label{eq:assn_5}
%\mathbb{E}[|\xi_{0,i}||x_{0,i}] & \le D_1(1+|x_{0,i}|),\\
%% \end{equation}
%% and
%% \begin{equation}
%\label{eq:assn_6}
%\mathbb{E}[|\xi_{0,i}|^2|x_{0,i}] & \le D_2(1+|x_{0,i}|^2),
%\end{align}
%almost surely for some positive constants $D_1$ and $D_2$.
Similarly $\bm{\omega}_0 = (\omega_{0,1},\ldots,\omega_{0,p})^\top$ and $\bm{v}_0 = (v_{0,1},\ldots,v_{0,p})^\top$ with $(\omega_{0,j},v_{0,j}) \stackrel{iid}{\sim} P_{\omega,v}$
{which also has finite second moment}.
 % where both the marginals have finite { second} moments.
%% . Further for all $j \in [p]$, we have,
%\begin{align}
%\label{eq:assn_7}
%\mathbb{E}[|\omega_{0,j}||v_{0,j}] & \le L_1(1+|v_{0,j}|),\\
%% \end{equation}
%% and
%% \begin{equation}
%\label{eq:assn_8}
%\mathbb{E}[|\omega_{0,j}|^2|v_{0,j}] & \le L_2(1+|v_{0,j}|^2),
%\end{align}
%almost surely for some positive constants $L_1$ and $L_2$. 
We further assume that $(\bm{\xi}_0,\bm{x}_0)$, and $(\bm{\omega}_0,\bm{v}_0)$ are independent of $\bm{L}$ and $\bm{N}$. 
Moreover, in \eqref{eq:orig_AMP}
\begin{align*}
{\sfg_t}(\bm{b}^t,\bm{\omega}_0,\bm{v}_0)&=({\sfg_t}(b^t_1,\omega_{0,1},v_{0,1}),\ldots,{\sfg_t}(b^t_p,\omega_{0,p},v_{0,p}))^\top,\\
{\sff_{t}}(\bm{h}^t,\bm{y}^t,\bm{\xi}_0,\bm{x}_0)&=({\sff_{t}}(h^t_1,y^t_1,\xi_{0,1},x_{0,1}),\ldots,{\sff_{t}}(h^t_n,y^t_n,\xi_{0,n},x_{0,n}))^\top,
\end{align*}
and
\begin{equation}
	\label{eq:cpd-generic}	
\begin{aligned}
c_t & = \frac{1}{p}\sum\limits_{i=1}^{p} \frac{\partial {\sfg_t}}{\partial b}(b^t_i,\omega_{0,i},v_{0,i}),\\
% \quad
% \\
p_t &= \frac{c}{n}\sum\limits_{i=1}^{n} \frac{\partial {\sff_{t}}}{\partial h}(h^t_i,y^t_i,\xi_{0,i},x_{0,i})
,\\
% \\
% \quad
d_t & = \frac{1}{n}\sum\limits_{i=1}^{n} \frac{\partial {\sff_t}}{\partial y}(h^t_i,y^t_i,\xi_{0,i},x_{0,i}),
\end{aligned}
\end{equation}
where $c = \lim\limits_{n\to\infty} n/p$.
Note that in construction of the above (partially) pseudo-Lipschitz functions, elements of $\bm{x}_0$ and $\bm{v}_0$ are offset variables, while those of $\bm{\xi}_0$ and $\bm{\omega}_0$ belong to the main variables. 
Finally, denote
\begin{equation}
\label{eq:func_AMP}
\bm{m}^t = {\sfg_t}(\bm{b}^t,\bm{\omega}_0,\bm{v}_0)
% ,
\hspace{0.1in} \mbox{and} \hspace{0.1in} \bm{q}^t = {\sff_{t}}(\bm{h}^t,\bm{y}^t,\bm{\xi}_0,\bm{x}_0).
\end{equation}

\begin{rem}
Compared to \eqref{eq:AMP_shift_main_1} and \eqref{eq:AMP_shift_main_1_1}, for AMP iterations without signal \eqref{eq:orig_AMP}, we have increased the number of arguments in both update function sequences by one to accommodate later analysis. 
Therefore, we change the notation to $\sff_t$ and $\sfg_t$ to alert the readers that the number of arguments has increased.
Their connection with the update functions $\{f_t,g_t:t\geq 0\}$ used when signal is present will be made explicit in Remark \ref{rem:signal}.
\end{rem}

\begin{rem}
The use of the same update function while updating $\bm h^t$ and $\bm y^t$ is not necessary.
% for the method to run. 
% However, in the current problem under consideration, 
We have considered this setup because it helps in the analysis of the estimate $\widehat{\bm x}^t$.
\end{rem}

%\begin{rem}
%{
%Although the AMP iterates \eqref{eq:orig_AMP} can be defined for any choice of $\{f_t, g_t:t \geq 0\}$, 
%we are most interested in cases where
%for all $t \ge 0$, $f_t$ is Lipschitz in its first two arguments and its partial derivatives with respect to the first two variables are also Lipschitz with respect to the first two arguments. 
%For $\{g_t:t\geq 0\}$, we shall assume that $g_t$ is Lipschitz with respect to the first argument and that its partial derivative with respect to the first argument is Lipschitz with respect to the first argument. }
%\end{rem}

\paragraph{State evolution}
\revsag{For notational simplicity}, we define for any vector $\bm{u},\bm{v} \in \mathbb{R}^m$, 
\[
\langle \bm{u}, \bm{v} \rangle_m = \frac{1}{m}\sum\limits_{i=1}^{m}u_iv_i
% ,
% \]
\quad \mbox{and} \quad
% \[
\langle \bm{u} \rangle_m = \frac{1}{m}\sum\limits_{i=1}^{m}u_i.
\]
The asymptotics of the foregoing AMP can be analyzed by its state evolution described below. 
Let $\tau^2_{-1}=\sigma^2_0=0$, $\vartheta^2_0 = c\, \lim\limits_{n \rightarrow \infty}\langle\bm{q}^0,\bm{q}^0\rangle_n$, and $\sigma^2_1 = \lim\limits_{n \rightarrow \infty}\langle\bm{q}^0,\bm{q}^0\rangle_n$. 
For all integer $t\geq 1$, we define recursively
\begin{equation}
\label{eq:var}
\begin{aligned}
\sigma^2_t &= \mathbb{E}\{{\sff_{t-1}}(\tau_{t-2}Z_1,\sigma_{t-1}Z_2,\widetilde{\Xi}_0,\widetilde{X}_0)^2\},\\
\tau^2_{t-1} & = \mathbb{E}\{{\sfg_{t-1}}(\vartheta_{t-1}Z_3,\widetilde{\Omega}_0,\widetilde{V}_0)^2\}, \\
\vartheta^2_t &= c\,\mathbb{E}\{{\sff_{t}}(\tau_{t-1}Z_1,\sigma_{t}Z_2,\widetilde{\Xi}_0,\widetilde{X}_0)^2\}.
\end{aligned}
\end{equation}
Here, $Z_1,Z_2,Z_3 \overset{iid}{\sim} N(0,1)$, $(\widetilde{\Xi}_0,\widetilde{X}_0) \sim P_{\xi,x}$ and $(\widetilde{\Omega}_0,\widetilde{V}_0) \sim P_{\omega,v}$, 
and they are mutually independent.
As before, $c = \lim_{n\to\infty} n/p$.
%\nb{The use of symbols $X_0$ and ${ V_0}$ is EXTREMELY confusing here given \eqref{eq:def_f} and \eqref{eq:def_g}! I would suggest changing them to $\widetilde{X}$ and $\widetilde{W}$. This also requires us to change notation in Section E.1--E.3!!}

% Let us consider pseudo-Lipschitz functions $\phi_h: \mathbb{R}^{3} \rightarrow \mathbb{R}$ and $\psi_b: \mathbb{R}^{2} \rightarrow \mathbb{R}$.
With the foregoing definitions,
the following theorem characterizes the SLLN type behavior of the ``large $n$'' averages of partially  pseudo-Lipschitz functions applied on AMP iterates.
\begin{thm}
\label{thm:thm_6_1}
Consider $\bm{L}$ and $\bm{N}$ defined in \eqref{eq:L-def} and \eqref{eq:N-def} 
that are mutually independent, and the AMP iterates \eqref{eq:orig_AMP} 
{ satisfying \eqref{eq:ft-req} and \eqref{eq:gt-req}}.
Let $\bm{\xi}_0 = (\xi_{0,1},\ldots,\xi_{0,n})^\top$ and $\bm{x}_0 = (x_{0,1},\ldots,x_{0,n})^\top$ 
with $(\xi_{0,i},x_{0,i}) \stackrel{iid}{\sim} P_{\xi,x}$
 % and $x_{0,i} \stackrel{iid}{\sim} P_x$ for some distributions $P_\xi$ and $P_x$ respectively,
which has finite second moment. 
Similarly, $\bm{\omega}_0 = (\omega_{0,1},\ldots,\omega_{0,p})^\top$ and $\bm{v}_0 = (v_{0,1},\ldots,v_{0,p})^\top$ 
with $(\omega_{0,j},v_{0,j}) \stackrel{iid}{\sim} P_{\omega,v}$
 % and $v_{0,j} \stackrel{iid}{\sim} P_v$ for some distribution $P_\omega$ and $P_v$ respectively,
which also has finite second moment.
% \label{eq:assn_7}
% \mathbb{E}[|\omega_{0,j}||v_{0,j}] & \le L_1(1+|v_{0,j}|),\\
% % \end{equation}
% % and
% % \begin{equation}
% \label{eq:assn_8}
% \mathbb{E}[|\omega_{0,j}|^2|v_{0,j}] & \le L_2(1+|v_{0,j}|^2),
% \end{align}
In addition, suppose $(\bm{\xi}_0, \bm{x}_0)$ and $(\bm{\omega}_0, \bm{v}_0)$ are independent of both $\bm{L}$ and $\bm{N}$.
% \nb{more conditions on the independence structures! 20210326zm}
Furthermore, 
% consider $\bm{m}^t,\bm{q}^t$ defined via \eqref{eq:func_AMP}, and
let $\tau^2_t, \vartheta^2_t, \sigma^2_t$ be defined by the recursions \eqref{eq:var} with initializations $\bm{y}^0=0,\bm{h}^0=0$, $\vartheta^2_0= c \lim_{p \rightarrow \infty}\langle \bm{q}^0, \bm{q}^0 \rangle_n$, and $\sigma^2_1= \lim_{n \rightarrow \infty}\langle \bm{q}^0, \bm{q}^0 \rangle_n$.
% \nb{Spell out the explicit conditions of the theorem here. 20210328 zm}
For any partially  pseudo-Lipschitz functions $\phi_h: \mathbb{R}^{4} \rightarrow \mathbb{R}$ and $\psi_b: \mathbb{R}^{3} \rightarrow \mathbb{R}$ in the sense of \eqref{eq:varphi}, we have
\begin{equation}
\frac{1}{n}\sum\limits_{i=1}^{n}\phi_h(h^{t+1}_i,y^{t+1}_i,\xi_{0,i},x_{0,i}) \overset{a.s.}{\longrightarrow} \mathbb{E}\left\{\phi_h(\tau_{t}Z_1,\sigma_{t+1}Z_2,\widetilde{\Xi}_0,\widetilde{X}_0)\right\},
\end{equation}
and
\begin{equation}
\frac{1}{p}\sum\limits_{i=1}^{p}\psi_b(b^{t}_i,\omega_{0,i},v_{0,i}) \overset{a.s.}{\longrightarrow} \mathbb{E}\left\{\psi_b(\vartheta_{t}Z_3,\widetilde{\Omega}_0,\widetilde{V}_0)\right\}.
\end{equation}
Here $Z_1, Z_2, Z_3\stackrel{iid}{\sim} N(0,1)$, $(\widetilde{\Xi}_0,\widetilde{X}_0) \sim P_{\xi,x}$, $(\widetilde{\Omega}_0,\widetilde{V}_0) \sim P_{\omega,v}$,
and they are mutually independent.  
%and 
%they are mutually independent \nb{Not true. $\widetilde{X}_0$ and $\widetilde{\Xi}_0$ are not independent. Nor are $\widetilde{\Omega}_0$ and $\widetilde{V}_0$!}.
\end{thm}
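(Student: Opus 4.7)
The plan is to adapt Bolthausen's conditional-distribution technique used in~\cite{BM11journal} to our two-orbit setup, proving the statement by induction on $t$. Let $\bm q^t$ and $\bm m^t$ be as in~\eqref{eq:func_AMP}, and let $\mathfrak{S}_t$ denote the $\sigma$-algebra generated by $(\bm h^1,\bm y^1,\ldots,\bm h^t,\bm y^t,\bm b^0,\ldots,\bm b^{t-1})$ together with all the offsets $(\bm\xi_0,\bm x_0,\bm\omega_0,\bm v_0)$. The inductive hypothesis will assert that the components of $(\bm h^s, \bm y^s)$ and of $\bm b^{s-1}$, for $s \le t$, behave componentwise as independent Gaussians with the variances prescribed by~\eqref{eq:var}; convergence of averages of partially pseudo-Lipschitz test functions will follow by a standard approximation of the averages with expectations over these limiting Gaussians.

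\textbf{Conditioning step.} The key observation is that $\bm L$ and $\bm N$ are independent of each other, so given $\mathfrak{S}_t$ their conditional distributions factorize. For the $\bm L$-orbit, the iterate equations in~\eqref{eq:orig_AMP} impose linear constraints of the form $\bm L\bm q^s = \bm b^{s} + p_{s}\bm m^{s-1}$ and $\bm L^\top \bm m^s = \bm h^{s+1} + c_s \bm q^s$ for $s \le t-1$. Since $\bm L$ is i.i.d.\ Gaussian, its conditional distribution given these constraints is an affine projection plus an independent Gaussian residual $\widetilde{\bm L}$ distributed as $\bm L$ but restricted to the orthogonal complement. Using the induction hypothesis to compute the projection, $\bm b^t$ decomposes as $\widetilde{\bm L}\bm q^t$ plus a linear combination of past $\bm b^{s}$, and the coefficients of that linear combination are controlled by the Onsager term $p_t$: Stein's lemma identifies $\frac{c}{n}\sum_i \partial_h\sff_t$ with the appropriate Gaussian divergence, so $p_t\bm m^{t-1}$ cancels the leading-order piece of the projection. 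A parallel argument using the symmetry of $\bm N$ handles the $\bm y$-orbit, with $d_t\bm q^{t-1}$ serving the same cancellation role. What remains in both orbits are (conditionally) i.i.d.\ Gaussian vectors whose variances can be read off using the definitions~\eqref{eq:var} together with the inductive convergence of inner products like $\langle\bm q^t,\bm q^t\rangle_n$ and $\langle\bm m^t,\bm m^t\rangle_p$ computed via the partially pseudo-Lipschitz test functions $\sff_t^2$ and $\sfg_t^2$.

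\textbf{Two orbits, shared function.} The principal new feature is that a single function $\sff_t(h,y,\xi_0,x_0)$ produces $\bm q^t$ that drives \emph{both} orbits. The induction needs to show that the joint limit of $(\bm h^t, \bm y^t)$ componentwise is a pair of independent Gaussians $(\tau_{t-1}Z_1,\sigma_t Z_2)$; the independence across the two coordinates is precisely what makes the variance formulas~\eqref{eq:var} well-defined as single expectations. This independence follows cleanly from the factorization of $(\bm L,\bm N)$: conditional on $\mathfrak{S}_{t-1}$, the residual parts of $\bm h^t$ and $\bm y^t$ come from independent Gaussian matrices, and the past projections are $\mathfrak{S}_{t-1}$-measurable. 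Thus the induction naturally carries a joint Gaussian structure with a block-diagonal covariance.

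\textbf{Main obstacle.} The nontrivial technical hurdle is the extension from (pseudo-)Lipschitz to partially pseudo-Lipschitz test functions and update functions. Since $\bm x_0$ and $\bm v_0$ are never multiplied by $\bm L$ or $\bm N$ they can be treated as fixed labels: the conditioning argument only needs smoothness in the ``main'' coordinates $h,y$ or $b$, and this is guaranteed by the partial Lipschitz property of $\sff_t,\sfg_t$ and the partial pseudo-Lipschitz property of the test functions. Concretely, I would discretize or condition on values of $(x_0,v_0)$ and apply the classical Bayati--Montanari SLLN for pseudo-Lipschitz functions on each stratum, then average. The bookkeeping is delicate because $\sff_t$ itself depends on $x_0$ through its partial-Lipschitz structure, so one has to verify that all composite functions arising in the induction (such as $\sff_t\sff_s$, $h\sff_t$, and $\partial_h \sff_t$ paired with the constraints from past iterates) remain partially pseudo-Lipschitz; this is where Lemma~\ref{lem:prop_pl} does the heavy lifting, ensuring that the class is closed under the relevant products and conditional expectations.
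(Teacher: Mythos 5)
Your proposal follows essentially the same route as the paper's proof: Bolthausen-style conditioning on the sigma-algebra of past iterates as in Bayati--Montanari, an induction that carries the joint Gaussian structure of the two orbits (independence of the residual parts of $\bm{h}^{t+1}$ and $\bm{y}^{t+1}$ coming from the independence of $\bm{L}$ and $\bm{N}$), Onsager cancellation of the projection terms, and closure of the partially pseudo-Lipschitz class under the products, derivatives, and conditional expectations arising in the induction (Lemma \ref{lem:prop_pl}), which is exactly how the paper proves the result via the technical Lemma \ref{lem:AMP_lem_tech}. The only caveat is your aside about discretizing/conditioning on $(\bm{x}_0,\bm{v}_0)$ and applying the classical separable Bayati--Montanari SLLN ``on each stratum'': the empirical Onsager coefficients and the matrix--vector products couple all coordinates across strata, so there is no per-stratum AMP to which the classical theorem applies; what actually works (and what the paper does, and what your closure argument amounts to) is to re-derive the conditioning lemma wholesale, requiring Lipschitzness only in the main variables and then averaging over the i.i.d.\ offsets at the level of the limiting expectations.
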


\begin{rem}
The presence of 
% \sout{side information and the crucial choice} 
two AMP orbits 
% characterized by $\bm A$ and $\bm B$ 
% \nb{what does it mean by ``characterized by $\bm A$ and $\bm B$''?}
and the use of 
orchestrated
iterates $\bm y^{t}$ and $\bm h^{t}$ in each ${ \sff_t}$ prevents us from directly using existing AMP convergence results in \cite{BM11journal}, \cite{JM12} or \cite{Berthier}.
To resolve this issue, we shall modify the proof of Lemma 1 in \cite{BM11journal} by  using the conditioning technique in \cite{Bol12} directly and prove an analogous lemma in Subsection \ref{AMP_1} (Lemma \ref{lem:AMP_lem_tech}) suitable for \eqref{eq:orig_AMP}.
The lemma will then be used to prove Theorem \ref{thm:thm_6_1}.
% Note that, to prove this theorem it is not enough to use the techniques of \cite{BM11journal}. This is because the use of side information in the construction of update functions. In the construction of the updates $\bm y^t$ and $\bm h^t$ we use the same iterates $\bm x^{t-1}$ and $\bm h^{t-1}$. This prevents the use of traditional techniques as described in \cite{BM11journal}.
\end{rem}

\subsection{Orchestrated AMP with Rank-One Deformed Sensing Matrices}
\label{sec:amp-side-signal}

We now turn back to the AMP iterates $\bm{u}^t,\bm{v}^t$ and $\bm{x}^t$ defined by \eqref{eq:AMP_shift_main_1}  and \eqref{eq:AMP_shift_main_1_1} with some generic update functions $f_t$ and $g_t$.
 % with any update functions $f_t$ and $g_t$ satisfying appropriate smoothness conditions.
With a slight abuse of notation, we 
define $\mu_0=\sigma_0=\alpha_{-1}=\tau_{-1}=0$ and 
\[
\beta_0=\sqrt{\frac{\mu}{c}}\,\mathbb{E}\left\{X_0f_0(0,0,X_0(\varepsilon))\right\}, \quad \vartheta^2_0 = c\,\lim\limits_{n \rightarrow \infty}\langle f_{0}(\bm{u}^0,\bm{x}^0,\bm{x}_0),f_{0}(\bm{u}^0,\bm{x}^0,\bm{x}_0)\rangle_n.
\] 
Then we define for all $t \ge 1$
\begin{equation}
\label{eq:state-ev-generic}
\begin{aligned}
\mu_t &= \sqrt{\lambda}\;\mathbb{E}\left\{X_0f_{t-1}(\tau_{t-2}Z_{1}+\alpha_{t-2}X_0,\sigma_{t-1}Z_{2}+\mu_{t-1}X_0,X_0(\varepsilon))\right\},\\
\alpha_{t-1} &= \sqrt{\frac{\mu}{c}}\;\mathbb{E}\left\{V_0 g_{t-1}(\vartheta_{t-1}Z_{3}+\beta_{t-1}V_0,V_0(\varepsilon))\right\},\\
\beta_t &= c\,\sqrt{\frac{\mu}{c}}\;\mathbb{E}\left\{X_0f_{t}(\tau_{t-1}Z_{1}+\alpha_{t-1}X_0,\sigma_{t}Z_{2}+\mu_{t}X_0,X_0(\varepsilon))\right\},\\
\sigma^2_t &= \mathbb{E}\left\{\left[f_{t-1}(\tau_{t-2}Z_{1}+\alpha_{t-2}X_0,\sigma_{t-1}Z_{2}+\mu_{t-1}X_0,X_0(\varepsilon))\right]^2\right\},\\
\tau^2_{t-1} &= \mathbb{E}\left\{\left[g_{t-1}(\vartheta_{t-1}Z_{3}+\beta_{t-1}V_0,V_0(\varepsilon))\right]^2\right\},\\
\vartheta^2_{t} &=c\,\mathbb{E}\left\{\left[f_{t}(\tau_{t-1}Z_{1}+\alpha_{t-1}X_0,\sigma_{t}Z_{2}+\mu_{t}X_0,X_0(\varepsilon))\right]^2\right\},
\end{aligned}
\end{equation}
where $X_0, X_0(\varepsilon), V_0, V_0(\varepsilon), Z_1, Z_2$ and $Z_3$ satisfy \eqref{eq:limit-quant}.
% %
% $X_0 \sim B(\varepsilon)X^*_0$, where $B(\varepsilon) \sim \mathrm{Bern}(\varepsilon)$, $X^*_0 \sim \mathrm{Rademacher}$  and ${ V_0} \sim \widetilde{B}(\varepsilon)U^*_0$ where $\widetilde{B}(\varepsilon) \sim \mathrm{Bern}(\varepsilon)$, $U^*_0 \sim N(0,1)$.

\begin{rem}
	\label{rem:signal}
Here we slightly \revsag{abuse notation} in the sense that we use $\alpha_t, \beta_t,\mu_t, \sigma_t^2,\tau_t^2$ and $\vartheta_t^2$ for state evolution with generic $f_t$ and $g_t$, whereas they were originally defined only for the specific $f_t$ and $g_t$ in \eqref{eq:def_f}--\eqref{eq:def_g}. 
Effectively, we could think of these quantities as functions of $\{f_t, g_t: t\geq 0\}$. In this way, the notation could be unified.
{Furthermore, the notation $\sigma_t^2$, $\tau^2_{t-1}$ and $\vartheta_t^2$ are in accordance with that of \eqref{eq:var} by identifying $(X_0, X_0(\varepsilon))$ with $(\widetilde{\Xi}_0, \widetilde{X}_0)$, $(V_0, V_0(\varepsilon))$ with $(\widetilde{\Omega}_0, \widetilde{V}_0)$, $\sff_t(x_1,x_2,y,z)$ with $f_t(x_1 + \alpha_{t-1}y,x_2 + \mu_t y, z)$
and $\sfg_{t-1}(x, y, z)$ with $g_{t-1}(x + \beta_{t-1}y, z)$.}
\end{rem}

% Recall pseudo-Lipschitz functions defined in \eqref{eq:varphi}. Then for such pseudo-Lipschitz functions, we have the following theorem.{ V_0}
The following theorem establishes the SLLN type behavior for AMP iterates defined by \eqref{eq:AMP_shift_main_1} and \eqref{eq:AMP_shift_main_1_1} with generic $f_t$ and $g_t$ satisfying certain smoothness conditions.
\begin{thm}
\label{thm:slln_shifted}
Consider partially pseudo-Lipschitz functions $\phi:\mathbb{R}^3 \rightarrow \mathbb{R}$ and $\psi:\mathbb{R}^2 \rightarrow \mathbb{R}$ in the sense of \eqref{eq:varphi}.
 % and the AMP iterates defined by \eqref{eq:AMP_shift}, where $\bm{u}^0=\bm{y}^0=0$.
Suppose that, in \eqref{eq:AMP_shift_main_1} and \eqref{eq:AMP_shift_main_1_1}, the update functions $f_t$ and its partial derivatives with respect to the first two variables are partially Lipschitz for all $t \ge 0$. 
Further for all $t \ge 0$, $g_t$ and its partial derivative with respect to the first argument  are also partially  Lipschitz. In addition, let $f_{-1}$ and $g_{-1}$ be zero functions. 
Then for all $t \in \mathbb{N}$, 
{ and for $\mu_t,\alpha_{t-1},
\beta_t, \sigma_t^2, \tau_{t-1}^2$ and $\vartheta_t^2$ defined in \eqref{eq:state-ev-generic},}
we have the following identities:
\begin{equation}
\label{eq:slln_1}
\lim\limits_{n \rightarrow \infty}\frac{1}{n}\sum\limits_{i=1}^{n}\phi(u^t_i, x^t_i, x_{0,i}) \overset{a.s.}{=} \mathbb{E}\left\{\phi\left(\alpha_{t-1}X_0+\tau_{t-1}Z_1, \mu_{t}X_0+\sigma_{t}Z_2, X_0(\varepsilon)\right)\right\},
\end{equation}
and
\begin{equation}
\label{eq:slln_2}
\lim\limits_{p \rightarrow \infty}\frac{1}{p}\sum\limits_{i=1}^{p}\psi(v^t_i, v_{0,i}) \overset{a.s.}{=} \mathbb{E}\left\{\psi\left(\beta_{t}V_0+\vartheta_{t}Z_3, V_0(\varepsilon)\right)\right\}.
\end{equation}
Here $X_0, X_0(\varepsilon), V_0, V_0(\varepsilon), Z_1, Z_2$ and $Z_3$ satisfy \eqref{eq:limit-quant}.
\end{thm}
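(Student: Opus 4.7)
The plan is to reduce Theorem~\ref{thm:slln_shifted} to the mean-zero result Theorem~\ref{thm:thm_6_1} via a change of variables that absorbs the rank-one signal contributions into state-evolution shifts. Decomposing $\bm{T}/\sqrt{n} = \bm{N} + (\sqrt{\lambda}/n)\,\bm{x}^*(\bm{x}^*)^\top$ with $\bm{N} = \bm{Z}/\sqrt{n}$ as in \eqref{eq:N-def}, and $\bm{B}/\sqrt{p} = \bm{L} + \sqrt{\mu/(np)}\,\bm{v}^*(\bm{x}^*)^\top$ with $\bm{L} = \bm{R}/\sqrt{p}$ as in \eqref{eq:L-def}, I would introduce shifted iterates
\[
\bm{h}^t := \bm{u}^t - \alpha_{t-1}\bm{x}^*, \qquad \bm{y}^t := \bm{x}^t - \mu_t \bm{x}^*, \qquad \bm{b}^t := \bm{v}^t - \beta_t \bm{v}^*,
\]
modified update functions
\[
\sff_t(h,y,\xi,x) := f_t(h + \alpha_{t-1}\xi,\, y + \mu_t \xi,\, x), \qquad \sfg_t(b,\omega,v) := g_t(b + \beta_t \omega,\, v),
\]
and signal offsets $\bm{\xi}_0 := \bm{x}^*$, $\bm{\omega}_0 := \bm{v}^*$, with $\bm{x}_0(\varepsilon)$ and $\bm{v}_0(\varepsilon)$ playing the role of the ``offset'' variables. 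By construction $f_t(\bm{u}^t,\bm{x}^t,\bm{x}_0) = \sff_t(\bm{h}^t,\bm{y}^t,\bm{\xi}_0,\bm{x}_0)$ and $g_t(\bm{v}^t,\bm{v}_0) = \sfg_t(\bm{b}^t,\bm{\omega}_0,\bm{v}_0)$ componentwise, while the chain rule makes the Onsager coefficients $c_t,p_t,d_t$ of \eqref{eq:def_correc} coincide exactly with those of \eqref{eq:cpd-generic} evaluated on $\sff_t,\sfg_t$.

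I would then prove by induction on $t$ that $(\bm{h}^t,\bm{y}^t,\bm{b}^t)$ coincides, up to errors vanishing in the SLLN sense, with the unsignal AMP iterates generated by $(\bm{L},\bm{N})$ and $(\sff_t,\sfg_t)$ in the form of \eqref{eq:orig_AMP}--\eqref{eq:orig_AMP_1}. The key algebraic identification is that the rank-one part of $\bm{B}/\sqrt{p}\cdot f_t(\bm{u}^t,\bm{x}^t,\bm{x}_0)$ equals $\sqrt{\mu n/p}\,\langle\bm{x}^*,\sff_t(\bm{h}^t,\bm{y}^t,\bm{\xi}_0,\bm{x}_0)\rangle_n\,\bm{v}^*$; by applying Theorem~\ref{thm:thm_6_1} at the preceding induction step to the partially pseudo-Lipschitz test function $(h,y,\xi,x)\mapsto \xi\,\sff_t(h,y,\xi,x)$, the scalar factor tends almost surely to $\sqrt{\mu c}\,\mathbb{E}[X_0 f_t(\alpha_{t-1}X_0+\tau_{t-1}Z_1, \mu_t X_0+\sigma_t Z_2, X_0(\varepsilon))] = \beta_t$, so subtracting $\beta_t\bm{v}^*$ recovers the required form for $\bm{b}^t$. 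Completely analogous computations convert the remaining rank-one terms $(\sqrt{\lambda}/n)\bm{x}^*(\bm{x}^*)^\top f_t$ and $\sqrt{\mu/(np)}\bm{x}^*(\bm{v}^*)^\top g_t$ into the shifts $\mu_{t+1}\bm{x}^*$ and $\alpha_t\bm{x}^*$ matching \eqref{eq:state-ev-generic}. The partially Lipschitz regularity of $\sff_t,\sfg_t$ required by \eqref{eq:ft-req}--\eqref{eq:gt-req} is inherited from $f_t,g_t$ since the shifts are linear in the main variables.

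Once the reduction is established, Theorem~\ref{thm:thm_6_1} applied to $\sff_t,\sfg_t$ with $P_{\xi,x} = \mathrm{Law}(X_0, X_0(\varepsilon))$ and $P_{\omega,v} = \mathrm{Law}(V_0, V_0(\varepsilon))$ delivers the SLLN for the shifted iterates, and a direct check that \eqref{eq:var} coincides with \eqref{eq:state-ev-generic} under these identifications ties up the state-evolution loose end. Claims \eqref{eq:slln_1}--\eqref{eq:slln_2} then follow by lifting: for a partially pseudo-Lipschitz $\phi:\mathbb{R}^3\to\mathbb{R}$, the augmented function $\widetilde{\phi}(h,y,\xi,x):= \phi(h+\alpha_{t-1}\xi,\,y+\mu_t\xi,\,x)$ is again partially pseudo-Lipschitz, and its Theorem~\ref{thm:thm_6_1} limit reproduces \eqref{eq:slln_1} via $u^t_i = h^t_i + \alpha_{t-1}x^*_i$ and $x^t_i = y^t_i + \mu_t x^*_i$; the analogous lift yields \eqref{eq:slln_2}. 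The main obstacle is the clean propagation of the $o(1)$ error terms through the induction, since each rank-one identification produces a vanishing scalar multiplying a macroscopic vector ($\bm{x}^*$ or $\bm{v}^*$); this can be handled by strengthening the induction hypothesis to include $n^{-1}\|\bm{h}^t - \widetilde{\bm{h}}^t\|^2 \to 0$ almost surely (with analogues for $\bm{y}^t$ and $\bm{b}^t$), where $\widetilde{\bm{h}}^t$ denotes the genuine unsignal AMP iterate, and using the Lipschitz continuity of $\sff_t,\sfg_t$ together with the a.s.\ limits $n^{-1}\|\bm{x}^*\|^2 = 1$ and $p^{-1}\|\bm{v}^*\|^2 \to 1$ to close the induction.
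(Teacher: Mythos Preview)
Your proposal is correct and follows essentially the same route as the paper: define the auxiliary noise-only AMP with shifted update functions $\sff_t(h,y,\xi,x)=f_t(h+\alpha_{t-1}\xi,y+\mu_t\xi,x)$ and $\sfg_t(b,\omega,v)=g_t(b+\beta_t\omega,v)$, invoke Theorem~\ref{thm:thm_6_1} on it, and then close the gap to the signal AMP by an induction carrying the $\ell_2$ error bounds $n^{-1}\|\bm{\Delta}^t_j\|^2\to 0$ together with the scalar identifications $\langle \bm{x}^*,f_t\rangle_n\to \mu_{t+1}/\sqrt{\lambda}$, etc. The paper organizes the induction as eight coupled hypotheses and handles the Onsager-coefficient discrepancy $|c_t-\widetilde{c}_t|,|p_t-\widetilde{p}_t|,|d_t-\widetilde{d}_t|\to 0$ explicitly, which is the concrete form of the ``clean propagation of $o(1)$ error terms'' you flag as the main obstacle.
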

\section{Numerical Experiments}
\label{sec:numerical}

\revsag{The AMP algorithm defined by the recursions \eqref{eq:AMP_shift_main_1} and \eqref{eq:AMP_shift_main_1_1} is asymptotically Bayes optimal for estimating $\bm x^*(\bm x^*)^\top$ in the Gaussian model. }
However, its dependence on the partial revelation of the truth $\bm x^*$ and $\bm v^*$ makes it impractical. 
% One can eliminate the dependence on $\bm x^*$ and $\bm v^*$, if one uses a non-zero initialization. 
% However, using any initialization may not give the desired state evolution limits. One popular initialization for AMP with a single sensing matrix is the spectral initialization. 
In this section, we investigate the empirical performance of a practically implementable variant of \eqref{eq:AMP_shift_main_1}-- \eqref{eq:AMP_shift_main_1_1}: 
we initialize with a spectral estimator and force $\varepsilon = 0$ in the AMP iterates \eqref{eq:AMP_shift_main_1}-- \eqref{eq:AMP_shift_main_1_1}.

To this end, we propose to initialize both $\bm{x}^0$ and $\bm{u}^0$ with {$\sqrt{n}\,\bar{\bm{e}}$, where $\bar{\bm{e}}$ is} the leading eigenvector of $\bm{T} + a_0 \bm{B}\bm{B}^\top$ for some constant $a_0$ defined below. 
% and $\bm{u}^0$ with \nb{blah, need to specify}.
Throughout this section, we define $a_0$ as the unique solution to the following equation: 
\begin{equation}
\label{eq:initialization}
\frac{\mu}{c\lambda}=\frac{-\lambda+(ca^2 + \mu a^2)+\sqrt{(\lambda+ca^2 + \mu a^2)^2 - 4\lambda ca^2 }}{2\mu}. 
\end{equation}
It can be shown that if $\lambda+\mu^2/c>1$, then the leading eigenvector of $\bm{T} + a_0 \bm{B}\bm{B}^\top$ is asymptotically correlated with $\bm x^*$, and they are asymptotically orthogonal if $\lambda+\mu^2/c\le 1$.
Rigorous proofs of the properties of this initializer and related issues are beyond the scope of the present paper, and they are being investigated in \cite{fan_yang_calculation}. 

In the rest of this section, we first conduct a simulation study of the above algorithm under the Gaussian observation model \eqref{eq:T} and \eqref{eq:def_cov}.
Next, we study its performance under the original multilayer network plus covariate model with one and two layers.
In both cases, the empirical performance of this practical variant agrees well with the theoretical predictions in Theorems \ref{thm:MMSE_main} and \ref{thm:main_thm_mmse}, respectively.

\subsection{The Gaussian Observation Model}
\label{sec:gauss-simu}
{We take $n=1500$ and $p=900$ and} consider two settings: ``fixed $\mu$ varying $\lambda$'' and ``fixed $\lambda$ varying $\mu$''. 
 
 In the first setting, \revsag{we fix $\mu \in \{0.5,0.7,0.9\}$}, respectively. 
 At each fixed value $\mu$, we vary $\lambda$ across \revsag{$25$ equally-spaced values in the interval $[0.5, 4.5]$. }
 For each combination $(\lambda,\mu)$, we generate $25$ i.i.d.~copies of $(\bm T, \bm B)$ pairs. For each $(\bm T, \bm B)$ pair, we run the iterates in \eqref{eq:AMP_shift_main_1} and \eqref{eq:AMP_shift_main_1_1} with $\varepsilon = 0$ for $100$ iterations after {initializing $\mu_0,\sigma_0, \alpha_{-1}$ and $\tau_{-1}$ randomly in the interval $[4, 10]$ and using the spectral initialization for $\bm u^0$ and $\bm x^0$.} 
 We construct the AMP estimate $\widehat{\bm{x}}^{100}$ as in \eqref{eq:amp_final_estim}. 
%  We compute the empirical MMSEs of the estimate $\widehat{\bm{x}}^{100}$ for $25$ $(\bm T, \bm B)$ pairs. 
{The upper panel of} Figure \ref{fig:fig_amp_lambda} reports the average and spread of the empirical MMSEs on 25 replications at each $\lambda$ for all three fixed values of $\mu$ and compares the average with the theoretical prediction \eqref{eq:MMSE-limit}.
These plots show that the MMSEs of the spectral initialized AMP iterates agree well with the theoretical limits across all $(\lambda,\mu)$ value pairs.

In the second setting, \revsag{we fix $\lambda \in \{0.3,0.6,0.9\}$}, respectively. 
At each fixed value $\lambda$, we vary $\mu$ across $25$ equally-spaced values in the interval $[0.5, 4.5]$.
For each combination $(\lambda,\mu)$, the other simulation details are the same as in the first setting.
The lower panel of Figure \ref{fig:fig_amp_lambda} reports the average and spread of the empirical MMSEs over 25 replications at each $\mu$ for the three fixed values of $\lambda$ and compares the average with the theoretical prediction \eqref{eq:MMSE-limit}.
As in the previous setting, the empirical MMSEs agree well with theoretical predictions.
 
%  We plot this average in a series of six plots (Figures \ref{fig:fig_amp_lambda} and \ref{fig:fig_amp_mu}), first fixing $\mu$ and plotting the empirical MMSE versus $\lambda$ and next fixing $\lambda$ and plotting empirical MMSE versus $\mu$. We also plot the predicted asymptotic MMSE for the $(\lambda,\mu)$ combinations, computed using the formula \eqref{eq:MMSE-limit} for comparison. From these two figures it is evident that the MMSE of the AMP based estimator using spectral initialization converges to the theoretical MMSE pretty fast. Hence, for Gaussian matrices, we recommend the spectral initialization along with our AMP orbits to recover the leading eigenvector of $\bm T$.

\begin{figure}[tb]
\begin{subfigure}{.33\textwidth}
  \centering
  \includegraphics[width=\linewidth]{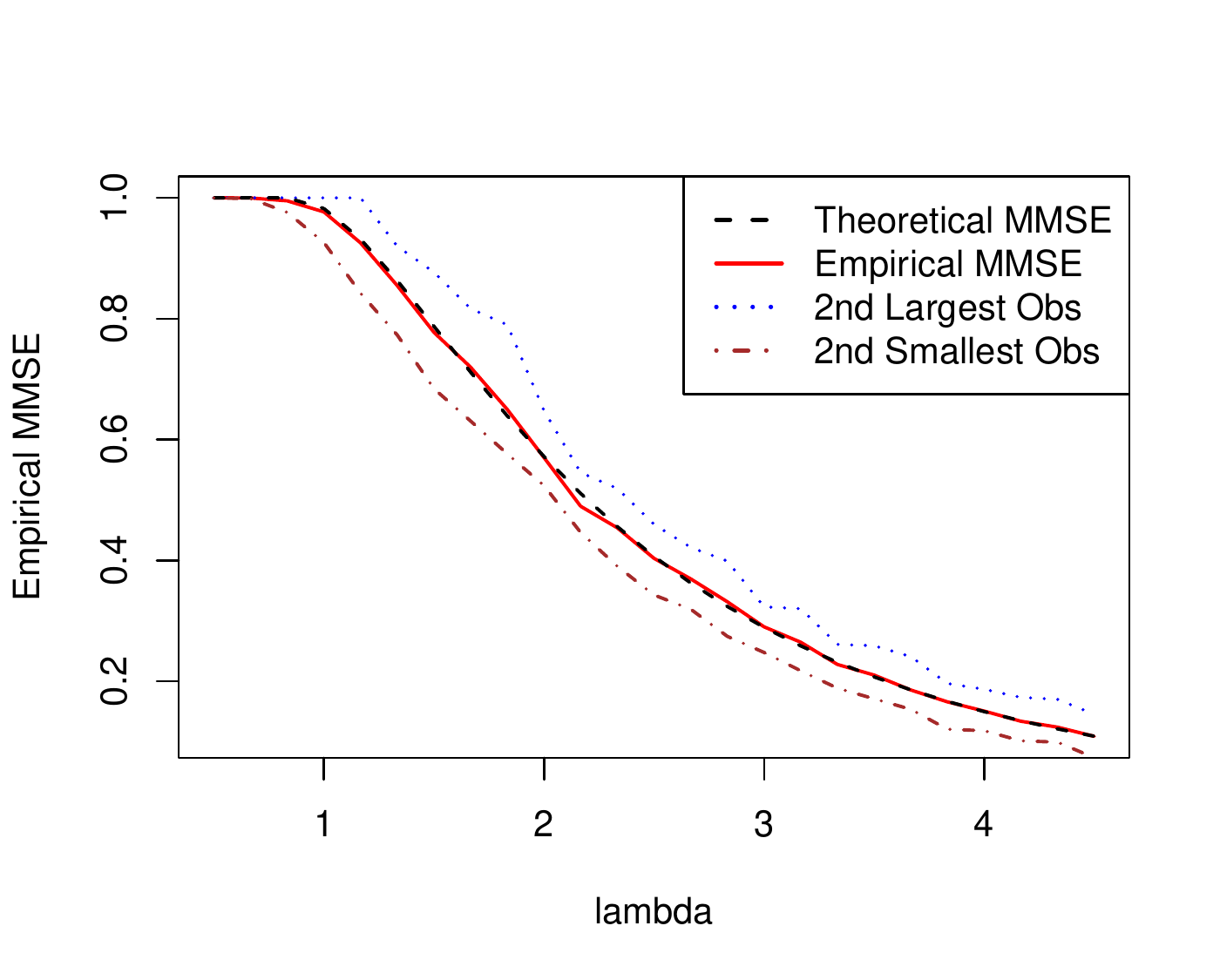}
  \caption{$\mu=0.5$}
  \label{fig:sfig11}
\end{subfigure}%
\begin{subfigure}{.33\textwidth}
  \centering
  \includegraphics[width=\linewidth]{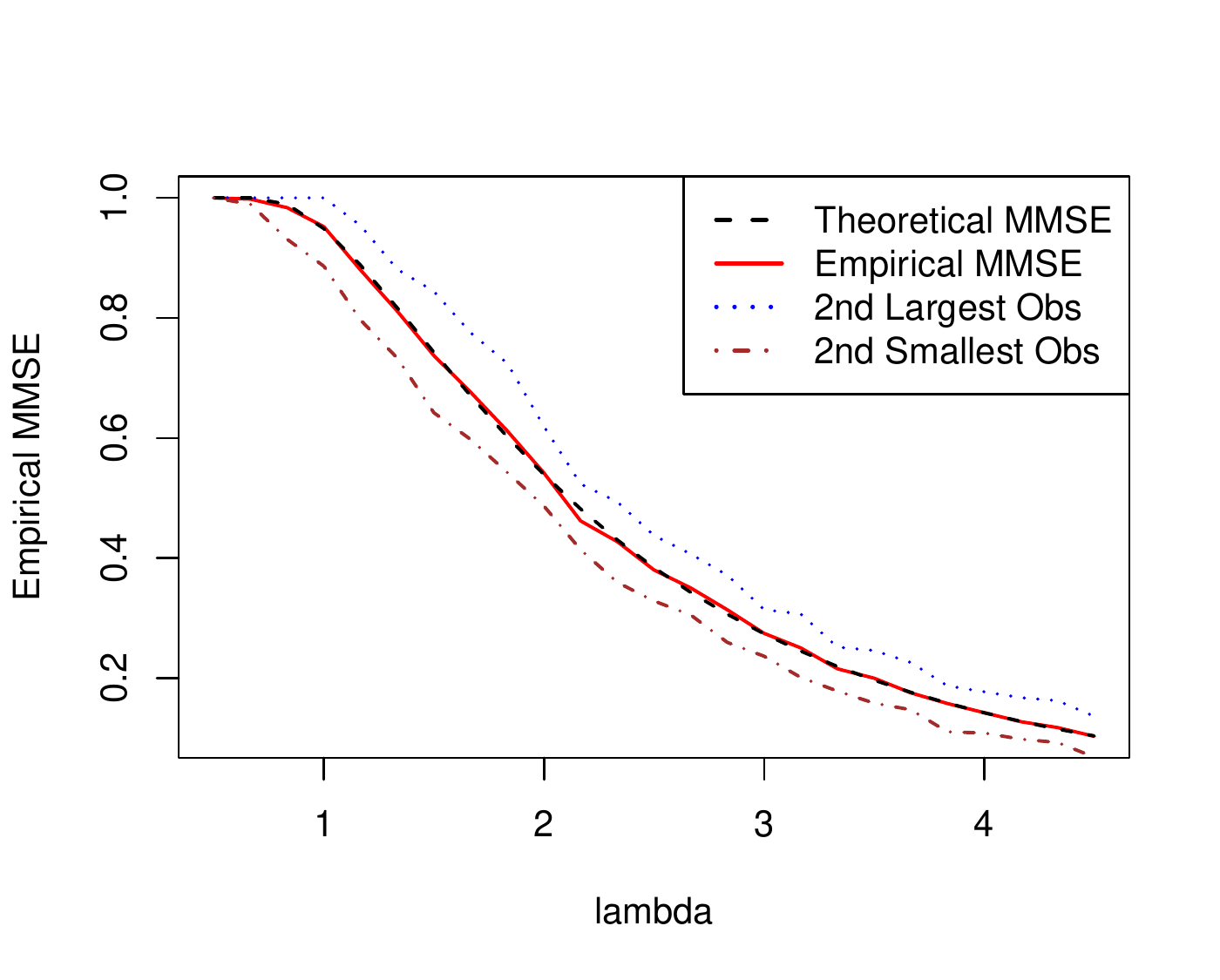}
  \caption{$\mu=0.7$}
  \label{fig:sfig12}
\end{subfigure}%
\begin{subfigure}{.33\textwidth}
  \centering
  \includegraphics[width=\linewidth]{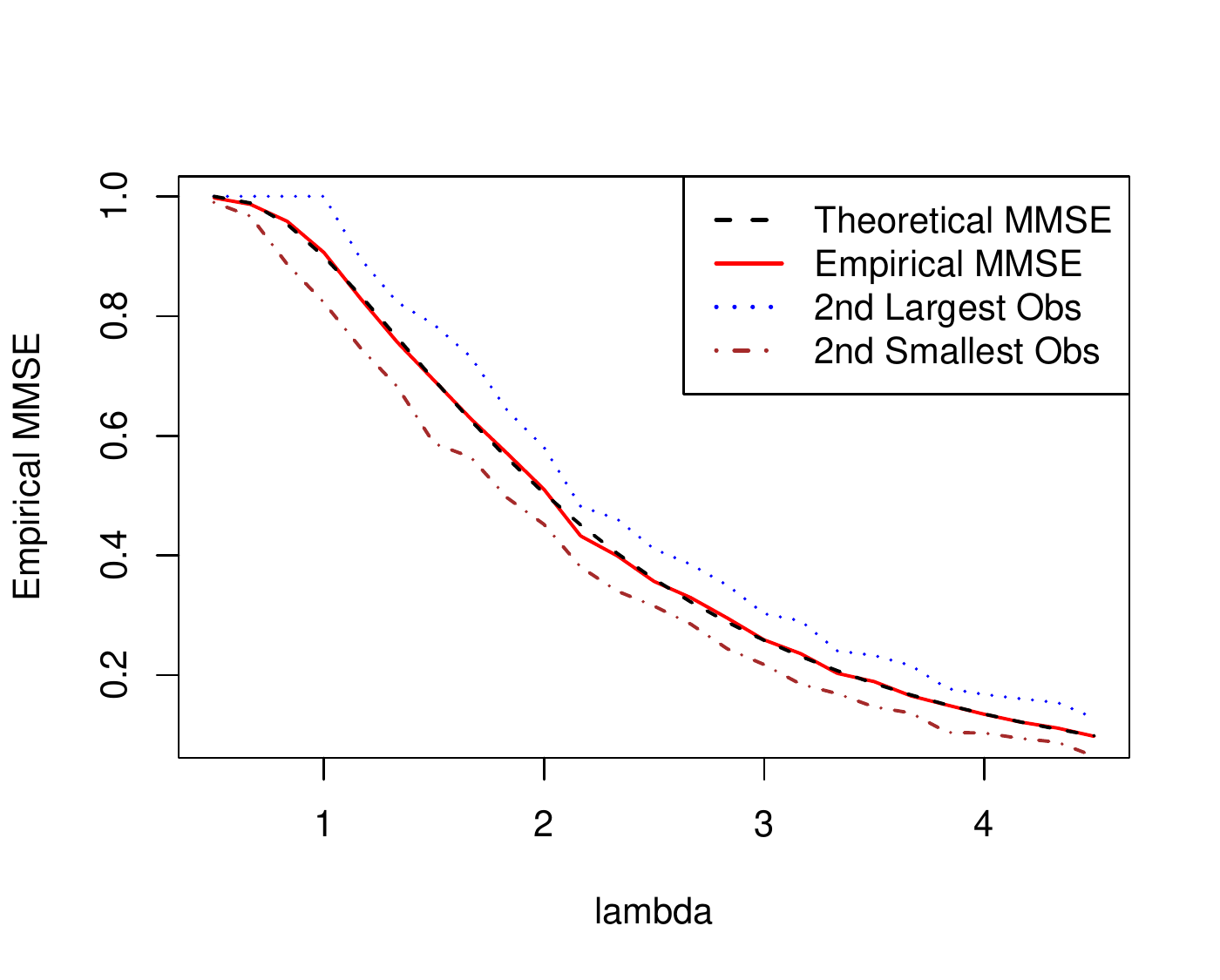}
  \caption{$\mu=0.9$}
  \label{fig:sfig13}
\end{subfigure}\\
\begin{subfigure}{.33\textwidth}
  \centering
  \includegraphics[width=\linewidth]{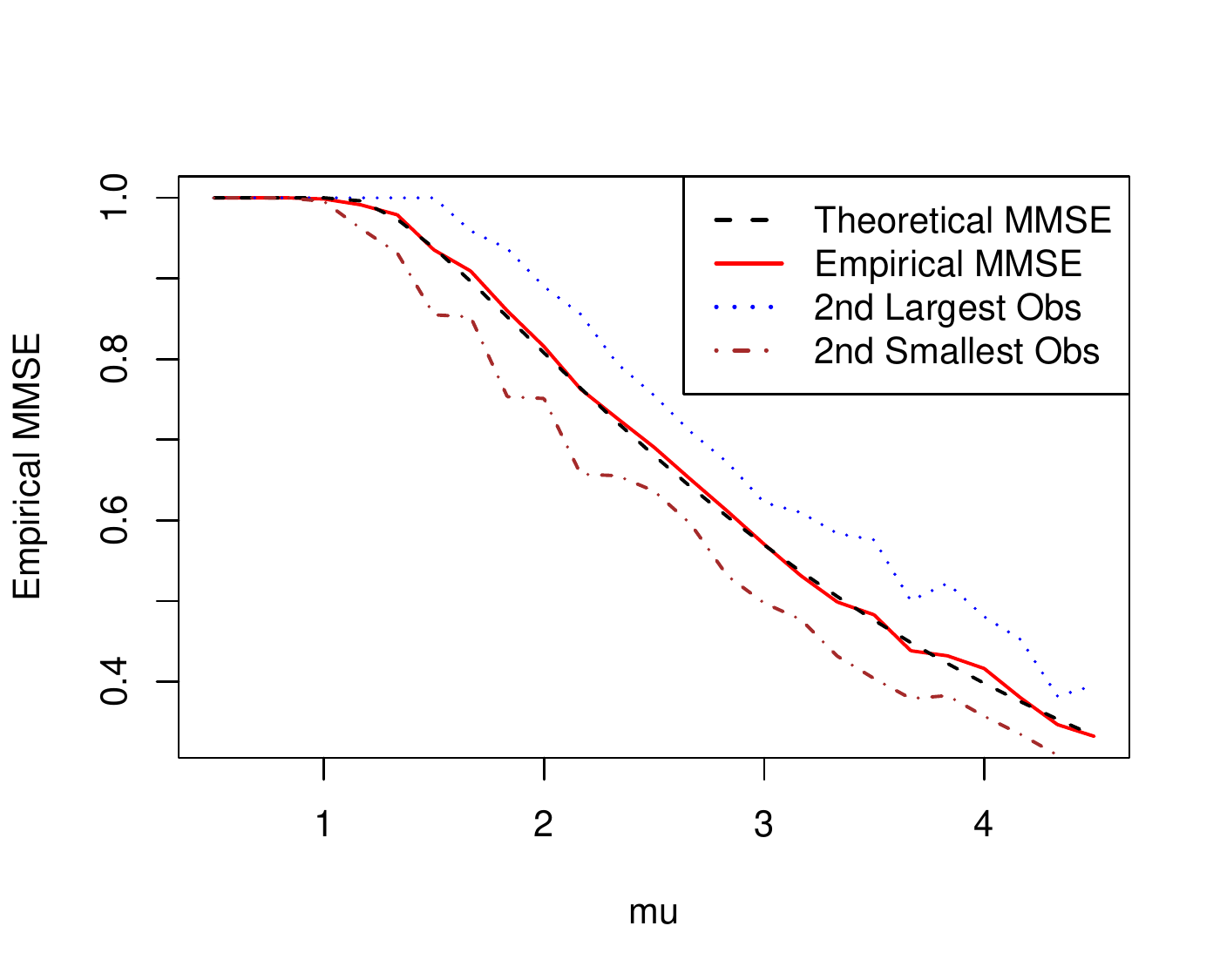}
  \caption{$\lambda=0.3$}
  \label{fig:sfig1}
\end{subfigure}%
\begin{subfigure}{.33\textwidth}
  \centering
  \includegraphics[width=\linewidth]{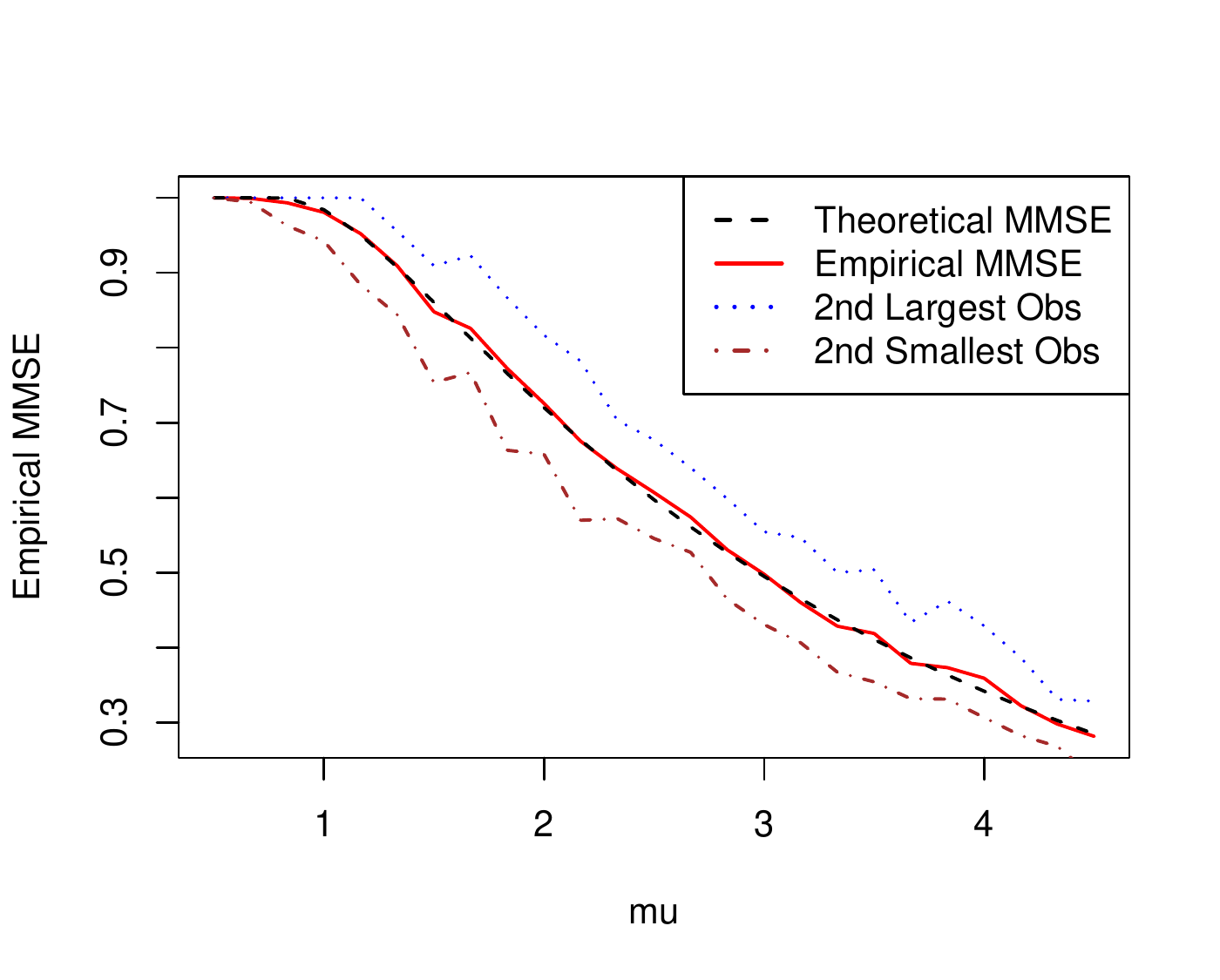}
  \caption{$\lambda=0.6$}
  \label{fig:sfig2}
\end{subfigure}%
\begin{subfigure}{.33\textwidth}
  \centering
  \includegraphics[width=\linewidth]{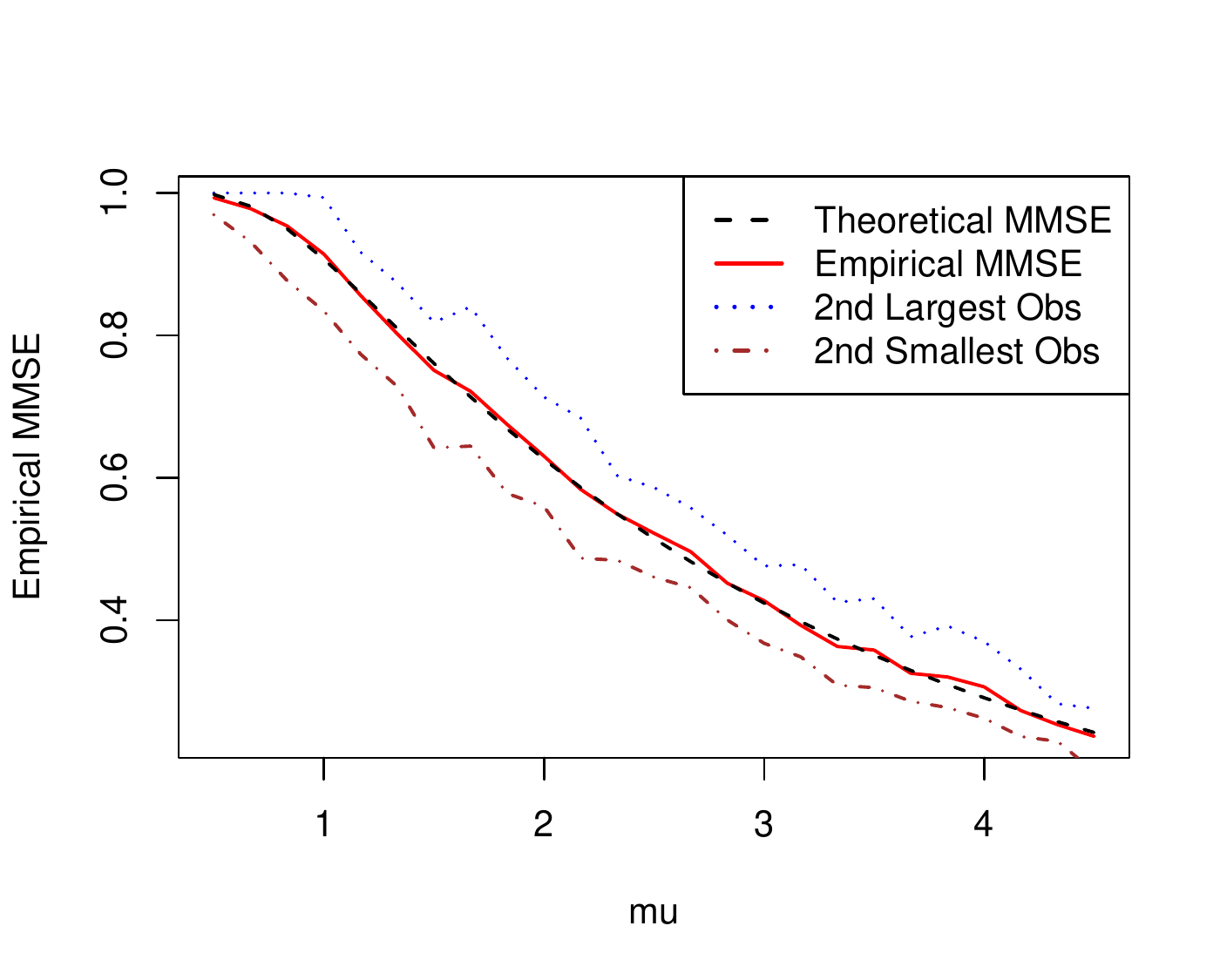}
  \caption{$\lambda=0.9$}
  \label{fig:sfig3}
\end{subfigure}
\caption{Upper Panel: Empirical MMSE plots of the AMP estimator versus $\lambda$ for different fixed $\mu$'s.
Lower Panel: Empirical MMSE plots of the AMP estimator versus $\mu$ for different fixed $\lambda$'s.}
\label{fig:fig_amp_lambda}
\end{figure}

\subsection{The Original Observation Model with One Layer}
\label{Single-layer Network plus Covariate Model}
We now consider the original observation model \eqref{eq:def_SBM_cov}--\eqref{eq:def_cov} with $m = 1$. 
This special case is also known as \revsag{the} contextual SBM.
Although the AMP algorithm defined by \eqref{eq:AMP_shift_main_1} and \eqref{eq:AMP_shift_main_1_1} is designed for a Gaussian sensing matrix, but \revsag{arguments of \cite{wang_zhong_fan} show that} the same state evolution limits can be obtained if instead of $\bm T$, one considers the matrix
\[
\bm A = \frac{\bm G-\widebar{p}_n\bm 1\bm 1^\top}{\sqrt{n\widebar{p}_n(1-\widebar{p}_n)}},
\]
where $\bm G$ is the adjacency matrix of the one layer network.
Thus, we could apply the practical algorithm presented at the beginning of this section with $\bm T$ replaced with $\bm A$.

% We do not prove any theoretical results for the orchestrated AMP state evolution using the graph adjacency matrices. Instead, we present some numerical simulations to demonstrate the efficiency of our AMP algorithm in detecting communities in the Contextual SBM setup using the network adjacency matrix and the Gaussian covariate matrix. In this case we have taken $m=1$. 

We take $n=2000$ and $p=3000$ and $\bar{p}_n = 0.7/\sqrt{n}$, and all other simulation details are identical to those used in Section \ref{sec:gauss-simu}.
% \begin{equation}
% \begin{aligned}
% \label{eq:AMP_shift_main_graph_1}
% \bm{v}^{t} &= \frac{\bm{B}}{\sqrt{p}} f_{t}(\bm{u}^t,\bm{x}^t)-p_{t}g_{t-1}(\bm{v}^{t-1}),\\
% \bm{u}^{t+1} &= \frac{\,\bm{B}^\top}{\sqrt{p}} g_t(\bm{v}^t)-c_tf_{t}(\bm{u}^t,\bm{x}^t),\\
% \end{aligned}
% \end{equation}
% and
% \begin{equation}
% \label{eq:AMP_shift_main_graph_1_1}
% \bm{x}^{t+1} = \frac{\bm{A}}{\sqrt{n}} f_{t}(\bm{u}^t,\bm{x}^t)-d_{t}f_{t-1}(\bm{u}^{t-1},\bm{x}^{t-1}),
% \end{equation}
% where
% \[
% \bm{A}:=\frac{\bm G-\widebar{p}_n\bm 1\bm 1^\top}{\sqrt{n\widebar{p}_n(1-\widebar{p}_n)}},
% \]
% where $\bm G$ is the adjacency matrix of an SBM with $\widebar{p}_n=0.7/\sqrt{n}$. Again we consider two settings like before. \textcolor{red}{In the first setting, we vary $\mu$ in $0.5,0.7$ and $0.9$ respectively. At each value of $\mu$, we vary $\lambda$ between $0.8$ and $4.5$. For each combination of $(\lambda,\mu)$ we generate $25$ copies of $(\bm G,\bm B)$ pairs. For each $(\bm G,\bm B)$ pair we initialize $\mu_0,  \sigma_0, \alpha_{-1}$ and $\tau_{-1}$ as in Section \ref{Single-layer Network plus Covariate Model}. To initialize $\bm u^0$ and $\bm x^0$, we consider the leading eigenvector $\bar{e}_1$ of $\bm G+a_0\bm B\bm B^\top$, where $a_0$ is the unique solution of \eqref{eq:initialization} and initialize both $\bm u^0$ and $\bm x^0$ by $\sqrt{n}*\bar{e}_1$. We run the iterates in \eqref{eq:AMP_shift_main_graph_1} and \eqref{eq:AMP_shift_main_graph_1_1} for $100$ iterations. 
In the upper panel of Figure \ref{fig:fig_amp_mu_g} we plot the average and the spread of the empirical MMSE of the estimator defined by \eqref{eq:amp_final_estim} over 25 iterates for each value of $\lambda$ at three fixed values of $\mu$ and compare it against the theoretical prediction given by \eqref{eq:MMSE-limit}. 
In the lower panel of Figure \ref{fig:fig_amp_mu_g}, we repeat the experiment across different $\mu$ values at three fixed $\lambda$ values.
In both settings, 
we see the same pattern as in the Gaussian observation model: the empirical MMSEs of the practical algorithm approximate the theoretical predictions well across all $(\lambda,\mu)$ combinations that we consider.
% We observe that the empirical MMSE of our estimator compares very well with the theoretical prediction.
% }

% \textcolor{red}{
% In the second setting, we consider the same setup as before with the same initializers except, in this case we vary $\lambda$ in $0.3,0.6$ and $0.9$ respectively and at each value of $\lambda$, we vary $\mu$ between $0.8$ and $4.5$. 
% In the lower panel of Figure \ref{fig:fig_amp_mu_g} we plot the average and the spread of the empirical MMSE of the estimator defined by \eqref{eq:amp_final_estim} across 25 iterates for different values of $\mu$ holding $\lambda$ fixed in three different values. Like the first setting, in this plot also, we observe that the empirical MMSE of our estimator compares very well with the theoretical prediction.}

\begin{figure}[h]
\begin{subfigure}{.33\textwidth}
  \centering
  \includegraphics[width=\linewidth]{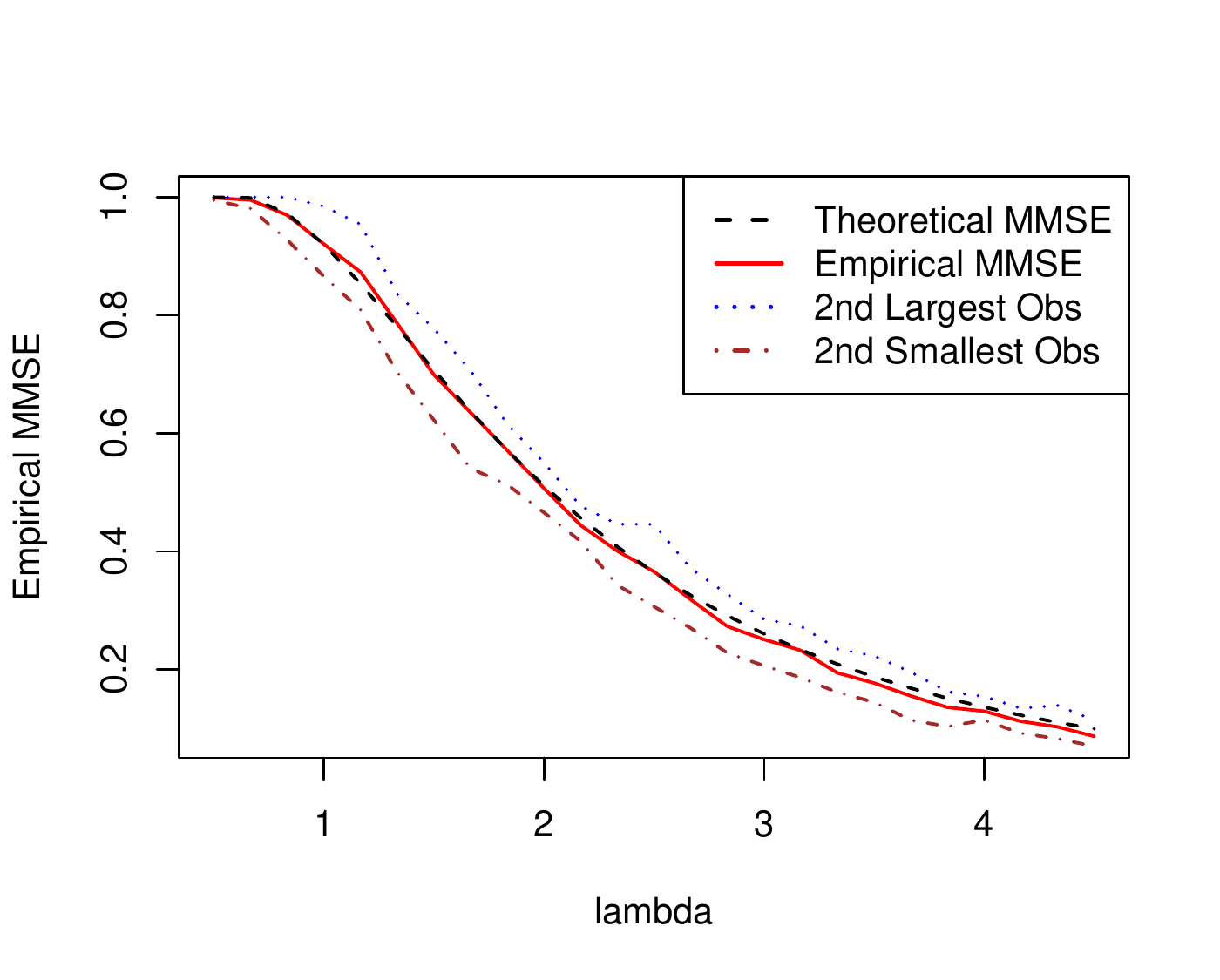}
  \caption{$\mu=0.5$}
  \label{fig:sfig11_g}
\end{subfigure}%
\begin{subfigure}{.33\textwidth}
  \centering
  \includegraphics[width=\linewidth]{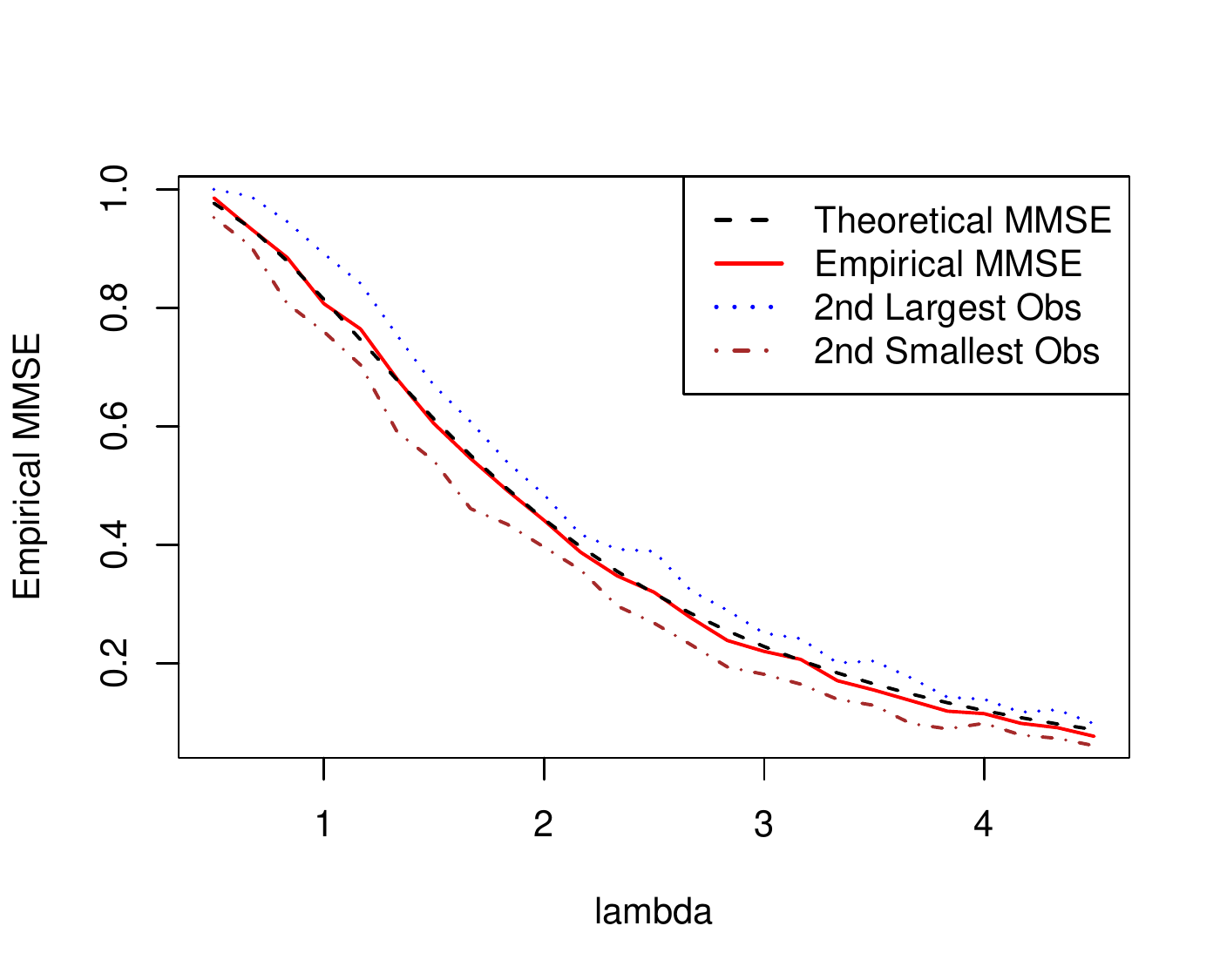}
  \caption{$\mu=0.7$}
  \label{fig:sfig12_g}
\end{subfigure}%
\begin{subfigure}{.33\textwidth}
  \centering
  \includegraphics[width=\linewidth]{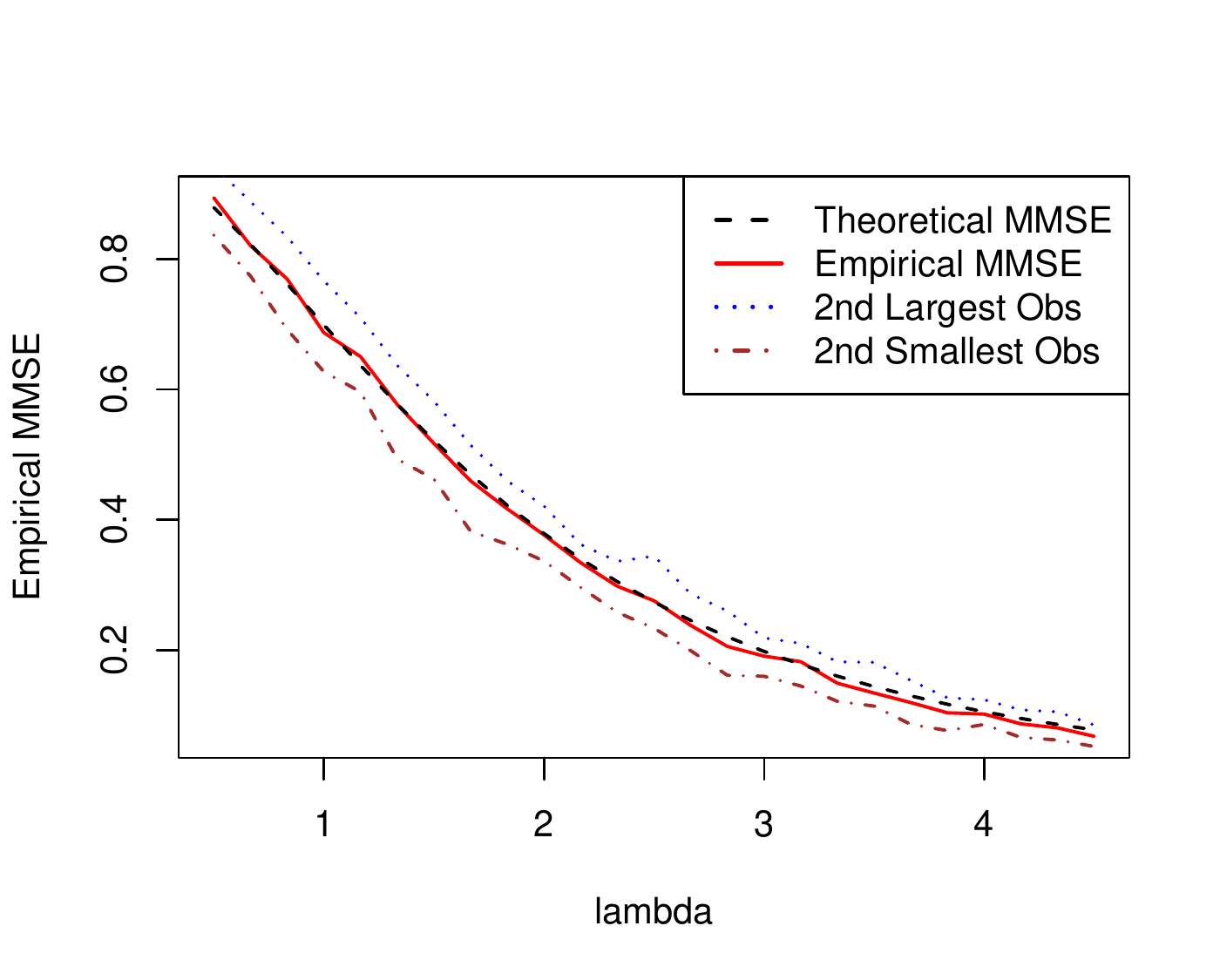}
  \caption{$\mu=0.9$}
  \label{fig:sfig13_g}
\end{subfigure}
\label{fig:fig_amp_lambda_g}
\\
\begin{subfigure}{.33\textwidth}
  \centering
  \includegraphics[width=\linewidth]{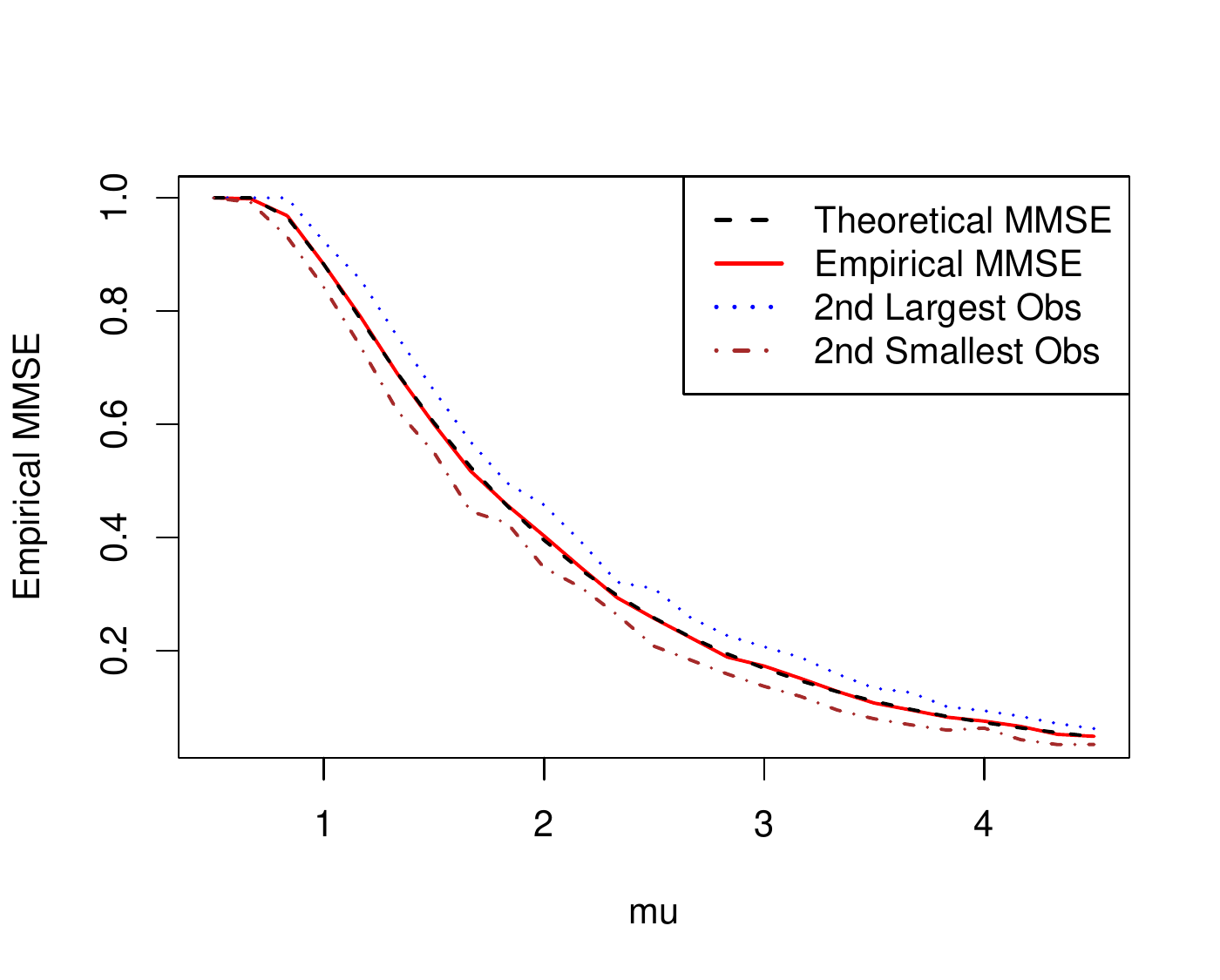}
  \caption{$\lambda=0.3$}
  \label{fig:sfig1_g}
\end{subfigure}%
\begin{subfigure}{.33\textwidth}
  \centering
  \includegraphics[width=\linewidth]{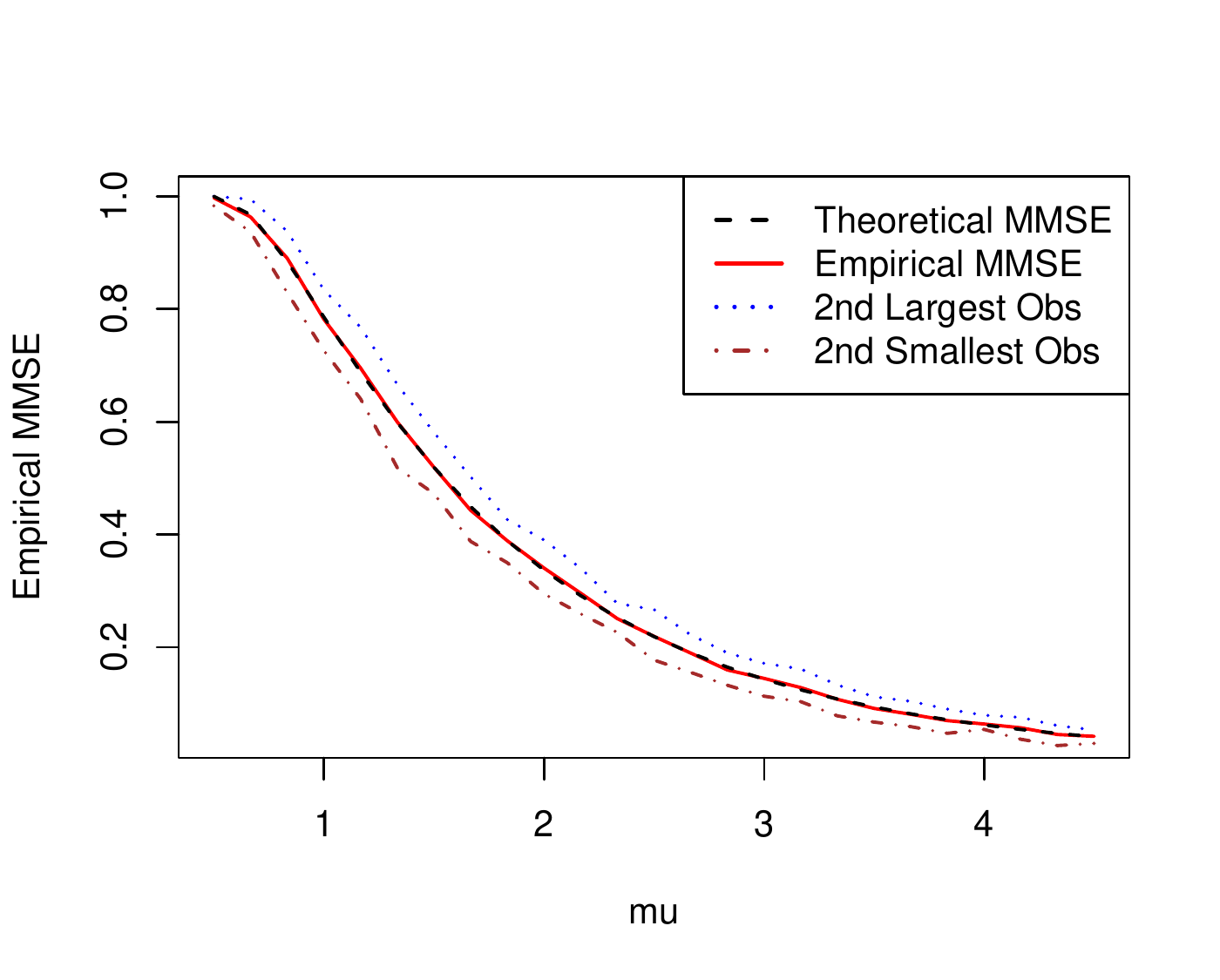}
  \caption{$\lambda=0.6$}
  \label{fig:sfig2_g}
\end{subfigure}%
\begin{subfigure}{.33\textwidth}
  \centering
  \includegraphics[width=\linewidth]{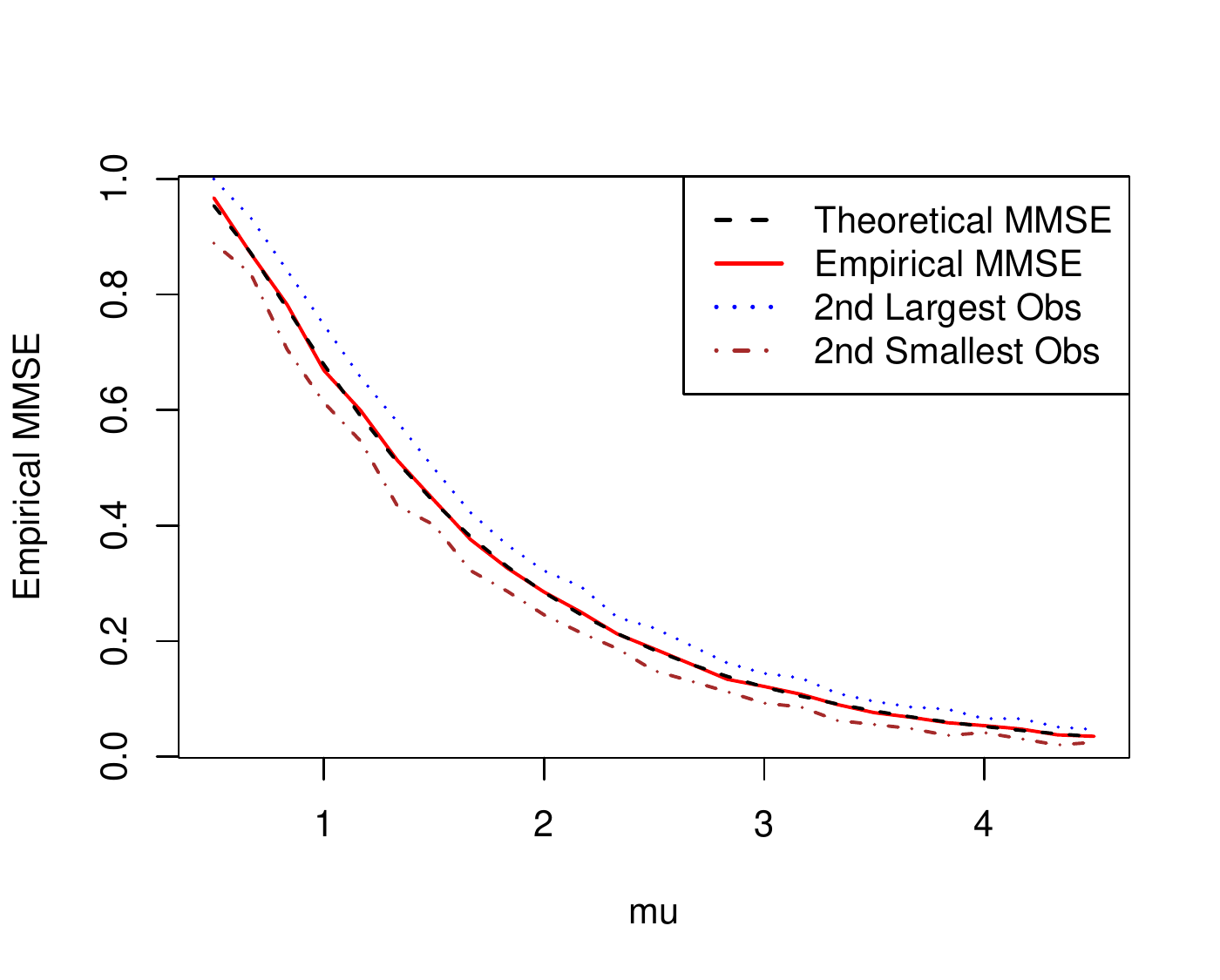}
  \caption{$\lambda=0.9$}
  \label{fig:sfig3_g}
\end{subfigure}
\caption{Upper Panel: Empirical MMSE plots of the AMP estimator based on Graph Adjacency matrix versus $\lambda$ for different fixed $\mu$'s.
Lower Panel: Empirical MMSE plots of the AMP estimator based on Graph Adjacency matrix versus $\mu$ for different fixed $\lambda$'s.}
\label{fig:fig_amp_mu_g}
\end{figure}

\subsection{The Original Observation Model with Three Layers}
\label{Multi-layer Network plus Covariate Model}
In our last set of simulations, 
we turn to the original observation model \eqref{eq:def_SBM_cov}-\eqref{eq:def_cov} with \revsag{$m = 3$.
% problem our observations were $m$ adjacency matrices of $m$ different SBM's with a common community structure and a Gaussian covariate matrix $\bm B$. In this subsection we define a set of orchestrated AMP orbits for this setup of multi-layer networks with the covariate matrix. 
We take $n=2000$, $p=3000$, and consider three SBMs with $\bar{p}^{(1)}_n=0.7/\sqrt{n}$, $\bar{p}^{(2)}_n=0.4/\sqrt{n}$ and $\bar{p}^{(3)}_n=0.3/\sqrt{n}$. 
In addition, we keep the SNR fractions
$r^{(1)}$, $r^{(2)}$ and $r^{(3)}$ in \eqref{eq:lambdaratio} at $0.6$, $0.2$ and $0.2$, respectively. }
The adjacency matrices of the SBMs are denoted by $\bm G_1$ and $\bm G_2$. 
To find the counterpart for $\bm T$ to be used in the practical algorithm, 
we first define the centered and scaled adjacency matrices:
\[
\bm A_i = \frac{\bm G_i-\widebar{p}^{(i)}_n\bm 1\bm 1^\top}{\sqrt{n\widebar{p}^{(i)}_n(1-\widebar{p}^{(i)}_n)}},
\qquad i = 1,2,3.
\]
% and
% \[
% \bm A_2 = \frac{\bm G_2-\widebar{p}^{(2)}_n\bm 1\bm 1^\top}{\sqrt{n\widebar{p}^{(2)}_n(1-\widebar{p}^{(2)}_n)}}.
% \]
\revsag{Simple} algebra suggests that we should replace $\bm T$ in the Gaussian model with 
\begin{equation*}
    {\bm A}:=\sqrt{\lambda_1/\lambda}\bm A_1+\sqrt{\lambda_2/\lambda}\bm A_2+\sqrt{\lambda_3/\lambda}\bm A_3
\end{equation*}
Other than the foregoing modification, the other experiment details are identical to what we have used in the previous two subsections.

% \textcolor{red}{
% We consider the following combined sensing matrix $A:=\sqrt{\lambda_1/\lambda}\bm A_1+\sqrt{\lambda_2/\lambda}\bm A_2$. Combining $\bm A_1$ and $\bm A_2$ in the foregoing fashion matches the mean of $\bm A$ with that of $\bm T$. Next, we consider the following orchestrated AMP orbits.
% \begin{equation}
% \begin{aligned}
% \label{eq:AMP_shift_main_graph_multi_1}
% \bm{v}^{t} &= \frac{\bm{B}}{\sqrt{p}} f_{t}(\bm{u}^t,\bm{x}^t)-p_{t}g_{t-1}(\bm{v}^{t-1}),\\
% \bm{u}^{t+1} &= \frac{\,\bm{B}^\top}{\sqrt{p}} g_t(\bm{v}^t)-c_tf_{t}(\bm{u}^t,\bm{x}^t),\\
% \end{aligned}
% \end{equation}
% and
% \begin{equation}
% \label{eq:AMP_shift_main_graph_1_2}
% \bm{x}^{t+1} = \frac{\bm{A}}{\sqrt{n}} f_{t}(\bm{u}^t,\bm{x}^t)-d_{t}f_{t-1}(\bm{u}^{t-1},\bm{x}^{t-1}).
% \end{equation}
% To initialize $\bm x^0$ and $\bm u^0$ we take the leading eigenvector of $\bm A$ and multiply it by $\sqrt{n}$. We initialize $\mu_0,  \sigma_0, \alpha_{-1}$ and $\tau_{-1}$ as in Section \ref{Single-layer Network plus Covariate Model}. Then we consider two settings. In the first setting, we vary $\mu$ in $0.5,0.7$ and $0.9$ and $\lambda$ between $0.8$ and $4.5$. For each pair of $(\lambda,\mu)$, we generate $25$ copies of $(\bm G_1, \bm G_2, \bm B)$ and run the recursions \eqref{eq:AMP_shift_main_graph_multi_1}. 
In the upper panel of Figure \ref{fig:fig_amp_mu_m_g} we plot the average and the spread of the empirical MMSE of the estimator defined by \eqref{eq:amp_final_estim} over 25 iterations at each value of $\lambda$ for three fixed values of $\mu$. 
We compare empirical MMSEs with the theoretical prediction \eqref{eq:MMSE-limit}. 
% In the second setting we repeat the same exercise except varying $\lambda$ in $0.3,0.6$ and $0.9$ and $\mu$ between $0.8$ and $4.5$ for each value of $\lambda$. 
In the lower panel of Figure \ref{fig:fig_amp_mu_m_g} we switch the roles of $\lambda$ and $\mu$, that is, we fix $\lambda$ and vary $\mu$. 
% In both the settings, we see that the empirical MMSE of our estimator closely approximates the predicted optimal value.
% }
In both settings, 
we see the same pattern as in the Gaussian observation model and in the contextual SBM: the empirical MMSEs of the practical algorithm approximate the theoretical predictions well across all $(\lambda,\mu)$ value pairs that we consider.

\begin{figure}[h]
\begin{subfigure}{.33\textwidth}
  \centering
  \includegraphics[width=\linewidth]{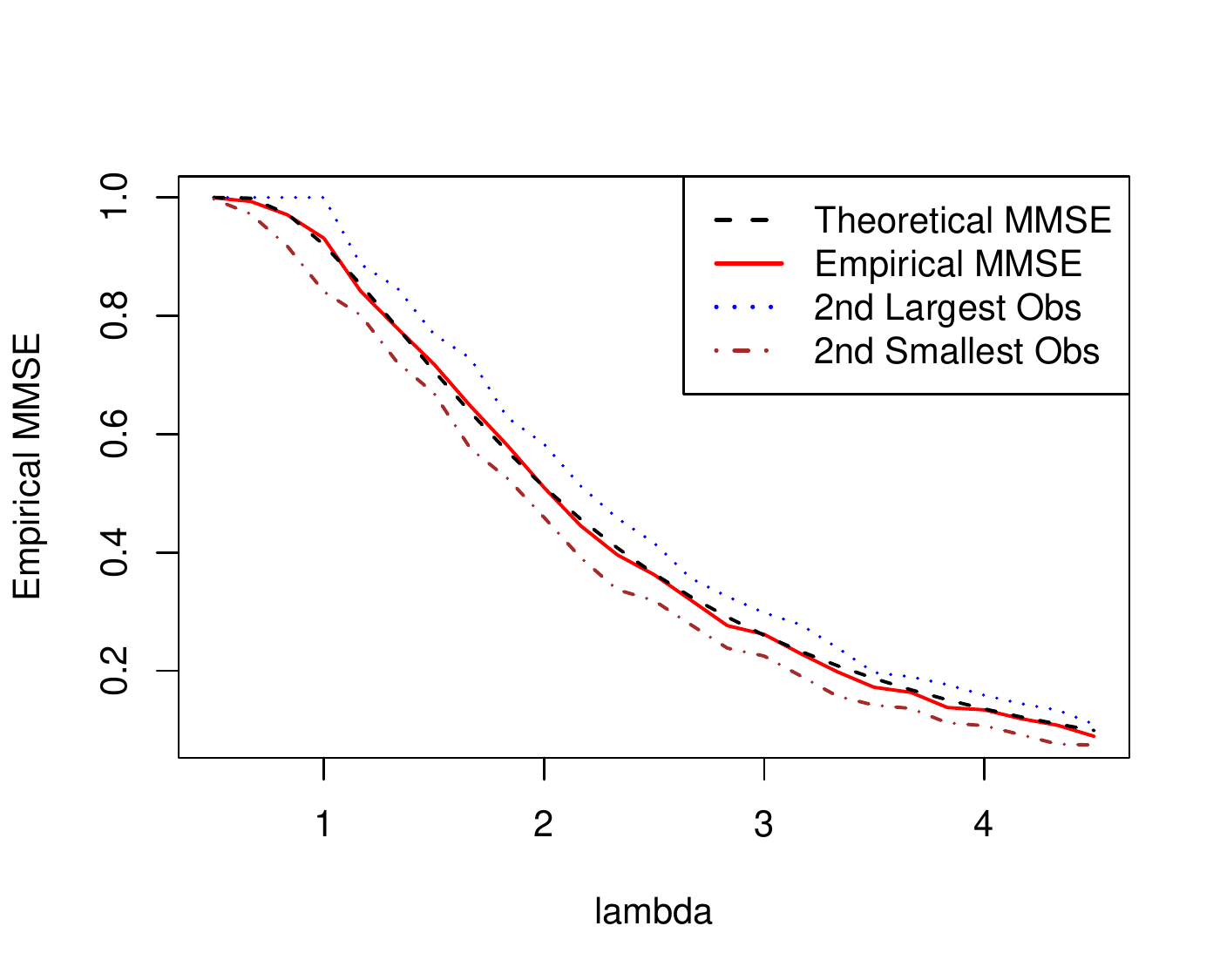}
  \caption{$\mu=0.5$}
  \label{fig:sfig11_m_g}
\end{subfigure}%
\begin{subfigure}{.33\textwidth}
  \centering
  \includegraphics[width=\linewidth]{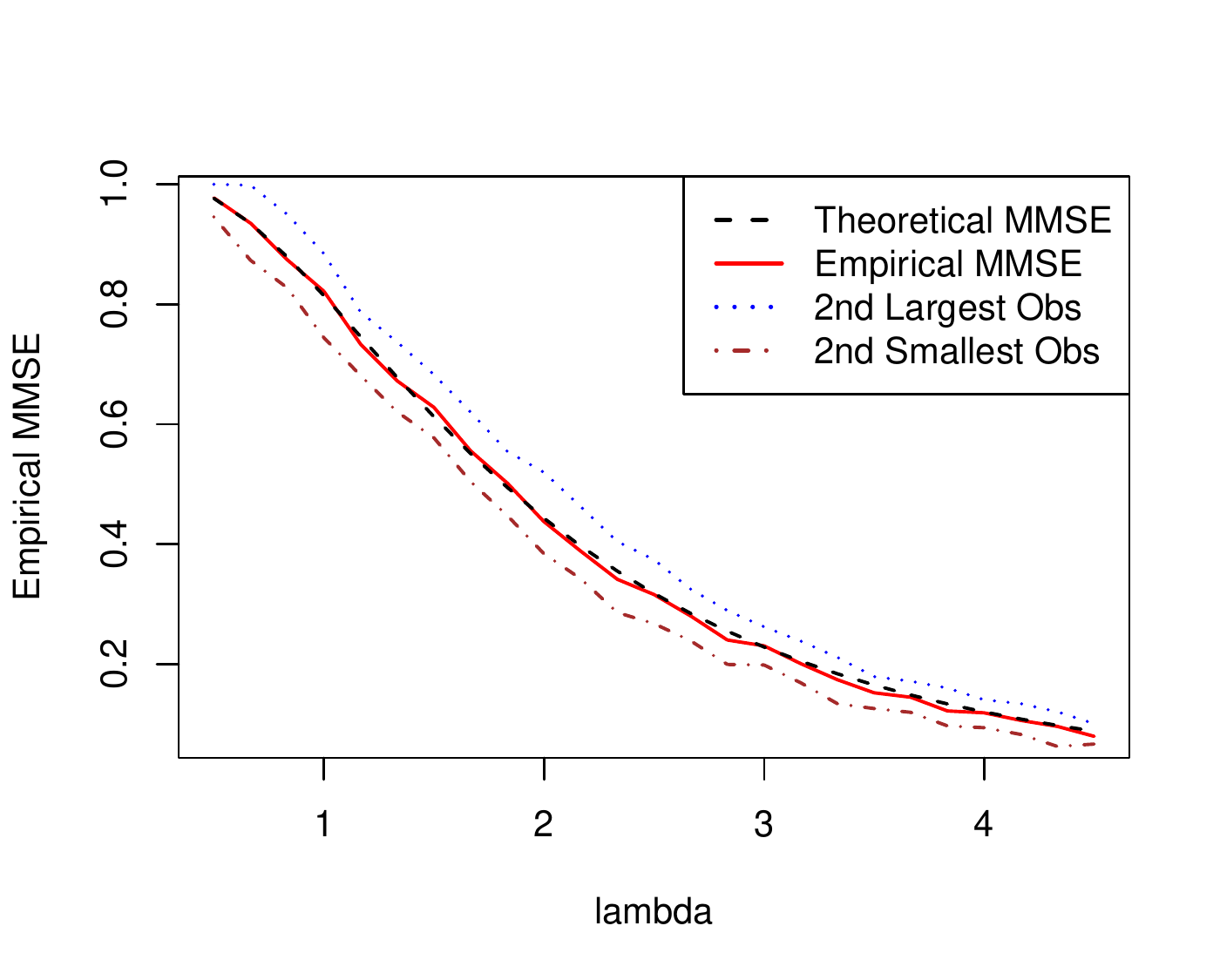}
  \caption{$\mu=0.7$}
  \label{fig:sfig12_m_g}
\end{subfigure}%
\begin{subfigure}{.33\textwidth}
  \centering
  \includegraphics[width=\linewidth]{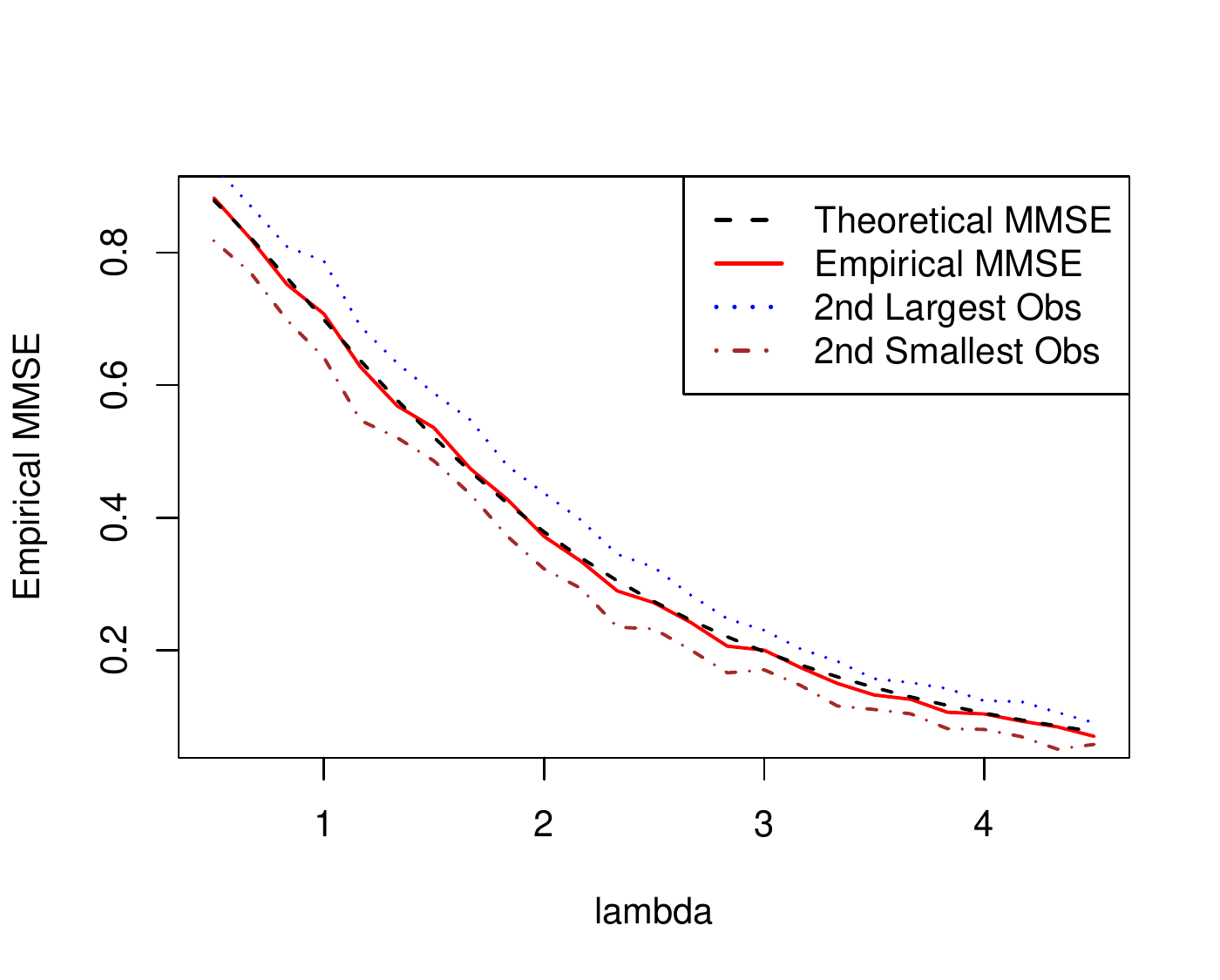}
  \caption{$\mu=0.9$}
  \label{fig:sfig13_m_g}
\end{subfigure}
\\
\begin{subfigure}{.33\textwidth}
  \centering
  \includegraphics[width=\linewidth]{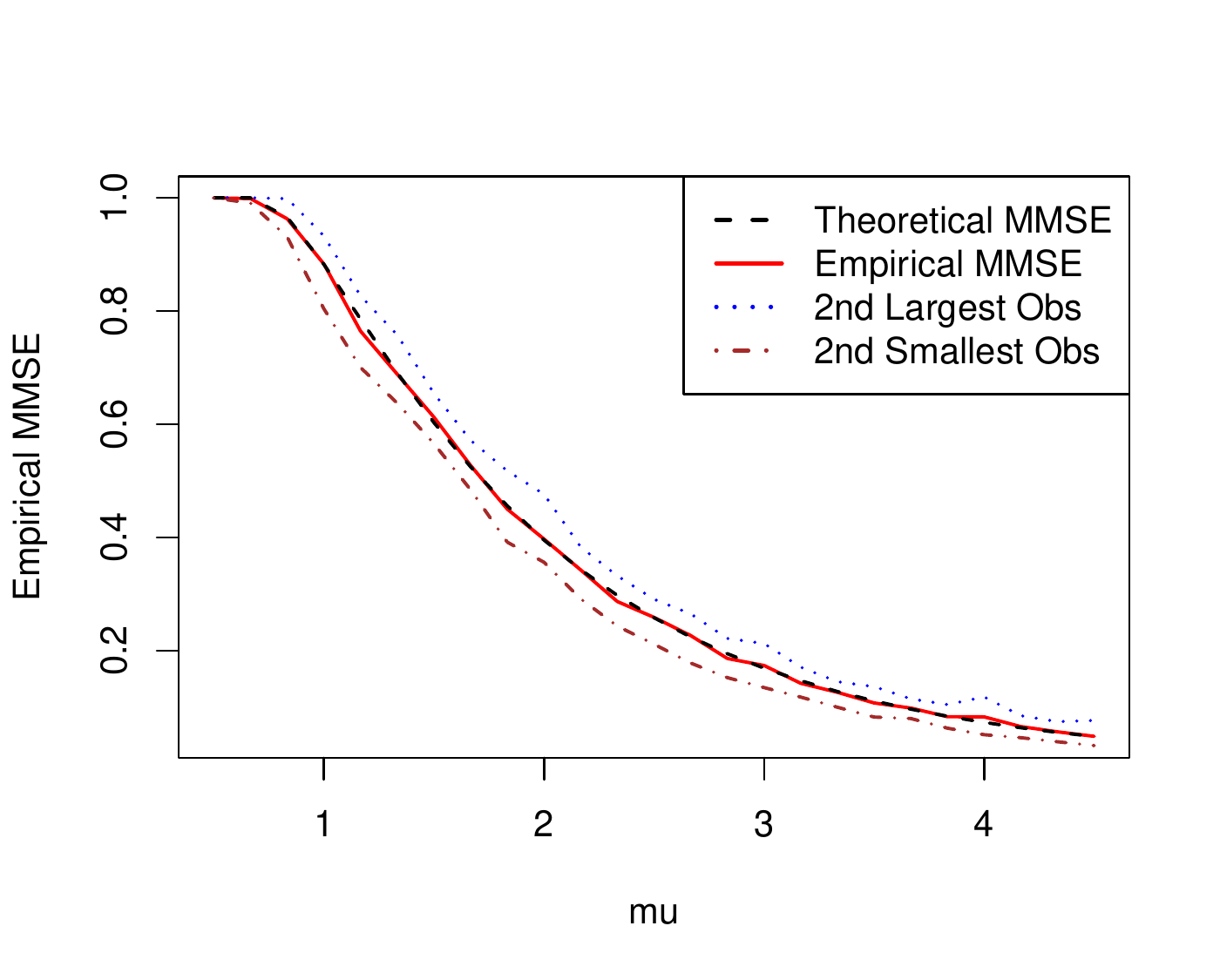}
  \caption{$\lambda=0.3$}
  \label{fig:sfig1_m_g}
\end{subfigure}%
\begin{subfigure}{.33\textwidth}
  \centering
  \includegraphics[width=\linewidth]{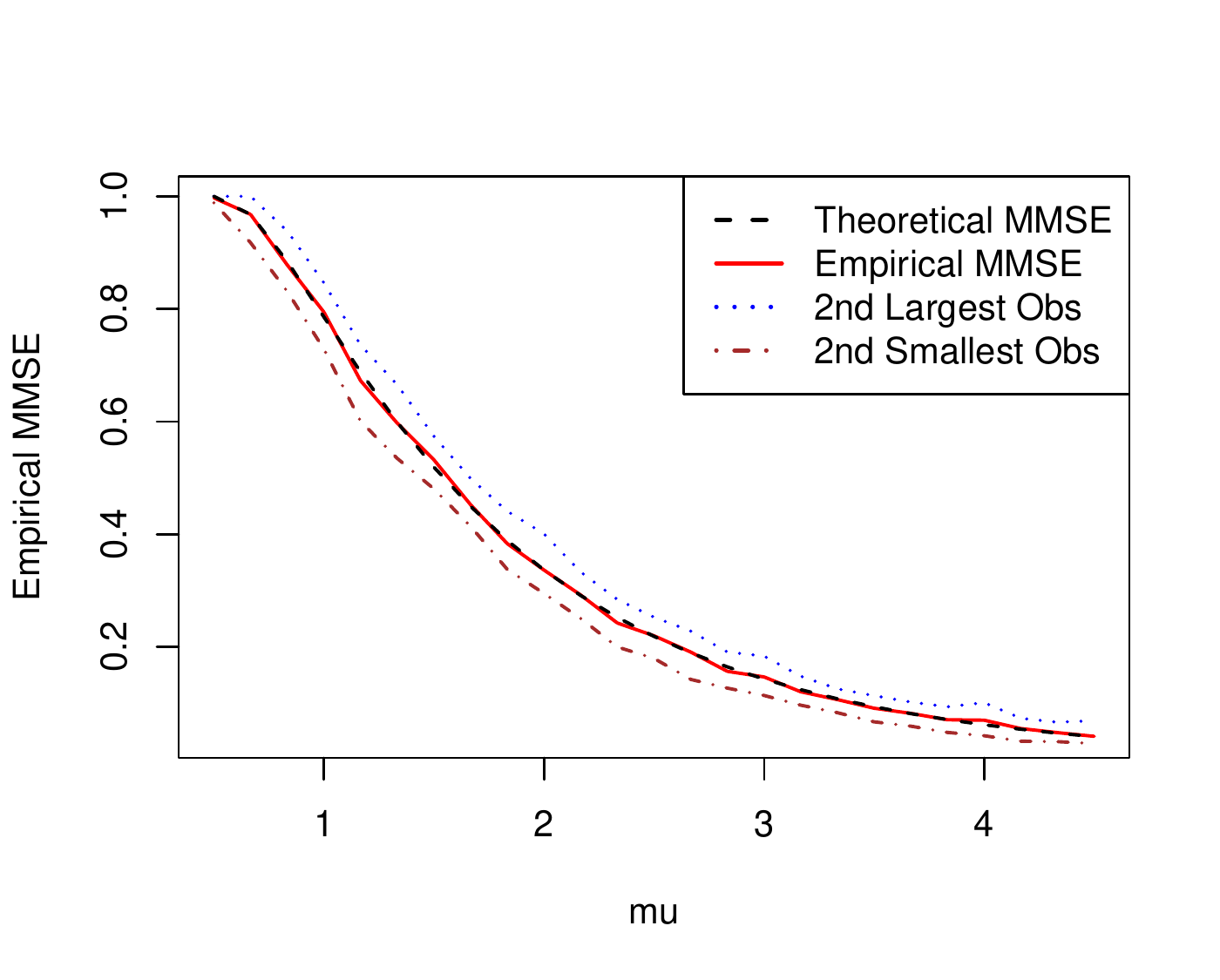}
  \caption{$\lambda=0.6$}
  \label{fig:sfig2_m_g}
\end{subfigure}%
\begin{subfigure}{.33\textwidth}
  \centering
  \includegraphics[width=\linewidth]{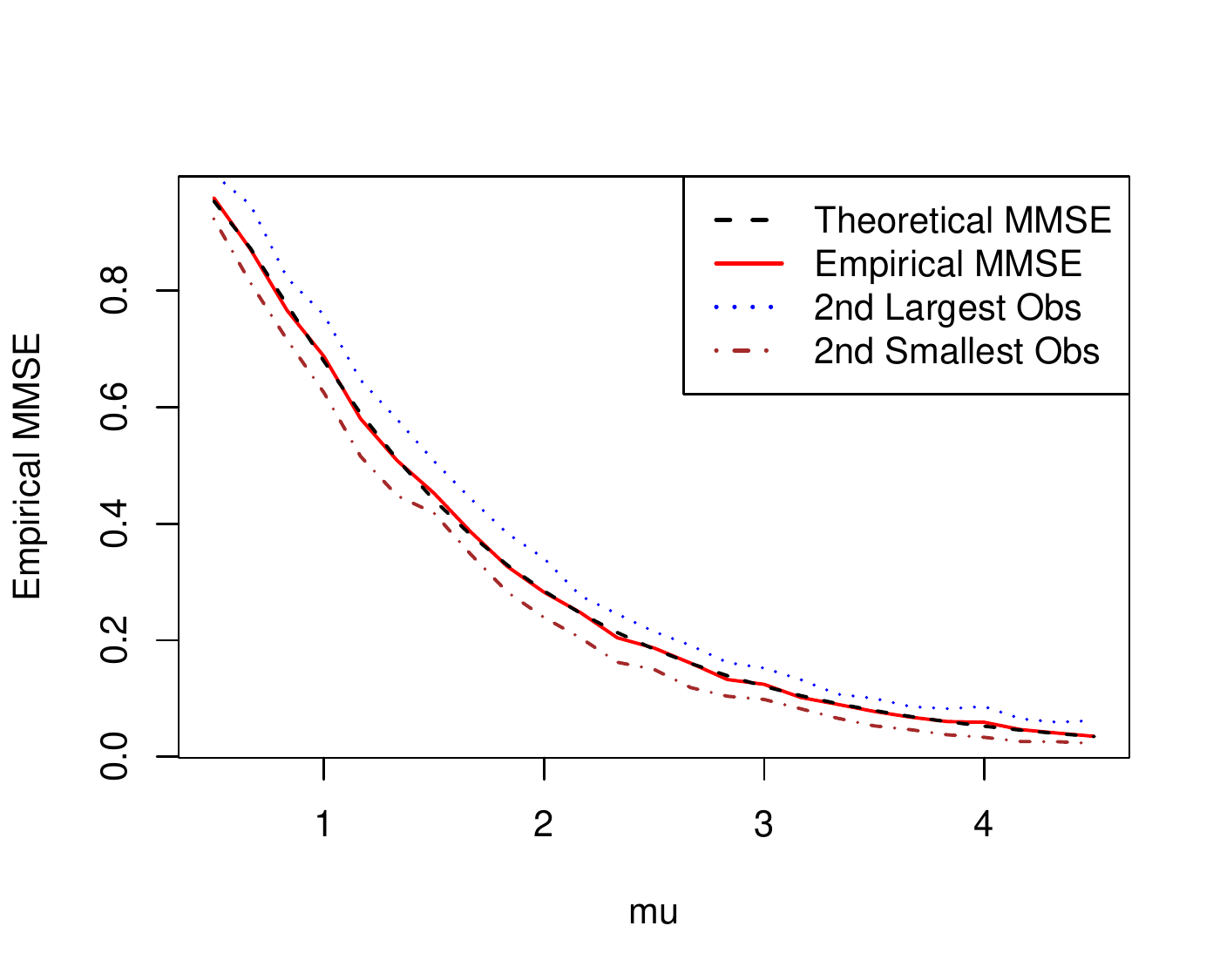}
  \caption{$\lambda=0.9$}
  \label{fig:sfig3_m_g}
\end{subfigure}
\caption{Upper Panel: Empirical MMSE plots of the AMP estimator based on three Graph Adjacency matrices versus $\lambda$ for different fixed $\mu$'s. Lower Panel: Empirical MMSE plots of the AMP estimator based on two Graph Adjacency matrices versus $\mu$ for different fixed $\lambda$'s.}
\label{fig:fig_amp_mu_m_g}
\end{figure}

\section{Concluding Remarks}
\label{sec:future}

In this paper, we have designed an orchestrated AMP algorithm with two orbits and $\varepsilon$-revelation to establish the exact asymptotic limit of MMSE for estimating $\bm{x}^*$. 
The theoretically justified version is not practical due to its dependence on the true parameter values through $\varepsilon$-revelation. 
In Section \ref{sec:numerical}, a practical variant with spectral initialization leads to empirical estimation errors that {closely approximate the} theoretically predicted optimal values over all simulated examples. 
To fully establish its generality, it is of great interest to show mathematically that this practical algorithm indeed reaches the asymptotic MMSE. 
{An alternative practical algorithm for community detection in single layer Contextual SBM has been described in Section 4 of \cite{ContBlockMod}. 
This algorithm can potentially be modified to handle multilayer networks and covariates. However, this is beyond the scope of the current manuscript and we leave it for future research.}

% \nb{insert some summary on what we have done}
% One important question that arises based on our work is the existence and optimality of any polynomial-time algorithm for (optimal) weak recovery of the community label vector above the detection threshold. \textcolor{red}{The AMP algorithm described in the Section \ref{sec:numerical} performs quite well empirically. However, at present we don't have any theory characterizing its performance. We leave this to future research.}

We have focused exclusively on the balanced two-block setting, which is the simplest non-trivial case for community detection. 
In addition, we have considered Gaussian covariates. 
These assumptions could be relaxed. 
The extension of our results to general sub-Gaussian sensing matrices can be derived by directly using the techniques described in \cite{wang_zhong_fan}. Therefore, we do not describe it in detail. 
In addition, we could consider the balanced $k$ block setting with $k > 2$, in which case the signal would be encoded by a matrix of rank $k-1$. 
See, for instance, the multiple spiked models (1.1) and (1.2) in \cite{montanari2021}.
The AMP algorithms for handling such cases can be developed by following the principle in the present paper and generalizing our techniques along the line of \cite{montanari2021}. 
% AMP algorithms can also be designed for signal detection in spiked tensor models with covariate information using our techniques. We leave these extensions to future research.
Furthermore, we could consider detection threshold in a sparse setting with non-diverging average degrees. We could also consider the optimal rate of community detection in these models under a Hamming loss as in \cite{10.1214/15-AOS1428} in the regime of weak consistency as opposed to detection.
In these settings, different techniques from AMP are expected to be needed to achieve the information-theoretically optimal performance. 
That being said, the idea of developing orchestrated parallel estimation sequences with information sharing at each step for different data sources could still be useful.
We leave the aforementioned potential extensions for future research.

In the present work, we have considered an orchestrated AMP algorithm with two orbits as we have two sources of information about the estimand. 
It could be generalized to more than two orbits when additional sources of information are present.
For example, suppose that there are $n$ vertices in total. 
In addition to \revsag{network and covariate information for all vertices} as we have considered in this work, there may be an additional network on a subset of vertices and some additional covariates on a different subset, represented by an $n_1\times n_1$ adjacency matrix and an $n_2\times p'$ covariate matrix, where $n_1, n_2 < n$.
To pool all the information together, we anticipate that an orchestrated AMP algorithm with four orbits would be needed to achieve an information-theoretically optimal estimation error.
We think that the study of \revsag{orchestrated AMP algorithms in more general settings} would be an interesting future research topic.

% The orchestrated AMP algorithm \revzm{we have proposed could potentially be adapted to other scenarios involving information extraction from multiple noisy channels}.
% % in the proportional growth asymptotic regime. 
% These channels may not be of the form described in our paper, but can be very generic. For instance, we might consider two sets of covariates in our setup. One set of covariates might be the Gaussian mixture model type covariate as described above, while the other set may be another $n_1 \times p_1$ data matrix, encoding some special information about a particular set of nodes. This additional information can also be leveraged using our orchestrated AMP set-up while detecting the signal. The detection threshold for the rank-one signal in such a set-up can be explored in the future using our generic method.

\section*{Acknowledgment}
The authors would like to thank Fan Yang for helpful \revsag{discussions} and communication \revsag{that lead} to \eqref{eq:initialization} and Galen Reeves for interesting \revsag{discussions} on the connection of the present work to estimation under the multi-view spiked matrix models.

\newpage
\appendix
\section{Proof of Results in Section~\ref{inf_sec}}
\subsection{Proof of Lemma~\ref{lem:cov_gauss_sbm}}
\label{proof_lem_1}
From the definition of mutual information we have
\[ I(\bm{x^{*}};\bm{Y},\bm{B})
=\mathbb{E}\left[\log\frac{dp_{\bm{Y},\bm{B}|\bm{x^{*}}}(\bm{Y},\bm{B}|\,\bm{x^{*}})}{dp_{\bm{Y},\bm{B}}(\bm{Y},\bm{B})}\right].
\]
Let 
\[
\mathcal{E}(\bm{x},\bm{v},\bm{Y},\bm{B})
=\exp\left(\sum_{i=1}^{m}-\frac{1}{4}\norm{\bm{Y^{(i)}}-
\sqrt{\frac{\lambda^{(i)}}{n}}\bm{x}\bm{x}^\top}^2_F-
\frac{1}{2} \norm{\bm{B}-\sqrt{\frac{\mu}{n}}\bm{v}\bm{x}^\top}^2_F
\right).
\]
Then using the property of Gaussian channel we get
\begin{equation} 
\label{eq:Gauss_Chann_Inform_cov}
\begin{split}
& I(\bm{x^{*}};\bm{Y},\bm{B}) \\
& = \mathbb{E}\log\left\{
\frac{\int_{\mathbb{R}^p}\mathcal{E}(\bm{x^{*}},\bm{v},\bm{Y},\bm{B})
\exp(-\frac{1}{2}{\|\bm{v}\|^2})d\bm{v}}{\sum_{\bm{x} \in \{\pm 1\}^{n}}\int_{\mathbb{R}^p}{2^{-n}}\mathcal{E}(\bm{x},\bm{v},\bm{Y},\bm{B})\exp(-\frac{1}{2}\|\bm{v}\|^2)d\bm{v}}\right\}\\
& = n\log 2 + \mathbb{E}\log\Bigg(\int_{\mathbb{R}^p}
\exp\left(-\frac{1}{2}\norm{\bm{B}-\sqrt{\frac{\mu}{n}}\bm{v}(\bm{x^{*}})^\top}^2_F\right)\exp\left(-\frac{\,\|\bm{v}\|^2}{2} \right)d\bm{v}\Bigg)\\
&\quad
- \mathbb{E}\log\Bigg\{\sum_{\bm{x} \in \{\pm 1\}^{n}}\int_{\mathbb{R}^p}\exp\Bigg(\sum_{i=1}^{m}\Bigg[\sum_{k<l}\Bigg\{-\frac{1}{2}\Bigg(Z^{(i)}_{kl} - \sqrt{\frac{\lambda^{(i)}}{n}}(x_kx_l-x^*_kx^*_l)\Bigg)^2+\frac{1}{2}(Z^{(i)}_{kl})^2\Bigg\}\Bigg]\\
&\hskip 19em
-\frac{1}{2}\norm{\bm{B}-\sqrt{\frac{\mu}{n}}\bm{v}\bm{x}^\top}^2_F\Bigg)
\exp\left(-\frac{\,\|\bm{v}\|^2}{2}\right)d\bm{v}\Bigg\}.
\end{split}
\end{equation}
Furthermore, if we note that 
\[
\sum_{k<l}\left(x_kx_l-x^*_kx^*_l\right)^2=n(n-1)-2\sum_{k<l}x_kx_lx^*_kx^*_l,
\]
then we easily get
\begin{align*}
I(\bm{x^{*}};\bm{Y},\bm{B})
&=n\log 2 +\frac{n-1}{2}\sum_{i=1}^{m}\lambda^{(i)}\\
& \quad +\mathbb{E}\log\Bigg(\int_{\mathbb{R}^p}
\exp\left(-\frac{1}{2}\norm{\bm{B}-\sqrt{\frac{\mu}{n}}\bm{v}(\bm{x^{*}})^\top}^2_F\right)
\exp\left(-\frac{\,\|\bm{v}\|^2}{2} \right)
d\bm{v}\Bigg)\\
&\quad-\mathbb{E}[\phi(\bm{x^{*}},\bm{B},\bm{W},\bm{\lambda},\mu,n,p)].
\end{align*}
% \normalsize
\subsection{Proof of Lemma~\ref{lem:join_all_matrices}}
\label{proof_lem_2}
A careful inspection of the expression of $I(\bm{x^{*}};\bm{Y},\bm{B})$ in Lemma \ref{lem:cov_gauss_sbm} shows that the mutual information depends on $\{\lambda^{(i)}\}$ and $\{Z^{(i)}\}$ only through 
\begin{equation*}
\lambda = \sum_{i=1}^m \lambda^{(i)}
\quad \mbox{and} \quad
\sum_{i=1}^m\sqrt{\frac{\lambda^{(i)}}{n}} Z_{kl}^{(i)}, 
\quad \mbox{for all $k<l$}.
\end{equation*}
The proof is simply completed by noting that 
\begin{equation*}
\sum_{i=1}^m\sqrt{\frac{\lambda^{(i)}}{n}} Z_{kl}^{(i)} 
\stackrel{d}{=} 
\sqrt{\frac{\lambda}{n}}\, Z_{kl},
\quad \mbox{for all $k<l$}.
\end{equation*}

\subsection{Proof of Lemma~\ref{lem:inf_cov_sbm}}
\label{proof_lem_3}
Let us define
\begin{align}
\mathcal{F}(\bm{x},\bm{v},\bm{G},\bm{B})
&=\Bigg[\prod\limits_{i=1}^{m}
\prod\limits_{k<l}(\widebar{p}^{(i)}_n+\Delta_n^{(i)}x_kx_l)^{G^{(i)}_{kl}}
(1-\widebar{p}^{(i)}_n-\Delta_n^{(i)}x_kx_l)^{1-G^{(i)}_{kl}}\Bigg]\\
& \hspace{0.4in} \exp\left(-\frac{1}{2}\norm{\bm{B}-\sqrt{\frac{\mu}{n}}\bm{v}\bm{x}^\top}^2_F\right).
\end{align}
Then from the definition of mutual information we have 
\begin{align*}
& I(\bm{x^{*}};\bm{G},\bm{B}) \\
& = \mathbb{E}\log\left\{\hspace{-0.05in}\frac{\int_{\mathbb{R}^p}\mathcal{F}(\bm{x^{*}},\bm{v},\bm{G},\bm{B})\exp(-\frac{1}{2}\|\bm{v}\|^2)d\bm{v}}{\sum_{\bm{x} \in \{\pm 1\}^{n}}
\int_{\mathbb{R}^p}{2^{-n}}\mathcal{F}(\bm{x},\bm{v},\bm{G},\bm{B})\exp(-\frac{1}{2} \|\bm{v}\|^2)d\bm{v}} \right\}\\
& = 
n\log2 
+\mathbb{E}\log\Bigg(\int_{\mathbb{R}^p}
\exp\left(-\frac{1}{2}\norm{\bm{B}-\sqrt{\mu\over n}\bm{v}(\bm{x^{*}})^\top}^2_F\right)
\exp\left(- \frac{\,\|\bm{v}\|^2}{2} \right)d\bm{v}\Bigg)\\
&\quad-\mathbb{E}\log\Bigg\{
\sum_{x \in \{\pm 1\}^{n}}
\int_{\mathbb{R}^p}\exp\Bigg(\sum_{i=1}^{m}\sum_{k<l}
\Bigg[G^{(i)}_{kl}\log\Bigg(\frac{\widebar{p}^{(i)}_n+\Delta_n^{(i)}x_kx_l}{\widebar{p}^{(i)}_n+\Delta_n^{(i)}x^*_kx^*_l}\Bigg)\\
&\quad+(1-G^{(i)}_{kl})\log\Bigg(\frac{1-\widebar{p}^{(i)}_n-\Delta_n^{(i)}x_kx_l}{1-\widebar{p}^{(i)}_n-\Delta_n^{(i)}x^*_kx^*_l}\Bigg)
\Bigg]
-\frac{1}{2}\norm{\bm{B}-\sqrt{\mu\over n}\bm{v}\bm{x}^\top}^2_F\Bigg)
\exp\left(- \frac{\,\|\bm{v}\|^2}{2} \right)d\bm{v}\Bigg\}
% -\frac{1}{2}\|\bm{B}-\sqrt{(\mu/n)}\bm{v}\bm{x}^\top\|^2_F\Bigg)\exp(-\|\bm{v}\|^2/2)d\bm{v}\Bigg\}
\\
& = n\log2
+\mathbb{E}\log\Bigg(\int_{\mathbb{R}^p}
\exp\left(-\frac{1}{2}\norm{\bm{B}-\sqrt{\mu\over n}\bm{v}(\bm{x^{*}})^\top}^2_F\right)
\exp\left(- \frac{\,\|\bm{v}\|^2}{2} \right)d\bm{v}\Bigg)
% +\mathbb{E}\log\Bigg(\int_{\mathbb{R}^p}\exp\left(-\frac{1}{2}\|\bm{B}-\sqrt{(\mu/n)}\bm{v}(\bm{x^{*}})^\top\|^2_F\right)\exp(-\|\bm{v}\|^2/2)d
\Bigg)\\
&-\mathbb{E}\psi(\bm{x^{*}},\bm{B},\bm{G},\mu,n,p).
\end{align*}
 \normalsize
\subsection{Proof of Lemma~\ref{lem:I_SBM_cov}}
\label{proof_lem_4}
We begin by noting that if $x\in\{\pm 1\}$ then
\[
\log\left(c+dx\right)=\frac{1}{2}\log(c+d)(c-d)+\frac{x}{2}\log\frac{c+d}{c-d}.
\]
Now from~\eqref{eq:H_cov} we get
\begin{equation}
\label{eq:simpl_inf_SBM}
\begin{split}
\mathcal{H}_{SBM}'\left(\bm{x},\bm{x^{*}},\bm{G},\bm \lambda,n\right)&=\sum_{k<l}(x_kx_l-x^*_kx^*_l)\left[\sum_{i=1}^{m}\frac{G^{(i)}_{kl}}{2}\log\left(\frac{1+\Delta^{(i)}_n/\widebar{p}^{(i)}_n}{1-\Delta^{(i)}_n/\widebar{p}^{(i)}_n}\right)\right]\\
&\quad + \sum_{k<l}(x_kx_l-x^*_kx^*_l)\left[\sum_{i=1}^{m}\frac{1-G^{(i)}_{kl}}{2}\log\left(\frac{1-\Delta^{(i)}_n/\left(1-\widebar{p}^{(i)}_n\right)}{1+\Delta^{(i)}_n/\left(1-\widebar{p}^{(i)}_n\right)}\right)\right].
\end{split}
\end{equation}
For large values of $n$, there exists some sufficiently small $c_0< \frac{1}{2}$ such that for all $i \in [m]$
% \nb{This is not guaranteed by the current set of assumptions. We need something stronger, such as $\max_i \widebar{p}_n^{(i)}\to 0$.}
\[
\max\left(\frac{\Delta^{(i)}_n}{\widebar{p}^{(i)}_n},\frac{\Delta^{(i)}_n}{1-\widebar{p}^{(i)}_n}\right)~\le~c_0.
\]
Using Taylor approximation for $z \in [0,c_0]$ we have
% \nb{whether the multiplier on the right side is $1$ depends on how close to $0$ the value $c_0$ is.}
\[
\left|\frac{1}{2}\log\left(\frac{1+z}{1-z}\right)-z\right|~\le~z^3.
\]
By triangle inequality
\[
\mathcal{H}_{SBM}'
\left(\bm{x},\bm{x^{*}},\bm{G}, \bm{\lambda}, n\right)
=\sum_{i=1}^{m}\left[\sum_{k<l}\left(\left(x_kx_l-x^*_kx^*_l\right)\left(\frac{\Delta^{(i)}_nG^{(i)}_{kl}}{\widebar{p}^{(i)}_n}-\frac{\Delta^{(i)}_n(1-G^{(i)}_{kl})}{1-\widebar{p}^{(i)}_n}\right)\right)+\mbox{err}^{(i)}_n\right]
\]
where $\mbox{err}^{(i)}_n$ satisfies, for all $i \in [m]$
\begin{equation}
	\label{eq:err-control}
\begin{split}
\left|\mbox{err}^{(i)}_n\right|
& \le 
C_1\left(\frac{\Delta^{(i)}_n}{\widebar{p}^{(i)}_n}\right)^3
\Big( \big|\bm{x}^\top \bm G^{(i)}\bm{x}\big|
+\big| (\bm{x^{*}})^\top \bm G^{(i)}\bm{x^{*}} \big|\Big)
\\
&\quad +C_2\left(\frac{\Delta^{(i)}_n}{1-\widebar{p}^{(i)}_n}\right)^3
\Big(\big| \bm{x}^\top(\bm{1}\bm{1}^\top-\bm G^{(i)} ) \bm{x}\big|+\big| (\bm{x^{*}})^\top(\bm{1}\bm{1}^\top-\bm G^{(i)} )\bm{x^{*}}\big|\Big)
\end{split}
\end{equation}
with $C_1$ and $C_2$ absolute positive constants.
Furthermore
\begin{equation}
\begin{split}
& \sum_{k<l}\left(x_kx_l-x^*_kx^*_l\right)
\left(\frac{\Delta^{(i)}_nG^{(i)}_{kl}}{\widebar{p}^{(i)}_n}
-\frac{\Delta^{(i)}_n (1-G^{(i)}_{kl})}{1-\widebar{p}^{(i)}_n}\right)\\
& \qquad\quad =\sum_{k<l}\left(x_kx_l-x^*_kx^*_l\right)
\left(\widetilde{G}^{(i)}_{kl}
+\frac{(\Delta^{(i)}_n)^2x^*_kx^*_l}{\widebar{p}^{(i)}_n (1-\widebar{p}^{(i)}_n)}\right)\\
&\qquad \quad =-\frac{n-1}{2}\lambda^{(i)}+\sum_{k<l}\widetilde{G}^{(i)}_{kl}(x_kx_l-x^*_kx^*_l)
% \\
% &
+\sum_{k<l}\frac{\,\lambda^{(i)}}{n}x_kx_lx^*_kx^*_l.
\end{split}
\end{equation}
This implies
\begin{equation}
\label{eq:lambda_new}
\mathcal{H}_{SBM}'\left(\bm{x},\bm{x^{*}},\bm{G},\bm \lambda,n\right)
=-\frac{n-1}{2}\sum_{i=1}^{m}\lambda^{(i)}
+\mathcal{H}'(\bm{x},\bm{x^{*}},\widetilde{\bm{G}},\bm{\lambda},n )
+\sum_{i=1}^{m}\mbox{err}^{(i)}_n.
\end{equation}
where $\mbox{err}^{(i)}_n$ satisfies \eqref{eq:err-control}
for all $i \in [m]$.
% \begin{equation}
% \begin{split}
% \left|\mbox{err}^{(i)}_n\right|& \le~C_1\left(\frac{\Delta^{(i)}_n}{\widebar{p}^{(i)}_n}\right)^3\left(\left|\left\langle\bm{x},\bm A^{(i)}\bm{x}\right\rangle\right|+\left|\left\langle\bm{x^{*}},\bm A^{(i)}\bm{x^{*}}\right\rangle\right|\right)\\
% &~+C_2\left(\frac{\Delta^{(i)}_n}{1-\widebar{p}^{(i)}_n}\right)^3\left(\left|\left\langle\bm{x},\left(\bm{1}\bm{1}^\top-\bm A^{(i)}\right)\bm{x}\right\rangle\right|+\left|\left\langle\bm{x^{*}},\left(\bm{1}\bm{1}^\top-\bm A^{(i)}\right)\bm{x^{*}}\right\rangle\right|\right)
% \end{split}
% \end{equation}
% where $C_1$ and $C_2$ are absolute constants.
Using~\eqref{eq:H_SBM_cov} we get
\begin{align*}
& \mathcal{H}_{SBM}(\bm{x},\bm{x^{*}},\bm{v},\bm{G},\bm{B},\bm \lambda,\mu,n,p)) \\
& \quad = -\frac{n-1}{2}\sum_{i=1}^{m}\lambda^{(i)}
+\mathcal{H}'(\bm{x},\bm{x^{*}},\widetilde{\bm{G}},\bm{\lambda},n)
-\frac{p}{2}\norm{\bm{B}-\sqrt{\mu\over n}\bm{v}\bm{x}^\top}^2_F
+\sum_{i=1}^{m}\mbox{err}^{(i)}_n.
% (\bm{x},\bm{x^{*}},\bm{G},\bm{\lambda}).
\end{align*}
Furthermore, by Remark 5.4 of \cite{AbbeMonYash} and~\eqref{eq:cov_phi_g}, 
we obtain the following.
\begin{align*}
& \mathbb{E}\psi(\bm{x^{*}},\bm{B},\bm{G},\bm \lambda, \mu,n,p)
\\
&= \hspace{-0.04in}-\frac{n-1}{2}\sum_{i=1}^{m}\lambda^{(i)} \hspace{-0.04in}+ \mathbb{E}\log\Bigg\{\sum_{\bm{x} \in \{\pm 1\}^n}\int_{\mathbb{R}^p}\exp(\mathcal{H}(\bm{x},\bm{x^{*}},\bm \lambda, \bm{v},\bm{\tilde{G}},\bm{B},\bm{\lambda},\mu,n,p))
\exp\left(- \frac{\|\bm{v}\|^2}{2} \right)d\bm{v}\Bigg\}\\
&+ O\left(\sum_{i=1}^{m}\frac{n\left(\lambda^{(i)}\right)^{3/2}}{\sqrt{n\widebar{p}^{(i)}_n (1-\widebar{p}^{(i)}_n )}}\right)
\end{align*}
We complete the proof by \revsag{applying}
Lemma~\ref{lem:inf_cov_sbm}.
% we get the result.

\subsection{Proof of Lemma~\ref{lem:cov_connect_phi}}
\label{proof_lem_5}
We begin by observing that  
\begin{equation}
\label{eq:first_mom_comp_cov}
\mathbb{E}\big[\widetilde{G}^{(i)}_{kl} \big|\;\bm{x^{*}}\big]
= 0
= \mathbb{E}\Big[Z^{(i)}_{kl}\sqrt{\frac{\lambda^{(i)}}{n}}
\,\Big|\,\bm{x^{*}}\Big].
\end{equation}
% Further for the second moments
Following the arguments of Lemma 5.5 of \cite{AbbeMonYash}, we obtain
\begin{equation}
\label{eq:second_mom_comp_cov}
\begin{split}
& \bigg|
\mathbb{E}\Big[ (\widetilde{G}^{(i)}_{kl} )^2
-(Z^{(i)}_{kl})^2\frac{\lambda^{(i)}}{n}\, \Big|\,\bm{x^{*}}
\Big]
\bigg|
\\
& \qquad = \left|\frac{(\Delta^{(i)}_n )^2}{(\widebar{p}^{(i)}_n )^2 (1-\widebar{p}^{(i)}_n )^2} (\widebar{p}^{(i)}_n+\Delta^{(i)}_nx^*_ix^*_j)(1-\widebar{p}^{(i)}_n+\Delta^{(i)}_nx^*_ix^*_j)-\frac{\lambda^{(i)}}{n}\right|\\
& \qquad \le \frac{\lambda^{(i)}}{n}\left(\sqrt{\frac{\lambda^{(i)}}{n\widebar{p}^{(i)}_n
(1-\widebar{p}^{(i)}_n )}}+\frac{\lambda^{(i)}}{n}\right),
\end{split}
\end{equation}
and
% For the third and fourth moments again following the arguments of Lemma 5.5 of \cite{AbbeMonYash} we get
\begin{align}
\label{eq:third_mom_comp_cov}
\left|\mathbb{E}\big[(\widetilde{G}^{(i)}_{kl} )^3 \,|\,\bm{x^{*}}\big]\right|
& \leq
\frac{3(\lambda^{(i)})^{3/2}}{n^{3/2}\sqrt{\widebar{p}^{(i)}_n
(1-\widebar{p}^{(i)}_n)}}\, ,\\
% \end{equation}
% and
% \begin{equation}
\label{eq:fourth_mom_comp_cov}
\mathbb{E}\big[ (\widetilde{G}^{(i)}_{kl} )^4\,|\,\bm{x^{*}}\big]
& \leq
\frac{2 (\lambda^{(i)} )^2}{n^2\widebar{p}^{(i)}_n (1-\widebar{p}^{(i)}_n )}\, . 
\end{align} 
Now we shall use Lindeberg's generalization theorem (Theorem 5.6 of \cite{AbbeMonYash}) to establish the desired result. 
We consider the collections of random variables $\{\widetilde{G}^{(i)}_{kl} \}$ 
and $ \{Z^{(i)}_{kl}\sqrt{\lambda^{(i)}/n}\}$. 
To this end, we regard the function $\phi(\bm{x^{*}},\bm{B},\bm{V},\bm \lambda,\mu,n,p)$ as a function of $\bm{V}=\left(\bm V^{(1)},\bm V^{(2)},...,\bm V^{(m)}\right)$.
% , a collection of $m$ matrices of dimension $n \times n$.
% and apply Lemma 5.6 of \cite{AbbeMonYash} to it.
Define
\[
\partial^{r}_{k,l;i}\phi:=\frac{\partial^r}{\partial(V^{(i)}_{kl})^r}
\phi(\bm{x^{*}},\bm{B},\bm{V},\bm \lambda,\mu,n,p).
\]
Let us consider the measure on $\{\pm 1\}^n$ given by
\[
\bm{m}(\bm{x}) = \frac{\int_{\mathbb{R}^p}\exp(\mathcal{H}(\bm{v},\bm{x^{*}},\bm{v},\bm{W},\bm{B},\bm{\lambda},\mu,n,p))\exp(-\|\bm{v}\|^2/2)d\bm{v}}{\sum_{x \in \{\pm 1\}^n}\int_{\mathbb{R}^p}\exp(\mathcal{H}(\bm{x},\bm{v},\bm{V},\bm{B},\bm{\lambda},\mu,n,p))\exp(-\|\bm{v}\|^2/2)d\bm{v}},
\quad \bm{x}\in \{\pm 1\}^n.
\]
Then it is easy to verify using induction that for all $i,k,l$, and any $r>1$
\begin{align*}
\partial^{1}_{k,l;i}\phi
& =\mathbb{E}_{\bm{m}}\left[x_kx_l-x^*_kx^*_l\right],\\
\partial^{r}_{k,l;i}\phi
& =\mathbb{E}_{\bm{m}}\left[\left(x_kx_l-x^*_kx^*_l\right)
-\mathbb{E}_{\bm{m}}\left[x_kx_l-x^*_kx^*_l\right]\right]^r.
\end{align*}
We note that in this case also the expressions of the partial derivatives \revsag{are} equivalent to the polynomial representations $p_r$ mentioned in Lemma 5.5 of~\cite{AbbeMonYash}. Since $|x_kx_l-x^*_kx^*_l|~\le~2$ for all $i,k,l$ there exists a constant $C$ such that
\[
|\partial^{r}_{k,l;i}\phi|~\le~C,
\]
for all $r \le 4$. 
Then, using Theorem 5.6 of \cite{AbbeMonYash} and \eqref{eq:first_mom_comp_cov},~\eqref{eq:second_mom_comp_cov},~\eqref{eq:third_mom_comp_cov} and \eqref{eq:fourth_mom_comp_cov} we get
\begin{equation}
\label{eq:main_eq_comb_Gauss_SBM_cov}
\mathbb{E}
% _{\bm{x^{*}},\bm{B},\tilde{\bm{G}}}
\phi(\bm{x^{*}},\bm{B},\tilde{\bm{G}},\bm{\lambda},\mu,n,p)
=
\mathbb{E}
% _{\bm{x^{*}},\bm{B},\bm{Z}}
\phi(\bm{x^{*}},\bm{B},\bm{W},\bm{\lambda},\mu,n,p)+~O\left(\sum_{i=1}^{m}\frac{n(\lambda^{(i)})^{3/2}}{\sqrt{n\widebar{p}^{(i)}_n(1-\widebar{p}^{(i)}_n)}}\right).
\end{equation}
\section{Proofs of Results in Section~\ref{sec_5}}

% Consider $\mathcal{G}$ defined in~\eqref{eq:cal_g} and $\mathcal{L}$ defined in~\eqref{eq:linear_transformed variables}. We also consider $\lambda^{(i)}$ from \eqref{eq:lambdalimit} and $\lambda$ from \eqref{eq:lambda}. Further recall $r^{(1)},\ldots,r^{(m)} \in (0,1)$ from \eqref{eq:lambdaratio}. Let us define
% \begin{equation}
% p^{(i)}_n=\widebar{p}^{(i)}_n+\Delta^{(i)}_n \quad\quad q^{(i)}_n=\widebar{p}^{(i)}_n-\Delta^{(i)}_n.
% \end{equation}
We start with some definitions.
For $i\in [m]$, we let $E(\bm{G}^{(i)})$ denote all ${n\choose 2}$ unordered pairs of nodes in the $i$th graph.
For a pair of vertices $e=(k,l)$, consider the following random variables.
\begin{equation*}
x_e:=x^*_kx^*_l \quad \mbox{and} \quad \ell^{(i)}_e:=L^{(i)}_{k,l}, 
\end{equation*} 
where $L^{(i)}_{k,l}=2G^{(i)}_{k,l}-1$. 
Further, we define
% the function $\pi^{(i)}(\lambda;y_e,x_e):$ is defined as
\begin{equation}
\label{eq:def_pi} 
\pi^{(i)}(\lambda;\ell^{(i)}_e,x_e):=\begin{cases}\widebar{p}^{(i)}_n+x_e\Delta^{(i)}_n & \mbox{if $\ell^{(i)}_e=+1$}\\1-(\widebar{p}^{(i)}_n+x_e\Delta^{(i)}_n) & \mbox{if $\ell^{(i)}_e=-1$.}\end{cases}
\end{equation}
and 
\begin{equation}
\label{eq:marg_prob}
p(y_e)=\mathbb{P}_{x_e}\left(x_e=y_e\right).
\end{equation}
By definitions in Section \ref{sec:prelim}
\begin{equation*}
	\Delta_n^{(i)} = \sqrt{ \frac{\lambda r^{(i)}\widebar{p}_n^{(i)}(1-\widebar{p}_n^{(i)})}{n} },
\end{equation*}
then it is easy to show that
\begin{equation}
\label{eq:deriv_prob}
\frac{d\pi^{(i)}(\lambda;\ell^{(i)}_e,x_e)}{d\lambda}
=\frac{1}{2}\sqrt{\frac{\widebar{p}^{(i)}_n
(1-\widebar{p}^{(i)}_n )r^{(i)}}{n\lambda}}\;x_e\ell^{(i)}_e.
\end{equation}
%$\tx_e=\tx_k\tx_l$ if $e=(k,l)$, and
%\[
%p^{(i)}_{e}(\lambda;\bm{x^{*}}):=\widebar{p}^{(i)}_n+\Delta^{(i)}_n(\lambda)\tx_e,
%\]
%where
%\[
%\widebar{p}^{(i)}_n:=\frac{a^{(i)}+b^{(i)}}{2n}, \quad \Delta^{(i)}_n(\theta):=\sqrt{\frac{\theta\widebar{p}^{(i)}_n(1-\widebar{p}^{(i)}_n)}{n}}.
%\]
%This implies for $e=(k,l)$
%\[
%p^{(i)}_{e}(\lambda;\bm{x^{*}})=\mathbb{P}\left[A^{(i)}_{k,l}=1|\tx_k=x^*_k,\tx_l=x^*_l\right].
%\]
%$\mathbb{P}\left[A^{(i)}_{k,l}=1|\tx_k=x^*_k,\tx_l=x^*_l\right]=p^{(i)}_{kl}(r^{(i)}\lambda;\bm{x^{*}})$ where $A^{(i)}_{k,l}$ is defined in~\eqref{eq:def_SBM_cov}.
By definition, it is immediate that $\MMSE_n(\lambda,\mu)$ defined in~\eqref{eq:MMSE_cov_graph} is a function of $\lambda$ and $\mu$ only. For the purpose of this lemma, we consider $\mu \in (0,\infty)$ to be a fixed number
% satisfying $\mu^2/c < 1$ \nb{do we really need this condition?}
and study the function as a function of $\lambda$ only.
We note that
\begin{equation}
\label{eq:joint_cond_entrop}
H(\bm{x^{*}}|\bm{G},\bm{B})
% =H(\bm{x^{*}}|\mathcal{G},\bm{B})
=-\mathbb{E}_{\bm{x^{*}},\bm{G},\bm{B}}\left[\log \pi\left(\bm{x^{*}}|\bm{G},\bm{B}\right)\right],
\end{equation}
where $\pi\left(\bm{x^{*}}|\bm{G},\bm{B}\right)$ is the posterior density of $\bm{x^{*}}$ given $\bm{G}$ and $\bm{B}$. 
By the definition of mutual information,
 % it is easy to see that
\[
I\left(\bm{x^{*}};\bm{G},\bm{B}\right)
% =I\left(\bm{x^{*}};\mathcal{G},\bm{B}\right)
=H(\bm{x^{*}})-H(\bm{x^{*}}|\bm{G},\bm{B}),
\]
where $H(\bm{x^{*}})$ is the entropy of $\bm{x^{*}}$. It is easy to observe that $H(\bm{x^{*}})$ equals $n\log 2$ as $\bm{x^{*}}$ is a $n$ vector made of i.i.d Rademacher random variables. Therefore
\[
\frac{dI\left(\bm{x^{*}};\bm{G},\bm{B}\right)}{d\lambda}=
-\frac{dH(\bm{x^{*}}|\bm{G},\bm{B})}{d\lambda}.
\]
%Let us recall the random variables $L^{(i)}_{k,l}=2A^{(i)}_{k,l}-1$. 
By \eqref{eq:cond_entrop_l} we have
\[
H(\bm{x^{*}}|\bm{G},\bm{B})=H(\bm{x^{*}}|\mathcal{L},\bm{B}),
\] 
where $H(\bm{x^{*}}|\mathcal{L},\bm{B})$ is the conditional entropy of $\bm{x^{*}}$ given $\mathcal{L}$ and $\bm{B}$. 
Finally, for $e=(k,l)$ we define
\begin{equation}
\label{eq:del_t}
\mathcal{L}^{(i)}_{-e}=\mathcal{L}\setminus \{L^{(i)}_{k,l} \}.
\end{equation}
%and 
%\begin{equation}
%\label{eq:del_t}
%p^{(i)}_{kl}\left(\ell^{(i)}_{k,l}|x^*_kx^*_l\right)=\mathbb{P}\left(A^{(i)}_{k,l}=(\ell^{(i)}_{k,l}+1)/2|\bm{x^{*}}_k=x^*_k, \bm{x^{*}}_l=x^*_l\right),
%\end{equation}
%where $\ell^{(i)}_{k,l} \in \{\pm 1\}$.
%
\subsection{Proof of Lemma~\ref{lem:diff_cov_inf}}
\label{proof_i_mmse}
%We recall $p^{(i)}_{kl}\left(L^{(i)}_{k,l}=1|x^*_kx^*_l\right)=\widebar{p}^{(i)}_n+\sqrt{\widebar{p}^{(i)}_n\left(1-\widebar{p}^{(i)}_n\right)r^{(i)}\lambda/n}\;x^*_kx^*_l$. This implies
%\[
%\frac{dp^{(i)}_{kl}\left(\ell^{(i)}_{k,l}|x^*_kx^*_l\right)}{d\lambda}=\frac{1}{2}\sqrt{\left(\widebar{p}^{(i)}_n\left(1-\widebar{p}^{(i)}_n\right)r^{(i)}/n\lambda\right)}\;x^*_kx^*_l\ell^{(i)}_{k,l}.
%\] 
Let us define $\widehat{x}_{e}(\mathcal{L}^{(i)}_{-e},\bm{B})=
\mathbb{E}[x_e\big|\mathcal{L}^{(i)}_{-e},\bm{B}]$ and 
\begin{equation}
\label{eq:mathcal_p}
\mathcal{P}^{(i)}_{e}(\mathcal{L}^{(i)},\bm{B},y_e,\ell^{(i)}_e)
=\mathbb{P}(x_e=y_e |\mathcal{L}^{(i)}_{-e},\bm{B} )
\log\left(\sum\limits_{x \in \{\pm 1\}}\pi^{(i)}(\lambda;\ell^{(i)}_e,x)
p(x|\mathcal{L}^{(i)}_{-kl},\bm{B})\right).
\end{equation}
Using Lemma~\ref{lem:lem_diff} below in Section \ref{sec:lem_diff}, we have
\begin{equation}
\begin{split}
& \frac{dH(\bm{x^{*}}|\mathcal{L},\bm{B})}{d\lambda}\\
& = \frac{1}{2}\sum\limits_{i=1}^{m}
\sqrt{\frac{\widebar{p}^{(i)}_n(1-\widebar{p}^{(i)}_n)r^{(i)}}{n\lambda}}
\sum\limits_{e \in E(G^{(i)})}
\sum\limits_{\substack{y^{(i)}_e\in\{\pm 1\} \\ \ell^{(i)}_{e} \in \{\pm 1\}}}
\ell^{(i)}_{e}y^{(i)}_e\,
\mathbb{E}_{\mathcal{L}^{(i)}_{-e},\bm B}
\left[\mathcal{P}^{(i)}_{e}(\mathcal{L}^{(i)},\bm{B},y^{(i)}_e,\ell^{(i)}_e)\right]\\
& \quad
-\frac{1}{2}\sum\limits_{i=1}^{m}
% \sqrt{\left(\widebar{p}^{(i)}_n\left(1-\widebar{p}^{(i)}_n\right)r^{(i)}/n\lambda\right)}
\sqrt{\frac{\widebar{p}^{(i)}_n(1-\widebar{p}^{(i)}_n)r^{(i)}}{n\lambda}}
\sum\limits_{e \in E(G^{(i)})}\sum\limits_{\substack{y^{(i)}_e\in\{\pm 1\} \\ \ell^{(i)}_{e} \in \{\pm 1\}}}
\ell_{e}^{(i)} y^{(i)}_e
p(y^{(i)}_e)
% \pi^{(i)}(\lambda;\ell^{(i)}_e,x)
\log \pi^{(i)}(\lambda;\ell^{(i)}_e, y_e)\\
& =\frac{1}{2}\sum\limits_{i=1}^{m}
% \sqrt{\left(\widebar{p}^{(i)}_n\left(1-\widebar{p}^{(i)}_n\right)r^{(i)}/n\lambda\right)}
\sqrt{\frac{\widebar{p}^{(i)}_n(1-\widebar{p}^{(i)}_n)r^{(i)}}{n\lambda}}
% \hspace{-0.08in}
\sum\limits_{e \in E(G^{(i)})}
% \hspace{-0.13in}
\mathbb{E}_{\mathcal{L}^{(i)}_{-e},\bm{B}}
\left\{ \widehat{x}_{e}(\mathcal{L}^{(i)}_{-e},\bm{B})
% \hspace{-0.04in}
\log
% \hspace{-0.03in}
\left[\frac{\sum_x
% \limits_{x \in \{\pm 1\}}
\pi^{(i)}(\lambda;1,x)p(x|\mathcal{L}^{(i)}_{-e},\bm{B})}{\sum_x
% \limits_{x \in \{\pm 1\}}
\pi^{(i)}(\lambda;-1,x)p(x|\mathcal{L}^{(i)}_{-e},\bm{B})}
\right]\right\}\\
&\quad -\frac{1}{4}\sum\limits_{i=1}^{m}
% \sqrt{\left(\widebar{p}^{(i)}_n\left(1-\widebar{p}^{(i)}_n\right)r^{(i)}/n\lambda\right)}
\sqrt{\frac{\widebar{p}^{(i)}_n(1-\widebar{p}^{(i)}_n)r^{(i)}}{n\lambda}}
{n \choose 2}\log\left\{\frac{\pi^{(i)}(\lambda;-1,-1)\pi^{(i)}(\lambda;+1,+1)}{\pi^{(i)}(\lambda;+1,-1)\pi^{(i)}(\lambda;-1,+1)}\right\}.
\end{split}
\end{equation}
Next we observe 
\begin{equation}
\label{eq:one_way_p}
\begin{split}
\sum_{x \in \{\pm 1\}}\pi^{(i)}(\lambda;1,x)
p(x|\mathcal{L}^{(i)}_{-e},\bm{B})
& =p^{(i)}_n p(1|\mathcal{L}^{(i)}_{-e},\bm{B})
+q^{(i)}_n p(-1|\mathcal{L}^{(i)}_{-e},\bm{B}) \\
& = 
p^{(i)}_n p(1|\mathcal{L}^{(i)}_{-e},\bm{B})
+q^{(i)}_n[1-p(1|\mathcal{L}^{(i)}_{-e},\bm{B} ) ]\\
& =
q^{(i)}_n+ (p^{(i)}_n-q^{(i)}_n)
p(1|\mathcal{L}^{(i)}_{-e},\bm{B} ).
\end{split}
\end{equation}
We also have
\begin{equation}
\label{eq:two_way_p}
\begin{split} 
\sum_{x \in \{\pm 1\}}
\pi^{(i)}(\lambda;1,x)p(x|\mathcal{L}^{(i)}_{-e},\bm{B} )
& = 
p^{(i)}_n [1 - p(-1|\mathcal{L}^{(i)}_{-e},\bm{B} )]
+q^{(i)}_n p(-1|\mathcal{L}^{(i)}_{-e},\bm{B} )\\
& = p^{(i)}_n - (p^{(i)}_n-q^{(i)}_n )
p(-1|\mathcal{L}^{(i)}_{-e},\bm{B} ).
\end{split}
\end{equation}
Combining~\eqref{eq:one_way_p} and~\eqref{eq:two_way_p} we get
\begin{equation}
\label{eq:first_rel}
2\sum_{x \in \{\pm 1\}}
\pi^{(i)}(\lambda;1,x)p(x|\mathcal{L}^{(i)}_{-e},\bm{B} )
= (p^{(i)}_n+q^{(i)}_n )
+\widehat{x}_{e}(\mathcal{L}^{(i)}_{-e},\bm{B}) (p^{(i)}_n-q^{(i)}_n ).
\end{equation}
By similar calculations, we also get
\begin{equation}
\label{eq:second_rel}
2\sum\limits_{x \in \{\pm 1\}}\pi^{(i)}(\lambda;-1,x)
p(x|\mathcal{L}^{(i)}_{-e},\bm{B})
= (2-p^{(i)}_n-q^{(i)}_n )-\widehat{x}_{e}(\mathcal{L}^{(i)}_{-e},\bm{B})
(p^{(i)}_n-q^{(i)}_n ).
\end{equation}
From~\eqref{eq:first_rel} and~\eqref{eq:second_rel} we get 
\begin{equation}
\begin{split}
\frac{\sum_{x \in \{\pm 1\}}
\pi^{(i)}(\lambda;1,x)p(x|\mathcal{L}^{(i)}_{-e},\bm{B})}{\sum_{x \in \{\pm 1\}}\pi^{(i)}(\lambda;-1,x)p(x|\mathcal{L}^{(i)}_{-e},\bm{B} )}
&= \frac{(p^{(i)}_n+q^{(i)}_n)+\widehat{x}_{e}(\mathcal{L}^{(i)}_{-e},\bm{B})(p^{(i)}_n-q^{(i)}_n)}{(2-p^{(i)}_n-q^{(i)}_n)-\widehat{x}_{e}(\mathcal{L}^{(i)}_{-e},\bm{B})(p^{(i)}_n-q^{(i)}_n)}\\
&=\frac{\widebar{p}^{(i)}_n}{1-\widebar{p}^{(i)}_n}
\left[
\frac{1+(\Delta^{(i)}_n/\widebar{p}^{(i)}_n )\widehat{x}_{e}(\mathcal{L}^{(i)}_{-e},\bm{B} )}{1- (\Delta^{(i)}_n/(1-\widebar{p}^{(i)}_n ) )\widehat{x}_{e}(\mathcal{L}^{(i)}_{-e},\bm{B} )}
\right].
\end{split}
\end{equation}
In addition,
it is easy to observe that 
\[
\frac{\pi^{(i)}(\lambda;-1,-1)\pi^{(i)}(\lambda;+1,+1)}{\pi^{(i)}(\lambda;+1,-1)\pi^{(i)}(\lambda;-1,+1)}
=
\frac{(1+(\Delta^{(i)}_n/\widebar{p}^{(i)}_n )) (1+(\Delta^{(i)}_n/(1-\widebar{p}^{(i)}_n ) ) )}{ (1-  (\Delta^{(i)}_n/\widebar{p}^{(i)}_n  ) )  (1-  (\Delta^{(i)}_n/ (1-\widebar{p}^{(i)}_n ) ) )}.
\]
Since we have $|\Delta^{(i)}_n/\widebar{p}^{(i)}_n |, 
|\Delta^{(i)}_n/(1-\widebar{p}^{(i)}_n) | 
\le \sqrt{\lambda r^{(i)}/(n\widebar{p}_n^{(i)}(1-\widebar{p}_n^{(i)}))}\rightarrow 0$, 
and 
$|\widehat{x}_{e}(\mathcal{L}^{(i)}_{-e},\bm{B})| \le 1$, 
% we obtain the following bounds by Taylor expansion we have
for each $\lambda_{max} \in \mathbb{R}$ there exists a $n_0(\lambda_{max})$ such that for $n \ge n_0(\lambda_{max})$ we have for all $1 \le i \le m$ and $e \in E(G^{(i)})$
\[
\left|\log\left[\frac{\sum_{x \in \{\pm 1\}}\pi^{(i)}(\lambda;1,x)p(x|\mathcal{L}^{(i)}_{-e},\bm{B})}{\sum_{x \in \{\pm 1\}}\pi^{(i)}(\lambda;-1,x)p(x|\mathcal{L}^{(i)}_{-e},\bm{B})}\right]
-B^{(i)}_0
-\frac{\Delta^{(i)}_n \widehat{x}_{e}(\mathcal{L}^{(i)}_{-e},\bm{B})}{\widebar{p}^{(i)}_n(1-\widebar{p}^{(i)}_n)}\right| \le C_1\frac{\lambda r^{(i)}}{n\widebar{p}^{(i)}_n (1-\widebar{p}^{(i)}_n )},
\]
and
\[
\left|\log\left[\frac{\pi^{(i)}(\lambda;-1,-1)\pi^{(i)}(\lambda;+1,+1)}{\pi^{(i)}(\lambda;+1,-1)\pi^{(i)}(\lambda;-1,+1)}\right]
-\frac{2\Delta^{(i)}_n}{\widebar{p}^{(i)}_n(1-\widebar{p}^{(i)}_n)}\right| \le C_2\frac{\lambda r^{(i)}}{n\widebar{p}^{(i)}_n (1-\widebar{p}^{(i)}_n )},
\]
where $B^{(i)}_0=\log(\widebar{p}^{(i)}_n/(1-\widebar{p}^{(i)}_n ))$ and $C_1,C_2$ are positive constants depending on $\lambda_{max}$. 
We observe that $\mathbb{E}[\widehat{x}_{e}(\mathcal{L}^{(i)}_{-e},\bm{B} )]=\mathbb{E}\left[x_e\right]=0$. 
This implies
% \nb{an additional $r^{(i)}$ multiplier can be squeezed in for each term on the RHS if necessary.}
\[
\left|\frac{1}{n}\,\frac{dH(\bm{x^{*}}|\mathcal{L},\bm{B})}{d\lambda}
+\frac{1}{2n^2}\sum\limits_{i=1}^{m}\sum\limits_{e \in E(G^{(i)})}r^{(i)}\left(1-\mathbb{E}[\widehat{x}_{e}(\mathcal{L}^{(i)}_{-e},\bm{B})]^2\right)\right| \le C_2\sum\limits_{i=1}^{m}\sqrt{\frac{\lambda r^{(i)}}{n\widebar{p}^{(i)}_n(1-\widebar{p}^{(i)}_n)}}.
\] 
Let $\widehat{x}_{e}(\mathcal{L},\bm{B})=\mathbb{E}\left[x_e|\mathcal{L},\bm{B}\right]$. 
Recall that $L^{(i)}_{e}=L^{(i)}_{k,l}$ for $e=(k,l)$, then by Bayes' formula we have
\begin{equation}
\begin{split}
p^{(i)}\left(y_e|\mathcal{L},\bm{B}\right)
& =\frac{p^{(i)}(L^{(i)}_{e}, \mathcal{L}^{(i)}_{-e},\bm{B}, x_e=y_e )}{\sum_{x \in \{\pm 1\}}p^{(i)}(L^{(i)}_{e}, \mathcal{L}^{(i)}_{-kl},\bm{B}, x_e=x )}\\
& \overset{(1)}{=} \frac{\pi^{(i)}(\lambda;L^{(i)}_{e},y_e)p^{(i)}(y_e|\mathcal{L}^{(i)}_{-e},\bm{B} )}{\sum_{x \in \{\pm 1\}}\pi^{(i)}(\lambda;L^{(i)}_{e},x)p^{(i)}(x|\mathcal{L}^{(i)}_{-e},\bm{B})}. 
\end{split}
\end{equation}
Here, equality (1) follows as conditional on $x_e$, $L^{(i)}_{e}$ is independent of $\mathcal{L}^{(i)}_{-e}$.
Let us define 
\[
b^{(i)}(L^{(i)}_{e})=\frac{\pi^{(i)}(\lambda;L^{(i)}_{e},+1)-\pi^{(i)}(\lambda;L^{(i)}_{e},-1)}{\pi^{(i)}(\lambda;L^{(i)}_{e},+1)+\pi^{(i)}(\lambda;L^{(i)}_{e},-1)}.
\]
We note that 
\begin{equation}
\label{eq:b_def_1}
\frac{\widehat{x}_{e}(\mathcal{L}^{(i)}_{-e},\bm{B})+b^{(i)}(L^{(i)}_{e})}{2} ~=~ \frac{\pi^{(i)}(\lambda;L^{(i)}_{e},+1)p^{(i)}(+1|\mathcal{L}^{(i)}_{-e},\bm{B})-\pi^{(i)}(\lambda;L^{(i)}_{e},-1)p^{(i)}(-1|\mathcal{L}^{(i)}_{-e},\bm{B})}{\pi^{(i)}(\lambda;L^{(i)}_{e},+1)+\pi^{(i)}(\lambda;L^{(i)}_{e},-1)}
\end{equation}
and
\begin{equation}
\label{eq:b_def_2}
\frac{1+b^{(i)}(L^{(i)}_{e})\widehat{x}_{e}(\mathcal{L}^{(i)}_{-e},\bm{B})}{2} ~=~ \frac{\pi^{(i)}(\lambda;L^{(i)}_{e},+1)p^{(i)}(+1|\mathcal{L}^{(i)}_{-e},\bm{B})+\pi^{(i)}(\lambda;L^{(i)}_{e},-1)p^{(i)}(-1|\mathcal{L}^{(i)}_{-e},\bm{B})}{\pi^{(i)}(\lambda;L^{(i)}_{e},+1)+\pi^{(i)}(\lambda;L^{(i)}_{e},-1)}.
\end{equation}
Then from~\eqref{eq:b_def_1} and~\eqref{eq:b_def_2} we get
\begin{equation}
\label{eq:cond_expec_delete_connect}
\widehat{x}_{e}(\mathcal{L},\bm{B})=\frac{\widehat{x}_{e}(\mathcal{L}^{(i)}_{-e},\bm{B})+b^{(i)}(L^{(i)}_{e})}{1+b^{(i)}(L^{(i)}_{e})\widehat{x}_{e}(\mathcal{L}^{(i)}_{-e},\bm{B})}.
\end{equation}
From the definition of $\pi^{(i)}(\lambda;L^{(i)}_{e},x_e)$ it follows that
\[
b^{(i)}(L^{(i)}_{e})=\begin{cases}\sqrt{(1-\widebar{p}^{(i)}_n)\lambda r^{(i)}/(n\widebar{p}^{(i)}_n)} & \mbox{if $L^{(i)}_{e}=1$,}\\ -\sqrt{\widebar{p}^{(i)}_n\lambda r^{(i)}/(n(1-\widebar{p}^{(i)}_n))} & \mbox{if $L^{(i)}_{e}=-1$.}
\end{cases}
\]
This in particular gives us $|b^{(i)}(L^{(i)}_{e})| \le \sqrt{\lambda r^{(i)}/(n\widebar{p}^{(i)}_n (1-\widebar{p}^{(i)}_n))} \to 0$.
 % for large values of $n$.
In addition, $|\widehat{x}_{e}(\mathcal{L}^{(i)}_{-e},\bm{B} ) |\le1$. 
Thus 
\begin{equation}
\label{eq:connect_cond_expec}
\begin{aligned}
\left|\widehat{x}_{e}(\mathcal{L},\bm{B})-
\widehat{x}_{e}(\mathcal{L}^{(i)}_{-e},\bm{B} )\right| 
& = \left|\frac{b^{(i)}(L^{(i)}_{e})(1-\widehat{x}_{e}(\mathcal{L}^{(i)}_{-e},\bm{B})^2)}{1+b^{(i)}(L^{(i)}_{e})\,\widehat{x}_{e}(\mathcal{L}^{(i)}_{-e},\bm{B} )^2} \right|\\
& \le |b^{(i)}(L^{(i)}_{e})| 
\le \sqrt{\frac{\lambda r^{(i)}}{n\widebar{p}^{(i)}_n(1-\widebar{p}^{(i)}_n )}}\,.
\end{aligned}
\end{equation}
Using $|\widehat{x}_{e}(\mathcal{L}^{(i)}_{-e},\bm{B} ) |\le 1$ and~\eqref{eq:connect_cond_expec} we get 
%\nb{Where does the extra term $\frac{1}{n}$ come from?}
\[
\left|\frac{1}{n}\,\frac{dH(\bm{x^{*}}|\mathcal{L},\bm{B})}{d\lambda}+\frac{1}{2n^2}\sum\limits_{i=1}^{m}\sum\limits_{e \in E(G^{(i)})}r^{(i)}\left(1-\mathbb{E}\left[\widehat{x}_{e}(\mathcal{L},\bm{B})\right]^2\right)\right| \le C_2\sum\limits_{i=1}^{m}\left(\sqrt{\frac{\lambda r^{(i)}}{n\widebar{p}^{(i)}_n(1-\widebar{p}^{(i)}_n )}}\right).
\]
It is easy to observe that for $e=(k,l)$
\[
\left(1-\mathbb{E}\left[\widehat{x}_{e}(\mathcal{L},\bm{B})\right]^2\right)=\mathbb{E}\left[\left(\bm{x^{*}}_k\bm{x^{*}}_l-\mathbb{E}(\bm{x^{*}}_k\bm{x^{*}}_l|\mathcal{L},\bm{B})\right)^2\right]
\]
Then using $\sum_{i=1}^{m}r^{(i)}=1$ we get
\[
\left|\frac{1}{n}\,\frac{dH(\bm{x^{*}}|\mathcal{L},\bm{B})}{d\lambda}+\frac{1}{4}\MMSE_n(\lambda,\mu)\right| \le C_2\left(\sum\limits_{i=1}^{m}\sqrt{\frac{\lambda r^{(i)}}{n\widebar{p}^{(i)}_n(1-\widebar{p}^{(i)}_n)}}\right).
\]
As $H(\bm{x^{*}})=n\log 2$ we get using definition of conditional entropy and mutual information 
\[
\left|\frac{1}{n}\,\frac{dI(\bm{x^{*}}; \mathcal{L},\bm{B})}{d\lambda}-\frac{1}{4}\MMSE_n(\lambda,\mu)\right| \le C_2\left(\sum\limits_{i=1}^{m}\sqrt{\frac{\lambda r^{(i)}}{n\widebar{p}^{(i)}_n(1-\widebar{p}^{(i)}_n )}}\right),
\]
which implies the lemma.

\subsection{Results Used to Prove Lemma~\ref{lem:diff_cov_inf}}
\label{sec:lem_diff}
\begin{lem}
\label{lem:lem_diff}
% Let $H(\bm{x^{*}}|\mathcal{L})$ be defined as in~\eqref{eq:linear_transformed variables}.
Let $\pi^{(i)}(\lambda;\ell^{(i)}_e,x_e)$, $p(y^{(i)}_e)$, $\mathcal{P}^{(i)}_{e}(\mathcal{L}^{(i)},\bm{B},y^{(i)}_e,\ell^{(i)}_e)$ be defined in~\eqref{eq:def_pi}, \eqref{eq:marg_prob} and \eqref{eq:mathcal_p} respectively,
then 
\begin{equation}
\begin{split}
\frac{dH(\tx|\mathcal{L}^{(i)},\bm{B})}{d\lambda}
&= 
-\sum\limits_{i=1}^{m}\sum\limits_{e \,\in E(G^{(i)})}\sum\limits_{\substack{y^{(i)}_e\in\{\pm 1\}\\ \ell^{(i)}_e \in \{\pm 1\}}}
p(y_e^{(i)})
\log \pi^{(i)}(\lambda;\ell^{(i)}_e,y^{(i)}_e)\frac{d\pi^{(i)}(\lambda;\ell^{(i)}_e,x_e)}{d\lambda}\\
&\quad 
+\sum\limits_{i=1}^{m}\sum\limits_{e \,\in E(G^{(i)})}\sum\limits_{\substack{y^{(i)}_e\in\{\pm 1\}\\ \ell_e \in \{\pm 1\}}}\frac{d\pi^{(i)}(\lambda;\ell^{(i)}_e,y^{(i)}_e)}{d\lambda}\mathbb{E}_{\mathcal{L}^{(i)}_{-e},\bm B}\left[\mathcal{P}^{(i)}_{e}(\mathcal{L}^{(i)},\bm{B},y^{(i)}_e,\ell^{(i)}_e)\right].
\end{split}
\end{equation}
\end{lem}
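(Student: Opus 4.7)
The plan is to differentiate the conditional entropy directly in $\lambda$, treating the LHS as $H(\tx\mid\mathcal{L},\bm{B})$ (the superscript $(i)$ on $\mathcal{L}$ in the statement appears to be a typographical slip, since the identity is applied in the proof of Lemma~\ref{lem:diff_cov_inf} with the full $\mathcal{L}$). Starting from
\[ H(\tx\mid\mathcal{L},\bm{B}) = -\int d\bm{B}\sum_{\tx,\mathcal{L}} p(\tx)\,p(\mathcal{L}\mid\tx;\lambda)\,p(\bm{B}\mid\tx)\,\log p(\tx\mid\mathcal{L},\bm{B};\lambda), \]
using conditional independence of $\mathcal{L}$ and $\bm{B}$ given $\tx$, the $\lambda$-dependence sits only in the likelihood $p(\mathcal{L}\mid\tx;\lambda)$ and the posterior $p(\tx\mid\mathcal{L},\bm{B};\lambda)$. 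The contribution from differentiating the log-posterior equals $\int\sum_{\mathcal{L}}p(\mathcal{L},\bm{B})\sum_\tx \tfrac{d}{d\lambda}p(\tx\mid\mathcal{L},\bm{B};\lambda)\,d\bm{B}$, which vanishes because $\sum_\tx p(\tx\mid\mathcal{L},\bm{B};\lambda)\equiv 1$. Only the derivative of $p(\mathcal{L}\mid\tx;\lambda)=\prod_{i,e}\pi^{(i)}(\lambda;\ell_e^{(i)},x_e)$ survives, and by Leibniz the result is a sum over edges $(i_0,e_0)$.

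The key manipulation is an edge-by-edge Bayes decomposition. Conditional independence of $L_{e_0}^{(i_0)}$ from $(\mathcal{L}_{-e_0}^{(i_0)},\bm{B})$ given $x_{e_0}$ gives
\[ \log p(\tx\mid\mathcal{L},\bm{B}) = \log p(\tx\mid\mathcal{L}_{-e_0}^{(i_0)},\bm{B}) + \log\pi^{(i_0)}(\lambda;\ell_{e_0}^{(i_0)},x_{e_0}) - \log p(L_{e_0}^{(i_0)}=\ell_{e_0}^{(i_0)}\mid\mathcal{L}_{-e_0}^{(i_0)},\bm{B}). \]
Substituting splits the $(i_0,e_0)$ contribution into three pieces. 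Piece (i), carrying $\log p(\tx\mid\mathcal{L}_{-e_0}^{(i_0)},\bm{B})$, is independent of $\ell_{e_0}^{(i_0)}$, and its sum over $\ell_{e_0}^{(i_0)}\in\{\pm 1\}$ vanishes because $\sum_{\ell}\tfrac{d}{d\lambda}\pi^{(i_0)}(\lambda;\ell,x_{e_0})=\tfrac{d}{d\lambda}\sum_{\ell}\pi^{(i_0)}(\lambda;\ell,x_{e_0})=0$. Piece (ii), carrying $\log\pi^{(i_0)}(\lambda;\ell_{e_0}^{(i_0)},x_{e_0})$, depends on $\tx$ only through $x_{e_0}$; partitioning $\tx$ by $x_{e_0}=y$ and marginalising the remainder collapses the $\tx$-sum to $p(y)$, reproducing the first sum on the RHS.

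Piece (iii), whose log-factor is independent of $\tx$, collapses after marginalising $\tx$ over $\{\tx:x_{e_0}=y\}$ to $\mathbb{E}_{\mathcal{L}_{-e_0}^{(i_0)},\bm{B}}\!\bigl[p(x_{e_0}=y\mid\mathcal{L}_{-e_0}^{(i_0)},\bm{B})\log p(\ell\mid\mathcal{L}_{-e_0}^{(i_0)},\bm{B})\bigr]$, which is exactly $\mathbb{E}[\mathcal{P}^{(i_0)}_{e_0}(\mathcal{L}^{(i_0)},\bm{B},y,\ell)]$ by definition \eqref{eq:mathcal_p}, reproducing the second sum. No ingredients beyond Bayes' rule, conditional independence, and probability normalisation are required. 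The main obstacle is disciplined index bookkeeping across the edge index $(i_0,e_0)$ and the paired sums over $(\ell,y)$, and justifying the interchange of differentiation with summation/integration — both are routine since the sums are finite and $\pi^{(i)}(\lambda;\cdot,\cdot)$ is smooth and, for $n$ large, bounded away from $\{0,1\}$.
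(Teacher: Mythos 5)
Your proof is correct, and it reaches the stated identity by the same underlying facts the paper uses — the conditional independence of $L^{(i_0)}_{e_0}$ from $(\mathcal{L}^{(i_0)}_{-e_0},\bm{B})$ given $x_{e_0}$, plus normalization identities that kill the unwanted terms — but the execution is a different rendering of the argument. The paper first localizes the $\lambda$-dependence to a single edge ("it suffices to assume only $\pi^{(i)}(\lambda;\ell^{(i)}_e,x_e)$ depends on $\lambda$", which is exactly your Leibniz step) and then invokes the entropy chain rule, $H(\tx\mid\mathcal{L},\bm{B})+H(L^{(i)}_e\mid\mathcal{L}^{(i)}_{-e},\bm{B})=H(\tx\mid\mathcal{L}^{(i)}_{-e},\bm{B})+H(L^{(i)}_e\mid x_e)$, so that the derivative reduces to differentiating two scalar conditional entropies, $H(L^{(i)}_e\mid x_e)$ and $H(L^{(i)}_e\mid\mathcal{L}^{(i)}_{-e},\bm{B})$, each of which is computed directly with the same normalization tricks ($\sum_{\ell}\tfrac{d\pi^{(i)}}{d\lambda}=0$ and $\tfrac{d}{d\lambda}1=0$) that you use for your piece (i) and the posterior-derivative term. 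Your route instead differentiates the density representation of $H(\tx\mid\mathcal{L},\bm{B})$ directly, kills the log-posterior contribution by normalization, and applies the pointwise Bayes factorization $\log p(\tx\mid\mathcal{L},\bm{B})=\log p(\tx\mid\mathcal{L}^{(i_0)}_{-e_0},\bm{B})+\log\pi^{(i_0)}-\log p(\ell^{(i_0)}_{e_0}\mid\mathcal{L}^{(i_0)}_{-e_0},\bm{B})$, which is precisely the density-level statement whose expectation is the paper's entropy identity. What the paper's version buys is economy: the chain rule packages your three-piece bookkeeping into standard entropy manipulations, and the per-edge reduction makes the surviving terms appear immediately as the two derivatives displayed in the lemma. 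What your version buys is self-containedness: everything is elementary calculus plus Bayes' rule, and the vanishing of the posterior-derivative term is made explicit rather than absorbed into the observation that $H(\tx\mid\mathcal{L}^{(i)}_{-e},\bm{B})$ is $\lambda$-free under the localization. Your reading of the left-hand side as $H(\tx\mid\mathcal{L},\bm{B})$ (the superscript $(i)$ being a slip) agrees with how the paper itself uses the lemma, and your remark on interchanging differentiation with the $\bm{B}$-integral is at the same level of rigor as the paper, which does not address it either.
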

\begin{proof}
We begin by observing that $H\left(\bm{x^{*}}|\mathcal{L}^{(i)},\bm{B}\right)$ is a function of $\lambda$ through $\pi^{(i)}(\lambda;\ell^{(i)}_e,x_e)$ for $e \in E(G^{(i)})$ and $1 \le i \le m$. 

By the chain rule and linearity of differentiation, 
it suffices to assume that only $\pi^{(i)}(\lambda;\ell^{(i)}_e,x_e)$ depends on $\lambda$. 
Then, for $e=(k,l)$, we have
\begin{equation}
\begin{split}
H(\bm{x^{*}}|\mathcal{L},\bm{B})
+H(L^{(i)}_{e}|\mathcal{L}^{(i)}_{-e},\bm{B} ) & = 
H(\bm{x^{*}};L^{(i)}_{e}|\mathcal{L}^{(i)}_{-e},\bm{B} )
\\
&\overset{(1)}{=}H(\bm{x^{*}}|\mathcal{L}^{(i)}_{-e},\bm{B})+H(L^{(i)}_{e}|\bm{x^{*}},\mathcal{L}^{(i)}_{-e},\bm{B})\\
&\overset{(2)}{=}H(\bm{x^{*}}|\mathcal{L}^{(i)}_{-e},\bm{B})+H(L^{(i)}_{e}|x_e).
\end{split}
\end{equation}
Here, equality (1) follows by writing the entropy in two different forms using the chain rule, 
and equality (2) follows from observing that given $x_{e}$, $L^{(i)}_{e}$ is independent of everything else.
This implies 
\[
\frac{dH(\bm{x^{*}}|\mathcal{L},\bm{B})}{d\lambda}=\frac{dH(L^{(i)}_{e}|x_e)}{d\lambda}-\frac{dH(L^{(i)}_{e}|\mathcal{L}^{(i)}_{-e},\bm{B})}{d\lambda},
\]
because $H(\bm{x^{*}}|\mathcal{L}^{(i)}_{-e},\bm{B})$ does not depend on $\pi^{(i)}(\lambda;\ell^{(i)}_e,x_e)$, hence not on $\lambda$ under the simplifying assumption that only $\pi^{(i)}(\lambda;\ell^{(i)}_e,x_e)$ depends on $\lambda$. 

Next let us observe
\begin{equation}
\label{eq:cond_entrop_2}
\begin{split}
\frac{dH(L^{(i)}_{e}|x_e)}{d\lambda} & = -\frac{d}{d\lambda}\sum\limits_{\small{\ell^{(i)}_{e} \in \{\pm 1\}}}\mathbb{E}_{\bm{x^{*}}}\left[\pi^{(i)}(\lambda;\ell^{(i)}_e,x_e)\log \pi^{(i)}(\lambda;\ell^{(i)}_e,x_e)\right]\\
& = -\sum\limits_{\small{\ell^{(i)}_{e} \in \{\pm 1\}}}\mathbb{E}_{\bm{x^{*}}}\left[\frac{d\pi^{(i)}(\lambda;\ell^{(i)}_e,x_e)}{d\lambda}\log \pi^{(i)}(\lambda;\ell^{(i)}_e,x_e) + \frac{d\pi^{(i)}(\lambda;\ell^{(i)}_e,x_e)}{d\lambda}\right]\\
&= -\sum\limits_{\small{\ell^{(i)}_{e} \in \{\pm 1\}}}\mathbb{E}_{\bm{x^{*}}}\left[\frac{d\pi^{(i)}(\lambda;\ell^{(i)}_e,x_e)}{d\lambda}\log \pi^{(i)}(\lambda;\ell^{(i)}_e,x_e)\right]\\
&= -\sum\limits_{\substack{y^{(i)}_e\in\{\pm 1\} \\ \ell^{(i)}_{e} \in \{\pm 1\}}}\mathbb{P}_{x_e}\left(x_e=y^{(i)}_e\right)\frac{d\pi^{(i)}(\lambda;\ell^{(i)}_e,y^{(i)}_e)}{d\lambda}\log \pi^{(i)}(\lambda;\ell^{(i)}_e,y^{(i)}_e).
\end{split}
\end{equation}
Here the third equality holds since
\[
\sum\limits_{\small{\ell^{(i)}_{e} \in \{\pm 1\}}}\frac{d\pi^{(i)}(\lambda;\ell^{(i)}_e,x_e)}{d\lambda}=0, \qquad  x_e\in\{\pm 1\}.
\]
Next, we note that
\begin{equation*}
H(L_e^{(i)}| \mathcal{L}_{-e}^{(i)}, \bm{B})
= -\sum_{\ell_e^{(i)}\in \{\pm 1\}} 
\mathbb{E}_{\mathcal{L}_{-e}^{(i)}, \bm{B} }\left[ \sum_{y_e^{(i)}\in \{\pm 1\}}\pi^{(i)}(\lambda;\ell_e^{(i)}, y_e^{(i)})
\mathcal{P}_e^{(i)}(\mathcal{L},\bm{B}, y_e^{(i)},\ell_e^{(i)}) \right].
\end{equation*}
So we have
\begin{align}
& \frac{dH(L^{(i)}_{e}|\mathcal{L}^{(i)}_{-e},\bm{B})}{d\lambda} 
\nonumber \\
&= 
-\sum\limits_{\substack{y^{(i)}_e \in\{\pm 1\} \\ \ell^{(i)}_{e} \in \{\pm 1\}}}
\frac{d \pi^{(i)}(\lambda;\ell_e^{(i)},x_e)}{d\lambda}
\mathbb{E}_{\mathcal{L}_{-e}^{(i)}, \bm{B} }
\left[
\mathcal{P}_e^{(i)}(\mathcal{L},\bm{B}, y_e^{(i)},\ell_e^{(i)}) \right]
\nonumber \\
& \quad
-  \sum\limits_{\substack{y^{(i)}_e \in\{\pm 1\} \\ \ell^{(i)}_{e} \in \{\pm 1\}}}
\mathbb{E}_{\mathcal{L}_{-e}^{(i)}, \bm{B} }
\left[ \frac{\pi^{(i)}(\lambda;\ell_e^{(i)}, y_e^{(i)})p(y_e^{(i)}|\mathcal{L}_{-e}^{(i)},\bm{B})}{p(\ell_e^{(i)}|\mathcal{L}_{-e}^{(i)},\bm{B} )} 
\sum_{x\in \{\pm 1\} } \frac{d\pi^{(i)}(\lambda;\ell_e^{(i)},x) }{d\lambda}
p(x|\mathcal{L}_{-e}^{(i)},\bm{B})
\right]
\nonumber \\
& = -\sum\limits_{\substack{y^{(i)}_e \in\{\pm 1\} \\ \ell^{(i)}_{e} \in \{\pm 1\}}}
\frac{d \pi^{(i)}(\lambda;\ell_e^{(i)},x_e)}{d\lambda}
\mathbb{E}_{\mathcal{L}_{-e}^{(i)}, \bm{B} }
\left[
\mathcal{P}_e^{(i)}(\mathcal{L},\bm{B}, y_e^{(i)},\ell_e^{(i)}) \right].
\label{eq:cond_entrop_1}
\end{align} 
Here, the second equality holds since
\begin{align*}
&  \sum\limits_{\substack{y^{(i)}_e \in\{\pm 1\} \\ \ell^{(i)}_{e} \in \{\pm 1\}}}
\mathbb{E}_{\mathcal{L}_{-e}^{(i)}, \bm{B} }
\left[ \frac{\pi^{(i)}(\lambda;\ell_e^{(i)}, y_e^{(i)})p(y_e^{(i)}|\mathcal{L}_{-e}^{(i)},\bm{B})}{p(\ell_e^{(i)}|\mathcal{L}_{-e}^{(i)},\bm{B} )} 
\sum_{x\in \{\pm 1\} } \frac{d\pi^{(i)}(\lambda;\ell_e^{(i)},x) }{d\lambda}
p(x|\mathcal{L}_{-e}^{(i)},\bm{B})
\right] \\
& = \sum_{\ell_e^{(i)}\in \{\pm 1\} } 
\mathbb{E}_{\mathcal{L}_{-e}^{(i)}, \bm{B} }
\left[ \sum_{x\in \{\pm 1\} } \frac{d\pi^{(i)}(\lambda;\ell_e^{(i)},x) }{d\lambda}
p(x|\mathcal{L}_{-e}^{(i)},\bm{B}) \right] \\
& = \mathbb{E}_{\mathcal{L}_{-e}^{(i)}, \bm{B} }
\left[ \frac{d}{d\lambda}\left( \sum_{x\in \{\pm 1\} }
\sum_{ \ell_e^{(i)} \in \{\pm 1\} } \pi^{(i)}(\lambda;\ell_e^{(i)},x) 
p(x|\mathcal{L}_{-e}^{(i)},\bm{B}) \right) \right] \\
& = \mathbb{E}_{\mathcal{L}_{-e}^{(i)}, \bm{B} } \left[ \frac{d}{d\lambda} 1 \right] = 0.
\end{align*}
% If we denote \[\mathcal{Q}(\mathcal{L}^{(i)},\bm{B})=\hspace{-0.1in}\sum\limits_{x_e\in\{\pm 1\}}\pi^{(i)}(\lambda;\ell^{(i)}_e,x_e)p\left(x_e|\mathcal{L}^{(i)}_{-kl},\bm{B}\right)\] and \[\mathcal{R}(\mathcal{L}^{(i)},\bm{B})=\hspace{-0.04in}\sum\limits_{x\in\{\pm 1\}}\pi^{(i)}(\lambda;\ell^{(i)}_e,x)p\left(x|\mathcal{T}^{(i)}_{-kl},\bm{B}\right),\] then we have
% So we have
% \begin{equation}
% \label{eq:cond_entrop_1}
% \begin{split}
% \frac{dH\left(L^{(i)}_{e}|\mathcal{L}^{(i)}_{-e},\bm{B}\right)}{d\lambda} &= -\frac{d}{d\lambda}\sum\limits_{\small{\ell^{(i)}_{e} \in \{\pm 1\}}}\mathbb{E}_{\mathcal{L}^{(i)}_{-e},\bm{B}}\left[\mathcal{Q}(\mathcal{L}^{(i)},\bm{B})\log\left(\mathcal{R}(\mathcal{L}^{(i)},\bm{B})\right)\right]\\
% & \overset{(1)}{=} -\sum\limits_{\substack{y^{(i)}_e \in\{\pm 1\} \\ \ell^{(i)}_{e} \in \{\pm 1\}}}\frac{d\pi^{(i)}(\lambda;\ell^{(i)}_e,x_e)}{d\lambda}\mathbb{E}_{\mathcal{L}^{(i)}_{-e},\bm{B}}\left[p\left(x_e=y^{(i)}_e|\mathcal{L}^{(i)}_{-e},\bm{B}\right)\log\left(\mathcal{R}(\mathcal{L}^{(i)},\bm{B})\right)\right].
% \end{split}
% \end{equation}
% Here equality (1) follows from the fact that $p\left(x_e=y_e|\mathcal{L}^{(i)}_{-e},\bm{B}\right)$ is free of $\pi^{(i)}(\lambda;\ell^{(i)}_e,x_e)$ hence free of $\lambda$ under the simplifying assumption that only $\pi^{(i)}(\lambda;\ell^{(i)}_e,x_e)$ depends on $\lambda$ and
% \[
% \sum\limits_{\small{\ell^{(i)}_{e} \in \{\pm 1\}}}\frac{d}{d\lambda}\left(\sum\limits_{x\in\{\pm 1\}}\pi^{(i)}(\lambda;\ell^{(i)}_e,x)p\left(x|\mathcal{L}^{(i)}_{-e},\bm{B}\right)\right)=0.
% \]
Combining \eqref{eq:cond_entrop_2} and \eqref{eq:cond_entrop_1}, we complete the proof of the lemma.
\end{proof}
\section{Proof of I-MMSE Identity in the Gaussian Model}
\label{I_MMSE}
 Let us consider the vector $\bm{t}$ containing $\{T_{i,j}\}_{i < j}$ and the vector $\bm{r}$ containing $\{x^*_ix^*_j\}_{i < j}$. Then from the definition of $\bm{T}$ we have
\[
\bm{t}=\sqrt{\frac{\lambda}{n}}\bm{r}+\bm{q},
\]
where $\bm{q}\sim N_{C^{n}_{2}}(0,\bm{I}_{C^{n}_{2}})$ ($C^{n}_{2}={n \choose 2}$). As the diagonal entries of $\tx(\tx)^\top$ are all $1$, let us consider 
\[
\bm{s}=\sqrt{\frac{\lambda}{n}}+\bm q_1,
\]
where $\bm q_1 \sim N_{n}(0,2\bm{I}_{n})$. Let $\mathcal{Y}=(\bm t, \bm s, \bm B)$ and $\bm B=(\bm b_1,\cdots,\bm b_p)$ where $\bm b_i = (\bm I + \frac{\mu}{n}\tx(\tx)^\top)^{-1}\widetilde{\bm z}_i$, where $\widetilde{\bm z}_i\sim N_n(\bm 0, \bm I_n)$. From the definition, it is clear that,
\[
I(\tx(\tx)^\top;\bm T, \bm B) =I(\tx(\tx)^\top;\mathcal{Y}) 
\]
Next we have
\[
I(\tx(\tx)^\top;\mathcal{Y}) = H(\mathcal{Y})-H(\mathcal{Y}|\tx(\tx)^\top).
\]
Note that
\[
H(\mathcal{Y}|\tx(\tx)^\top) = H(\bm q, \bm q_1, \{\widetilde{\bm z}_i\}_{i=1}^p).
\]
Observe that the right hand side is free of $\lambda$ and hence we get
\[
\frac{d}{d\lambda}I(\tx(\tx)^\top;\mathcal{Y}) = \frac{d}{d\lambda}H(\mathcal{Y}).
\]
Since the density of $\mathcal{Y}$ can be written as
\[
f_\mathcal{Y}(\mathcal{Y})=\sum\limits_{\bm x \in \{\pm 1\}^n}\frac{1}{2^n}\Bigg[\Bigg\{\prod\limits_{i<j}\phi\left(T_{i,j}-\sqrt{\frac{\lambda}{n}}x_ix_j\right)\Bigg\}\Bigg\{\prod\limits_{i=1}^{n}\phi\left(\frac{T_{i,i}-\sqrt{\lambda/n}}{\sqrt{2}}\right)\Bigg\}f(\bm{B}|\bm x)\Bigg],
\]
we have
\begin{align}
\frac{d}{d\lambda}H(\mathcal{Y}) & = \frac{d}{d\lambda}\Bigg\{-\mathbb{E}_{\mathcal{Y},\tx}[\log f_\mathcal{Y}(\mathcal{Y})]\Bigg\}\\
& = \frac{1}{2\sqrt{\lambda n}}\mathbb{E}_{\mathcal{Y},\tx}\Bigg[\frac{1}{2^n}\sum\limits_{\bm x \in \{\pm 1\}^n}\sum\limits_{i<j}\frac{(T_{i,j}-\sqrt{\lambda/n}x_ix_j)(x^*_ix^*_j-x_ix_j)f_\mathcal{Y}(\mathcal{Y}|\bm x)}{f_\mathcal{Y}(\mathcal{Y})}\Bigg]\\
& = \frac{1}{2\sqrt{\lambda n}}\mathbb{E}_{\mathcal{Y},\tx}\Bigg[\sum\limits_{\bm x \in \{\pm 1\}^n}\sum\limits_{i<j}(T_{i,j}-\sqrt{\lambda/n}x_ix_j)(x^*_ix^*_j-x_ix_j)f(\bm x|\mathcal{Y})\Bigg]\\
& = \frac{1}{2\sqrt{\lambda n}}\mathbb{E}_{\mathcal{Y},\tx}\Bigg[\sum\limits_{i<j}\Bigg\{T_{i,j}x^*_ix^*_j-T_{i,j}\mathbb{E}[x^*_ix^*_j|\mathcal{Y}]-\sqrt{\frac{\lambda}{n}}x^*_ix^*_j\mathbb{E}[x^*_ix^*_j|\mathcal{Y}]\\
&\hskip 1.5in +\sqrt{\frac{\lambda}{n}}\mathbb{E}[(x^*_ix^*_j)^2|\mathcal{Y}]\Bigg]\\
& = \frac{1}{2n}\sum\limits_{i<j}\mathbb{E}\Bigg[(x^*_ix^*_j-\mathbb{E}[x^*_ix^*_j|\mathcal{Y}])^2\Bigg]\\
& = \frac{1}{4n^2}\mathbb{E}\|\tx(\tx)^\top-\mathbb{E}[\tx(\tx)^\top|\mathcal{Y}]\|^2_F\\
& = \frac{1}{4}\GMMSE_n(\lambda,\mu).
\end{align}
This implies
\begin{equation}
\frac{1}{n}\frac{d}{d\lambda}I(\tx(\tx)^\top;\bm T, \bm B)=\frac{1}{4}\GMMSE_n(\lambda,\mu).
\end{equation}

\section{Proof of Results in Section \ref{mmse}}
\subsection{Proof of Theorem \ref{thm:MSE_AMP_main}}
\label{sec:proof-mse-amp}
We start by observing that
\begin{equation}
\label{eq:mse_amp_eqn}
\begin{split}
& \mathsf{MSE^{AMP}_n}(t;\lambda,\mu,\varepsilon) \\
& = 1-2\;\mathbb{E}\langle\widehat{\bm{x}}^{t}, \tx\rangle^2_n +\frac{1}{n^2}\mathbb{E}\|\widehat{\bm{x}}^{t}\|^4\\
&= 1-2\;\mathbb{E}\langle f_{t-1}(\bm{u}^{t-1},\bm{y}^{t-1},\bm{x}_0(\varepsilon)),\tx\rangle^2_n +\frac{1}{n^2}\mathbb{E}\|f_{t-1}(\bm{u}^{t-1},\bm{y}^{t-1},\bm{x}_0(\varepsilon))\|^4.\\
\end{split}
\end{equation}
Since $f_{t-1}$ is Lipschitz (by Lemma \ref{lem:Lipschitz}), 
$(x,y,w,z)\mapsto w f_{t-1}(x,y,z)$ is partially pseudo-Lipschitz. 
This implies using Theorem~\ref{thm:slln_shifted}, we get
\begin{equation}
\label{eq:sigma_2_amp}
\lim_{n \rightarrow \infty}\langle\widehat{\bm{x}}^{t}, \bm{x^{*}} \rangle^2_n 
= 
\mathbb{E}\left[X_0\mathbb{E}
\big[X_0|\alpha_{t-2}X_0+\tau_{t-2}Z_0, \mu_{t-1}X_0+\sigma_{t-1}\widetilde{Z}_0, X_0(\varepsilon)\big] \right],
\end{equation}
where $Z_0,\widetilde{Z}_0~\stackrel{iid}{\sim}~N(0,1)$, 
$X_0 \sim \mbox{Rademacher}$ and $X_0(\varepsilon)=B_0X_0$ where $B_0 \sim \mathrm{Bern}(\varepsilon)$ is independent of all other random variables. 
Similarly, $(x,y,w,z) \mapsto f^2_{t-1}(x,y,z)$ is partially pseudo-Lipschitz, 
and Theorem~\ref{thm:slln_shifted} implies
\begin{equation}
% \label{eq:rec_final_1}
\lim_{n\rightarrow \infty}\frac{1}{n}\|\bm{f}_{t-1}(\bm{u}^{t-1},
\bm{y}^{t-1},
\bm{x}_0(\varepsilon))\|^2 = \mathbb{E}\left[
\big(\mathbb{E}\big[X_0|\alpha_{t-2}X_0+\tau_{t-2}Z_0, \mu_{t-1}X_0+\sigma_{t-1}\widetilde{Z}_0,X_0(\varepsilon)\big]\big)^2 \right]. 
\end{equation}
Then using \revsag{the} dominated convergence theorem, property of conditional expectation and \eqref{eq:mse_amp_eqn}, we obtain the following.
\[
\lim_{n\rightarrow \infty} \mathsf{MSE^{AMP}_n}(t;\lambda,\mu,\varepsilon) 
= 1-\left(\mathbb{E}\Big[\big(\mathbb{E}\big[X_0|\alpha_{t-2}X_0+\tau_{t-2}Z_0, \mu_{t-1}X_0+\sigma_{t-1}\widetilde{Z}_0, X_0(\varepsilon)\big]\big)^2 \Big]\right)^2.
\]
Now using \eqref{eq:state_ev_rec}, and \eqref{eq:init_z_t} we get
\[
\lim_{n\rightarrow \infty} \mathsf{MSE^{AMP}_n}(t;\lambda,\mu,\varepsilon) = 1-z^2_t.
\]
It is immediate to show that
\[
G_\varepsilon(z)= 1-(1-\varepsilon)\mathsf{mmse}\left(\lambda z + (1-\varepsilon)\frac{\mu^2}{c}\frac{z}{1+\mu z}\right)
\]
\revsag{is continuous on $[0,\infty)$,} $\lim_{z \rightarrow \infty}G_\varepsilon(z)=1$ and $G_\varepsilon(0)=\varepsilon$. Using the fact that the function $t \mapsto \mathsf{mmse}(t)$ is monotone decreasing and $\varepsilon \in [0,1]$, it is easy to show that $G_\varepsilon(z)$ is monotone increasing in $z$. Further, using Lemma 6.1 of \cite{AbbeMonYash}, it can be concluded that $G_\varepsilon(z)$ is strictly concave in $[0,\infty)$. From these observations we have
\[
\lim_{t \rightarrow \infty}\lim_{n\rightarrow \infty} \mathsf{MSE^{AMP}_n}(t;\lambda,\mu,\varepsilon) = 1-z^2_*(\lambda,\mu,\varepsilon),
\]
where $z_*(\lambda,\mu,\varepsilon)$ is the largest non-negative solution to \eqref{eq:rec_final}. 
Note that
\[
G_\varepsilon(z)= 1+(\varepsilon-1)\mathsf{mmse}\left(\lambda z + (1-\varepsilon)\frac{\mu^2}{c}\frac{z}{1+\mu z}\right).
\]
As $\varepsilon \mapsto \mathsf{mmse}\left(\lambda z + (1-\varepsilon)(\mu^2/c)(z/(1+\mu z))\right)$ is increasing in $\varepsilon$, we have 
$G_\varepsilon(z)$ is increasing as a function of $\varepsilon$. From this observation and boundedness of $G_\varepsilon(z)$, we have
\begin{equation}
\label{eq:5_4_4}
z_{*}(\lambda,\mu,\varepsilon) \rightarrow z_{*}(\lambda,\mu)
\end{equation}
as $\varepsilon \rightarrow 0$, where $z_{*}(\lambda,\mu)$ satisfies \eqref{eq:zstar} and hence
\[
\lim_{\varepsilon \rightarrow 0}\lim_{t \rightarrow \infty}\lim_{n\rightarrow \infty} \mathsf{MSE^{AMP}_n}(t;\lambda,\mu,\varepsilon) = 1-z^2_*(\lambda,\mu).
\]

\subsection{Proof of Theorem \ref{thm:MMSE_main}}
\label{proof_thm_5_2}
Begin by noting that
\begin{multline}
\left|\frac{1}{n}I\left(\bm{x^{*}};\bm{T}(\lambda), \bm{B},\bm{x}_0(\varepsilon),\bm{w}_0(\varepsilon)\right)-
\frac{1}{n}I\left(\bm{x^{*}};\bm{T}(\lambda), \bm{B}\right)\right| 
= \frac{1}{n}I\left(\bm{x^{*}};\bm{x}_0(\varepsilon),\bm{w}_0(\varepsilon)|\bm{T}(\lambda), \bm{B}\right) \\ \le \frac{1}{n} H(\bm{x}_0(\varepsilon),\bm{w}_0(\varepsilon)) \le \varepsilon \log 2 +\frac{p}{2n}\varepsilon\log(2\pi\,e) \rightarrow 0, 
\end{multline}
as $\varepsilon \rightarrow 0$. 
%\nb{need more details on why the last inequality holds.}
Further, using techniques similar to the proof of Remark 6.5 in \cite{AbbeMonYash}, we can show
\begin{equation}
\label{eq:Inf_3}
\lim_{\lambda \rightarrow \infty}\lim_{n \rightarrow \infty}\frac{1}{n}I\left(\tx(\tx)^\top;\bm{T}(\lambda), \bm{B}\right) = \log 2,
\end{equation}
where $\bm{T}(\lambda)$ is defined in~\eqref{eq:T}. 
From Lemma~\ref{lem:mse_2_1}, we also have
\begin{align}
\label{eq:Inf_4}
&\lim_{n \rightarrow \infty}\frac{1}{n}I\left(\tx(\tx)^\top; \bm{T}(0), \bm{B}\right)\\
&=\frac{1}{2c}\log(1+\mu \gamma_*)+\frac{1}{2c}\,\frac{(1+\mu)}{(1+\mu \gamma_*)}+\sMI\left(\frac{\mu^2}{c}\frac{\gamma_*}{1+\mu \gamma_*}\right)-\frac{1}{2c}\log(1+\mu)-\frac{1}{2c}\\
&= \upkappa(\mu,\gamma_*), 
\end{align}
where $\gamma_*$ satisfies
\begin{equation}
\label{eq:rec_final_1}
\gamma_{*} = 1-\smmse\left(\frac{\mu^2}{c}\frac{\gamma_*}{1+\mu \gamma_*}\right).
\end{equation}
From (263) of~\cite{AbbeMonYash} we have for all $\lambda \ge 0$ 
%\nb{Why do we need this assumption?}
\[
\lim_{n \rightarrow \infty}\left[\frac{1}{n}I\left(\tx; \bm{T}(\lambda), \bm{B}, \bm{x}_0(\varepsilon),\bm{w}_0(\varepsilon)\right) - \frac{1}{n}I\left(\tx(\tx)^\top; \bm{T}(\lambda), \bm{B}, \bm{x}_0(\varepsilon),\bm{w}_0(\varepsilon)\right)\right] = 0.
\]
Next observe that for all $\lambda,\mu,\varepsilon > 0$
\begin{equation}
\label{eq:5_4_1}
\mathsf{GMMSE}_n(\lambda,\mu,\varepsilon) \le \mathsf{MSE^{AMP}_n}(t;\lambda,\mu,\varepsilon),
\end{equation}
where 
\[
\GMMSE_n(\lambda,\mu,\varepsilon) = \frac{1}{\,n^2}\, \mathbb{E}\left\| \tx(\tx)^\top - \mathbb{E}[ \tx(\tx)^\top \,|\, \bm{T},\bm{B},\bm x_0(\varepsilon), \bm w_0(\varepsilon)]\right\|_F^2.
\]
%Using conditional version of I-MMSE identity of~\cite{1412024} we get
With the same techniques used to prove \eqref{eq:I_MMSE}, we have
\begin{equation}
\label{eq:5_4_2}
\frac{1}{n}\frac{d}{d\lambda}I\left(\tx(\tx)^\top;\bm{T}(\lambda), \bm{B},\bm{x}_0(\varepsilon),\bm{w}_0(\varepsilon)\right) = \frac{1}{4}\GMMSE_n(\lambda,\mu,\varepsilon).
\end{equation}
Using Lemma~\ref{lem:mse_2_3}, 
%\nb{fill in reference!}, 
we further have
\begin{equation}
\label{eq:5_4_3}
\xi(z_{*}(\lambda,\mu),\lambda,\mu) = \xi(\gamma_{*},0,\mu) + \int_{0}^{\lambda}\frac{1}{4}\left(1-z^2_{*}(t,\mu)\right)\,dt.
\end{equation}
Then using Theorem \ref{thm:MSE_AMP_main}, \eqref{eq:Inf_3}, \eqref{eq:Inf_4}, \eqref{eq:5_4_1}, \eqref{eq:5_4_2}, \eqref{eq:5_4_3}
\begin{equation}
\label{eq:equiv_2}
%\begin{split}
\begin{aligned}
\log 2 - \upkappa(\mu,\gamma_*) & =  \liminf_{n \rightarrow \infty}\lim_{\lambda \rightarrow \infty}\Bigg[\frac{1}{n}I\left(\tx(\tx)^\top;\bm{T}(\lambda), \bm{B}\right)-\frac{1}{n}I\left(\tx(\tx)^\top; \bm{T}(0), \bm{B}\right)\Bigg]\\
&=\liminf_{n \rightarrow \infty}\lim_{\lambda \rightarrow \infty}\lim_{\varepsilon \rightarrow 0}\Bigg[\frac{1}{n}I\left(\tx(\tx)^\top;\bm{T}(\lambda), \bm{B},\bm{x}_0(\varepsilon),\bm{w}_0(\varepsilon)\right)\\
&\hspace{2in}-\frac{1}{n}I\left(\tx(\tx)^\top; \bm{T}(0), \bm{B},\bm{x}_0(\varepsilon),\bm{w}_0(\varepsilon)\right)\Bigg]\\
    								   & = \liminf_{n \rightarrow \infty}\lim_{\lambda \rightarrow \infty}\lim_{\varepsilon \rightarrow 0} \int_{0}^{\lambda}\frac{1}{4}\mathsf{GMMSE}_{n}(u,\mu,\varepsilon)\,du\\
								   & \le \limsup_{t \rightarrow \infty}\limsup_{n \rightarrow \infty}\lim_{\lambda \rightarrow \infty}\lim_{\varepsilon \rightarrow 0}\int_{0}^{\lambda}\frac{1}{4}\mathsf{MSE}^{\scriptsize{\mathsf{AMP}}}_n(t;u,\mu,\varepsilon)\,du\\
								   &= \lim_{\varepsilon \rightarrow 0}\lim_{\lambda \rightarrow \infty}\int_{0}^{\lambda}\frac{1}{4}\left(1-z^2_{*}(u,\mu,\varepsilon)\right)\,du\\& \hspace{2 in} \left(\mbox{where $z_{*}(\lambda,\mu,\varepsilon)$ satisfies~\eqref{eq:rec_final}}\right)\\
								   &= \lim_{\lambda \rightarrow \infty}\int_{0}^{\lambda}\frac{1}{4}\left(1-z^2_{*}(u,\mu)\right)\,du\\& \hspace{2 in} \left(\mbox{where $z_{*}(\lambda,\mu)$ satisfies~\eqref{eq:zstar}}\right)\\
								   & = \lim_{\lambda \rightarrow \infty} \xi(z_{*}(\lambda,\mu),\lambda,\mu)-\xi(\gamma_{*},0,\mu) \hspace{0.1 in}\\& \hspace{2in} \left(\mbox{where $\xi(z,\lambda,\mu)$ is defined by~\eqref{eq:def_xi}}\right)\\
								   & = \log 2 - \upkappa(\mu,\gamma_*).
								   %\end{split}
\end{aligned}
\end{equation}
This implies that all inequalities in~\eqref{eq:equiv_2} are equalities, which, in turn, implies 
\[
\lim_{n \rightarrow \infty}\mathsf{GMMSE}_n(\lambda,\mu) = \lim_{\varepsilon \rightarrow 0}\lim_{t \rightarrow \infty}\lim_{n \rightarrow \infty}\mathsf{MSE}^{\scriptsize{\mathsf{AMP}}}_{n}(t;\lambda,\mu,\varepsilon) = 1-z^2_{*}(\lambda,\mu).
\]
By the definition of $\xi$
\[
\lim_{n \rightarrow \infty}\frac{1}{n}I\left(\tx(\tx)^\top; \bm{T}(0), \bm{B}\right) = \xi(\gamma_{*},0,\mu).
\]
Finally using Theorem \ref{thm:MSE_AMP_main} and Lemma~\ref{lem:mse_2_3}, 
%\nb{fill in reference}
we have
\begin{equation}
\begin{split}
\hspace{-0.5in}\lim_{n \rightarrow \infty}\frac{1}{n}I\left(\tx(\tx)^\top;\bm{T}(\lambda), \bm{B}\right) & = \xi(\gamma_{*},0,\mu)+ \lim_{n \rightarrow \infty}\int_{0}^{\lambda}\frac{1}{4}\mathsf{GMMSE}_n(t,\mu)\,dt\\
																			   & = \xi(\gamma_{*},0,\mu)+\hspace{-0.05in} \lim_{n \rightarrow \infty}\int_{0}^{\lambda}\frac{1}{4}\mathsf{MSE}^{\scriptsize{\mathsf{AMP}}}_{n}(t;\lambda,\mu)\,dt\\
																			   & = \xi(\gamma_{*},0,\mu)+ \int_{0}^{\lambda}\frac{1}{4}\left(1-z^2_{*}(t,\mu)\right)\,dt\\
																			   & = \xi(z_{*}(\lambda,\mu),\lambda,\mu).
\end{split}
\end{equation}
This completes the proof. 

\subsection{Lemmas Used to Prove Results in Section \ref{mmse}}
\begin{lem}
\label{lem:Lipschitz}
{ Consider $f_t$ defined in \eqref{eq:def_f}. Then $f_t$ and its partial derivatives with respect to the first and second arguments are Lipschitz for all $t \ge 0$.  }
\end{lem}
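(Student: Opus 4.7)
The plan is to exploit the fact that $X_0(\varepsilon)$ takes only three values, $\{-1,0,+1\}$, so the conditional expectation that defines $f_t$ collapses to at most three elementary cases, each of which can be written in closed form.

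First, I would fix $z \in \{-1,0,+1\}$ and compute $f_t(x,y,z)$ by Bayes' rule.
\begin{itemize}
\item If $z=\pm 1$, then $X_0(\varepsilon)=z$ forces $B(\varepsilon)=1$ and hence $X_0=z$, so the Gaussian observations carry no additional information and $f_t(x,y,z)=z$ is constant in $(x,y)$.
\item If $z=0$, then $X_0\mid \{X_0(\varepsilon)=0\}$ is still Rademacher and is independent of $Z_1,Z_2$. A direct calculation with Gaussian densities yields the log-likelihood ratio $2\alpha_{t-1}x/\tau_{t-1}^2 + 2\mu_t y/\sigma_t^2$ and hence
\[
f_t(x,y,0) \;=\; \tanh\!\left(\frac{\alpha_{t-1}\,x}{\tau_{t-1}^{\,2}} + \frac{\mu_t\,y}{\sigma_t^{\,2}}\right).
\]
\end{itemize}

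Next, I would verify that the state-evolution parameters $\tau_{t-1}$ and $\sigma_t$ appearing in the denominators are strictly positive for $t\ge 1$, so the formula is well defined. Using the recursion \eqref{eq:state_ev_rec} and the fact that $\smmse(\cdot)<1$ on $(0,\infty)$, one checks inductively that $\sigma_t^{\,2},\tau_{t-1}^{\,2}>0$ whenever $\varepsilon\in(0,1)$ and $\mu,\lambda>0$. The degenerate base case $t=0$ is handled separately: with $\alpha_{-1}=\tau_{-1}=\mu_0=\sigma_0=0$, the Gaussian conditioning drops out and $f_0(x,y,z)=\mathbb{E}[X_0\mid X_0(\varepsilon)=z]=z$, which is trivially Lipschitz (and the natural convention $0/0=0$ makes this consistent with the $t=0$ substitution in the $\tanh$ formula above, whose argument we set to $0$).

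Once the explicit form is in place, Lipschitz continuity is immediate. For the case $z=0$, the boundedness $|\tanh'|=\sech^{\,2}\le 1$ gives
\[
|f_t(x,y,0)-f_t(x',y',0)|
\;\le\; \sqrt{\bigl(\alpha_{t-1}/\tau_{t-1}^{\,2}\bigr)^{\!2}+\bigl(\mu_t/\sigma_t^{\,2}\bigr)^{\!2}}\;\bigl\|(x,y)-(x',y')\bigr\|,
\]
and for $z=\pm 1$ the function is constant in $(x,y)$. Uniformly over the three admissible values of $z$, $f_t$ is thus Lipschitz in $(x,y)$. Extending $f_t$ to arbitrary $z\in\mathbb{R}$ (where the conditional expectation is not a priori defined) can be done by any Lipschitz interpolation between these three values, e.g.\ $f_t(x,y,z):=\mathrm{sign}(z)\min(|z|,1) + (1-\min(|z|,1))\tanh(\alpha_{t-1}x/\tau_{t-1}^{\,2}+\mu_t y/\sigma_t^{\,2})$, which preserves the required smoothness. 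The partial derivatives
\[
\partial_x f_t(x,y,0)=\frac{\alpha_{t-1}}{\tau_{t-1}^{\,2}}\,\sech^{\,2}(\cdot),\qquad \partial_y f_t(x,y,0)=\frac{\mu_t}{\sigma_t^{\,2}}\,\sech^{\,2}(\cdot),
\]
are themselves Lipschitz in $(x,y)$ because $\sech^{\,2}$ is smooth with bounded derivative, and for $z=\pm 1$ they vanish identically.

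The only real obstacle is therefore the bookkeeping that keeps $\tau_{t-1},\sigma_t$ bounded away from zero across all $t\ge 1$ so that the coefficients appearing in the Lipschitz bound remain finite; once that is established the rest is direct computation from the closed forms above.
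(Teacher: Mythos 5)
Your proposal is correct and takes essentially the same route as the paper: compute the closed form ($f_t(x,y,\pm 1)=\pm 1$ and $f_t(x,y,0)=\tanh(\alpha_{t-1}x/\tau_{t-1}^2+\mu_t y/\sigma_t^2)$) and then use boundedness of $\tanh'$, $\sech^2$ and $\sech^2\tanh$ to get Lipschitzness of $f_t$ and of its first two partial derivatives. The only cosmetic difference is that the paper verifies the Lipschitz inequality directly for mixed pairs $z\neq z'$ in $\{-1,0,1\}$ (using $|f_t|\le 1$ and $|z-z'|\ge 1$), whereas you handle the $z$-variable via a global Lipschitz interpolation; your extra care about $\sigma_t,\tau_{t-1}>0$ and the degenerate $t=0$ case matches the convention the paper uses implicitly.
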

\begin{proof}
Let $\bm{x}=(x,y)$ and $\bm{a}=(a,b)$. We begin by observing that
\begin{equation}
\label{eq:f_t_lips}
f_t(x,y,z)=\begin{cases}1 & \mbox{if $z=1$}\\-1 & \mbox{if $z=-1$}\\ \tanh\left(-\frac{\alpha_{t-1}}{\tau^2_{t-1}}x-\frac{\mu_{t}}{\sigma^2_{t}}y\right) & \mbox{if $z=0$} \end{cases}
\end{equation}
Using \eqref{eq:f_t_lips}, we get
\[
|f_t(x,y,1)-f_t(a,b,1)| = 0 \le \|(\bm{x},1)-(\bm{a},1)\|,
\]
and
\[
|f_t(x,y,-1)-f_t(a,b,-1)| = 0 \le \|(\bm{x},1)-(\bm{a},1)\|.
\] 
Further, since $|\tanh^\prime(x)| \le 1$ for all $x$, using multivariate mean-value theorem, we have
\[
|f_t(x,y,0)-f_t(a,b,0)| \le (\alpha_{t-1}/\tau^2_{t-1} + \mu_{t}/\sigma^2_{t})^{1/2} \|(\bm{x},0)-(\bm{a},0)\|.
\]
{ Again as $|\tanh(x)|\le 1$, it can be easily shown that
\[
|f_t(x,y,-1)-f_t(a,b,0)| \le C \|(\bm{x},-1)-(\bm{a},0)\|,
\] 
and
\[
|f_t(x,y,1)-f_t(a,b,0)| \le C \|(\bm{x},1)-(\bm{a},0)\|.
\] 
Again, by definition
\[
|f_t(x,y,1)-f_t(a,b,-1)| =2 \le C \|(\bm{x},1)-(\bm{a},-1)\|.
\] }
Next observe that 
\begin{equation}
\label{eq:f_t_1}
\frac{\partial f_t(x,y,z)}{\partial x}=\begin{cases}0 & \mbox{if $z=1$,}\\ 0 & \mbox{if $z=-1$,}\\-\frac{\alpha_{t-1}}{\tau^2_{t-1}}\sech^2\left(-\left(\frac{\alpha_{t-1}}{\tau^2_{t-1}}x + \frac{\mu_{t}}{\sigma^2_{t}}y\right)\right) & \mbox{if $z=0$;}\end{cases}
\end{equation}
Observing that $|\sech(x)| \le 1$ and $|\sech(x)\tanh(x)| \le 1$, and using arguments similar to those previously used, we can show that $\frac{\partial f_t(x,y,z)}{\partial x}$ is Lipschitz. Similarly, we can also show $\frac{\partial f_t(x,y,z)}{\partial y}$ is Lipschitz.
\end{proof}
\begin{lem}
\label{lem:mse_2_1}
We have
\[
\lim_{n \rightarrow \infty}\frac{1}{n}I\left(\tx; \bm{B}\right) = \upkappa(\mu,\gamma_*),
\]
where $\upkappa(\mu,\gamma_*)$ is as defined in \eqref{eq:Inf_4}.
\end{lem}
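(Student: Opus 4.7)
My plan is to mirror the strategy used to prove Theorem~\ref{thm:MMSE_main}, but specialized entirely to the slice $\lambda = 0$. When $\lambda = 0$ the matrix $\bm{T}(0) = \bm{Z}$ is pure Gaussian noise independent of $\tx$ and carries no information about the community structure; the second AMP orbit \eqref{eq:AMP_shift_main_1_1} becomes uninformative and may be dropped. The remaining single-orbit AMP on $\bm{B}$ has state evolution \eqref{eq:state_ev_rec} with $\mu_t = \sigma_t \equiv 0$, collapsing to the scalar recursion
\[
z_{t+1} = 1 - (1-\varepsilon)\,\smmse\!\Big(\tfrac{\mu^2}{c}\cdot\tfrac{z_t}{1+\mu z_t}\Big),
\]
whose fixed point as $t\to\infty$ and $\varepsilon\to 0$ is exactly $\gamma_*$ defined by \eqref{eq:rec_final_1}. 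Applying Theorem~\ref{thm:slln_shifted} to this reduced iteration along the lines of Theorem~\ref{thm:MSE_AMP_main} shows that the corresponding AMP estimator $\widehat{\bm{x}}^t$ satisfies $\lim_{\varepsilon\to 0}\lim_{t\to\infty}\lim_{n\to\infty}\MSEA_n(t;0,\mu,\varepsilon) = 1-\gamma_*^2$.

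Next I would invoke an I-MMSE argument in the parameter $\mu$. For the model $\bm{B}(\mu) = \sqrt{\mu/n}\,\bm{v}^*\tx^\top + \bm{R}$, the same derivation as in Appendix~\ref{I_MMSE} gives
\[
\tfrac{1}{n}\tfrac{d}{d\mu}I(\bm{v}^*\tx^\top;\bm{B}(\mu)) = \tfrac{1}{2}\,\GMMSE_n(0,\mu).
\]
Running the sandwich argument of the proof of Theorem~\ref{thm:MMSE_main} in the $\mu$-direction (replacing the $\lambda\to\infty$ endpoint with $\mu\to\infty$, at which $\tfrac{1}{n}I(\tx(\tx)^\top;\bm{B})\to\log 2$ by the technique of Remark 6.5 of \cite{AbbeMonYash}) pins down $\lim_{n\to\infty}\GMMSE_n(0,\mu) = 1-\gamma_*^2(\mu)$ and hence, by integration,
\[
\lim_{n\to\infty}\tfrac{1}{n}I(\bm{v}^*\tx^\top;\bm{B}) = \int_0^\mu \tfrac{1}{2}\big(1-\gamma_*^2(s)\big)\,ds.
\]

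To extract $\tfrac{1}{n}I(\tx;\bm{B})$ from $\tfrac{1}{n}I(\bm{v}^*\tx^\top;\bm{B})$, I decompose $I(\tx;\bm{B}) = I(\tx,\bm{v}^*;\bm{B}) - I(\bm{v}^*;\bm{B}\mid \tx)$ and compute the conditional term exactly: given $\tx$ with $\|\tx\|^2 = n$, the sufficient statistic $\bm{B}\tx/\sqrt{n} = \sqrt{\mu}\,\bm{v}^* + \bm{R}\tx/\sqrt{n}$ is a Gaussian channel of SNR $\mu$ applied to $\bm{v}^*\sim N_p(0,I_p)$, giving $I(\bm{v}^*;\bm{B}\mid\tx) = \tfrac{p}{2}\log(1+\mu)$ and thus the per-vertex limit $\tfrac{1}{2c}\log(1+\mu)$. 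Combined with the asymptotic equivalence $\tfrac{1}{n}[I(\tx,\bm{v}^*;\bm{B}) - I(\bm{v}^*\tx^\top;\bm{B})]\to 0$ in the spirit of (263) of \cite{AbbeMonYash}, this yields
\[
\lim_{n\to\infty}\tfrac{1}{n}I(\tx;\bm{B}) = \int_0^\mu \tfrac{1}{2}(1-\gamma_*^2(s))\,ds - \tfrac{1}{2c}\log(1+\mu).
\]

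The final step is purely algebraic: verify that this integrated expression equals $\upkappa(\mu,\gamma_*)$. Using the identity $\mathrm{I}'(\eta) = \tfrac{1}{2}\smmse(\eta)$ from \eqref{eq:inf_scalar}--\eqref{eq:mmse} and differentiating $\upkappa$ implicitly through $\gamma_*$ (which satisfies \eqref{eq:rec_final_1}), one checks $\tfrac{d\upkappa}{d\mu} = \tfrac{1}{2}(1-\gamma_*^2)$, and that both sides vanish at $\mu = 0$, so the two expressions agree. The main obstacle is executing the sandwich argument of paragraph~2 in the $\mu$-direction without circularly invoking Theorem~\ref{thm:MMSE_main}; this requires an independent proof of the boundary value $\lim_{\mu\to\infty}\tfrac{1}{n}I(\tx(\tx)^\top;\bm{B}) = \log 2$ and a verification that the integrated AMP upper bound matches this endpoint, a self-contained analogue of the argument given for the combined model.
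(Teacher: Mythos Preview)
Your proposal has a genuine gap at the I--MMSE step, and the paper's route is both different and much shorter.

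\textbf{The gap.} Differentiating $I(\bm{v}^*(\tx)^\top;\bm{B}(\mu))$ in $\mu$ yields, by the standard Guo--Shamai--Verd\'u identity applied to the rectangular Gaussian channel $\bm{B}=\sqrt{\mu/n}\,\bm{v}^*\tx^\top+\bm{R}$,
\[
\frac{1}{n}\frac{d}{d\mu}I(\bm{v}^*\tx^\top;\bm{B})
= \frac{1}{2n^2}\,\mathbb{E}\big\|\bm{v}^*\tx^\top-\mathbb{E}[\bm{v}^*\tx^\top\mid\bm{B}]\big\|_F^2,
\]
i.e.\ the matrix MMSE for the \emph{joint} signal $\bm{v}^*\tx^\top$. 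This is not $\tfrac12\GMMSE_n(0,\mu)$: by definition \eqref{eq:Gmmse}, $\GMMSE_n$ is the MMSE for $\tx(\tx)^\top$, a different object with a different normalization and a different limit. Your AMP analysis gives $\MSEA_n\to 1-\gamma_*^2$, which upper-bounds the MMSE for $\tx(\tx)^\top$; it does not bound the joint MMSE appearing in the derivative above, so the sandwich does not close and the integrated formula $\int_0^\mu\tfrac12(1-\gamma_*^2(s))\,ds$ is not justified. To salvage this route you would have to track the AMP estimate $g_t(\bm{v}^t,\bm{v}_0(\varepsilon))\,f_t(\bm{u}^t,\bm{x}^t,\bm{x}_0(\varepsilon))^\top$ of $\bm{v}^*\tx^\top$, compute its limiting Frobenius MSE via state evolution, and run the sandwich in that quantity---effectively reproving the asymmetric spiked-matrix replica formula by AMP, which is considerably more than what you outlined.

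\textbf{What the paper does instead.} The paper bypasses all of this by invoking an existing result: Corollary~1 of Miolane \cite{miolane2018} for the asymmetric low-rank model $\bm{B}^\top=\sqrt{\mu/n}\,\tx(\bm{v}^*)^\top+\bm{R}^\top$ directly gives $\lim_{n\to\infty}\tfrac1n I(\bm{v}^*(\tx)^\top;\bm{B})$ as a variational formula; specializing to Rademacher and Gaussian priors and computing the functions $\psi_{P_U},\psi_{P_V}$ explicitly collapses the formula to $\upkappa(\mu,\gamma_*)+\tfrac{1}{2c}\log(1+\mu)$, with $\gamma_*$ the stationary point satisfying \eqref{eq:rec_final_1}. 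The conversion from $I(\bm{v}^*(\tx)^\top;\bm{B})$ to $I(\tx;\bm{B})$ via the exact computation $H(\bm{B}\mid\tx)-H(\bm{B}\mid\bm{v}^*\tx^\top)=\tfrac{p}{2}\log(1+\mu)$ is essentially the same as your last step. This avoids both the I--MMSE mismatch and the circularity concern you flagged.
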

\begin{proof}
Let us observe that if $\lambda = 0$, we have $\bm B$ is the transpose of the matrix $\bm Y$ described in (1) of \cite{miolane2018} with $\bm U = \tx$, $\bm V= \bm v^{*}$, $\lambda = \mu$ and $\alpha = 1/c$. Since
\[
\frac{1}{n} I(\bm v^{*}(\tx)^\top; \bm B) = \frac{1}{n} I((\bm v^{*}, \tx); \bm B) = \frac{1}{n} I((\bm v^{*}, \tx); \bm B^\top),
\]
the result of \cite{miolane2018} directly \revsag{apply} in our case. Let us consider the scalar model
\[
Y = \sqrt{\gamma}\,X+Z,
\]
where $Z \sim N(0,1)$ and $X \in \{U,V\}$ where $V \sim N(0,1)$ and $U \sim \mbox{Rademacher}$. Recall
\[
\mathcal{Z}(Y) = \int d\,P_X e^{\gamma xX + \sqrt{\gamma}xZ-\frac{\gamma x^2}{2}}
\]
and the function $F_{P_X}(\gamma)$ defined in (9) of \cite{miolane2018} given by
\[
F_{P_X}(\gamma) = \mathbb{E}\left[\frac{1}{\mathcal{Z}(Y)}\int xXe^{\gamma xX + \sqrt{\gamma}xZ-\frac{\gamma x^2}{2}}d\,P_X\right].
\]
For $X = V$ it can be easily verified that
\[
F_{P_X}(\gamma) = \frac{\gamma}{1+\gamma}.
\]
By definition of $\Gamma(\mu,c)$ as in (11) of \cite{miolane2018}, in our case we have
\[
\Gamma(\mu,c) = \Bigg\{\left(q,\frac{\mu q}{1+\mu q}\right): q \ge 0\Bigg\}.
\]
If we recall the definition of $\psi_{P_X}(\gamma)$ as in (10) of \cite{miolane2018}, that is
\[
\psi_{P_X}(\gamma)=\mathbb{E}\log\left(\int e^{\gamma xX + \sqrt{\gamma}xZ-\frac{\gamma x^2}{2}}d\,P_X\right),
\]
then it is easy to show that
\[
\psi_{P_V}(\gamma)= \frac{\gamma}{2}-\frac{1}{2}\log(1+\gamma),
\]
and
\[
\psi_{P_U}(\gamma)= \frac{\gamma}{2}-\mathsf{I}(\gamma),
\]
where $\mathsf{I}(\gamma)$ is defined in \eqref{eq:inf_scalar}. Let us define
\begin{multline}
\mathcal{F}(q) = \psi_{P_U}\left(\frac{\mu^2}{c}\frac{q}{1+\mu q}\right) + \frac{1}{c}\psi_{P_V}(\mu q) -\frac{\mu^2}{2c} \frac{q^2}{1+\mu q}\\
= \frac{\mu}{2c}+\frac{1}{2c}-\frac{\mu+1}{2c(1+\mu q)}-\mathsf{I}\left(\frac{\mu^2}{c}\frac{q}{1+\mu q}\right)-\frac{1}{2c}\log(1+\mu q).
\end{multline}
Now if $\gamma^*$ is the \revsag{supremum} of $\mathcal{F}(q)$, then it must satisfy, $\mathcal{F}^\prime(\gamma^*)=0$, which further implies
\[
\gamma^* = 1-\smmse\left(\frac{\mu^2}{c}\,\frac{\gamma^*}{1+\mu\gamma^*}\right).
\]
Using Corollary 1 of \cite{miolane2018}, we get
\begin{multline}
\lim_{n \rightarrow \infty} \frac{1}{n}I(\bm v^{*}(\tx)^\top; \bm B) = \frac{\mu}{2c}-\frac{\mu}{2c}-\frac{1}{2c}+\frac{\mu+1}{2c(1+\mu \gamma^*)}+\mathsf{I}\left(\frac{\mu^2}{c}\frac{\gamma^*}{1+\mu \gamma^*}\right)+\frac{1}{2c}\log(1+\mu \gamma^*)\\
= \frac{\mu+1}{2c(1+\mu \gamma^*)}+\mathsf{I}\left(\frac{\mu^2}{c}\frac{\gamma^*}{1+\mu \gamma^*}\right)+\frac{1}{2c}\log(1+\mu \gamma^*)-\frac{1}{2c}\\
= \upkappa(\mu,\gamma_*) + \frac{1}{2c}\log(1+\mu).
\end{multline}
We note that given $\bm{v^{*}}(\tx)^\top$, $\bm{B}$ is \revsag{equal} in distribution to $(\bm{y}_1,\bm{y}_2,\ldots,\bm{y}_p)$, where \newline $\bm{y}_i \sim \mathbf{N}_n\left(\sqrt{\mu/n}\bm{v^{*}}(\tx)^\top,\bm{I}_n\right)$. This implies
\[
H(\bm{B}|\bm{v^{*}}(\tx)^\top) = \frac{p}{2}\log\mbox{det}\left(2\pi e \mathbf{I}_n\right).
\]
Also note that, given $\tx$, $\bm{B}\overset{d}{=}(\bm{b}_1,\bm{b}_2,\ldots,\bm{b}_p)$, where $\bm{b}_i \sim \mathbf{N}_n\left(\mathbf{0},(\mu/n)\tx(\tx)^\top+\bm{I}_n\right)$. This implies
\[
H(\bm{B}|\bm{v^{*}}(\tx)^\top) = \frac{p}{2}\log\mbox{det}\left(2\pi e\left(\frac{\mu}{n}\tx(\tx)^\top+\bm{I}_n\right)\right).
\]
Next we get
\[
\frac{1}{n}\left[I(\tx;\bm{B})-I(\bm{v^{*}}(\tx)^\top;\bm{B})\right]=\frac{1}{n}\left[H(\bm{B}|\bm{v^{*}}(\tx)^\top)-H(\bm{B}|\tx)\right] =-\frac{p}{2n}\log(1+\mu).
\]
Thus, we have
\begin{equation}
\begin{split}
\lim_{n \rightarrow \infty}\frac{1}{n}I\left(\tx; \bm{B}\right) &= \lim_{n \rightarrow \infty}\frac{1}{n}\left[I(\tx;\bm{B})-I(\bm{v^{*}}(\tx)^\top;\bm{B})\right] + \lim_{n \rightarrow \infty}\frac{1}{n}I(\bm{v^{*}}(\tx)^\top;\bm{B})\\
& = \upkappa(\mu,\gamma_*).
\end{split}
\end{equation}
\end{proof}
\begin{lem}
\label{lem:mse_2_3}
Let us consider the function $\xi$ defined in~\eqref{eq:def_xi}. Then for all $\lambda,\mu >0$
\[
\xi(z_{*}(\lambda,\mu),\lambda,\mu)= \xi(\gamma_{*},0,\mu)+\int_{0}^{\lambda}\frac{1}{4}\left(1-z^2_{*}(t,\mu)\right)\,dt.
\]
\end{lem}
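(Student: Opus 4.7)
The plan is to show that $\Psi(\lambda) := \xi(z_*(\lambda,\mu),\lambda,\mu)$ is absolutely continuous in $\lambda$ with $\Psi'(\lambda) = \tfrac{1}{4}(1 - z_*^2(\lambda,\mu))$, so that the identity follows by integrating from $0$ to $\lambda$. The core ingredient is an envelope-theorem argument: I will verify that $z_*(\lambda,\mu)$ is a critical point of $z \mapsto \xi(z,\lambda,\mu)$, which collapses the chain rule to $\Psi'(\lambda) = \partial_\lambda \xi(z_*(\lambda,\mu),\lambda,\mu)$.

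The critical-point claim rests on the scalar I-MMSE identity $\sMI'(\eta) = \tfrac{1}{2}\smmse(\eta)$. Differentiating \eqref{eq:def_xi} with respect to $z$,
\[
\partial_z \xi = \tfrac{\lambda(z-1)}{2} + \tfrac{\mu}{2c(1+\mu z)} - \tfrac{\mu(1+\mu)}{2c(1+\mu z)^2} + \tfrac{1}{2}\smmse(\eta(z))\left(\lambda + \tfrac{\mu^2}{c(1+\mu z)^2}\right),
\]
where $\eta(z) = \lambda z + \tfrac{\mu^2}{c}\cdot\tfrac{z}{1+\mu z}$. Plugging in the defining relation $\smmse(\eta(z_*)) = 1 - z_*$ from \eqref{eq:zstar}, the $\lambda$-contributions cancel and the $\mu$-contributions telescope to zero, giving $\partial_z \xi(z_*,\lambda,\mu) = 0$. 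A parallel computation gives $\partial_\lambda \xi = \tfrac{(z-1)^2}{4} + \tfrac{z}{2}\smmse(\eta(z))$, which at $z = z_*$ factors as $\tfrac{(1-z_*)(1+z_*)}{4} = \tfrac{1 - z_*^2}{4}$.

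The main obstacle is justifying the regularity of $\lambda \mapsto z_*(\lambda,\mu)$ so that the chain rule applies. By Lemma 6.1 of \cite{AbbeMonYash}, $G(z,\lambda) := 1 - \smmse(\eta(z))$ is strictly concave and increasing in $z$, bounded by $1$, with $G(0,\lambda)=0$ and $\partial_z G(0,\lambda) = \lambda + \mu^2/c$. Hence $z_*(\lambda,\mu) = 0$ for $\lambda + \mu^2/c \le 1$ and is the unique positive fixed point (with $\partial_z G(z_*,\lambda) < 1$ by strict concavity) otherwise; the implicit function theorem then makes $z_*$ of class $C^1$ in $\lambda$ off the single transition point $\lambda + \mu^2/c = 1$, where $z_*$ nevertheless remains continuous. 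Since $z_*$ is monotone non-decreasing and continuous in $\lambda$ and $\xi$ is smooth, $\Psi$ is absolutely continuous, $\Psi'(\lambda) = \tfrac{1}{4}(1 - z_*^2(\lambda,\mu))$ holds off a Lebesgue-null set, and integration from $0$ to $\lambda$ completes the proof.
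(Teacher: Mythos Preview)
Your proof is correct and follows essentially the same envelope-theorem approach as the paper: both compute $\partial_z \xi$ and show it vanishes at $z_*$ via the fixed-point relation \eqref{eq:zstar}, then evaluate $\partial_\lambda \xi$ at $z_*$ to obtain $\tfrac{1}{4}(1-z_*^2)$, and conclude by the chain rule and integration (with $z_*(0,\mu)=\gamma_*$). Your version is more careful than the paper's in justifying the differentiability of $\lambda \mapsto z_*(\lambda,\mu)$ via strict concavity and the implicit function theorem, a point the paper leaves implicit.
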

\begin{proof}
From \eqref{eq:rec_final} it is easy to see that $z_{*}(\lambda,\mu,\varepsilon)=\gamma_*$ where $\gamma_*$ is the unique non-negative solution to \eqref{eq:rec_final_1}. Then we have
\begin{align}
&\frac{\partial \xi(\gamma,\lambda,\mu)}{\partial \gamma}\bigg|_{(z_*(\lambda,\mu),\lambda)}\\
&= \frac{1}{2}\Bigg(\lambda+\frac{\mu^2}{c}\frac{1}{(\mu z_*(\lambda,\mu)+1)^2}\Bigg)\Bigg\{z_*(\lambda,\mu)-1+\;\mbox{mmse}\Bigg(\lambda z^2_*(\lambda,\mu)+\frac{\mu^2}{c}\frac{z_*(\lambda,\mu)}{\mu z_*(\lambda,\mu)+1}\Bigg)\Bigg\}
\end{align}
also
\[
\frac{\partial \xi(\gamma,\lambda,\mu)}{\partial \lambda}\bigg|_{(z_*(\lambda,\mu),\lambda)} = \frac{1}{4}\left(1-z^2_*(\lambda,\mu)\right).
\]
This implies using~\eqref{eq:rec_final}
\[
\frac{d \xi(\gamma,\lambda,\mu)}{d \lambda}\bigg|_{(z_*(\lambda,\mu),\lambda)} = \frac{1}{4}\left(1-z^2_*(\lambda,\mu)\right).
\]
\end{proof}
\section{Proof of Results in Section \ref{AMP}}
\subsection{Proof of Lemma \ref{lem:prop_pl}}
{
Let us consider two $\bm x, \bm y \in \mathbb{R}^k$, where $\bm x=(x_1,\ldots,x_{k})$ and $\bm y=(y_1,\ldots,y_k)$. Then
\[
|f(\bm x,z)| \le |f(\bm x,z)-f(\bm 0,z)| + |f(\bm 0,z)|.
\]
Since $f$ is partially Lipschitz, we have using \eqref{eq:varphi}
\[
|f(\bm x,z)-f(\bm 0,z)| \le C\|\bm x\|,
\]
for some constant $C>0$. Again, using \eqref{eq:varphi_1}
\[
|f(\bm 0,z)| \le C(1+|z|).
\]
Hence, we have
\[
|f(\bm x,z)| \le C(1+\|\bm x\|+|z|).
\]
\begin{enumerate}
\item Now let us observe that
\begin{align}
&|f(\bm x,z)g(\bm x,z)-f(\bm y,z)g(\bm y,z)|\\
& = |f(\bm x,z)g(\bm x,z)-f(\bm y,z)g(\bm x,z)+f(\bm y,z)g(\bm x,z)-f(\bm y,z)g(\bm y,z)|\\
& \le |f(\bm x,z)-f(\bm y,z)||g(\bm x,z)|+|g(\bm x,z)-g(\bm y,z)||f(\bm y,z)|\\
& \le C(1+\|\bm x\|+\|\bm y\|+|z|)\|\bm x-\bm y\|.
\end{align}
Also
\begin{align}
|f(\bm 0,z)g(\bm 0,z)| \le C^2(1+|z|)^2 \le C_1(1+|z|^2).
\end{align}
\item Let us denote $\bm x^\prime=(x_1,\ldots,x_{r-1},x_{r+1},\ldots,x_k)$ and $\bm y^\prime=(y_1,\ldots,y_{r-1},y_{r+1},\ldots,y_k)$. Next note that
\begin{align}
&|H(\bm x^\prime,z)-H(\bm y^\prime,z)|\\
&\le \mathbb{E}_X\Big[\Big|\phi(x_1,\ldots,x_{r-1},X,x_{r+1},\ldots,x_k,z)-\phi(y_1,\ldots,y_{r-1},X,y_{r+1},\ldots,y_k,z)\Big|\Big]\\
& \le C\;\mathbb{E}_X\Big[(1+\|\bm x^\prime\|+\|\bm y^\prime\|+|X|+|z|)\|\bm x^\prime-\bm y^\prime\|\Big]\\
& \le C_1(1+\|\bm x^\prime\|+\|\bm y^\prime\|+|z|)\|\bm x^\prime-\bm y^\prime\|.
\end{align}
Next, note that
\begin{align}
|H(\bm 0,z)| &\le \mathbb{E}_X\Big[\Big|\phi(0,\ldots,0,X,0,\ldots,0,z)-\phi(0,\ldots,0,0,0,\ldots,0,z)\Big|\Big]\\
& \quad\quad +\Big|\phi(0,\ldots,0,0,0,\ldots,0,z)\Big|\\
& \le C\;\mathbb{E}_X\Big[(1+|X|+|z|)|X|\Big]+C(1+|z|^2)\\
& \le C_1(1+|z|+|z|^2)\\
& \le C_2(1+|z|^2).
\end{align}
\end{enumerate}
}

\subsection{Proof of Theorem \ref{thm:thm_6_1}, Conditioning Technique and the Main Technical Lemma}
\label{AMP_1}
To prove Theorem \ref{thm:thm_6_1}, we apply the same device used in \cite{BM11journal}. We begin by observing that for the Gaussian matrix $\bm{L}$ defined at the beginning of Section \ref{sec:amp-side} and a fixed vector $\bm v$, $\bm{L}\bm{v}$ is a centered Gaussian vector with i.i.d.~entries and variance $\langle \bm{v}, \bm{v} \rangle_p$. 
Similarly, $\bm{N}\bm{v}$ is a centered Gaussian vector with the covariance matrix $\bm{\Sigma}=\bm{I}_n+\frac{1}{n}\bm{v}\bm{v}^\top$. 
However, $\bm{L}^\top\bm{m}^{t}$ is not a centered Gaussian by the previous argument as $\bm m^t$ is not independent of $\bm{L}$. We can argue similarly for the other terms. To resolve this problem we adopt the conditioning technique developed in \cite{Bol12} and later used in \cite{BM11journal}, \cite{JM12} and \cite{Berthier}.

\subsubsection{Conditioning Technique} 
With $\bm q^t$ and $\bm m^t$ defined in \eqref{eq:func_AMP},
the AMP orbits can be written as 

\begin{align}
\label{eq:AMP_new_b}
\bm{b}^{t} &= \bm{L} \bm{q}^t-p_{t}\bm{m}^{t-1}, \\
\label{eq:AMP_new_h}
\bm{h}^{t+1} &= \bm{L}^\top \bm{m}^t -c_t\bm{q}^t,
\end{align}
and
\begin{align}
\label{eq:AMP_new_x}
\bm{y}^{t+1} &= \bm{N} \bm{q}^t-d_{t}\bm{q}^{t-1}.
\end{align}
\paragraph{The Asymmetric Orbit.}
Let us observe that, to construct $\bm{h}^{t+1}$ we need to know $\mathcal{A}_{h,t+1}\newline=\{\bm{h}^{1},\ldots,\bm{h}^{t},\bm{y}^{1},\ldots,\bm{y}^{t},\bm{b}^{0},\ldots,\bm{b}^{t}, \bm{m}^{0},\ldots,\bm{m}^{t},\bm{q}^{0},\ldots,\bm{q}^{t},\bm{\xi}_0,\bm{x}_0,\bm{\omega}_0,\bm{v}_0\}$. 
Let the sigma \newline algebra generated by these random variables be denoted by 
% $\mathcal{G}_{t,t+1}$.
{$\mathcal{G}_{t+1,t}$}. 
Since $\bm{m}^{j}$'s and $\bm{q}^{j}$'s are functions of $\bm{h}^{j}, \bm{y}^{j}, \bm{q}^{j}, \bm{\xi}_0, \bm{x}_0, \bm{\omega}_0$ and $\bm{v}_0$; $\mathcal{G}_{t+1,t}$ is the sigma-algebra generated by $\{\bm{h}^{1},\ldots,\bm{h}^{t},\bm{y}^{1},\ldots\newline,\bm{y}^{t},\bm{b}^{0},\ldots,\bm{b}^{t},\bm{\xi}_0,\bm{x}_0,\bm{\omega}_0,\bm{v}_0\}$. 
\revsag{Further, since $\bm{h}^{t+1}$ depends on $\bm{y}^{1},\ldots,\bm{y}^{t}$ through $\bm{m}^{0},\ldots,\bm{m}^{t}$ and $\bm{q}^{0},\ldots,\bm{q}^{t}$, the conditional distribution of $\bm{L}$ given {$\mathcal{G}_{t+1,t}$} is equal to the conditional distribution of $\bm L$ given 
\begin{equation}
\underbracket{[\bm{h}^{1}+c_0\bm{q}^{0}|\,\ldots|\,\bm{h}^{t}+c_{t-1}\bm{q}^{t-1}]}_{=\widebar{\bm{H}}_t} = \bm{L}^\top\underbracket{[\bm{m}^{0}|\,\ldots|\,\bm{m}^{t-1}]}_{=\bm{M}_t},
\end{equation}
and
\begin{equation}
\underbracket{[\bm{b}^{0}|\,\ldots|\,\bm{b}^{t}+p_{t}\bm{m}^{t-1}]}_{=\widebar{\bm{B}}_{t+1}} = \bm{L}\underbracket{[\bm{q}^{0}|\,\ldots|\,\bm{q}^{t}]}_{=\bm{Q}_{t+1}}.
\end{equation}}
Using Lemma 11 and Lemma 12 of \cite{BM11journal}, we get
\begin{equation}
\label{eq:e_t+1}
\bm{L}|_{\mathcal{G}_{t+1,t}} \overset{d}{=} \mathcal{E}_{t+1,t}+\mathcal{P}_{t+1,t}(\widetilde{\bm{L}}),
\end{equation}
where
\begin{multline}
 \mathcal{E}_{t+1,t} = \widebar{\bm{B}}_{t+1}(\bm{Q}_{t+1}^\top \bm{Q}_{t+1})^{-1}\bm{Q}_{t+1}^\top + \bm{M}_t(\bm{M}_t^\top \bm{M}_t)^{-1}\widebar{\bm{H}}^\top_t\\-\bm{M}_t(\bm{M}^\top_t\bm{M}_t)^{-1}\bm{M}^\top_t\widebar{\bm{B}}_{t+1}(\bm{Q}_{t+1}^\top \bm{Q}_{t+1})^{-1}\bm{Q}_{t+1}^\top,
\end{multline}
and
\begin{equation}
 \mathcal{P}_{t+1,t}(\widetilde{\bm{L}}) = P^{\perp}_{\bm{M}_t}\widetilde{\bm{L}}P^{\perp}_{\bm{Q}_{t+1}}.
\end{equation}
Here $\widetilde{\bm{L}}$ is an independent copy of $\bm{L}$ and $P^{\perp}_{\bm{M}_t}, P^{\perp}_{\bm{Q}_{t+1}}$ are the orthogonal projectors on to the orthogonal complements of the column spaces of $\bm{M}_t$ and $\bm{Q}_{t+1}$, respectively.

Next, if we consider $\bm{b}^t$ we need to know $\mathcal{A}_{b,t}=\{\bm{h}^{1},\ldots,\bm{h}^{t},\bm{y}^{1},\ldots,\bm{y}^{t},\bm{b}^{0},\ldots,\bm{b}^{t-1},\bm{m}^{0},\ldots,\newline\bm{m}^{t-1},\bm{q}^{0},\ldots,\bm{q}^{t},\bm{\xi}_0,\bm{x}_0,\bm{\omega}_0,\bm{v}_0\}$. Let the sigma algebra generated by the above mentioned variables be denoted by $\mathcal{G}_{t,t}$. \revsag{The conditional distribution of $\bm L$ given $\mathcal{G}_{t,t}$ is equal to the conditional distribution of $\bm L$ given $\widebar{\bm{H}}_t = \bm{L}^\top \bm{M}_t$ and $\widebar{\bm{B}}_t=\bm{L}\bm{Q}_t$. By Lemmas 11 and 12 of \cite{BM11journal} we get
\begin{equation}
\label{eq:E_t_t}
\bm{L}|_{\mathcal{G}_{t,t}} \overset{d}{=} \mathcal{E}_{t,t}+\mathcal{P}_{t,t}(\widetilde{\bm{L}}),
\end{equation}
where
\begin{equation}
 \mathcal{E}_{t,t} = \widebar{\bm{B}}_{t}(\bm{Q}_{t}^\top \bm{Q}_{t})^{-1}\bm{Q}_{t}^\top + \bm{M}_t(\bm{M}_t^\top \bm{M}_t)^{-1}\widebar{\bm{H}}^\top_t-\bm{M}_t(\bm{M}^\top_t\bm{M}_t)^{-1}\bm{M}^\top_t\widebar{\bm{B}}_{t}(\bm{Q}_{t}^\top \bm{Q}_{t})^{-1}\bm{Q}_{t}^\top,
\end{equation}
and
\begin{equation}
 \mathcal{P}_{t,t}(\widetilde{\bm{L}}) = P^{\perp}_{\bm{M}_t}\widetilde{\bm{L}}P^{\perp}_{\bm{Q}_{t}}.
\end{equation}}

\paragraph{The Symmetric Orbit}
Now, we consider the second orbit characterized by $\bm B$. Observe that the distribution of $\bm{y}^{t+1}$ depends on the sigma algebra generated by 
$\mathcal{A}_{{ y},t+1}=\{\bm{h}^{1},\ldots,\bm{h}^{t},\bm{y}^{1},\ldots\newline\bm{y}^{t},\bm{b}^{0},\ldots,\bm{b}^{t},\bm{m}^{0},\ldots,\bm{m}^{t-1},\bm{q}^{0},\ldots,\bm{q}^{t},\bm \xi_0,\bm{x}_0,\bm \omega_0,\bm{v}_0\}$. \revsag{This implies that we must consider the distribution of $\bm{N}$ given $\mathcal{G}_{t+1,t}$, or equivalently given 
\begin{equation}
\underbracket{[\bm{y}^{1}|\,\ldots|\,\bm{y}^{t}+d_{t-1}\bm{q}^{t-2}]}_{=\widebar{\bm{Y}}_t} = \bm{N}\underbracket{[\bm{q}^{0}|\,\ldots|\,\bm{q}^{t-1}]}_{=\bm{Q}_{t}}.
\end{equation}
Now, using Lemma 3 of \cite{JM12}, we get
\begin{equation}
\label{eq:cond_n}
\bm{N}|_{\mathcal{G}_{t+1,t}} \overset{d}{=} \mathcal{F}_{t+1,t}+\mathcal{P}_{t+1,t}(\widetilde{\bm{N}}),
\end{equation}
where
\begin{equation}
\label{eq:F_t}
 \mathcal{F}_{t+1,t} = \widebar{\bm{Y}}_{t}(\bm{Q}_{t}^\top \bm{Q}_{t})^{-1}\bm{Q}_{t}^\top + \bm{Q}_{t}(\bm{Q}_{t}^\top \bm{Q}_{t})^{-1}\widebar{\bm{Y}}^\top_t-\bm{Q}_{t}(\bm{Q}^\top_{t}\bm{Q}_{t})^{-1}\bm{Q}^\top_{t}\widebar{\bm{Y}}_{t}(\bm{Q}_{t}^\top \bm{Q}_{t})^{-1}\bm{Q}_{t}^\top,
\end{equation}
and
\begin{equation}
 \mathcal{P}_{t+1,t}(\widetilde{\bm{N}}) = P^{\perp}_{\bm{Q}_{t}}\widetilde{\bm{N}}P^{\perp}_{\bm{Q}_{t}}.
\end{equation}}
Here $\widetilde{\bm{N}}$ is an independent copy of $\bm{N}$ and $P^{\perp}_{\bm{Q}_{t}}$ is the orthogonal projector to the orthogonal complement of the column space of $\bm{Q}_{t}$. Using the above conditioning technique and the following main technical lemma (that is, Lemma \ref{lem:AMP_lem_tech}), the proof of Theorem \ref{thm:thm_6_1} is immediate.

\subsubsection{Main Technical Lemma} 
Let us denote the projection of $\bm{m}^t$ on the column space of $\bm{M}_t$ by $\bm{m}^t_{\|}$ and its ortho-complement by $\bm{m}^t_{\perp}$. Similarly $\bm{q}^t_{\|}$ denotes the projection of $\bm{q}^t$ onto the column space of $\bm{Q}_t$ and $\bm{q}^t_{\perp}$ be its ortho-complement. 
%Finally $\bm{r}^t_{\|}$ denotes the projection of $\bm{r}^t$ onto the column space of $\bm{R}_t$ and $\bm{r}^t_{\perp}$ be its ortho-complement. 
This implies, if we define
\[
\bm{\upalpha}_t = (\upalpha^{t}_{0},\ldots,\upalpha^{t}_{t-1}) = \left[\frac{\bm{M}^\top_t\bm{M}_t}{p}\right]^{-1}\frac{\bm{M}^\top_t\bm{m}^t}{p},
\]
and
\[
\bm{\upbeta}_t = (\upbeta^{t}_{0},\ldots,\upbeta^{t}_{t-1}) = \left[\frac{\bm{Q}^\top_t\bm{Q}_t}{n}\right]^{-1}\frac{\bm{Q}^\top_t\bm{q}^t}{n},
\]
%and
%\[
%\bm{\upgamma}_t = (\upgamma^{t}_{0},\ldots,\upgamma^{t}_{t-1}) = \left[\frac{\bm{R}^\top_t\bm{R}_t}{n}\right]^{-1}\frac{\bm{R}^\top_t\bm{r}^t}{n},
%\]
then we have
\[
\bm{m}^t_{\|} = \sum_{i=0}^{t-1}\upalpha^t_i\bm{m}^{i}, \hspace{0.3in} \bm{m}^{t}_{\perp}=\bm{m}^{t}-\bm{m}^t_{\|};
\]
and
\[
\bm{q}^t_{\|} = \sum_{i=0}^{t-1}\upbeta^t_i\bm{q}^{i}, \hspace{0.3in} \bm{q}^{t}_{\perp}=\bm{q}^{t}-\bm{q}^t_{\|}.
\]
%\[
%\bm{r}^t_{\|} = \sum_{i=0}^{t-1}\upgamma^t_i\bm{r}^{i}, \hspace{0.3in} \bm{r}^{t}_{\perp}=\bm{r}^{t}-\bm{r}^t_{\|}.
%\]
Finally, for two sequences of random vectors $\bm{x}_n,\bm{y}_n$, by $\bm{x}_n \overset{P}{\simeq} \bm{y}_n$ we mean $\bm{x}_n-\bm{y}_n \overset{P}{\rightarrow}0$. With all these defined, let us now state the following general result.
\begin{lem}
Suppose that the conditions of Theorem \ref{thm:thm_6_1} hold. 
Then for all $t \in \mathbb{N} \cup \{0\}$, we get
%$f_0$ is defined in a way such that $\vartheta^2_0= c \lim_{p \rightarrow \infty}\langle \bm{q}^0, \bm{q}^0 \rangle_n$ and $\sigma^2_1= \lim_{n \rightarrow \infty}\langle \bm{r}^0, \bm{r}^0 \rangle_n$, when $p/n \rightarrow 1/c$. Then for all $t \in \mathbb{N} \cup \{0\}$, we get
\begin{enumerate}[label=(\alph*)]
\item
%\[ 
%\bm{h}^{t+1}|_{\mathcal{G}_{t+1,t}} \overset{d}{=} \sum_{i=0}^{t-1}\upalpha^{t}_i\bm{h}^{i+1} + \widetilde{\bm{L}}^\top\bm{m}^{t}_{\perp} - P_{\bm{Q}_{t+1}}\widetilde{\bm{L}}^\top\bm{m}^{t}_{\perp},
%\]
% \[ 
%\bm{x}^{t+1}|_{\mathcal{G}_{t+1,t}} \overset{d}{=} \sum_{i=0}^{t-1}\upgamma^{t}_i\bm{x}^{i+1} + \widetilde{\bm{N}}\bm{r}^{t}_{\perp} - P_{\bm{R}_{t}}\widetilde{\bm{N}}\bm{r}^{t}_{\perp},
%\]
% \[ 
%\bm{b}^{t}|_{\mathcal{G}_{t,t}} \overset{d}{=} \sum_{i=0}^{t-1}\upbeta^{t}_i\bm{b}^{i} + \widetilde{\bm{L}}\bm{q}^{t}_{\perp} - P_{\bm{M}_{t}}\widetilde{\bm{L}}\bm{q}^{t}_{\perp} .
%\]
\begin{align*}
	 % \[
\bm{h}^{t+1}|_{\mathcal{G}_{t+1,t}} 
& \overset{d}{=} \sum_{i=0}^{t-1}\upalpha^{t}_i\bm{h}^{i+1} + \widetilde{\bm{L}}^\top\bm{m}^{t}_{\perp} + \widetilde{\bm{Q}}_{t+1}o(1),\\
\bm{b}^{t}|_{\mathcal{G}_{t,t}} 
& \overset{d}{=} \sum_{i=0}^{t-1}\upbeta^{t}_i\bm{b}^{i} + \widetilde{\bm{L}}\bm{q}^{t}_{\perp} + \widetilde{\bm{M}}_{t}o(1),
% \]
\end{align*}
and
\begin{align*}
\bm{y}^{t+1}|_{\mathcal{G}_{t+1,t}} \overset{d}{=} \sum_{i=0}^{t-1}\upbeta^{t}_i\bm{y}^{i+1} + \widetilde{\bm{N}}\bm{q}^{t}_{\perp} + \widetilde{\bm{Q}}_{t}o(1),\\
\end{align*}
where $\widetilde{\bm{Q}}_t$(alternatively, $\widetilde{\bm{M}}_t$) is a matrix whose columns form an orthogonal basis of $\bm{Q}_t$\newline (respectively, $\bm{M}_t$), and
{$\widetilde{\bm{Q}}^\top_t\widetilde{\bm{Q}}_t=n\bm{I}_{t}$ ($\widetilde{\bm{M}}^\top_t\widetilde{\bm{M}}_t=p\bm{I}_{t}$).}
% $\widetilde{\bm{Q}}_t\widetilde{\bm{Q}}^\top_t=\widetilde{\bm{Q}}^\top_t\widetilde{\bm{Q}}_t=n\bm{I}_{n}$ ($\widetilde{\bm{M}}_t\widetilde{\bm{M}}^\top_t=\widetilde{\bm{M}}^\top_t\widetilde{\bm{M}}_t=p\bm{I}_{p}$).

\item For all partially pseudo-Lipschitz functions $\phi_h:\mathbb{R}^{2t+4}\rightarrow \mathbb{R}$, $\phi_b:\mathbb{R}^{t+3}\rightarrow\mathbb{R}$
\begin{align}
 &\lim_{n \rightarrow \infty}\frac{1}{n}\sum_{i=1}^{n}\phi_h(h^{1}_i,\ldots,h^{t+1}_i,y^{1}_i,\ldots,y^{t+1}_i,\xi_{0,i},x_{0,i})\\
 &\overset{a.s.}{=}\mathbb{E}\left\{\phi_h(\tau_0Z_0,\ldots,\tau_{t}Z_t,\sigma_1\widetilde{Z}_1,\ldots,\sigma_{t+1}\widetilde{Z}_{t+1},\widetilde{\Xi}_0,\widetilde{X}_0)\right\},   
\end{align}

\[
\lim_{p \rightarrow \infty}\frac{1}{p}\sum_{i=1}^{p}\phi_b(b^{0}_i,\ldots,b^{t}_i,\omega_{0,i},v_{0,i}) \overset{a.s.}{=}\mathbb{E}\left\{\phi_b(\vartheta_0\widecheck{Z}_0,\ldots,\vartheta_{t}\widecheck{Z}_t,\widetilde{\Omega}_0,\widetilde{V}_0)\right\},
\]
where $(Z_0,\ldots,Z_t)$, $(\widetilde{Z}_1,\ldots,\widetilde{Z}_t)$,  $(\widecheck{Z}_1,\ldots,\widecheck{Z}_t)$, { $(\widetilde{\Xi}_0, \widetilde{X}_0)$} and {$(\widetilde{\Omega}_0, \widetilde{V}_0)$} are mutually independent random vectors. Marginally, $Z_i, \widetilde{Z}_i, \widecheck{Z}_i \sim N(0,1)$, { $(\widetilde{\Xi}_0,\widetilde{X}_0) \sim P_{\xi,x}$} and { $(\widetilde{\Omega}_0,\widetilde{V}_0) \sim P_{\omega,v}$}.
% \nb{change the statement to conform with theorem 7.1. 20210405zm}

\item For all $0 \le k,\ell \le t$, the following equations hold and all the limits exist, are bounded and have degenerate distributions:
\begin{align*}
% \[
\lim_{n \rightarrow \infty}\langle \bm{h}^{k+1}, \bm{h}^{\ell+1} \rangle_n 
& \overset{a.s.}{=} \lim_{p \rightarrow \infty} \langle \bm{m}^{k}, \bm{m}^{\ell} \rangle_p,\\
\lim_{p \rightarrow \infty}\langle \bm{b}^{k}, \bm{b}^{\ell} \rangle_p  
& \overset{a.s.}{=} c\,\lim_{n \rightarrow \infty} \langle \bm{q}^{k}, \bm{q}^{\ell} \rangle_n,
% \]
\end{align*}
and
\begin{align*}
\lim_{n \rightarrow \infty}\langle \bm{y}^{k+1}, \bm{y}^{\ell+1} \rangle_n 
\overset{a.s.}{=} \lim_{n \rightarrow \infty} \langle \bm{q}^{k}, \bm{q}^{\ell} \rangle_n.
\end{align*}

\item For all $0 \le k,\ell \le t$ and for any partially Lipschitz functions $\varphi: \mathbb{R}^4 \rightarrow \mathbb{R}$, $\psi: \mathbb{R}^3 \rightarrow \mathbb{R}$, with $\varphi^\prime_1$ being the derivative of $\varphi$ with respect to the first coordinate, $\varphi^\prime_2$ being the derivative of $\varphi$ with respect to the second coordinate and $\psi^\prime$ being the derivative of $\psi$ with respect to the first coordinate, where $\varphi^\prime_1$, $\varphi^\prime_2$ and $\psi^\prime$ being partially Lipschitz, the following equations hold and all the limits exist, are bounded and have degenerate distributions:
\begin{align*}
% \[
\lim_{n \rightarrow \infty}\langle\bm{h}^{k+1},\varphi(\bm{h}^{\ell+1},\bm{y}^{\ell+1},\bm \xi_0,\bm{x}_0)\rangle_n 
& \overset{a.s.}{=} \lim_{n \rightarrow \infty}\langle\bm{h}^{k+1},\bm{h}^{\ell+1}\rangle_n \langle\varphi^\prime_1(\bm{h}^{\ell+1},\bm{y}^{\ell+1},\bm \xi_0,\bm{x}_0)\rangle_n,\\
\lim_{p \rightarrow \infty}\langle\bm{b}^{r},\psi(\bm{b}^{s},\bm \omega_0,\bm{v}_0)\rangle_p 
& \overset{a.s.}{=} \lim_{p \rightarrow \infty}\langle\bm{b}^{r},\bm{b}^{s}\rangle_p \langle\psi^\prime(\bm{b}^{s},\bm \omega_0,\bm{v}_0)\rangle_p,
% \]
\end{align*}
and
\begin{align*}
\lim_{n \rightarrow \infty}\langle\bm{y}^{r+1},\varphi(\bm{h}^{s+1},\bm{y}^{s+1},\bm \xi_0,\bm{x}_0)\rangle_n & \overset{a.s.}{=} \lim_{n \rightarrow \infty}\langle\bm{y}^{r+1},\bm{y}^{s+1}\rangle_n \langle\varphi^\prime_2(\bm{h}^{s+1},\bm{y}^{s+1},\bm \xi_0,\bm{x}_0)\rangle_n.
\end{align*}

\item The following relations hold almost surely
\[
\limsup_{n \rightarrow \infty}\frac{1}{n}\sum_{i=1}^{n}(h^{t+1}_i)^2 < \infty,
\quad
% \]
% \[
\limsup_{p \rightarrow \infty}\frac{1}{p}\sum_{i=1}^{n}(b^{t}_i)^2 < \infty,
\quad
% \]
% \[
\limsup_{n \rightarrow \infty}\frac{1}{n}\sum_{i=1}^{n}(y^{t+1}_i)^2 < \infty.
\]
\item For all $0 \le r \le t$:
\[
\lim_{n \rightarrow \infty}
% \frac{1}{n}
\langle\bm{h}^{t+1},\bm{q}^0\rangle_n \overset{a.s.}{=} 0.
\]
\item For all $0 \le s \le t-1$, $0 \le k
%,\ell 
\le t$, the following limits exist, and there exist strictly positive constants $\rho_k,
%\varsigma_\ell
, \varkappa_s$ such that the following relations hold almost surely:
\[
\lim_{n \rightarrow \infty}\langle\bm{q}^k_{\perp},\bm{q}^k_{\perp}\rangle_n > \rho_k,
\quad
% \]
%\[
%\lim_{n \rightarrow \infty}\langle\bm{r}^\ell_{\perp},\bm{r}^\ell_{\perp}\rangle_n > \varsigma_\ell,
%\]
% \[
\lim_{p \rightarrow \infty}\langle\bm{m}^s_{\perp},\bm{m}^s_{\perp}\rangle_p > \varkappa_s.
\]
\end{enumerate}
\label{lem:AMP_lem_tech}
\end{lem}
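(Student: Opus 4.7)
The plan is to prove the lemma by strong induction on $t$, closely following the architecture of Lemma 1 in \cite{BM11journal} and Lemma 1 in \cite{JM12}, but with the key novelty of simultaneously maintaining two conditioning filtrations (for $\bm L$ and $\bm N$) that are coupled through the shared update function $\sff_t$. The base case $t=0$ reduces to standard Gaussian computations since $\bm{h}^0=\bm{y}^0=0$, $\bm{q}^0$ and $\bm{m}^0$ are deterministic functions of the offset data, and the statements (a)--(g) become routine facts about $\bm{L}\bm{q}^0, \bm{L}^\top \bm{m}^0, \bm{N}\bm{q}^0$ together with partial SLLN for the initial iterates.

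For the inductive step, I would first prove part (a) using the conditional distributions \eqref{eq:e_t+1}, \eqref{eq:E_t_t}, and \eqref{eq:cond_n} derived in Section \ref{AMP_1}. The two AMP orbits \eqref{eq:AMP_new_h} and \eqref{eq:AMP_new_x} are fed jointly into $\sff_t$, but since $\bm{L}$ and $\bm{N}$ are independent, conditional on $\mathcal{G}_{t+1,t}$ they remain independent, so one can handle their contributions separately. For $\bm h^{t+1}$ one writes
$$
\bm h^{t+1} \overset{d}{=} \mathcal{E}_{t+1,t}^\top \bm m^t - c_t \bm q^t + P^{\perp}_{\bm Q_{t+1}}\widetilde{\bm L}^\top P^\perp_{\bm M_t}\bm m^t,
$$
expand $\bm m^t = \bm m^t_{\|} + \bm m^t_\perp$, use the inductive hypothesis parts (c), (d), (g) to show that the deterministic part of $\mathcal{E}_{t+1,t}^\top \bm m^t$ combines with $-c_t \bm q^t$ to produce the Onsager-cancelled expression $\sum_{i=0}^{t-1}\upalpha^t_i \bm h^{i+1}$, and show that replacing $P^\perp_{\bm Q_{t+1}}\widetilde{\bm L}^\top P^\perp_{\bm M_t}\bm m^t_\perp$ by $\widetilde{\bm L}^\top \bm m^t_\perp$ only costs a $\widetilde{\bm Q}_{t+1}\,o(1)$ error, using that $\|\bm M_t^\top \bm m^t_\perp\|/p \to 0$ and the nondegeneracy in (g). The analysis of $\bm b^t$ is analogous using \eqref{eq:E_t_t}, while $\bm y^{t+1}$ is handled using \eqref{eq:cond_n} and \eqref{eq:F_t}.

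Given (a), parts (b)--(g) follow by essentially the same SLLN/Stein-type arguments as in \cite{BM11journal}, with three adaptations. First, for part (b) the joint convergence for the $\bm h$ coordinates and the $\bm y$ coordinates must be proven simultaneously because $\sff_t$ accesses both; the independence of $\widetilde{\bm L}$ and $\widetilde{\bm N}$ guarantees that the limiting Gaussian vectors $(Z_0,\ldots,Z_t)$ and $(\widetilde Z_1,\ldots,\widetilde Z_{t+1})$ are independent, as asserted. Second, since the test functions are only partially pseudo-Lipschitz in the sense of \eqref{eq:varphi} and the update functions only partially Lipschitz, I would apply Lemma \ref{lem:prop_pl} to verify that quantities like $\sff_t \cdot \partial \sff_t/\partial h$, $\sff_t \sff_{t-1}$, $\sfg_t^2$ are all partially pseudo-Lipschitz, and then invoke a partially pseudo-Lipschitz strong law for i.i.d.\ sums (conditioning on the offset variables $\bm x_0, \bm v_0$ reduces this to the standard pseudo-Lipschitz SLLN on each stratum). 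Third, the Stein-type identity in part (d) requires partial integration in only the ``main'' coordinates, which is justified by the partial Lipschitz hypothesis on the relevant partial derivatives.

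The main obstacle is precisely the coupling in part (a): we must simultaneously condition on both the $\bm L$-filtration of $\{\widebar{\bm H}_t, \widebar{\bm B}_{t+1}\}$ and the $\bm N$-filtration of $\widebar{\bm Y}_t$, and then show that the Onsager correction $c_t \bm q^t$ in \eqref{eq:AMP_new_h} and the correction $d_t \bm q^{t-1}$ in \eqref{eq:AMP_new_x}, computed using \eqref{eq:cpd-generic} with \emph{the same} $\sff_t$, match the deterministic parts of $\mathcal{E}_{t+1,t}^\top \bm m^t$ and $\mathcal{F}_{t+1,t}\bm q^t$ respectively. The match for each orbit individually is standard; what is new is ensuring that the two cancellations are compatible when $\sff_t$ is evaluated at the joint $(\bm h^t,\bm y^t)$, which we handle by observing that $c_t$ depends only on partial derivatives with respect to $h$ (hence only on the $\bm L$ iteration's Onsager structure) while $d_t$ depends only on partial derivatives with respect to $y$ (hence only on the $\bm N$ iteration's). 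A secondary obstacle is the nondegeneracy in (g): one must show that the limiting covariance matrices remain strictly positive definite even with the two orbits sharing information; this I would prove by induction using the conditional expectation interpretation of $\sff_t$ and $\sfg_t$ together with the partial Lipschitz hypothesis, ensuring the corresponding Gram matrices are invertible at each step.
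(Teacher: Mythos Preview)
Your proposal is correct and follows essentially the same architecture as the paper's proof: strong induction on $t$, conditioning technique for both $\bm L$ and $\bm N$ via \eqref{eq:e_t+1}, \eqref{eq:E_t_t}, \eqref{eq:cond_n}, Onsager cancellation in (a), and SLLN/Stein arguments for (b)--(g). Your identification of the decoupling mechanism---that $c_t$ involves only $\partial\sff_t/\partial h$ and $d_t$ only $\partial\sff_t/\partial y$, so the two Onsager corrections are compatible despite the shared $\sff_t$---is exactly the point.

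One ordering issue you should be aware of: you propose to prove (a) at step $t$ using only the inductive hypothesis (c), (d), (g) at step $t-1$, then derive (b)--(g). But the conditional representation of $\bm h^{t+1}$ in (a) requires inverting $\bm M_t^\top\bm M_t/p$, which has columns $\bm m^0,\ldots,\bm m^{t-1}$; hence you need $\langle\bm m^{t-1}_\perp,\bm m^{t-1}_\perp\rangle_p>0$, and that is the $s=t-1$ case of (g) at step $t$, not covered by the inductive hypothesis (which stops at $s\le t-2$). The paper resolves this by proving (g) at step $t$ \emph{before} (a): since $\bm m^{t-1}=\sfg_{t-1}(\bm b^{t-1},\ldots)$ and $\bm q^t=\sff_t(\bm h^t,\bm y^t,\ldots)$ only involve iterates already handled by the inductive hypothesis (b), the nondegeneracy can be established first via a variance argument (the paper's Lemma~\ref{lem:lem_10}, which plays the role of your ``conditional expectation interpretation''). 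With that adjustment your plan goes through; the remaining parts (b), (c), (d), (e), (f) then follow in the order the paper gives, with (b) for the joint $(\bm h^{t+1},\bm y^{t+1})$ handled exactly as you describe using independence of $\widetilde{\bm L}$ and $\widetilde{\bm N}$ (the paper invokes Lemma~\ref{lem:key_help_lem}(d) to replace $\widetilde{\bm N}\bm q^t_\perp$ by an independent Gaussian before appealing to the asymmetric-orbit argument).
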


\subsubsection{Proof of Theorem \ref{thm:thm_6_1}}
\begin{proof}
The desired result is a direct consequence of Lemma \ref{lem:AMP_lem_tech} claim (b).
\end{proof}

\subsection{Proof of Lemma \ref{lem:AMP_lem_tech}}
We shall prove this lemma by induction. We shall first show that the statements (a)-(g) hold true for $\bm b^0, \bm h^1$ and $\bm y^1$. Then assuming that the result holds true for $0 \le s \le t-1$, we shall show that the statements (a)-(g) hold $\bm b^t, \bm h^{t+1}$ and $\bm y^{t+1}$.

First, let us observe that  if ${\sff}_t(x,y,\xi_0,x_0)$ is free of $x,y$ almost surely with respect to $\xi_0$ and $x_0$ or if ${\sfg}_t(u,\omega_0,v_0)$ is free of $u$ almost surely with respect to $\omega_0$ and $v_0$, then the lemma is immediate. So we assume that these degenerate cases do not arise in the rest of this proof.

\paragraph{The Base Case ($\bm b^0, \bm h^1$ and $\bm y^1$).}  The proofs of the assumptions $(a)-(g)$ for $\bm b^0$ follows using exactly the same arguments as in $\mathcal{B}_0$ of \cite{BM11journal}. So we skip the details. Next, let us observe that as $\bm{y}^1=\bm{N}\bm{q}^0$, and $\bm{Q}_0$ is empty matrix, we have $\bm{q}^0_{\perp}=\bm{q}^0$. Thus using the definition of $\mathcal{G}_{1,0}$, we conclude
\[
\bm{y}^1|_{\mathcal{G}_{1,0}} \overset{d}{=} \bm{N}\bm{q}^0_{\perp}.
\]
Again, the assertions (a), (c), (e) and (f) for $\bm h^1$ follows immediately using the techniques of $\mathcal{H}_1$ of \cite{BM11journal}. Now, let us consider these assertions for $\bm y^1$. For the assertion $(a)$, let us observe that, as $\bm{y}^1=\bm{N}\bm{q}^0$, and $\bm{Q}_0$ is empty matrix, we have $\bm{q}^0_{\perp}=\bm{q}^0$. Thus using the definition of $\mathcal{G}_{1,0}$, we conclude
\[
\bm{y}^1|_{\mathcal{G}_{1,0}} \overset{d}{=} \bm{N}\bm{q}^0_{\perp}.
\]
Next, for the assertion (c), using Lemma \ref{lem:key_help_lem}(c), we have
\[
\lim_{n \rightarrow \infty} \langle\bm{y}^1,\bm{y}^1\rangle_n|_{\mathcal{G}_{1,0}} \overset{d}{=} \lim_{n \rightarrow \infty}\frac{1}{n}\,\|\bm{N}\bm{q}^0\|^2 \overset{a.s.}{=} \lim_{n \rightarrow \infty}\langle\bm{q}^0,\bm{q}^0\rangle_n \overset{a.s.}{=} \sigma^2_1.
\]
Then for assertion (e), by Lemma \ref{lem:key_help_lem} (d), we have
\[
\lim_{n \rightarrow \infty}\frac{1}{n}\sum_{i=1}^{n}(y^1_i)^{2} = \lim_{n \rightarrow \infty}\frac{1}{n}\sum_{i=1}^{n}[\bm{N}\bm{q}^0]^2_i \overset{a.s.}{=} \lim_{n \rightarrow \infty}\langle\bm q^0,\bm q^0\rangle_n\;\lim_{n \rightarrow \infty}\frac{1}{n}\sum_{i=1}^{n}z^2_i,
\]
which implies the assertion. Next, we prove the assertion (b) for the pair $(\bm h^1, \bm y^1)$. Let us observe that, as $\phi_h$ is partially pseudo-Lipschitz, by Lemma \ref{lem:key_help_lem} (d) { with $r=1$ and $m=2$, we have}
%\nb{need to spell out more detail. Need part (e) for instance. 20210331zm}
\[
\lim_{n \rightarrow \infty}\frac{1}{n}\sum_{i=1}^{n}\left[\phi_h(h^1_i, { y}^1_i,\xi_{0,i},x_{0,i})-\phi_h(h^1_i,z_i,\xi_{0,i},x_{0,i})\right] \overset{a.s.}{=} 0,
\]
where $\bm{z} \sim N_n(0,(\|\bm{q}^0\|^2/n)\bm{I}_n)$. Now conditional on $\mathcal{G}_{1,0}$, we have
\[
\frac{1}{n}\sum_{i=1}^{n}\phi_h(h^1_i,z_i,\xi_{0,i},x_{0,i})
{ \Big|_{\mathcal{G}_{1,0}}} \overset{d}{=} \frac{1}{n}\sum_{i=1}^{n}\phi_h([\widetilde{\bm{L}}^\top\bm{m}^0]_i+o(1)q^0_i,z_i,\xi_{0,i},x_{0,i}).
\]
Then using the techniques used to prove $\mathcal{H}_1$(b) of \cite{BM11journal}, we can show
\[
\frac{1}{n}\sum_{i=1}^{n}\left[\mathbb{E}\left\{\phi_h([\widetilde{\bm{L}}^\top\bm{m}^0]_i,z_i,\xi_{0,i},x_{0,i})|\mathcal{G}_{1,0}\right\}-\mathbb{E}\left\{\phi_h(\tau_0Z_i,\sigma_1\widetilde{Z}_i,\xi_{0,i},x_{0,i})|\mathcal{G}_{1,0}\right\}\right] \overset{a.s.}{=} 0.
\]
It is easy to see that 
\begin{align}
\mathbb{E}\left\{\phi_h(\tau_0Z_i,\sigma_1\widetilde{Z}_i,\xi_{0,i},x_{0,i})|\mathcal{G}_{1,0}\right\} =\mathbb{E}_{Z_i,\widetilde{Z}_i}\left\{\phi_h(\tau_0Z_i,\sigma_1\widetilde{Z}_i,\xi_{0,i},x_{0,i})\right\},
\end{align}
where the expectation is taken with respect to $Z_i,\widetilde{Z}_i$, treating $\xi_{0,i},x_{0,i}$ as constants. 
Now using SLLN for i.i.d $\psi(\xi_{0,i},x_{0,i})=\mathbb{E}_{Z,\widetilde{Z}}\left\{\phi_h(\tau_0Z,\sigma_1\widetilde{Z},\xi_{0,i},x_{0,i})\right\}$ we get
\[
\frac{1}{n}\sum_{i=1}^{n}\mathbb{E}\left\{\phi_h([\widetilde{\bm{L}}^\top\bm{m}^0]_i,z_i,\xi_{0,i},x_{0,i})|\mathcal{G}_{1,0}\right\}\overset{a.s.}{=} \mathbb{E}\left\{\phi_h(\tau_0Z,\sigma_1\widetilde{Z},\widetilde{\Xi}_0,\widetilde{X}_0)\right\}.
\]
On the right side of the last display, we have that $Z$, $\widetilde{Z}$ and $(\widetilde{\Xi}_0,\widetilde{X}_0)$ are mutually independent.
By our foregoing arguments, the randomness of $Z$ comes from that of $\bm{\widetilde{L}}$, the randomness of $\widetilde{Z}$ comes from that of $\bm{N}$, and both are independent of $(\bm{\xi}_0, \bm{x}_0)$.
To prove the assertion (d) for $\bm h^{1}$ and $\bm y^{1}$, we use part (b) for partially pseudo-Lipschitz function $\phi_h(x,z,\xi_{0,i},x_{0,i})=x\varphi(x,z,\xi_{0,i},x_{0,i})$ (by Lemma \ref{lem:prop_pl}(1)) to obtain $\lim_{n \rightarrow \infty}\langle\bm{h}^1,\varphi(\bm{h}^1,\bm{x}^1,\bm \xi_0,\bm{x}_0)\rangle_n \overset{a.s.}{=}\mathbb{E}\left\{\tau_0Z\varphi(\tau_0Z,\sigma_1\widetilde{Z},\widetilde{\Xi}_0,\widetilde{X}_0)\right\}$. 
Now, \revsag{we have using Lemma 4 of \cite{BM11journal}}
\begin{align*}
\mathbb{E}\left\{\tau_0Z\varphi(\tau_0Z,\sigma_1\widetilde{Z},\widetilde{\Xi}_0,\widetilde{X}_0)\right\} & = \mathbb{E}\left\{\mathbb{E}\left\{\tau_0Z\varphi(\tau_0Z,\sigma_1\widetilde{Z},\widetilde{\Xi}_0,\widetilde{X}_0)|\widetilde{Z},\widetilde{\Xi}_0,\widetilde{X}_0\right\}\right\}\\
														& = \mathbb{E}\left\{\mathbb{E}\left\{\tau^2_0\varphi^{\prime}_1(\tau_0Z,\sigma_1\widetilde{Z},\widetilde{\Xi}_0,\widetilde{X}_0)|\widetilde{Z},\widetilde{\Xi}_0,\widetilde{X}_0\right\}\right\}\\
% & \overset{(1)}{=} \mathbb{E}\left\{\tau^2_0\mathbb{E}\left\{\varphi^{\prime}_1(\tau_0Z,\sigma_1\widetilde{Z},\widetilde{\Xi}_0,\widetilde{X}_0)|\widetilde{Z},\widetilde{\Xi}_0,\widetilde{X}_0\right\}\right\}\\
														& = \tau^2_0\, \mathbb{E}\left\{\varphi^{\prime}_1(\tau_0Z,\sigma_1\widetilde{Z},\widetilde{\Xi}_0,\widetilde{X}_0)\right\}.
\end{align*}
The second equality holds since $Z$ is independent of $\widetilde{Z},\widetilde{\Xi}_0$ and $\widetilde{X}_0$.
Note that \newline $\tau^2_0=\lim_{n \rightarrow \infty} \langle\bm{h}^1,\bm{h}^1\rangle_n$. Using part (b) and the fact that $\varphi^\prime_1$ is partially Lipschitz we get $\lim_{n \rightarrow \infty}\langle\varphi^{\prime}_1(\bm{h}^1,\bm{x}^1,\bm \xi_0,\bm{x}_0)\rangle_n \overset{a.s.}{=} \mathbb{E}\left\{\varphi^{\prime}_1(\tau_0Z,\sigma_1\widetilde{Z},\widetilde{\Xi}_0,\widetilde{X}_0)\right\}$. The assertion about \newline $\lim_{n \rightarrow \infty}\langle\bm{y}^1,\varphi(\bm{h}^1,\bm{y}^1,\bm \xi_0,\bm{x}_0)\rangle_n$ follows similarly. Finally, since $t=0$ and $\bm{q}^0=\bm{q}^0_\perp$, the assertion (g) follows from \[\lim_{p \rightarrow \infty}\langle\bm{q}^0,\bm{q}^0\rangle_n=\frac{1}{c}\vartheta^2_0 < \infty.\]
\paragraph{Inductive Step.} Let us assume that the assertions (a)-(g) for $\bm b^s, \bm h^{s+1}$ and $\bm y^{s+1}$ for $0 \le s \le t-1$. We shall show that the assertions hold for $s=t$. The assertions for $\bm b^t$ follows exactly \revsag{using the same arguments used to prove $\mathcal{B}_t$, (a)-(g) of \cite{BM11journal}. }

Next, we consider the assertion (g) for $\bm h^{t+1}$ and $\bm y^{t+1}$. \revsag{Applying the induction
hypothesis to the partially pseudo-Lipschitz function $\phi_b(h^r_i,y^r_i,\xi_{0,i},x_{0,i})={ \sff}_{r}(h^r_i,y^r_i,\xi_{0,i},x_{0,i})$ and \newline ${\sff}_{s}(h^s_i,y^s_i,\xi_{0,i},x_{0,i})$ { (by Lemma \ref{lem:prop_pl}(1))} such that} for $1 \le r,s \le t$, 
we have almost surely
\[
\lim_{n \rightarrow \infty}\langle\bm{q}^r,\bm{q}^s\rangle_n = \mathbb{E}\left\{{ \sff}_{r}(\tau_{r-1}Z_{r-1},\sigma_{r}\widetilde{Z}_{r},\widetilde{\Xi}_0,\widetilde{X}_0)
{ \sff}_{s}(\tau_{s-1}Z_{s-1},\sigma_{s}\widetilde{Z}_{s},\widetilde{\Xi}_0,\widetilde{X}_0)\right\}.
\]
Further 
\[
\langle\bm{q}^t_\perp,\bm{q}^t_\perp\rangle_n = \langle\bm{q}^t,\bm{q}^t\rangle_n - \frac{(\bm{q}^{t})^\top \bm{Q}_t}{n}\left[\frac{\bm{Q}^\top_t \bm{Q}_t}{n}\right]^{-1}\frac{\bm{Q}^\top_t\bm{q}^{t}}{n}.
\]
Using induction hypotheses part (g), we have $\lim_{p \rightarrow \infty}\langle \bm{q}^r_\perp, \bm{q}^r_\perp\rangle_n > \rho_r$ for all $r \le t-1$. Now using Lemma 9 of \cite{BM11journal}, for large enough $n$ the smallest eigenvalue of matrix $\bm{Q}^\top_t \bm{Q}_t/n$ is larger than positive constant $c^\prime$ independent of $n$. By Lemma 10 of \cite{BM11journal}, $\bm{Q}^\top_t \bm{Q}_t/n$ converges to an invertible limit. Hence, we have
\[
\lim_{n \rightarrow \infty}\langle \bm{q}^t_\perp, \bm{q}^t_\perp\rangle_n \overset{a.s.}{=} \mathbb{E}\left\{[{ \sff}_t(\tau_{t-1}Z_{t-1},\sigma_t\widetilde{Z}_{t},\widetilde{\Xi}_0,\widetilde{X}_0)]^2\right\}-u^\top C^{-1}u
\]  
with $u \in \mathbb{R}^t$ and $C \in \mathbb{R}^{t}\times \mathbb{R}^t$ such that $1\le r,s\le t$:
\[
u_r = \mathbb{E}\left\{{ \sff}_r(\tau_{r-1}Z_{r-1},\sigma_r\widetilde{Z}_{r},\widetilde{\Xi}_0,\widetilde{X}_0){ \sff}_t(\tau_{t-1}Z_{t-1},\sigma_t\widetilde{Z}_{t},\widetilde{\Xi}_0,\widetilde{X}_0)\right\},
\]
and
\[
C_{r,s}=\mathbb{E}\left\{{ \sff}_r(\tau_{r-1}Z_{r-1},\sigma_r\widetilde{Z}_{r},\widetilde{\Xi}_0,\widetilde{X}_0){ \sff}_s(\tau_{s-1}Z_{s-1},\sigma_s\widetilde{Z}_{s},\widetilde{\Xi}_0,\widetilde{X}_0)\right\}.
\]
{ If we show $\mbox{Var}[\tau_{r-1}Z_{ r-1}|\tau_0Z_0,\ldots,\tau_{r-2}Z_{ r-2},\sigma_1\widetilde{Z}_1,\ldots,\sigma_{r-1}\widetilde{Z}_{r-1}]$ and $\mbox{Var}[\sigma_r\widetilde{Z}_r|\tau_0Z_0,\ldots,\newline\tau_{r-2}Z_{ r-2},\sigma_1\widetilde{Z}_1,\ldots,\sigma_{r-1}\widetilde{Z}_{r-1}]$} are strictly positive for $1 \le r \le t$, then using Lemma \ref{lem:lem_10} the result follows. Using the induction hypotheses, part (b), and \revsag{the techniques similar to the proof of $\mathcal{B}_t$ (g) of \cite{BM11journal}}, we have for all $1 \le r,s \le t$:
\begin{align}
\lim_{n \rightarrow \infty}\langle \bm{h}^r_\perp, \bm{h}^r_\perp \rangle_n &= \lim_{n \rightarrow \infty}\left(\langle \bm{h}^r, \bm{h}^r \rangle_n - \frac{(\bm{h}^r)^\top \bm{H}_r}{n}\left[\frac{(\bm{H}_r)^\top \bm{H}_r}{n}\right]^{-1}\frac{\bm{H}^\top_r\bm{h}^r}{n}\right)\\
& = \mbox{Var}[\tau_{r-1}Z_{r-1}|\tau_0Z_0,\ldots,\tau_{r-2}Z_{r-2},
\sigma_1 {\widetilde{Z}}_1,\ldots,\sigma_{r-1}{ \widetilde{Z}}_{r-1}].
\end{align}
Next, using part (c) of the induction hypotheses,  we have almost surely
\begin{align}
\lim_{n \rightarrow \infty}\langle \bm{h}^r_\perp, \bm{h}^r_\perp \rangle_n &= \lim_{n \rightarrow \infty}\left(\langle \bm{h}^r, \bm{h}^r \rangle_n - \frac{(\bm{h}^r)^\top \bm{H}_r}{n}\left[\frac{(\bm{H}_r)^\top \bm{H}_r}{n}\right]^{-1}\frac{H^\top_r\bm{h}^r}{n}\right)\\
&= \lim_{p \rightarrow \infty}\left(\langle \bm{m}^{r-1}, \bm{m}^{r-1} \rangle_p - \frac{(\bm{m}^{r-1})\bm{M}^\top _{r-1}}{p}\left[\frac{(\bm{M}_{r-1})^\top \bm{M}_{r-1}}{p}\right]^{-1}\frac{\bm{M}^\top_{r-1}\bm{m}^{r-1}}{p}\right)\\
&= \lim_{p \rightarrow \infty} \langle \bm{m}^{r-1}_\perp, \bm{m}^{r-1}_\perp \rangle_p.
\end{align} 
Using part (g) of the induction hypotheses, we have $\lim_{p \rightarrow \infty} \langle \bm{m}^{r-1}_\perp, \bm{m}^{r-1}_\perp \rangle_p > \varkappa_{r-1} > 0$, and hence the result follows.
\\
The assertion (a) for $\bm h^{t+1}$ follows using the techniques \revsag{used to prove $\mathcal{H}_{t+1}$(a) of \cite{BM11journal}}. To prove the same for $\bm y^{t+1}$, we consider $\mathcal{F}_{t+1,t}$ defined in \eqref{eq:F_t}, we get
\[
\mathcal{F}_{t+1,t}\bm{q}^t_\perp = \bm{Q}_t(\bm{Q}^\top_t\bm{Q}_t)^{-1}\bm{\widebar{Y}}^\top_t\bm{q}^t_\perp.
\]
Further, note that using $\bm{Q}^\top_t\bm{\widebar{Y}}_t=\bm{\widebar{Y}}^\top_t\bm{Q}_t$ we get
\[
\mathcal{F}_{t+1,t}\bm{q}^t_\|=\bm{\widebar{Y}}_t(\bm{Q}^\top_t\bm{Q}_t)^{-1}\bm{Q}^\top_t\bm{q}^t_\|.
\]
Combining these two equations we get
\[
\mathcal{F}_{t+1,t}\bm{q}^t = \bm{Q}_t(\bm{Q}^\top_t\bm{Q}_t)^{-1}\bm{\widebar{Y}}^\top_t\bm{q}^t_\perp+\bm{\widebar{Y}}_t(\bm{Q}^\top_t\bm{Q}_t)^{-1}\bm{Q}^\top_t\bm{q}^t_\|.
\]
Then using \eqref{eq:cond_n} we get
\[
\bm{y}^{t+1}|_{\mathcal{G}_{t+1,t}} \overset{d}{=} \bm{Q}_t(\bm{Q}^\top_t\bm{Q}_t)^{-1}\bm{\widebar{Y}}^\top_t\bm{q}^t_\perp+\bm{\widebar{Y}}_t(\bm{Q}^\top_t\bm{Q}_t)^{-1}\bm{Q}^\top_t\bm{q}^t_\| + P^\perp_{\bm{Q}_t}\widetilde{\bm{N}}P^\perp_{\bm{Q}_t}\bm{q}^t-d_t\bm{q}^{t-1}.
\]
Now $\bm{\widebar{Y}}_t=\bm{Y}_t+[0|\bm{Q}_{t-1}]\bm{D}_t$, where $\bm{Y}_t=[\bm{y}^1|\ldots|\bm{y}^{t}]$ and $\bm{D}_t=\mbox{diag}(d_0,\ldots,d_{t-1})$. As $\bm{\widebar{Y}}^\top_t\bm{q}^t_\perp=\bm{Y}^\top_t\bm{q}^t_\perp$, so we need to show
\[
\bm{Q}_t(\bm{Q}^\top_t\bm{Q}_t)^{-1}\bm{Y}^\top_t\bm{q}^t_\perp+[0|\bm{Q}_{t-1}]\bm{D}_t(\bm{Q}^\top_t\bm{Q}_t)^{-1}\bm{Q}^\top_t\bm{q}^t-d_t\bm{q}^{t-1} = \bm{Q}_to(1),
\]
or equivalently
\[
[0|\bm{Q}_{t-1}]\bm{D}_t\bm{\upbeta}^t+ \bm{Q}_t(\bm{Q}^\top_t\bm{Q}_t)^{-1}\bm{Y}^\top_t\bm{q}^t_\perp-d_t\bm{q}^{t-1} = \bm{Q}_to(1),
\]
We need to show that the coefficients of $\bm{q}^{\ell-1}$ \revsag{converge} to zero for $\ell=1,\ldots,t$. Now the coefficient of $\bm{q}^{\ell-1}$ is given by
\[
[\bm{Q}_t(\bm{Q}^\top_t\bm{Q}_t)^{-1}\bm{Y}^\top_t\bm{q}^t_\perp]_\ell-d_l(-\upbeta^t_\ell)^{\mathbb{I}_{\ell \neq t}}=\sum_{k=1}^{t}\left[\left(\frac{\bm{Q}^\top_t\bm{Q}_t}{n}\right)^{-1}\right]_{\ell,k}\langle \bm{y}^k,\bm{q}^{t}-\sum_{s=0}^{t-1}\upbeta^t_s\bm{q}^s\rangle_n-d_\ell(-\upbeta^t_\ell)^{\mathbb{I}_{\ell \neq t}}.
\]
Denoting $\bm{Q}^\top_t\bm{Q}_t/n$ by $\bm{G}$, we get
\[
\lim_{n \rightarrow \infty} \mbox{Coefficient of $\bm{q}^{\ell-1}$} = \lim_{n \rightarrow \infty} \left\{\sum_{k=1}^{t}(G^{-1})_{\ell,k}\langle \bm{y}^k,\bm{q}^{t}-\sum_{s=0}^{t-1}\upbeta^t_s\bm{q}^s\rangle_n-d_\ell(-\upbeta^t_\ell)^{\mathbb{I}_{\ell \neq t}}\right\}.
\]
Using parts (c) and (d) of the induction hypotheses, for $k =1,\ldots,t$, we get
\begin{align}
\lim_{n \rightarrow \infty} \mbox{Coefficient of $\bm{q}^{\ell-1}$} &\overset{a.s.}{=} \lim_{n \rightarrow \infty} \left\{\sum_{k=1}^{t}(G^{-1})_{\ell,k}\langle \bm{y}^k,d_t\bm{x}^{t}-\sum_{s=0}^{t-1}\upbeta^t_sd_s\bm{y}^s\rangle_n-d_\ell(-\upbeta^t_\ell)^{\mathbb{I}_{\ell \neq t}}\right\}\\
& \overset{a.s.}{=} \lim_{n \rightarrow \infty} \left\{\sum_{k=1}^{t}(G^{-1})_{\ell,k}[G_{k,t}d_t-\sum_{s=0}^{t-1}\upbeta^t_sG_{k,s}d_s]-d_\ell(-\upbeta^t_\ell)^{\mathbb{I}_{\ell \neq t}}\right\}\\
& =  \lim_{n \rightarrow \infty} \left\{d_t\mathbb{I}_{t=\ell}-\sum_{s=0}^{t-1}\upbeta^t_sd_s\mathbb{I}_{\ell=s}-d_\ell(-\upbeta^t_\ell)^{\mathbb{I}_{\ell \neq t}}\right\}\\
& = 0.
\end{align}
This implies
\[
\bm{y}^{t+1}|_{\mathcal{G}_{t+1,t}} \overset{d}{=} \sum_{i=0}^{t-1}\upbeta^t_i\bm{y}^{i+1}+\widetilde{\bm{N}}\bm{q}^t_\perp-P_{\bm{Q}_t}\widetilde{\bm{N}}\bm{q}^t_\perp+\bm{Q}_to(1).
\]
Using the fact that
\[
\widetilde{\bm{N}}=\frac{1}{2}\left(\bm{N}^\top_1+\bm{N}_1\right),
\]
where all entries of $\bm{N}_1$ are distributed as $N(0,1/n)$. Hence
\[
P_{\bm{Q}_t}\widetilde{\bm{N}}\bm{q}^t_\perp = \frac{1}{2}\left(\sum_{i=1}^{t}\langle\widetilde{\bm{q}}_i, \bm{N}^\top_1\bm{q}^t_\perp\rangle_n\widetilde{\bm{q}}_i\right) + \frac{1}{2}\left(\sum_{i=1}^{t}\langle\widetilde{\bm{q}}_i, \bm{N}_1\bm{q}^t_\perp\rangle_n\widetilde{\bm{q}}_i\right),
\]
where $\widetilde{\bm{q}}_i$ are columns of $\widetilde{\bm Q}_t$. Using Lemma \ref{lem:key_help_lem} (b) along with arguments similar to the proof of $\mathcal{H}_{t+1}$(a) of \cite{BM11journal} and $\langle\bm{q}^t_\perp,\bm{q}^t_\perp\rangle_n<\infty$, we get
\[
P_{\bm{Q}_t}\widetilde{\bm{N}}\bm{q}^t_\perp = \widetilde{\bm{Q}}_to(1).
\]
Hence we have the result. 
\\
Using the induction hypotheses and \revsag{the proof of $\mathcal{H}_{t+1}$(d) of \cite{BM11journal}}, one can prove the assertion (d) for both $\bm h^{t+1}$ and $\bm y^{t+1}$.
\\
The assertion (e) for $\bm h^{t+1}$ \revsag{follows using the proof of $\mathcal{H}_{t+1}$(e) of \cite{BM11journal}}. To show the same for $\bm y^{t+1}$ we condition on $\mathcal{G}_{t+1,t}$, and using the assertion $(a)$ we get
\[
\frac{1}{n}\sum_{i=1}^{n}(y^{t+1}_l)^2 \le \frac{C}{n}\sum_{i=1}^{n}\left(\sum_{r=0}^{t-1}\upbeta^{t}_{r}\bm{y}^{r+1}_i\right)^2 + \frac{C}{n}\sum_{i=1}^n\left([P^\perp_{\bm{Q}_t}\bm{\widetilde{N}}^\top\bm{q}^t_\perp]_i\right)^2+\,o(1)\frac{C}{n}\sum_{r=0}^{t-1}\sum_{i=1}^{n}\left([\bm{q}^r]_i\right)^2.
\]
Now \revsag{using the techniques described in $\mathcal{B}_t$ (e) of \cite{BM11journal}}, we can show
\[
\frac{C}{n}\sum_{i=1}^{n}\left(\sum_{r=0}^{t-1}\upbeta^{t}_{r}\bm{y}^{r+1}_i\right)^2 < \infty,
\]
and 
\[
\frac{C}{n}\sum_{r=0}^{t-1}\sum_{i=1}^{n}\left([\bm{q}^r]_i\right)^2 < \infty.
\]
Finally
\[
\frac{C}{n}\sum_{i=1}^n\left([P^\perp_{\bm{Q}_t}\bm{\widetilde{N}}^\top\bm{q}^t_\perp]_i\right)^2 \le O\left(\frac{C}{n}\sum_{i=1}^n\left([\bm{\widetilde{N}}^\top\bm{q}^t_\perp]_i\right)^2\right) +O\left(\frac{C}{n}\sum_{i=1}^n\left([P_{\bm{Q}_t}\bm{\widetilde{N}}^\top\bm{q}^t_\perp]_i\right)^2\right).
\]
Using Lemma \ref{lem:key_help_lem} (d) with $\varphi_n(\bm{x})=(\|\bm{x}\|^2)/n$ and $\langle\bm{q}^t_\perp,\bm{q}^t_\perp\rangle_n < \langle\bm{q}^t,\bm{q}^t\rangle_n<\infty$ \revsag{, we show that the first term is finite}. We \revsag{show that the second term is finite using Lemma \ref{lem:key_help_lem} (b).} Hence, we have
\[
\frac{1}{n}\sum_{i=1}^{n}(y^{t+1}_l)^2 < \infty.
\]
To show part (b) for $\bm h^{t+1}$ and $\bm y^{t+1}$ we use part (a) to get
\begin{align}
&\phi_h(h^1_i,\ldots,h^{t+1}_i,y^1_i,\ldots,y^{t+1}_i,\xi_{0,i},x_{0,i})\big|_{\mathcal{G}_{t+1,t}}\\
&\overset{d}{=} \phi_h\Big(h^1_i,\ldots,h^{t}_i,\left[\sum_{i=0}^{t-1}\upalpha^t_i\bm{h}^{i+1} + \widetilde{\bm{L}}^\top\bm{m}^t_\perp+\widetilde{\bm{Q}}_{t+1}o(1)\right]_i,y^1_i,\ldots,y^t_i,\\
&\hskip 1.5in \left[\sum_{i=0}^{t-1}\upbeta^t_i\bm{y}^{i+1}+\widetilde{N}\bm{q}^{t}_{\perp}+\widetilde{\bm{Q}}_to(1)\right]_i\hspace{-0.03in},\xi_{0,i},x_{0,i}\hspace{-0.03in}\Big).
\end{align}
Firstly, using Lemma \ref{lem:key_help_lem} (d){ with $r=2t+1$ and $m=2$}, we get
%\nb{i think more details are needed, as the constant $L$ seems to be changing with $n$. similar to step 2. 20210331 zm}
\begin{align}
&\frac{1}{n}\sum_{i=1}^{n}\phi_h\Big(h^1_i,\ldots,h^{t}_i,\left[\sum_{i=0}^{t-1}\upalpha^t_i\bm{h}^{i+1} + \widetilde{\bm{L}}^\top\bm{m}^t_\perp+\widetilde{\bm{Q}}_{t+1}o(1)\right]_i,y^1_i,\ldots,y^t_i,\\
& \hskip 1.5in\left[\sum_{i=0}^{t-1}\upbeta^t_i\bm{y}^{i+1}+\widetilde{N}\bm{q}^{t}_{\perp}+\widetilde{\bm{Q}}_to(1)\right]_i,\xi_{0,i},x_{0,i}\Big)\\
& -\frac{1}{n}\sum_{i=1}^{n}\phi_h\Big(h^1_i,\ldots,h^{t}_i,\left[\sum_{i=0}^{t-1}\upalpha^t_i\bm{h}^{i+1} + \widetilde{\bm{L}}^\top\bm{m}^t_\perp+\widetilde{\bm{Q}}_{t+1}o(1)\right]_i,y^1_i,\ldots,y^t_i,\\
& \hskip 1.5in \left[\sum_{i=0}^{t-1}\upbeta^t_i\bm{y}^{i+1}+\hspace{-0.03in}\bm{z}\frac{\|\bm{q}^t_\perp\|}{\sqrt{n}}+\widetilde{\bm{Q}}_to(1)\right]_i\hspace{-0.05in},\xi_{0,i},x_{0,i}\Big)\\
&\hskip 3.5in \overset{a.s.}{\longrightarrow} 0,
\end{align}
where $\bm{z} \sim N_n(0,\bm{I}_n)$ is independent of everything else. 
Now \revsag{using the techniques used in $\mathcal{B}_t$ (b) of \cite{BM11journal}} we can remove the terms $\widetilde{\bm{Q}}_to(1)$ and $\widetilde{\bm{Q}}_{t+1}o(1)$. So it is enough to consider { 
\[
\widetilde{X}_{n,i}=\phi_h\left(h^1_i,\ldots,h^{t}_i,\left[\sum_{i=0}^{t-1}\upalpha_i\bm{h}^{i+1} + \widetilde{\bm{L}}^\top\frac{\|\bm{m}^t_\perp\|}{\sqrt{p}}\right]_i,y^1_i,\ldots,y^t_i,\left[\sum_{i=0}^{t-1}\upbeta^t_i\bm{y}^{i+1}+\bm{z}{ \frac{\|\bm{q}^t_\perp\|}{\sqrt{n}}}\right]_i,\xi_{0,i},x_{0,i}\right).
\]
It is easy to verify the conditions of Theorem 3 of \cite{BM11journal} conditionally on $\mathcal{G}_{t+1,t}$ and hence we get { for $\bm{z}_1,\bm{z}_2\stackrel{iid}{\sim}N_n(0, \bm{I}_n)$,} given $\mathcal{G}_{t+1,t}$
\begin{align}
&\frac{1}{n}\sum_{i=1}^{n}\Bigg\{\phi_h\left(h^1_i,\ldots,h^{t}_i,\left[\sum_{i=0}^{t-1}\upalpha^t_i\bm{h}^{i+1} + \widetilde{\bm{L}}^\top\bm{m}^t_\perp\right]_i,y^1_i,\ldots,y^t_i,\left[\sum_{i=0}^{t-1}\upbeta^t_i\bm{y}^{i+1}+\hspace{-0.03in}\bm{z}\frac{\|\bm{q}^t_\perp\|}{\sqrt{n}}\right]_i\hspace{-0.05in},\xi_{0,i},x_{0,i}\right)\\
&-\mathbb{E}_{\bm{z}_1,\bm{z}_2}\Bigg[\phi_h\Bigg(h^1_i,\ldots,h^{t}_i,\left[\sum_{i=0}^{t-1}\upalpha^t_i\bm{h}^{i+1} + \hspace{-0.03in}\bm{z}_1\frac{\|\bm{m}^t_\perp\|}{\sqrt{p}}\right]_i,y^1_i,\ldots,y^t_i,\left[\sum_{i=0}^{t-1}\upbeta^t_i\bm{y}^{i+1}+\hspace{-0.03in}\bm{z}_2\frac{\|\bm{q}^t_\perp\|}{\sqrt{n}}\right]_i,\hspace{-0.05in}\\
&\hskip 3in\xi_{0,i},x_{0,i}\Bigg)\Bigg]\Bigg\} \overset{a.s.}{\longrightarrow} 0.
\end{align}
It follows that we also have this marginally. Let $\updelta_t=\lim_{n \rightarrow \infty}\frac{\|\bm{m}^t_\perp\|}{\sqrt{n}}$ and $\uprho_t=\lim_{n \rightarrow \infty}\frac{\|\bm{q}^t_\perp\|}{\sqrt{n}}$. Then by partially pseudo-Lipschitz property of $\phi_h$
\begin{align}
&\frac{1}{n}\sum_{i=1}^{n}\Bigg\{\mathbb{E}_{\bm{z}_1,\bm{z}_2}\Bigg[\phi_h\Bigg(h^1_i,\ldots,h^{t}_i,\left[\sum_{i=0}^{t-1}\upalpha^t_i\bm{h}^{i+1} + \hspace{-0.03in}\bm{z}_1\frac{\|\bm{m}^t_\perp\|}{\sqrt{p}}\right]_i,\\
&\hskip 1.8in y^1_i,\ldots,y^t_i,\left[\sum_{i=0}^{t-1}\upbeta^t_i\bm{y}^{i+1}+\hspace{-0.03in}\bm{z}_2\frac{\|\bm{q}^t_\perp\|}{\sqrt{n}}\right]_i\hspace{-0.05in},\xi_{0,i},x_{0,i}\Bigg)\Bigg]\\&-\mathbb{E}_{\bm{z}_1,\bm{z}_2}\Bigg[\phi_h\Bigg(h^1_i,\ldots,h^{t}_i,\left[\sum_{i=0}^{t-1}\upalpha^t_i\bm{h}^{i+1} + \hspace{-0.03in}\bm{z}_1\updelta_t\right]_i,y^1_i,\ldots,y^t_i,\left[\sum_{i=0}^{t-1}\upbeta^t_i\bm{y}^{i+1}+\hspace{-0.03in}\bm{z}_2\uprho_t\right]_i\hspace{-0.05in},\\
&\hskip 3in \xi_{0,i},x_{0,i}\Bigg)\Bigg]\Bigg\} \overset{a.s.}{\longrightarrow} 0.
\end{align}
Now consider the partially pseudo-Lipschitz function 
\begin{multline}
\widehat{\phi}_h(h^1_i,\ldots,h^{t}_i,y^1_i,\ldots,y^t_i,\xi_{0,i}, x_{0,i})\\=\mathbb{E}_{\bm{z}_1,\bm{z}_2}\left\{\phi_h\left(h^1_i,\ldots,h^{t}_i,\left[\sum_{i=0}^{t-1}\upalpha^t_i\bm{h}^{i+1} + \updelta_t\bm{z}_1\right]_i,y^1_i,\ldots,y^t_i,\left[\sum_{i=0}^{t-1}\upbeta^t_i\bm{y}^{i+1}+\hspace{-0.03in}\uprho_t\bm{z}_2\right]_i\hspace{-0.05in},\xi_{0,i},x_{0,i}\right)\right\}.
\end{multline}
That it is partially pseudo-Lipschitz follows by{ Lemma \ref{lem:prop_pl}(2)}. By the induction hypothesis part (b), we get
\begin{align}
&\lim_{n \rightarrow \infty}\frac{1}{n}\sum_{i=1}^{n}\widehat{\phi}_h(h^1_i,\ldots,h^{t}_i,y^1_i,\ldots,y^t_i,\xi_{0,i}, x_{0,i})\\
&=\mathbb{E}\Bigg\{\phi_h\Bigg(\tau_0Z_0,\ldots,\tau_{t-1}Z_{t-1},\sum_{i=0}^{t-1}\upalpha^t_i\tau_{i}Z_i + \updelta_t Z,\sigma_1\widetilde{Z}_1,\ldots,\sigma_t\widetilde{Z}_t,\\
&\hskip 1.8in\sum_{i=0}^{t-1}\upbeta^t_i\sigma_{i+1}\widetilde{Z}_{i+1}+\uprho_t\widetilde{Z},\widetilde{\Xi}_0,\widetilde{X}_0\Bigg)\Bigg\},
\end{align}
}
As both ${ \tau_t Z_t} = \sum_{i=0}^{t-1}\upalpha^t_i\tau_{i}Z_i + \updelta_t Z$ and 
${ \sigma_{t+1}\widetilde{Z}_{t+1}} = \sum_{i=0}^{t-1}\upbeta^t_i\sigma_{i+1}\widetilde{Z}_{i+1}+\uprho_t\widetilde{Z}$ are centered Gaussians, it is enough to show { their} variances are $\tau^2_t$ and $\sigma^2_{t+1}$ respectively. \revsag{Proceeding as in $\mathcal{B}_t$ (b) of \cite{BM11journal}}
\begin{align}
\mathbb{E}\left\{\sum_{i=0}^{t-1}\upalpha^t_i\tau_{i}Z_i + \updelta_t Z\right\}^2 &\overset{a.s.}{=} \lim_{n \rightarrow \infty} \langle\bm{h}^{t+1},\bm{h}^{t+1}\rangle_n\\
& \overset{a.s.}{=} \lim_{n \rightarrow \infty} \langle\bm{m}^{t},\bm{m}^{t}\rangle_p\\
& \overset{a.s.}{=}\mathbb{E}\left\{{ \sfg}_t(\vartheta_t\widecheck{Z}_t,\widetilde{\Omega}_0,\widetilde{V}_0)^2\right\}\\
& = \tau^2_t.
\end{align}
Similarly we have
\begin{align}
\mathbb{E}\left\{\sum_{i=0}^{t-1}\upbeta^t_{i}\sigma_{i+1}\widetilde{Z}_{i+1} + \uprho_t \widetilde{Z}\right\}^2 &\overset{a.s.}{=} \lim_{n \rightarrow \infty} \langle\bm{y}^{t+1},\bm{y}^{t+1}\rangle_n\\
& \overset{a.s.}{=} \lim_{n \rightarrow \infty} \langle\bm{q}^{t},\bm{q}^{t}\rangle_n\\
& \overset{a.s.}{=}\mathbb{E}\left\{{ \sff}_t(\tau_{t-1}Z_{t-1},\sigma_t\widetilde{Z}_t,\widetilde{\Xi}_0,\widetilde{X}_0)^2\right\}\\
& = \sigma^2_{t+1}.
\end{align}
\revsag{Last but not least}, using induction hypotheses, and the fact that $Z$ and $\widetilde{Z}$ in the definition of $Z_t$ and $\widetilde{Z}_t$ are independent of everything else, we obtain that $(Z_0,\dots,Z_t)$, $(\widetilde{Z}_1,\dots, \widetilde{Z}_{t+1})$ and $(\widetilde{\Xi}_0,\widetilde{X}_0)$ are mutually independent.
This completes the proof of part (b). 
\\ 
Finally, the assertion (d) for $\bm h^{t+1}$ and $\bm y^{t+1}$ follows using the arguments similar to the base case.

\subsection{Proof of Theorem \ref{thm:slln_shifted}}
Let us define the following AMP which is easier to analyze. We shall show that the iterates of this AMP is asymptotically close to the iterates of the original AMP given by \eqref{eq:AMP_shift_main_1} and \eqref{eq:AMP_shift_main_1_1} { with generic $\{f_t,g_t:t\geq 0\}$ satisfying the condition of Theorem \ref{thm:slln_shifted}}. 
Let us define $\widetilde{\bm{u}}^0=\widetilde{\bm{y}}^0=0$. Then for $t \in \mathbb{N}\cup\{0\}$, we define
\begin{align}
\label{eq:AMP_shift}
\widetilde{\bm{v}}^{t} &= \frac{\bm{R}}{\sqrt{p}} f_{t}(\alpha_{t-1}\bm{x}^*+\widetilde{\bm{u}}^t,\mu_{t}\bm{x}^*+\widetilde{\bm{x}}^t,\bm{x}_0(\varepsilon))-\widetilde{p}_{t}g_{t-1}(\beta_{t-1}\bm{v}^*+\widetilde{\bm{v}}^{t-1},\bm{v}_0(\varepsilon))),\\
\widetilde{\bm{u}}^{t+1} &= \frac{\bm{R}^\top}{\sqrt{p}} g_t(\beta_t\bm{v}^*+\widetilde{\bm{v}}^t,\bm{v}_0(\varepsilon))-\widetilde{c}_tf_{t}(\alpha_{t-1}\bm{x}^*+\widetilde{\bm{u}}^t,\mu_{t}\bm{x}^*+\widetilde{\bm{x}}^t,\bm{x}_0(\varepsilon)),\\
\widetilde{\bm{x}}^{t+1} &= \frac{\bm{Z}}{\sqrt{n}} f_{t}(\alpha_{t-1}\bm{x}^*+\widetilde{\bm{u}}^t,\mu_{t}\bm{x}^*+\widetilde{\bm{x}}^t,\bm{x}_0(\varepsilon))-\widetilde{d}_{t}f_{t-1}(\alpha_{t-2}\bm{x}^*+\widetilde{\bm{u}}^{t-1},\mu_{t-1}\bm{x}^*+\widetilde{\bm{x}}^{t-1},\bm{x}_0(\varepsilon)),
\end{align}
where
\vspace{-0.1in}
\begin{align}
\widetilde{c}_t &= \frac{1}{p}\sum_{i=1}^{p} \frac{\partial g_t}{\partial v}(\beta_tv^*_{i}+\widetilde{v}^t_i,v_{0,i}(\varepsilon)),\\
\widetilde{p}_t &= \frac{c}{n}\sum_{i=1}^{n} \frac{\partial f_t}{\partial u}(\alpha_{t-1}x^*_{i}+\widetilde{u}^t_i,\mu_{t}x^*_{i}+\widetilde{x}^t_i,x_{0,i}(\varepsilon)),\\
\widetilde{d}_t &= \frac{1}{n}\sum_{i=1}^{n} \frac{\partial f_t}{\partial y}(\alpha_{t-1}x^*_{i}+\widetilde{u}^t_i,\mu_{t}x^*_{i}+\widetilde{x}^t_i,x_{0,i}(\varepsilon)).
\end{align}
Let us observe that $f(u,v,x,y)=f_t(\alpha_{t-1}x+u,\mu_{t}x+v,y)$ and $g(u,x,y)=g_{t-1}(\beta_{t-1}x+u,y)$ are partially Lipschitz functions for all $t$. {The iterates defined in \eqref{eq:AMP_shift} is of the form \eqref{eq:orig_AMP}. 
Hence using Theorem \ref{thm:thm_6_1} for any partially pseudo-Lipschitz functions $\widehat{\phi}$ and $\widehat{\psi}$}, we get
\begin{equation}
\label{eq:slln_3}
\lim_{n \rightarrow \infty} \frac{1}{n}\sum_{i=1}^{n}\widehat{\phi}(\widetilde{u}^{t}_i, \widetilde{x}^t_i, x^*_i, x_{0,i}(\varepsilon)) \overset{a.s.}{=} \mathbb{E}\left\{\widehat{\phi}\left(\tau_{t-1}Z_1, \sigma_{t}Z_2, X_0, X_0(\varepsilon)\right)\right\},
\end{equation}
and
\begin{equation}
\label{eq:slln_4}
\lim_{p \rightarrow \infty} \frac{1}{p}\sum_{i=1}^{p}\widehat{\psi}(\widetilde{v}^{t}_i, v^*_i, v_{0,i}(\varepsilon)) \overset{a.s.}{=} \mathbb{E}\left\{\widehat{\psi}\left(\vartheta_{t}Z_3, V_0, V_0(\varepsilon)\right)\right\}.
\end{equation}
Define $\widehat{\phi}(x,y,z,w)=\phi(\alpha_{t-1}z+{ x},\mu_t z+{ y},z,w)$ and $\widehat{\psi}(x,y,{ r})=\psi(\beta_{t}y+{ x},y,{ r})$. { It is not hard to observe that these functions are partially pseudo-Lipschitz}. Then we obtain
\[
\lim_{n \rightarrow \infty} \frac{1}{n}\sum_{i=1}^{n}\phi(\alpha_{t-1}x^*_i+\widetilde{u}^{t}_i, \mu_tx^*_i+\widetilde{x}^t_i, x_{0,i}(\varepsilon)) \overset{a.s.}{=} \mathbb{E}\left\{\phi\left(\alpha_{t-1}X_0+\tau_{t-1}Z_1, \mu_tX_0+\sigma_{t}Z_2, X_0(\varepsilon)\right)\right\},
\]
and
\[
\lim_{p \rightarrow \infty} \frac{1}{p}\sum_{i=1}^{p}\psi(\beta_tv^*_i+\widetilde{v}^{t}_i, v_{0,i}(\varepsilon)) \overset{a.s.}{=} \mathbb{E}\left\{\psi\left(\beta_tV_0+\vartheta_{t}Z_3, V_0(\varepsilon)\right)\right\}.
\]
Hence, it is enough to show that
\[
\lim_{n \rightarrow \infty} \frac{1}{n}\sum_{i=1}^{n}\left[\phi(\alpha_{t-1}x^*_i+\widetilde{u}^{t}_i, \mu_tx^*_i+\widetilde{x}^t_i, x_{0,i}(\varepsilon))-\phi(u^{t}_i, x^t_i, x_{0,i}(\varepsilon))\right] \overset{a.s.}{=} 0,
\]
and
\[
\lim_{p \rightarrow \infty} \frac{1}{p}\sum_{i=1}^{p}\left[\psi(\beta_{t}v^*_i+\widetilde{v}^{t}_i, v_{0,i}(\varepsilon))-\psi(v^{t}_i, v_{0,i}(\varepsilon))\right] \overset{a.s.}{=} 0.
\]

We shall prove the last two displays by induction on the following hypotheses:
\begin{enumerate}
\item $\lim_{n \rightarrow \infty} \frac{1}{n}\sum_{i=1}^{n}\left[\phi(\alpha_{t-1}x^*_i+\widetilde{u}^{t}_i, \mu_tx^*_i+\widetilde{x}^t_i, x_{0,i}(\varepsilon))-\phi(u^{t}_i, x^t_i, x_{0,i}(\varepsilon))\right] \overset{a.s.}{=} 0,
$
\item $\lim_{p \rightarrow \infty} \frac{1}{p}\sum_{i=1}^{p}\left[\psi(\beta_{t}v^*_i+\widetilde{v}^{t}_i, v_{0,i}(\varepsilon))-\psi(v^{t}_i, v_{0,i}(\varepsilon))\right] \overset{a.s.}{=} 0,$
\item $\lim_{n \rightarrow \infty}\frac{\|\bm{\Delta}^t_1\|^2}{n} \overset{a.s.}{=} 0$,
\item $\lim_{n \rightarrow \infty}\frac{\|\bm{\Delta}^t_2\|^2}{n} \overset{a.s.}{=} 0$,
\item $\lim_{p \rightarrow \infty}\frac{\|\bm{\Delta}^t_3\|^2}{p} \overset{a.s.}{=} 0$,
\item $\lim_{n \rightarrow \infty}\frac{\|\alpha_{t-1}\bm{x}^*+\widetilde{\bm{u}}^{t}\|^2}{n} < \infty$ { a.s.},
\item $\lim_{n \rightarrow \infty}\frac{\|\mu_{t}\bm{x}^*+\widetilde{\bm{x}}^{t}\|^2}{n} < \infty$ { a.s.},
\item $\lim_{p \rightarrow \infty}\frac{\|\beta_{t}\bm{v}^*+\widetilde{\bm{v}}^{t}\|^2}{p} < \infty$ { a.s.},
\end{enumerate}
where $\bm{\Delta}^t_1=\bm{x}^t-\mu_{t}\bm{x}^*-\widetilde{\bm{x}}^{t}$, $\bm{\Delta}^t_2=\bm{u}^t-\alpha_{t-1}\bm{x}^*-\widetilde{\bm{u}}^{t}$, and $\bm{\Delta}^t_3=\bm{v}^t-\beta_{t}\bm{v}^*-\widetilde{\bm{v}}^{t}$. 
\paragraph{Step 1: The $t=0$ case} Using $\alpha_{-1}=\mu_0=0$ and $\bm{u}^0=\bm{\widetilde{u}}^0=\bm{y}^0=\bm{\widetilde{y}}^0=0$, \revsag{hypotheses} $(1),(3),(4),(6)$ and $(7)$ follows. Now note that
\[
\bm{v}^0=\sqrt{\frac{\mu}{np}}\bm{v}^*(\bm{x}^*)^\top f_0(\bm{0},\bm{0},\bm{x}_0(\varepsilon))+\frac{\bm{R}}{\sqrt{p}}f_0(\bm{0},\bm{0},\bm{x}_0(\varepsilon)),
\]
and
\[
\bm{\widetilde{v}}^0=\frac{\bm{R}}{\sqrt{p}}f_0(\bm{0},\bm{0},\bm{x}_0(\varepsilon)).
\]
We have for $i \in [p]$
\[
\widetilde{v}^0_i = \|f_0(\bm{0},\bm{0},\bm{x}_0(\varepsilon))\|\frac{{ z}_i}{\sqrt{p}},
\]
where ${ z}_1,\ldots,{ z}_p$ are i.i.d $N(0,1)$ and ${ \bm{z}=(z_1,\ldots,z_p)^\top}$. Then we get
\[
\frac{\|\beta_0\bm{v}^*+\bm{\widetilde{v}}^0\|^2}{p} = \beta^2_0 \frac{\|\bm{v}^*\|^2}{p} + \frac{\|f_0(\bm{0},\bm{0},\bm{x}_0(\varepsilon))\|^2}{p}\frac{\|{\bm{z}}\|^2}{p} + 2\,\beta_0\frac{\|f_0(\bm{0},\bm{0},\bm{x}_0(\varepsilon))\|}{\sqrt{p}}\langle
{\bm{z}},\bm{v}^*\rangle_p.
\]
By SLLN, we get that all the terms are finite. Hence Hypothesis (8) follows. We further note that
\[
\lim_{p \rightarrow \infty}\frac{1}{p}\;\|\bm{\Delta}^{0}_3\|^2 = \lim_{p \rightarrow \infty}\left(\sqrt{\frac{\mu}{np}}(\bm{x}^*)^\top f_0(\bm{u}^0,\bm{y}^0,\bm{x}_0(\varepsilon))-\beta_0\right)^2\;\frac{\|\bm{v}^*\|^2}{p}.
\]
Using SLLN, definition of $\beta_0$ and $p/n \rightarrow 1/c$, we get Hypothesis (5). Again note that
\[
\frac{\|\bm{v}^0\|}{\sqrt{p}} \le 
\Bigg|
\sqrt{\frac{\mu}{np}}(\bm{x}^*)^\top f_0(\bm{0},\bm{0},\bm{x}_0(\varepsilon)) 
\Bigg|\frac{\|\bm{v}^*\|}{\sqrt{p}} + \frac{\|{\bm{z}}\|}{\sqrt{p}}\frac{\|f_0(\bm{0},\bm{0},\bm{x}_0(\varepsilon))\|}{\sqrt{p}}.
\]
Then using SLLN we get $\lim_{p \rightarrow \infty}\|\bm{v}^0\|/\sqrt{p} < \infty$ almost surely. 
Now using \revsag{the} partially pseudo-Lipschitz property of $\psi$, we get
\begin{multline}
\Bigg|\frac{1}{p}\;\sum_{i=1}^{p}\left[\psi(\beta_0{ v^*_{0,i}}+\widetilde{v}^0_i,v_{0,i}(\varepsilon))-\psi(v^0_i,v_{0,i}(\varepsilon))\right]\Bigg| \\
\le \left(1+\frac{\|{\beta_0}\bm{v}^*+\bm{\widetilde{v}}^0\|}{\sqrt{p}}+\frac{\|\bm{v}^0\|}{\sqrt{p}}+\frac{\|\bm{v}_0(\varepsilon)\|}{\sqrt{p}}\right) \frac{\|\bm{v}^*\|}{\sqrt{p}}
\Bigg|\sqrt{\frac{\mu}{np}}(\bm{x}^*)^\top f_0(\bm{u}^0,\bm{x}^0,\bm{x}_0(\varepsilon))-\beta_0\Bigg| \overset{a.s.}{\rightarrow} 0,
\end{multline}
by SLLN, definition of $\beta_0$ and $p/n \rightarrow 1/c$. This shows Hypothesis (2).

\medskip

Let the hypotheses hold for $\ell = 0,\ldots,t-1$. Now we show the hypotheses for $\ell=t$ to complete the induction.

\paragraph{Step 2: Hypothesis (6), (7) and (8)} First consider Hypotheis (6) for $\ell=t$. If we consider \revsag{the} partially pseudo-Lipschitz function $\widehat{\phi}(x, y, z, { w})=(\alpha_{t-1}z+x)^2$, then using \eqref{eq:slln_3} this Hypothesis follows. 
Similarly using $\widehat{\phi}(x, y, z, { w})=(\mu_{t}z+y)^2$, Hypothesis (7) follows. 
Finally using $\widehat{\psi}(x, y, { r})=(\beta_{t}y+x)^2$ and \eqref{eq:slln_4}, Hypothesis (8) follows. 
\paragraph{Step 3: Hypothesis (3)} Consider Hypothesis (3) for $\ell = t$. It can be observed that
\begin{align}
&\Delta^t_{1,i}\\
&= \left(\sqrt{\lambda}\langle\bm{x}^*,f_{t-1}(\bm{u}^{t-1},\bm{x}^{t-1},\bm{x}_0(\varepsilon))\rangle_n-\mu_t\right)x^*_i \\
&\hskip 1.5in+\widetilde{d}_{t-1}f_{t-2}(\alpha_{t-3}x^*_i+\widetilde{u}^{t-2}_i,\mu_{t-2}x^*_i+\widetilde{x}^{t-2}_i,\bm{x}_{0,i}(\varepsilon))\\
&\hspace{0.3in}+\frac{1}{\sqrt{n}}\langle\bm{Z}_{i,*},f_{t-1}(\bm{u}^{t-1},\bm{x}^{t-1},\bm{x}_0(\varepsilon))-f_{t-1}(\alpha_{t-2}\bm{x}^*+\widetilde{\bm{x}}^{t-1},\mu_{t-1}\bm{x}^*+\widetilde{\bm{x}}^{t-1},\bm{x}_0(\varepsilon))\rangle_n\\
&\hskip 1.5in-d_{t-1}f_{t-2}({ u^{t-2}_i}, { x^{t-2}_i},\bm{x}_{0,i}(\varepsilon)).
\end{align}
Thus using \revsag{the} Jensen's inequality, we have for constant $L_1>0$
\begin{align}
&\frac{1}{n}\|\Delta^t_{1}\|^2 \\
&\le L_1\left(\sqrt{\lambda}\langle\bm{x}^*,f_{t-1}(\bm{u}^{t-1},\bm{x}^{t-1},\bm{x}_0(\varepsilon))\rangle_n-\mu_t\right)^2 \\
&+ L_1|\widetilde{d}_{t-1}-d_{t-1}|^2\frac{1}{n}\sum_{i=1}^{n}f^2_{t-2}(\alpha_{t-3}x^*_i+\widetilde{u}^{t-2}_i,\mu_{t-2}x^*_i+\widetilde{x}^{t-2}_i,\bm{x}_{0,i}(\varepsilon))\\
&+\frac{L_1}{n^2}\|\bm{Z}\|^2_{\mbox{\tiny{op}}}\|f_{t-1}(\bm{u}^{t-1},\bm{x}^{t-1},\bm{x}_0(\varepsilon))-f_{t-1}(\alpha_{t-2}\bm{x}^*+\widetilde{\bm{u}}^{t-1},\mu_{t-1}\bm{x}^*+\widetilde{\bm{x}}^{t-1},\bm{x}_0(\varepsilon))\|^2\\
&+\frac{L_1}{n}|d_{t-1}|^2\|f_{t-2}(\bm{u}^{t-2},\bm{x}^{t-2},\bm{x}_0(\varepsilon))-f_{t-2}(\alpha_{t-3}\bm{x}^*+\widetilde{\bm{u}}^{t-2},\mu_{t-2}\bm{x}^*+\widetilde{\bm{x}}^{t-2},\bm{x}_0(\varepsilon))\|^2.
\end{align}
Using the partially pseudo-Lipschitz function { $\widehat\phi(x,y,z,w)=zf_{t-1}(\alpha_{t-1}z+x, \mu_t z+y, w)$} ({by Lemma \ref{lem:prop_pl}(1)}), definition of $\mu_t$ and Hypothesis (1) for $\ell=t-1$ we have
\begin{equation}
\label{eq:s_1_1}
\sqrt{\lambda}\langle\bm{x}^*,f_{t-1}(\bm{u}^{t-1},\bm{x}^{t-1},\bm{x}_0(\varepsilon))\rangle_n-\mu_t \overset{a.s.}{\rightarrow} 0.
\end{equation}
From \cite{anderson_guionnet_zeitouni_2009}, we have
\[
\limsup_{n \rightarrow \infty} \frac{1}{n}\|\bm{Z}\|^2_{\mbox{\tiny{op}}} < \infty \quad \mbox{{ a.s.}}
\]
Using \revsag{the} partially Lipschitz property of $f_{t-1}$, we have
\[
\|f_{t-1}(\bm{u}^{t-1},\bm{x}^{t-1},\bm{x}_0(\varepsilon))-f_{t-1}(\alpha_{t-2}\bm{x}^*+\bm{\widetilde{u}}^{t-1},\mu_{t-1}\bm{x}^*+\bm{\widetilde{x}}^{t-1},\bm{x}_0(\varepsilon))\|^2 \le L_1(\|\Delta^{t-1}_1\|^2+\|\Delta^{t-1}_2\|^2).
\]
By induction hypothesis (3) and (4) for $\ell=t-1$ we get
\begin{multline}
\frac{1}{n}\|f_{t-1}(\bm{u}^{t-1},\bm{x}^{t-1},\bm{x}_0(\varepsilon))-f_{t-1}(\alpha_{t-2}\bm{x}^*+\bm{\widetilde{u}}^{t-1},\mu_{t-1}\bm{x}^*+\bm{\widetilde{x}}^{t-1},\bm{x}_0(\varepsilon))\|^2 \\
\le L_1\left(\frac{\|\Delta^{t-1}_1\|^2}{n}+\frac{\|\Delta^{t-1}_2\|^2}{n}\right)
\overset{a.s.}{\rightarrow} 0.
\end{multline}
This implies
\begin{multline}
\label{eq:s_1_2}
\frac{1}{n^2}\|\bm{Z}\|^2_{\mbox{\tiny{op}}}\|f_{t-1}(\bm{u}^{t-1},\bm{x}^{t-1},\bm{x}_0(\varepsilon))-f_{t-1}(\alpha_{t-2}\bm{x}^*+\bm{\widetilde{u}}^{t-1},\mu_{t-1}\bm{x}^*+\bm{\widetilde{x}}^{t-1},\bm{x}_0(\varepsilon))\|^2 \overset{a.s.}{\rightarrow} 0.
\end{multline}
Since $\widehat\psi(x,y,r)=f^2_{t-2}(x,y,r)$ is partially pseudo-Lipschitz ({ by Lemma \ref{lem:prop_pl}(1)}), using \eqref{eq:slln_3}, we have almost surely
\[
\limsup_{n \rightarrow \infty} \frac{1}{n}\sum_{i=1}^nf^2_{t-2}(\alpha_{t-3}x^*_i+\widetilde{u}^{t-2}_i,\mu_{t-2}x^*_i+\widetilde{x}^{t-2}_i,x_{0,i}(\varepsilon)) < \infty.
\] 
As $f^{(2)}_{t-1}(x,y,r)=\frac{\partial f_{t-1}}{\partial y}(x,y,r)$ is partially Lipschitz, we have using Hypothesis (1)
\begin{multline}
|\widetilde{d}_{t-1}-d_{t-1}| = \Bigg|\frac{1}{n}\sum_{i=1}^{n}[f^{(2)}_{t-1}(\alpha_{t-2}x^*_i+\widetilde{u}^{t-1}_i,\mu_{t-1}x^*_i+\widetilde{x}^{t-1}_i,x_{0,i}(\varepsilon))-f^{(2)}_{t-1}(u^{t-1}_i,x^{t-1}_i,x_{0,i}(\varepsilon))]\Bigg|\\
\overset{a.s}{\rightarrow} 0.
\end{multline}
This implies
\begin{equation}
\label{eq:s_1_3}
|\widetilde{d}_{t-1}-d_{t-1}|^2\frac{1}{n}\sum_{i=1}^{n}f^2_{t-2}(\alpha_{t-3}x^*_i+\widetilde{u}^{t-2}_i,\mu_{t-2}x^*_i+\widetilde{x}^{t-2}_i,\bm{x}_{0,i}(\varepsilon)) \overset{a.s.}{\rightarrow} 0.
\end{equation}
Again, using similar arguments, we can show that 
\[
|d_{t-1}|^2 < \infty \quad \mbox{a.s.}
\]
Again using the induction hypothesis (3) and (4) for $\ell=t-2$ we get
\begin{multline}
\frac{1}{n}\|f_{t-2}(\bm{u}^{t-2},\bm{x}^{t-2},\bm{x}_0)-f_{t-2}(\alpha_{t-3}\bm{x}^*+\bm{\widetilde{u}}^{t-2},\mu_{t-2}\bm{x}^*+\bm{\widetilde{x}}^{t-2},\bm{x}_0(\varepsilon))\|^2 \\
\le L_1\left(\frac{\|\Delta^{t-2}_1\|^2}{n}+\frac{\|\Delta^{t-2}_2\|^2}{n}\right)
\overset{a.s.}{\rightarrow} 0.
\end{multline}
Thus, we have the following.
\begin{equation}
\label{eq:s_1_4}
|d_{t-1}|^2\frac{1}{n}\|f_{t-2}(\bm{u}^{t-2},\bm{x}^{t-2},\bm{x}_0(\varepsilon))-f_{t-2}(\alpha_{t-3}\bm{x}^*+\bm{\widetilde{u}}^{t-2},\mu_{t-2}\bm{x}^*+\bm{\widetilde{x}}^{t-2},\bm{x}_0(\varepsilon))\|^2 \overset{a.s.}{\rightarrow} 0.
\end{equation}
Using \eqref{eq:s_1_1}, \eqref{eq:s_1_2}, \eqref{eq:s_1_3}, and \eqref{eq:s_1_4} we have
\[
\lim_{n \rightarrow \infty}\frac{\|\Delta^t_1\|^2}{n} \overset{a.s.}{=} 0.
\]
\paragraph{Step 4: Hypothesis (4)} Next we try to prove Hypothesis {(4)} for $\ell = t$. We observe that
\begin{align}
\Delta^t_{2,i}&= \left((\sqrt{p\mu/n})\langle\bm{v}^*,g_{t-1}(\bm{v}^{t-1},\bm{v}_0(\varepsilon))\rangle_p-\alpha_{t-1}\right)x^*_i\\
&\hskip 0.5in +\widetilde{c}_{t-1}f_{t-1}(\alpha_{t-2}x^*_i+\widetilde{u}^{t-1}_i,\mu_{t-1}x^*_i+\widetilde{x}^{t-1}_i,{x}_{0,i}(\varepsilon))\\
&\hspace{0.7in}+\langle\bm{L}_{*,i},g_{t-1}(\bm{v}^{t-1},\bm{v}_0(\varepsilon))-g_{t-1}(\beta_{t-1}\bm{v}^*+\widetilde{\bm{v}}^{t-1},\bm{v}_0(\varepsilon))\rangle\\
&\hspace{0.7in}-c_{t-1}f_{t-1}(u^{t-1}_i,x^{t-1}_i,{x}_{0,i}(\varepsilon)).
\end{align}
If $\bm{L}=\bm{V}/\sqrt{n}$, then by \revsag{the} Jensen's inequality, we have for constant $L_2>0$
\begin{align}
\frac{\|\Delta^t_{2}\|^2}{n} &\le L_2\left(\sqrt{\frac{\mu p}{n}}\langle\bm{v}^*,g_{t-1}(\bm{v}^{t-1},\bm{v}_0(\varepsilon))\rangle_p-\alpha_{t-1}\right)^2 \\&+ L_2|\widetilde{c}_{t-1}-c_{t-1}|^2\frac{1}{n}\sum_{i=1}^{n}f^2_{t-1}(\alpha_{t-2}x^*_i+\widetilde{u}^{t-1}_i,\mu_{t-1}x^*_i+\widetilde{x}^{t-1}_i,\bm{x}_{0,i}(\varepsilon))\\&+\frac{L_2}{np}\uplambda_{\mbox{\tiny{max}}}(\bm{V}\bm{V}^\top)\|g_{t-1}(\bm{v}^{t-1},\bm{v}_0(\varepsilon))-g_{t-1}(\beta_{t-1}\bm{v}^*+\widetilde{\bm{v}}^{t-1},\bm{v}_0(\varepsilon))\|^2\\&+\frac{L_2}{n}|c_{t-1}|^2\|f_{t-1}(\bm{u}^{t-1},\bm{x}^{t-1},\bm{x}_0(\varepsilon))-f_{t-1}(\alpha_{t-1}\bm{x}^*+\widetilde{\bm{u}}^{t-1},\mu_{t-1}\bm{x}^*+\widetilde{\bm{x}}^{t-1},\bm{x}_0(\varepsilon))\|^2.
\end{align}
Using the partially pseudo-Lipschitz function ${ \widehat\psi(x,y,r)}=yg_{t-1}(\beta_{t-1}y+x,r)$({ by Lemma \ref{lem:prop_pl}(1)}), definition of $\alpha_{t-1}$ and Hypothesis (2) for $\ell=t-1$, we have
\begin{equation}
\label{eq:s_2_1}
\sqrt{\frac{\mu p}{n}}\langle\bm{v}^*,g_{t-1}(\bm{v}^{t-1},\bm{v}_0(\varepsilon))\rangle_p-\alpha_{t-1} \overset{a.s.}{\rightarrow} 0.
\end{equation}
Since $p/n \rightarrow 1/c$, using Corollary 5.35 of \cite{vershynin_2012}, we have
\[
\limsup_{n \rightarrow \infty} \frac{\uplambda_{\mbox{\tiny{max}}}(\bm{V}\bm{V}^\top)}{n}< \infty \quad \mbox{{ a.s.}}
\]
Using \revsag{the} partially Lipschitz property of $g_{t-1}$, we have
\[
\|g_{t-1}(\bm{v}^{t-1},\bm{v}_0(\varepsilon))-g_{t-1}(\beta_{t-1}\bm{v}^*+\bm{\widetilde{v}}^{t-1},\bm{v}_0(\varepsilon))\|^2 \le L_2\,\|\Delta^{t-1}_3\|^2
\]
By induction hypothesis (5) for $\ell=t-1$, we get
\begin{equation}
\frac{1}{p}\|g_{t-1}(\bm{v}^{t-1},\bm{v}_0(\varepsilon))-g_{t-1}(\beta_{t-1}\bm{v}^*+\bm{\widetilde{v}}^{t-1},\bm{v}_0(\varepsilon))\|^2 \le L_2\left(\frac{\|\Delta^{t-1}_3\|^2}{p}\right)\overset{a.s.}{\rightarrow} 0.
\end{equation}
This implies
\begin{equation}
\label{eq:s_2_2}
\limsup_{n \rightarrow \infty} \frac{\uplambda_{\mbox{\tiny{max}}}(\bm{V}\bm{V}^\top)}{np}\|g_{t-1}(\bm{v}^{t-1},\bm{v}_0(\varepsilon))-g_{t-1}(\beta_{t-1}\bm{v}^*+\bm{\widetilde{v}}^{t-1},\bm{v}_0(\varepsilon))\|^2 \overset{a.s.}{=}0.
\end{equation}
Since ${\widehat\psi(x,y,r)}=f^2_{t-1}(x,y,r)$ is partially pseudo-Lipschitz (by Lemma \ref{lem:prop_pl} (1)), using \eqref{eq:slln_3}, we have almost surely
\[
\limsup_{n \rightarrow \infty} \frac{1}{n}\sum_{i=1}^nf^2_{t-1}(\alpha_{t-2}x^*_i+\widetilde{u}^{t-1}_i,\mu_{t-1}x^*_i+\widetilde{x}^{t-1}_i,x_{0,i}(\varepsilon)) < \infty.
\] 
As $g^{\prime}_{t-1}(x,y)=\frac{\partial g_{t-1}}{\partial x}(x,y)$ is partially Lipschitz, we have using Hypothesis (2) for $\ell=t-1$
\begin{equation}
|\widetilde{c}_{t-1}-c_{t-1}| =\Bigg|\frac{1}{p}\sum_{i=1}^{p}[g^{\prime}_{t-1}(\beta_{t-1}v^*_i+\widetilde{v}^{t-1}_i,v_{0,i}(\varepsilon))-g^{\prime}_{t-1}(v^{t-1}_i,v_{0,i}(\varepsilon))]\Bigg|
\overset{a.s}{\rightarrow} 0.
\end{equation}
This implies 
\begin{equation}
\label{eq:s_2_3}
|\widetilde{c}_{t-1}-c_{t-1}|^2\frac{1}{n}\sum_{i=1}^{n}f^2_{t-1}(\alpha_{t-2}x^*_i+\widetilde{u}^{t-1}_i,\mu_{t-1}x^*_i+\widetilde{x}^{t-1}_i,x_{0,i}(\varepsilon)) \overset{a.s.}{\rightarrow} 0.
\end{equation}
Using similar arguments, we can show 
\[
|c_{t-1}|^2 < \infty \quad \mbox{a.s.}
\]
Using the induction hypotheses (3) and (4) for $\ell=t-1$ we get
\begin{multline}
\frac{1}{n}\|f_{t-1}(\bm{u}^{t-1},\bm{x}^{t-1},\bm{x}_0(\varepsilon))-f_{t-1}(\alpha_{t-2}\bm{x}^*+\bm{\widetilde{u}}^{t-1},\mu_{t-1}\bm{x}^*+\bm{\widetilde{x}}^{t-1},\bm{x}_0(\varepsilon))\|^2 \\
\le L_1\left(\frac{\|\Delta^{t-2}_1\|^2}{n}+\frac{\|\Delta^{t-2}_2\|^2}{n}\right)
\overset{a.s.}{\rightarrow} 0.
\end{multline}
Thus we have
\begin{equation}
\label{eq:s_2_4}
|c_{t-1}|^2\frac{1}{n}\|f_{t-2}(\bm{u}^{t-2},\bm{x}^{t-2},\bm{x}_0(\varepsilon))-f_{t-2}(\alpha_{t-3}\bm{x}^*+\bm{\widetilde{u}}^{t-2},\mu_{t-2}\bm{x}^*+\bm{\widetilde{x}}^{t-2},\bm{x}_0(\varepsilon))\|^2 \overset{a.s.}{\rightarrow} 0.
\end{equation}
Using \eqref{eq:s_2_1}, \eqref{eq:s_2_2}, \eqref{eq:s_2_3}, and \eqref{eq:s_2_4}, we have
\[
\lim_{n \rightarrow \infty}\frac{\|\Delta^t_2\|^2}{n} \overset{a.s.}{=} 0.
\]
\paragraph{Step 5: Hypothesis (1)}
Now observe that using \revsag{the} partially pseudo-Lipschitz property of $\phi$, we have for a constant $C_1>0$
\begin{equation}
\label{eq:phi_pseudo_lips}	
\begin{aligned}
& \left|\phi(\alpha_{t-1}x^*_i+\widetilde{u}^{t}_i, \mu_tx^*_i+\widetilde{x}^t_i, x_{0,i}(\varepsilon))-\phi(u^{t}_i, x^t_i, x_{0,i}(\varepsilon))\right| \\
& ~~~~\le C_1(|\Delta^t_{1,i}|+|\Delta^t_{2,i}|)(1+|u^{t}_i|+|x^t_i|+|\alpha_{t-1}x^*_i+\widetilde{u}^{t}_i|+|\mu_tx^*_i+\widetilde{y}^t_i|+|x_{0,i}(\varepsilon)|)\\
& ~~~~\le 2C_1(|\Delta^t_{1,i}|+|\Delta^t_{2,i}|)(1+|\Delta^{t}_{1,i}|+|\Delta^t_{2,i}|+|\alpha_{t-1}x^*_i+\widetilde{u}^{t}_i|+|\mu_tx^*_i+\widetilde{x}^t_i|+|x_{0,i}(\varepsilon)|).
\end{aligned}
\end{equation}
Using \eqref{eq:phi_pseudo_lips} and the Cauchy Schwarz inequality, we get
\begin{multline}
\frac{1}{n}\sum_{i=1}^{n}\left|\phi(\alpha_{t-1}x^*_i+\widetilde{u}^{t}_i, \mu_tx^*_i+\widetilde{x}^t_i, x_{0,i}(\varepsilon))-\phi(u^{t}_i, x^t_i, x_{0,i}(\varepsilon))\right|\\\le\frac{2L}{n}\sum_{i=1}^{n}\{|\Delta^{t}_{1,i}|+|\Delta^{t}_{1,i}|^2+|\Delta^{t}_{1,i}||\alpha_{t-1}x^*_i+\widetilde{u}^{t}_i|+|\Delta^{t}_{1,i}||\Delta^{t}_{2,i}|\\\hspace{0.8in}+|\Delta^{t}_{1,i}||\mu_tx^*_i+\widetilde{x}^t_i|+|\Delta^{t}_{1,i}||x_{0,i}(\varepsilon)|+|\Delta^{t}_{2,i}|+|\Delta^{t}_{2,i}|^2\\\hspace{1.8in}+|\Delta^{t}_{2,i}||\alpha_{t-1}x^*_i+\widetilde{u}^{t}_i|+|\Delta^{t}_{1,i}||\Delta^{t}_{2,i}|+|\Delta^{t}_{2,i}||\mu_tx^*_i+\widetilde{x}^t_i|+|\Delta^{t}_{2,i}||x_{0,i}(\varepsilon)|\}\\
\le\frac{2L}{n}\{\sqrt{n}\|\Delta^{t}_{1}\|+\|\Delta^{t}_{1}\|^2+\|\Delta^{t}_{1}\|\|\alpha_{t-1}\bm{x}^*+\widetilde{\bm{u}}^{t}\|+2\|\Delta^{t}_{1}\|\|\Delta^{t}_{2}\|\\+\|\Delta^{t}_{1}\|\|\mu_t\bm{x}^*+\widetilde{\bm{x}}^t\|+\|\Delta^{t}_{1}\|\|\bm{x}_{0}(\varepsilon)\|+\sqrt{n}\|\Delta^t_2\|+\|\Delta^t_2\|^2\\+\|\Delta^{t}_{2}\|\|\alpha_{t-1}\bm{x}^*+\widetilde{\bm{u}}^{t}\|+\|\Delta^{t}_{2}\|\|\mu_t\bm{x}^*+\widetilde{\bm{x}}^t\|+\|\Delta^{t}_{2}\|\|\bm{x}_{0}(\varepsilon)\|\}\\.
\end{multline}
Thus, using Hypotheses (3) and (4) for $\ell = t$, we have
\[
\frac{1}{n}\sum_{i=1}^{n}\left|\phi(\alpha_{t-1}x^*_i+\widetilde{u}^{t}_i, \mu_tx^*_i+\widetilde{x}^t_i, x_{0,i}(\varepsilon))-\phi(u^{t}_i, x^t_i, x_{0,i}(\varepsilon))\right| \overset{a.s.}{\rightarrow} 0.
\]
\paragraph{Step 6: Hypothesis (5)} Now we try to prove Hypothesis {(5)} for $\ell = t$. We first observe that
\begin{multline}
\Delta^t_{3,i} = \left((\sqrt{n\mu/p})\langle\bm{x}^*,f_{t}(\bm{u}^{t},\bm{x}^t,\bm{x}_0(\varepsilon))\rangle_n-\beta_{t}\right)v^*_i + \widetilde{p}_{t}g_{t-1}(\beta_{t-1}v^*_i+\widetilde{v}^{t-1}_i, v_{0,i}(\varepsilon))\\\hspace{0.7in}+\langle\bm{L}_{i,*},f_{t}(\bm{u}^{t},\bm{x}^t,\bm{x}_0(\varepsilon))-f_{t}(\alpha_{t-1}\bm{x}^*+\widetilde{\bm{u}}^{t-1},\mu_{t}\bm{x}^*+\widetilde{\bm{x}}^{t},\bm{x}_0(\varepsilon))\rangle-p_{t}g_{t-1}(v^{t-1}_i,{v}_{0,i}(\varepsilon)).
\end{multline}
By Jensen's inequality, \revsag{there exists a} constant $L_3>0$ \revsag{such that}
\begin{multline}
\frac{1}{p}\|\Delta^t_{3}\|^2 \le L_3\left(\sqrt{\frac{n\mu}{p}}\langle\bm{x}^*,f_{t}(\bm{u}^{t},\bm{x}^t,\bm{x}_0(\varepsilon))\rangle_n-\beta_{t}\right)^2\left(\frac{1}{p}\sum_{i=1}^{p}v^2_{0,i}(\varepsilon)\right) \\\hspace{0.5in}+L_3\frac{\uplambda_{\mbox{\tiny{max}}}(\bm{V}^\top\bm{V})}{p^2}\,\|f_{t}(\bm{u}^{t},\bm{x}^{t},\bm{x}_0(\varepsilon))-f_{t}(\alpha_{t-1} x^*_i+\widetilde{\bm{u}}^{t},\mu_t x^*_i+\widetilde{\bm{x}}^{t},\bm{x}_0(\varepsilon))\|^2\\\hspace{0.7in}+\frac{L_3}{p}\,|p_{t}|^2\|g_{t-1}(\bm{v}^{t-1},\bm{v}_0(\varepsilon))-g_{t-1}(\beta_{t-1}\bm{v}^*+\widetilde{\bm{v}}^{t-1},\bm{v}_0(\varepsilon))\|^2\\\quad+ L_3|\widetilde{p}_{t}-p_{t}|^2\frac{1}{p}\sum_{i=1}^{n}g^2_{t-1}(\beta_{t-1}v^*_i+\widetilde{v}^{t-1}_i,{v}_{0,i}(\varepsilon)).
\end{multline}
Using the partially pseudo-Lipschitz function { $\widehat\phi(x,y,z,w)=zf_{t}(\alpha_{t-1}z+x,\mu_t z+y,w)$} ({by Lemma \ref{lem:prop_pl}(2))}, definition of $\beta_{t}$ and Hypothesis (1) for $\ell=t-1$ we have
\begin{equation}
\label{eq:s_3_1}
\sqrt{\frac{n\mu}{p}}\langle\bm{x}^*,f_{t}(\bm{u}^{t},\bm{x}^t,\bm{x}_0(\varepsilon))\rangle_n-\beta_{t} \overset{a.s.}{\rightarrow} 0.
\end{equation}
Since $p/n \rightarrow 1/c$, using Corollary 5.35 of \cite{vershynin_2012}, we have
\[
\limsup_{n \rightarrow \infty} \frac{\uplambda_{\mbox{\tiny{max}}}(\bm{V}^\top\bm{V})}{p}< \infty \quad \mbox{{ a.s.}}
\]
Using \revsag{the} partially Lipschitz property of $f_{t}$, we have
\[
\|f_{t}(\bm{u}^{t},\bm{x}^{t},\bm{x}_0(\varepsilon))-f_{t}(\alpha_{t-1} \bm{x}^*+\widetilde{\bm{u}}^{t},\mu_t \bm{x}^*+\widetilde{\bm{x}}^{t},\bm{x}_0(\varepsilon))\|^2 \le L_3\,(\|\Delta^{t}_1\|^2 + \|\Delta^{t}_2\|^2)
\]
By induction hypotheses (3) and (4) for $\ell=t$ we get
\begin{equation}
\frac{1}{n}\|f_{t}(\bm{u}^{t},\bm{x}^{t},\bm{x}_0(\varepsilon))-f_{t}(\alpha_{t-1} \bm{x}^*+\widetilde{\bm{u}}^{t},\mu_t \bm{x}^*+\widetilde{\bm{x}}^{t},\bm{x}_0(\varepsilon))\|^2 \le L_3\,\left(\frac{\|\Delta^{t}_1\|^2}{n} + \frac{\|\Delta^{t}_2\|^2}{n}\right)\overset{a.s.}{\rightarrow} 0.
\end{equation}
This implies
\begin{equation}
\label{eq:s_3_2}
\limsup_{n \rightarrow \infty} \frac{\uplambda_{\mbox{\tiny{max}}}(\bm{V}^\top\bm{V})}{p^2}\,\|f_{t}(\bm{u}^{t},\bm{x}^{t},\bm{x}_0(\varepsilon))-f_{t}(\alpha_{t-1} \bm{x}^*+\widetilde{\bm{u}}^{t},\mu_t \bm{x}^*+\widetilde{\bm{x}}^{t},\bm{x}_0(\varepsilon))\|^2 \overset{a.s.}{=}0.
\end{equation}
Since, {$\widehat\psi(x,y,r)=g^2_{t-1}(\beta_{t-1} y+x,r)$} is partially pseudo-Lipschitz ({by Lemma \ref{lem:prop_pl}(1)}), using \eqref{eq:slln_3}, we have almost surely
\[
\limsup_{p \rightarrow \infty} \frac{1}{p}\sum_{i=1}^pg^2_{t-1}(\beta_{t-1}v^*_i+\widetilde{v}^{t-1}_i,v_{0,i}(\varepsilon)) < \infty.
\] 
As $f^{(1)}_{t}(x,y,r)=\frac{\partial f_{t}}{\partial x}(x,y,r)$ is partially Lipschitz, we have using Hypothesis (1) for $\ell = t$
\begin{equation}
\label{eq:lable}
|\widetilde{p}_{t}-p_{t}| = \Bigg|\frac{1}{n}\sum_{i=1}^{n}[f^{(1)}_{t}(\alpha_{t-1}x^*_i+\widetilde{u}^{t}_i,\mu_{t}x^*_i+\widetilde{x}^{t}_i,x_{0,i}(\varepsilon))-f^{(1)}_{t}(u^{t}_i,x^{t}_i,x_{0,i}(\varepsilon))]\Bigg|\\
\overset{a.s}{\rightarrow} 0.
\end{equation}
This implies
\begin{equation}
\label{eq:s_3_3}
|\widetilde{p}_{t}-p_{t}|^2\frac{1}{p}\sum_{i=1}^pg^2_{t-1}(\beta_{t-1}v^*_i+\widetilde{v}^{t-1}_i,v_{0,i}(\varepsilon)) \overset{a.s.}{\rightarrow} 0.
\end{equation}
Using arguments similar to \eqref{eq:lable}, we can show that 
\[
{ \limsup_{n\to\infty}}\,|p_{t}|^2 < \infty \quad \mbox{a.s.}
\]
Then, using the induction hypothesis (5) for $\ell=t-1$, we get
\begin{equation}
\frac{1}{p}\|g_{t-1}(\bm{v}^{t-1},\bm{v}_0(\varepsilon))-g_{t-1}(\beta_{t-1}\bm{v}^*+\widetilde{\bm{v}}^{t-1},\bm{v}_0(\varepsilon))\|^2 \le L_3\,\frac{\|\Delta^{t-1}_3\|^2}{p}\\
\overset{a.s.}{\rightarrow} 0.
\end{equation}
Thus, we have the following.
\begin{equation}
\label{eq:s_3_4}
|p_{t}|^2\frac{1}{p}\|g_{t-1}(\bm{v}^{t-1},\bm{v}_0(\varepsilon))-g_{t-1}(\beta_{t-1}\bm{v}^*+\widetilde{\bm{v}}^{t-1},\bm{v}_0(\varepsilon))\|^2 \overset{a.s.}{\rightarrow} 0.
\end{equation}
Using \eqref{eq:s_3_1}, \eqref{eq:s_3_2}, \eqref{eq:s_3_3}, and \eqref{eq:s_3_4} we have
\[
\lim_{n \rightarrow \infty}\frac{\|\Delta^t_3\|^2}{n} \overset{a.s.}{=} 0.
\]
\paragraph{Step 7: Hypothesis (2)} Again using the partially pseudo-Lipschitz property of $\psi$ and \revsag{the} Cauchy--Schwarz inequality we get
\begin{multline}
\frac{1}{p}\sum_{i=1}^{p}\left|\psi(\beta^{t}v^*_i+\widetilde{v}^{t}_i, v_{0,i}(\varepsilon))-\psi(v^{t}_i, v_{0,i}(\varepsilon))\right|\\\le\frac{2L}{p}\{\sqrt{p}\|\Delta^{t}_{3}\|+\|\Delta^{t}_{3}\|^2+\|\Delta^{t}_{3}\|^2+\|\Delta^{t}_{3}\|\|\beta_t\bm{v}^*+\widetilde{\bm{v}}^t\|+\|\Delta^{t}_{1}\|\|\bm{v}_{0}(\varepsilon)\|.
\end{multline}
Using Hypothesis (5) for $\ell = t$ gives us
\[
\frac{1}{p}\sum_{i=1}^{p}\left|\psi(\beta^{t}v^*_i+\widetilde{v}^{t}_i, v_{0,i}(\varepsilon))-\psi(v^{t}_i, v_{0,i}(\varepsilon))\right| \overset{a.s.}{\rightarrow} 0.
\]

\subsection{Lemmas Used to Prove Results in Section \ref{AMP}}

Variants of the following lemmas have previously appeared in \cite{BM11}.
We include their statement and proof here mainly 
for the manuscript to be self-contained.

\begin{lem}
	\label{lem:key_help_lem}
Consider a sequence of matrices $\bm{A} \sim GOE(n)$ and two sequences of vectors $\bm{u},\bm{v} \in \mathbb{R}^n$ such that $\|\bm{u}\|=\|\bm{v}\|=\sqrt{n}$.
\begin{enumerate}[label=(\alph*)]
\item $\langle\bm{v},\bm{A}\bm{u}\rangle_n \overset{a.s.}{\rightarrow} 0$,
\item Let $\bm{P} \in \mathbb{R}^{n \times n}$ be a sequence of projection matrices such that there exists a constant $t$ that satisfies for all $n$, $\mbox{rank}\,(\bm{P}) \le t$. Then $\frac{1}{n}\|\bm{P}\bm{A}\bm{u}\|^2_2 \overset{a.s.}{\rightarrow} 0$,
\item $\frac{1}{n}\|\bm{A}\bm{u}\|^2_2 \overset{a.s.}{\rightarrow} 1$,
\item { There exists a sequence of random vectors $\bm{z} \sim N(0,\bm{I}_n)$ such that for any sequence of functions $\varphi_n:(\mathbb{R}^n)^r \times \mathbb{R}^n \times (\mathbb{R}^n)^m \rightarrow \mathbb{R}$, $n \ge 1$ satisfying 
\begin{multline}
|\varphi_n(\bm h_1,\ldots,\bm h_r, \bm x, \bm \xi_1,\ldots,\bm \xi_m)-\varphi_n(\bm h_1,\ldots,\bm h_r, \bm y, \bm \xi_1,\ldots,\bm \xi_m)| \\\le L\, \left(1+\sum\limits_{i=1}^{r}\frac{\|\bm{h}_i\|}{\sqrt{n}}+\frac{\|\bm{x}\|}{\sqrt{n}}+\frac{\|\bm{y}\|}{\sqrt{n}}+\sum\limits_{j=1}^{m}\frac{\|\bm{\xi}_j\|}{\sqrt{n}}\right)\frac{\|\bm{x}-\bm{y}\|}{\sqrt{n}},
\end{multline}
where for all $i \in [r]$ and $j \in [m]$,
\[
\limsup\limits_{n \rightarrow \infty}\frac{\|\bm{h}_i\|}{\sqrt{n}} < \infty \quad \quad \limsup\limits_{n \rightarrow \infty}\frac{\|\bm{\xi}\|}{\sqrt{n}} < \infty.
\]
Then we have
\[
\varphi_n(\bm h_1,\ldots,\bm h_r,\bm{A}\bm{u},\bm \xi_1,\ldots,\bm \xi_m)-\varphi_n(\bm h_1,\ldots,\bm h_r,\bm{z},\bm \xi_1,\ldots,\bm \xi_m) \overset{a.s.}{\rightarrow} 0.
\]}
\end{enumerate}
\end{lem}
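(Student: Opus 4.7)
}

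The plan is to treat $\bm u$ and $\bm v$ as deterministic sequences (the lemma will be applied in the AMP analysis after the conditioning step, where the required independence is built in), and then exploit the fact that a GOE$(n)$ matrix acts on a fixed vector as a Gaussian vector with an easily computed covariance. For part (a), I would observe that $n\langle\bm v,\bm A\bm u\rangle_n=\bm v^\top\bm A\bm u$ is a centered Gaussian random variable whose variance can be computed entrywise as
\[
\mathrm{Var}(\bm v^\top\bm A\bm u)=\sum_{i\neq j}\frac{v_iu_j\cdot v_iu_j+v_iu_j\cdot v_ju_i}{n}+\frac{2}{n}\sum_i v_i^2u_i^2=\frac{\|\bm u\|^2\|\bm v\|^2+\langle\bm u,\bm v\rangle^2}{n}+O(1),
\]
so $\langle\bm v,\bm A\bm u\rangle_n\sim N(0,O(1/n^3))$ after dividing by $n$; Gaussian tail bounds give $\mathbb{P}(|\langle\bm v,\bm A\bm u\rangle_n|>\varepsilon)\le 2\exp(-cn\varepsilon^2)$, and Borel--Cantelli yields the a.s.\ convergence. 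Part (b) follows by writing $\bm P=\sum_{i=1}^{t}\bm p_i\bm p_i^\top$ with orthonormal $\bm p_i$, so $\|\bm P\bm A\bm u\|^2=\sum_i(\bm p_i^\top\bm A\bm u)^2$; each summand is the square of a Gaussian with variance $\|\bm u\|^2/n+(\bm p_i^\top\bm u)^2/n=O(1)$, hence $\frac{1}{n}\|\bm P\bm A\bm u\|^2\le \frac{t}{n}\cdot O_{a.s.}(\log n)\to 0$ after a union bound over the rank-$t$ projection.

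For part (c), I would directly compute the mean by noticing $(\bm A\bm u)_i^2=\sum_{j,k}A_{ij}A_{ik}u_ju_k$ with cross-terms vanishing, giving $\mathbb{E}\|\bm A\bm u\|^2=\|\bm u\|^2+\|\bm u\|^2/n=n(1+1/n)$, so that $\mathbb{E}\frac{1}{n}\|\bm A\bm u\|^2\to 1$. The concentration around this mean follows from the Hanson--Wright inequality applied to the quadratic form $\bm u^\top\bm A^2\bm u$ in the Gaussian entries of $\bm A$, giving exponential tails of order $\exp(-cn\varepsilon^2)$, and Borel--Cantelli again yields the a.s.\ convergence.

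Part (d) is the key step, and the strategy is a coupling argument. Conditional on $\bm u$, $\bm A\bm u$ is $N(\bm 0,\bm I_n+\bm u\bm u^\top/n)=N(\bm 0,\bm I_n+\bm e\bm e^\top)$ where $\bm e=\bm u/\sqrt{n}$ is a unit vector, so one can write $\bm A\bm u\stackrel{d}{=}\bm z+(\sqrt{2}-1)(\bm e^\top\bm z)\bm e$ for $\bm z\sim N(\bm 0,\bm I_n)$. Couple this representation so that $\|\bm A\bm u-\bm z\|=(\sqrt{2}-1)|\bm e^\top\bm z|$, which is $O_{a.s.}(\sqrt{\log n})$ by standard Gaussian tails; in particular $\|\bm A\bm u-\bm z\|/\sqrt{n}\xrightarrow{a.s.}0$. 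Plugging into the hypothesized Lipschitz-type inequality on $\varphi_n$ gives
\[
|\varphi_n(\bm h_1,\ldots,\bm A\bm u,\ldots,\bm\xi_m)-\varphi_n(\bm h_1,\ldots,\bm z,\ldots,\bm\xi_m)|\le L\Bigl(1+\sum_i\tfrac{\|\bm h_i\|}{\sqrt n}+\tfrac{\|\bm A\bm u\|}{\sqrt n}+\tfrac{\|\bm z\|}{\sqrt n}+\sum_j\tfrac{\|\bm\xi_j\|}{\sqrt n}\Bigr)\tfrac{\|\bm A\bm u-\bm z\|}{\sqrt n}.
\]
The prefactor is bounded a.s.\ in $n$ by the assumed boundedness of $\|\bm h_i\|/\sqrt n,\|\bm\xi_j\|/\sqrt n$, together with part (c) and the SLLN ($\|\bm z\|/\sqrt n\to 1$), so the right-hand side tends to $0$ almost surely.

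The main obstacle I anticipate is not any single step in isolation, but keeping the coupling in part (d) and the independence assumption compatible with the way this lemma is later invoked inside the conditioning machinery of Lemma~\ref{lem:AMP_lem_tech}: there, $\bm u$ and $\bm v$ are themselves functions of (a conditioned) AMP state, and one has to verify that the new randomness $\widetilde{\bm A}$ (or $\widetilde{\bm N}$) used in parts (a)--(d) is indeed independent of $\bm u,\bm v,\bm h_i,\bm\xi_j$ after conditioning, and that the deterministic-in-$n$ bounds on $\|\bm u\|/\sqrt n$, $\|\bm v\|/\sqrt n$, $\|\bm h_i\|/\sqrt n$ propagate to the a.s.\ bounds used above. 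Careful bookkeeping of sigma-algebras, rather than any new idea, will resolve this.
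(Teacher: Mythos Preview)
Your proposal is correct and follows essentially the same route as the paper: a Gaussian tail plus Borel--Cantelli argument for (a), the rank-$t$ expansion for (b), and for (d) the identical coupling $\bm A\bm u=\bm\Sigma^{1/2}\bm z=\bm z+(\sqrt2-1)\frac{1}{n}\bm u\bm u^\top\bm z$ together with the Lipschitz bound. Two small remarks. First, your variance bookkeeping in (a) has a slip: with $\|\bm u\|=\|\bm v\|=\sqrt n$ your formula gives $\mathrm{Var}(\bm v^\top\bm A\bm u)=O(n)$, hence $\langle\bm v,\bm A\bm u\rangle_n$ has variance $O(1/n)$, not $O(1/n^3)$; the paper sidesteps this by writing $\bm A=\bm G+\bm G^\top$ with $G_{ij}$ i.i.d.\ $N(0,1/(2n))$, so each of $\frac1n\bm v^\top\bm G\bm u$ and $\frac1n\bm v^\top\bm G^\top\bm u$ is $N(0,1/(2n))$ and the conclusion is immediate. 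Second, for (c) the paper does not use Hanson--Wright but simply applies (d) with $\varphi_n(\bm x)=\|\bm x\|^2/n$ and then the SLLN for $\|\bm z\|^2/n$; this is more economical and avoids checking the matrix norm condition in Hanson--Wright, though your route is perfectly valid.
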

\begin{proof}
First note that for any fixed $k>0$, if we have a sequence of random variables $X_n$ defined on the same probability space such that $X_n \sim N(0,k/n)$. Then we have the following inequality
\[
\mathbb{P}\left(|X_n| \ge \frac{1}{n^{1/4}}\right) \le 2 \exp\left(-\frac{\sqrt{n}}{2k}\right).
\]
As $\sum\limits_{n=1}^{\infty}\exp\left(-\frac{\sqrt{n}}{2k}\right) < \infty$, using \revsag{the} Borel Cantelli Lemma we have
\[
|X_n| \overset{a.s.}{\rightarrow} 0.
\]
\begin{enumerate}[label=(\alph*)]
\item Recall that $\bm{A}=\bm{G}+\bm{G}^\top$ where $G_{i,j}$ are i.i.d $N(0,1/(2n))$ random variables, thus
\[
\frac{1}{n}\langle\bm{v},\bm{A}\bm{u}\rangle = \frac{1}{n}\langle\bm{v},\bm{G}\bm{u}\rangle+\frac{1}{n}\langle\bm{v},\bm{G}^\top\bm{u}\rangle.
\]
The random variable $\frac{1}{n}\langle\bm{v},\bm{G}\bm{u}\rangle$ is a centered Gaussian random variable with variance $1/2n$. Thus $\frac{1}{n}\langle\bm{v},\bm{G}\bm{u}\rangle \overset{a.s.}{\rightarrow} 0$. Similarly we can show $\frac{1}{n}\langle\bm{v},\bm{G}^\top\bm{u}\rangle \overset{a.s.}{\rightarrow} 0$.
\item Suppose $\bm{v}_1,\ldots,\bm{v}_k$, an orthogonal basis of the image $\bm{P}$, such that $\|\bm{v}_i\|=\sqrt{n}$. As $k$ is bounded by $t$, by part (a)
\[
\frac{1}{n}\|\bm{P}\bm{A}\bm{u}\|^2_2 = \frac{1}{n}\sum\limits_{i=1}^{k}\left(\frac{\langle\bm{v},\bm{A}\bm{u}\rangle}{\|\bm{v}_j\|}\right)^2 = \sum\limits_{i=1}^{k}\left(\frac{1}{n}\langle\bm{v},\bm{A}\bm{u}\rangle\right)^2 \overset{a.s.}{\rightarrow} 0.
\]
\item By (d), we have a sequence of random vectors $\bm{z} \sim N(0,\bm{I}_n)$
\[
\frac{1}{n}\|\bm{A}\bm{u}\|^2_2-\frac{1}{n}\|\bm{z}\|^2_2 \overset{a.s.}{\rightarrow} 0.
\]
As $\frac{1}{n}\|\bm{z}\|^2_2 \overset{a.s.}{\rightarrow} 1$, we can show part (c).
{ \item We shall show this for $r=1$ and $m=1$, the case for higher $r$ and $m$ follows. It is easy to check that $\bm{A}\bm{u}$ is a centered Gaussian vector with covariance matrix $\bm{\Sigma}=\bm{I}_n+\frac{1}{n}\bm{u}\bm{u}^\top$. Thus there exists a Gaussian vector $\bm{z}\sim N(0,\bm{I}_n)$ such that $\bm{A}\bm{u}=\bm{\Sigma}^{1/2}\bm{z}=\bm{z}+(\sqrt{2}-1)\frac{1}{n}\bm{u}\bm{u}^\top\bm{z}$. By the property of $\varphi_n$ we have
\[
|\varphi_n(\bm h, \bm{A}\bm{u}, \bm \xi)-\varphi_n(\bm h, \bm{z}, \bm \xi)| \le L\, \left(1+\frac{\|\bm h\|}{\sqrt{n}}+\frac{\|\bm{A}\bm{u}\|}{\sqrt{n}}+\frac{\|\bm{z}\|}{\sqrt{n}}\right)\frac{\|\bm{A}\bm{u}-\bm{z}\|}{\sqrt{n}}.
\]
The law of large numbers imply, $\|z\|_2/\sqrt{n} \overset{a.s.}{\rightarrow} 1$, and we have $\|\bm{A}\bm{u}\|/\sqrt{n} \le \|\bm{\Sigma}^{1/2}\|_{\mbox{\tiny{op}}}\|\bm{z}\|/\sqrt{n} \newline \le \sqrt{2}\|\bm{z}\|/\sqrt{n} \overset{a.s.}{\rightarrow} \sqrt{2}$. Further
\[
\frac{\|\bm{A}\bm{u}-\bm{z}\|}{\sqrt{n}}=\frac{\|\left(\bm{\Sigma}^{1/2}-\bm{I}_n\right)\bm{z}\|}{\sqrt{n}}=\frac{1}{n^{3/2}}(\sqrt{2}-1)\|\bm{u}\bm{u}^\top\bm{z}\|=(\sqrt{2}-1)\frac{1}{n}\,|\bm{u}^\top\bm{z}|\overset{a.s.}{\rightarrow}0.
\]
The last assertion follows as $\frac{1}{n}\,|\bm{u}^\top\bm{z}|$ is a centered Gaussian with variance $1/n$.}
\end{enumerate}
\end{proof}
\begin{lem}
\label{lem:lem_10}
Let $(Z_1,\ldots,Z_t)$, $(\widetilde{Z}_1,\ldots,\widetilde{Z}_t)$ be sequences Gaussian random variables, where the two sequences are independent. Let $c_1,\ldots,c_t$ and $\tilde{c}_1,\ldots,\tilde{c}_t$ be strictly positive constants such that for all $i=1,\ldots,t$: \[\mbox{Var}(Z_i|Z_1,\ldots,Z_{i-1},\widetilde{Z}_1,\ldots,\widetilde{Z}_{i-1}) > c_i\] and \[\mbox{Var}(\widetilde{Z}_i|Z_1,\ldots,Z_{i-1},\widetilde{Z}_1,\ldots,\widetilde{Z}_{i-1}) > \tilde{c}_i.\] Further assume $\mathbb{E}\left\{Z^2_i\right\} \le K$ for all $i$ and $\mathbb{E}\left\{\widetilde{Z}^2_i\right\} \le L$. Let $Y$ be a random variable in the same probability space. 

Finally let $\ell:\mathbb{R}^3 \rightarrow \mathbb{R}$ be a Lipschitz function, with $(z,y)\mapsto \ell(z,y,Y)$ non-constant with positive probability (with respect to $Y$). Then there exists a positive constant $c^\prime_t$ such that
\[
\mathbb{E}\left\{[\ell(Z_t,\widetilde{Z}_t,Y)^2]\right\}-u^\top C^{-1} u > c^\prime_t,
\]
where $u \in \mathbb{R}^{t-1}$ is given by $u_i=\mathbb{E}\left\{\ell(Z_t,\widetilde{Z}_t,Y)\ell(Z_i,\widetilde{Z}_i,Y)\right\}$, and $C \in \mathbb{R}^{t-1}\times\mathbb{R}^{t-1}$ satisfies $C_{i,j}=\mathbb{E}\left\{\ell(Z_i,\widetilde{Z}_i,Y)\ell(Z_j,\widetilde{Z}_j,Y)\right\}$ for all $1 \le i,j \le t-1$.
\end{lem}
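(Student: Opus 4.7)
Write $\ell_i := \ell(Z_i,\widetilde{Z}_i,Y)$ for $i=1,\dots,t$. My first step would be to recognize that $u^\top C^{-1}u$ is the squared $L^2$-norm of the projection of $\ell_t$ onto $\mathrm{span}(\ell_1,\dots,\ell_{t-1})$, so that
\[
\mathbb{E}[\ell_t^2] - u^\top C^{-1}u = \min_{a\in\mathbb{R}^{t-1}}\mathbb{E}\Big[\big(\ell_t-\textstyle\sum_{i<t} a_i\ell_i\big)^2\Big].
\]
(Invertibility of $C$ is implicit, otherwise the quantity is undefined; one should also handle this case separately by a limiting/pseudo-inverse argument if needed.) Since $\sum_i a_i\ell_i$ is $\sigma(Y,Z_{1:t-1},\widetilde{Z}_{1:t-1})$-measurable, conditioning yields the key uniform-in-$a$ lower bound
\[
\mathbb{E}\Big[\big(\ell_t-\textstyle\sum_{i<t} a_i\ell_i\big)^2\Big] \;\ge\; \mathbb{E}\big[\mathrm{Var}\big(\ell(Z_t,\widetilde{Z}_t,Y)\,\big|\,Y,Z_{1:t-1},\widetilde{Z}_{1:t-1}\big)\big].
\]
Thus the task reduces to producing a positive constant $c'_t$ bounding the right-hand side from below, independently of $a$.

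Next, I would exploit joint Gaussianity of $(Z_{1:t},\widetilde{Z}_{1:t})$ together with the independence of the two sequences (and, as implicit in the application of this lemma, of $Y$): conditional on $(Y,Z_{1:t-1},\widetilde{Z}_{1:t-1})$, the pair $(Z_t,\widetilde{Z}_t)$ is jointly Gaussian with independent coordinates, conditional means that are (random) real numbers $\mu_Z,\mu_{\widetilde{Z}}$, and conditional variances $\sigma_Z^2\ge c_t$, $\sigma_{\widetilde{Z}}^2\ge \tilde{c}_t$. Writing $Z_t=\mu_Z+\sigma_Z W_1$, $\widetilde{Z}_t=\mu_{\widetilde{Z}}+\sigma_{\widetilde{Z}} W_2$ with independent $W_1,W_2\sim N(0,1)$, the inner conditional variance becomes $\mathrm{Var}_{W_1,W_2}\!\big[\ell(\mu_Z+\sigma_Z W_1,\mu_{\widetilde{Z}}+\sigma_{\widetilde{Z}} W_2,Y)\big]$.

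To turn this into a uniform positive bound, I would argue as follows. Pick a measurable event $E$ of positive $Y$-probability on which $(z,\tilde z)\mapsto \ell(z,\tilde z,Y)$ is non-constant; on $E$ choose (measurably in $Y$) two points $(z_1,\tilde z_1),(z_2,\tilde z_2)$ with $|\ell(z_1,\tilde z_1,Y)-\ell(z_2,\tilde z_2,Y)|\ge 2\delta(Y)>0$. By the Lipschitz property, $\ell$ differs by at least $\delta(Y)$ on small balls of some radius $r(Y)>0$ around these points. Since any bivariate Gaussian density with variances $\ge c_t\wedge \tilde c_t$ assigns at least a strictly positive probability, depending only on the second-moment bounds $K,L$ on $\mathbb{E}[Z_i^2],\mathbb{E}[\widetilde{Z}_i^2]$ (and on $c_t,\tilde c_t,r(Y)$), to any two disjoint balls of radius $r(Y)$ in $\mathbb{R}^2$, the conditional variance is bounded below on $E$ by a positive function of $Y$; integrating over $E$ gives the desired positive constant $c'_t$.

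The main obstacle will be the last step: obtaining a \emph{quantitative} (not merely positive) lower bound on the conditional variance that does not depend on the realization of the past. The subtlety is that the conditional Gaussian means $\mu_Z,\mu_{\widetilde{Z}}$ are random and can be large, so the Gaussian mass at the pre-chosen pair of points can degenerate. I would address this by combining Chebyshev's inequality (using $\mathbb{E}[Z_i^2]\le K$, $\mathbb{E}[\widetilde{Z}_i^2]\le L$) to first restrict to a high-probability event on which $|\mu_Z|,|\mu_{\widetilde{Z}}|$ are bounded, then selecting the reference pair $(z_1,\tilde z_1),(z_2,\tilde z_2)$ measurably within this bounded region using the non-constancy of $\ell(\cdot,\cdot,Y)$, and finally using uniform lower bounds on Gaussian mass over bounded shifts of the mean to conclude. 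A tidier alternative, which I would fall back on if the quantitative route becomes cumbersome, is a contradiction/compactness argument: if no such $c'_t$ exists there is a sequence $a^{(k)}$ with $\mathbb{E}[(\ell_t-\sum a_i^{(k)}\ell_i)^2]\to 0$; a bounded subsequence forces $\ell_t$ to be an a.s.\ linear combination of $\ell_1,\dots,\ell_{t-1}$, contradicting the Gaussian anti-concentration plus non-constancy conclusion above, while $\|a^{(k)}\|\to\infty$ contradicts invertibility of $C$ after normalization.
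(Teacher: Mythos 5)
Your proposal is correct, but it takes a genuinely different route from the paper's proof. The paper argues by contradiction: compactness of the admissible covariance pairs $(Q,Q')$ reduces the claim to showing the quantity cannot vanish; if it did, the Schur complement formula would make the Gram matrix of $(\ell_1,\dots,\ell_t)$ singular, so some nontrivial combination $\sum_i a_i\ell(Z_i,\widetilde{Z}_i,Y)$ would vanish almost surely; being Lipschitz and non-constant on a $Y$-event of positive probability, this combination is nonzero on a set of positive Lebesgue measure, which forces the Gaussian law to be degenerate and contradicts the conditional-variance hypotheses. You instead prove the bound directly: you identify $\mathbb{E}[\ell_t^2]-u^\top C^{-1}u$ with $\min_a\mathbb{E}[(\ell_t-\sum_{i<t}a_i\ell_i)^2]$, bound this uniformly in $a$ from below by $\mathbb{E}\left[\mathrm{Var}\left(\ell_t\mid Y,Z_{1:t-1},\widetilde{Z}_{1:t-1}\right)\right]$, and then exploit the conditional Gaussian structure (conditional variances at least $c_t,\tilde{c}_t$, conditional means with second moments at most $K,L$) together with Lipschitz continuity and non-constancy of $\ell(\cdot,\cdot,Y)$ to get a quantitative anti-concentration bound. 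What your route buys is an explicit constant depending only on $c_t,\tilde{c}_t,K,L$, the Lipschitz constant and the law of $Y$ — hence uniform over the covariance class, which the paper obtains only through compactness — and it makes explicit the independence of $Y$ from the two Gaussian sequences, which is implicit in the paper's proof as well and is genuinely needed (with $Y=Z_2$ and $\ell(z,\tilde z,y)=z-y$ the conclusion fails). Two small repairs to your sketch: fix the reference points and the gap on a single sub-event, e.g.\ choose $R$ with $\mathbb{P}(E_R)>0$ where $E_R$ is the event that $\ell(\cdot,\cdot,Y)$ differs by at least $1/R$ at two points of $[-R,R]^2$; this bounded region need not coincide with the Chebyshev region constraining the conditional means $\mu_Z,\mu_{\widetilde{Z}}$ — both being bounded already gives a uniform positive Gaussian mass for the two small balls. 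Also, since $C$ is invertible the minimizer $a=C^{-1}u$ is attained, so your compactness fallback (which is essentially the paper's argument) is not needed once the conditional-variance bound is in place.
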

\begin{proof}
Let us denote by $Q$ the covariance of the Gaussian vectors $Z_1,\ldots,Z_t$, and $Q^\prime$ the covariance of the Gaussian vectors $\widetilde{Z}_1,\ldots,\widetilde{Z}_t$. The set of matrices $Q,Q^\prime$ satisfying the constraints with constants $c_1,\ldots,c_t,K$ is compact. So if the thesis does not hold then there must exist a covariance matrix
\begin{equation}
\label{eq:d_2}
\mathbb{E}\left\{[\ell(Z_t,\widetilde{Z}_t,Y)]^2\right\}-u^\top C^{-1} u =0.
\end{equation}
Let $S \in \mathbb{R}^{t \times t}$ be the matrix with the entries $S_{i,j}=\mathbb{E}\left\{\ell(Z_i,\widetilde{Z}_i,Y)\ell(Z_j,\widetilde{Z}_j,Y)\right\}$.
Then \eqref{eq:d_2} implies that $S$ is not invertible by \revsag{the} Schur Complement  Formula. Therefore, there exist non-vanishing constants $a_1,\ldots,a_\ell$ such that
\[
a_1\ell(Z_1,\widetilde{Z}_1,Y)+\ldots+a_t\ell(Z_t,\widetilde{Z}_t,Y) \overset{a.s.}{=}0.
\]
The function $(z_1,\ldots,z_t) \mapsto a_1\ell(z_1,\tilde{z}_1,Y)+\ldots+a_t\ell(z_t,\tilde{z}_t,Y)$ is Lipschitz and non-constant. Hence there is a set $\mathcal{A} \subseteq \mathbb{R}^t$ of positive Lebesgue Measure such that it is non-vanishing on $\mathcal{A}$.  Therefore, $\mathcal{A}$ must have zero measure under the law of $(Z_1,\ldots,Z_t)$ and $(\widetilde{Z}_1,\ldots,\widetilde{Z}_t)$, i.e., $\lambda_{\mbox{\tiny{min}}}(Q)=0$ and $\lambda_{\mbox{\tiny{min}}}(Q^\prime)=0$. This implies there exists $a^\prime_1,\ldots,a^\prime_t$ and $b^\prime_1,\ldots,b^\prime_t$ such that
\[
a^\prime_1Z_1+\ldots+a^\prime_tZ_t \overset{a.s.}{=}0,
% \]
\quad\mbox{and}\quad
% \[
b^\prime_1\widetilde{Z}_1+\ldots+b^\prime_t\widetilde{Z}_2 \overset{a.s.}{=}0.
\]
If $t_{*}=\max\{i \in \{1,\ldots,t\}:a^\prime_i \neq 0\}$ and $s_{*}=\max\{i \in \{1,\ldots,t\}:b^\prime_i \neq 0\}$, this implies
\[
Z_{t_{*}}\overset{a.s.}{=}\sum\limits_{i=1}^{t_{*}-1}(-a^\prime_i/a^\prime_{t_*})Z_i,
% \]
\quad\mbox{and}\quad
% \[
\widetilde{Z}_{t_{*}}\overset{a.s.}{=}\sum\limits_{i=1}^{t_{*}-1}(-b^\prime_i/b^\prime_{t_{*}})\widetilde{Z}_i.
\]
This violates the assumption of the hypothesis.
\end{proof}
\begin{lem}
The vectors
\[
\bm{\upalpha}_t = (\upalpha^{t}_{1},\ldots,\upalpha^{t}_{t-1}) = \left[\frac{M^\top_tM_t}{p}\right]^{-1}\frac{M^\top_t\bm{m}^t}{p},
\]
and
\[
\bm{\upbeta}_t = (\upbeta^{t}_{1},\ldots,\upbeta^{t}_{t-1}) = \left[\frac{Q^\top_tQ_t}{n}\right]^{-1}\frac{Q^\top_t\bm{q}^t}{n},
\]
have finite limits as $p ,n\rightarrow \infty$.
\end{lem}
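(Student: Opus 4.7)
The plan is to show that both the Gram-type matrices $M_t^\top M_t/p$ and $Q_t^\top Q_t/n$ as well as the cross-product vectors $M_t^\top \bm{m}^t/p$ and $Q_t^\top \bm{q}^t/n$ converge almost surely to finite limits, and that the two Gram matrices have limits that are strictly positive definite. The conclusion then follows by the continuous mapping theorem.

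First, note that each entry of $M_t^\top M_t/p$ is of the form $\langle \bm{m}^r,\bm{m}^s\rangle_p = \langle \sfg_r(\bm{b}^r,\bm{\omega}_0,\bm{v}_0), \sfg_s(\bm{b}^s,\bm{\omega}_0,\bm{v}_0)\rangle_p$, and similarly each entry of $M_t^\top \bm{m}^t/p$ has the same form. Since the $\sfg_\ell$'s are partially Lipschitz, their pairwise products are partially pseudo-Lipschitz by Lemma~\ref{lem:prop_pl}(1), so Lemma~\ref{lem:AMP_lem_tech}(b) (applied through its second displayed equation) guarantees that each such empirical average converges almost surely to a finite expectation under the appropriate jointly Gaussian law. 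An identical argument using the first displayed equation of Lemma~\ref{lem:AMP_lem_tech}(b), together with the partial Lipschitz property of the $\sff_\ell$'s, gives the almost sure convergence of every entry of $Q_t^\top Q_t/n$ and $Q_t^\top \bm{q}^t/n$ to finite limits. Call the resulting deterministic limits $\overline{M}_t$, $\overline{m}_t$, $\overline{Q}_t$, and $\overline{q}_t$ respectively.

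The key step is showing $\overline{M}_t$ and $\overline{Q}_t$ are invertible. For $\overline{Q}_t$, the Schur complement of its leading $(s{-}1){\times}(s{-}1)$ block inside the $s{\times}s$ block is exactly $\lim_{n}\langle \bm{q}^{s-1}_\perp,\bm{q}^{s-1}_\perp\rangle_n$, because $\bm{q}^{s-1}_\perp$ is by construction the residual of $\bm{q}^{s-1}$ after projection onto the earlier columns of $Q_s$. By Lemma~\ref{lem:AMP_lem_tech}(g), this Schur complement exceeds $\rho_{s-1}>0$ for every $s\le t$, so $\overline{Q}_t$ is positive definite by the usual LDL$^\top$ decomposition. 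The same argument based on $\lim_p \langle \bm{m}^s_\perp,\bm{m}^s_\perp\rangle_p > \varkappa_s > 0$ shows $\overline{M}_t$ is positive definite. Consequently, for all $n$ large enough, $M_t^\top M_t/p$ and $Q_t^\top Q_t/n$ are invertible almost surely and their inverses converge to $\overline{M}_t^{-1}$ and $\overline{Q}_t^{-1}$.

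Combining the two pieces by the continuous mapping theorem,
\begin{equation*}
\bm{\upalpha}_t \;\overset{a.s.}{\longrightarrow}\; \overline{M}_t^{\,-1}\,\overline{m}_t,
\qquad
\bm{\upbeta}_t \;\overset{a.s.}{\longrightarrow}\; \overline{Q}_t^{\,-1}\,\overline{q}_t,
\end{equation*}
and both limits are finite deterministic vectors. The main obstacle is really the invertibility of $\overline{M}_t$ and $\overline{Q}_t$, since everything else is a routine application of the SLLN-type statements already established; fortunately this obstacle is disposed of by the positivity of the perpendicular-component norms supplied by Lemma~\ref{lem:AMP_lem_tech}(g).
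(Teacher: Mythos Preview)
The proposal is correct and follows essentially the same approach as the paper's proof. The paper invokes part (c) of Lemma~\ref{lem:AMP_lem_tech} (together with Lemmas 9 and 10 of \cite{BM11}) where you invoke part (b) and a direct Schur-complement argument, but the underlying ingredients---entrywise convergence of the Gram matrices and cross-products, plus positivity of the perpendicular-component norms from part (g) to force invertibility of the limits---are identical; your version is simply a more self-contained unpacking of what those external lemmas say.
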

\begin{proof}
Applying Lemma 9 of \cite{BM11} and $\mathcal{B}_t(g), \mathcal{H}_{t}(g), \mathcal{X}_{t}(g)$ we can obtain that for large $n$ the smallest eigenvalues of $(M^\top_tM_t)/n$, $(Q^\top_tQ_t)/n$ are all strictly positive. By Lemma 10 of \cite{BM11} this implies they converge to invertible limits. Then using $\mathcal{H}_{t}(c),\mathcal{X}_{t}(c)$ and $\mathcal{B}_{t-1}(c)$ we have the result.
\end{proof}

\end{document}